\newtheorem{lem}{Lemma}[section]
\newtheorem{thm}[lem]{Theorem}
\newtheorem{cor}[lem]{Corollary}
\newtheorem{defn}[lem]{Definition}
\newtheorem{rem}[lem]{Remark}
\newcommand{\bqn}{\begin{equation}}
\newcommand{\eqn}{\end{equation}}
\newcommand{\beqx}{\begin{equation*}}
\newcommand{\eeqx}{\end{equation*}}
\newcommand{\barr}{\begin{array}}
\newcommand{\earr}{\end{array}}
\newcommand{\beqn}{\begin{eqnarray}}
\newcommand{\eeqn}{\end{eqnarray}}
\newcommand{\beqnx}{\begin{eqnarray*}}
\newcommand{\eeqnx}{\end{eqnarray*}}
\newcommand{\bmt}{\begin{multline}}
\newcommand{\emt}{\end{multline}}
\numberwithin{equation}{section}
\newcommand{\D}{\partial}
\newcommand{\cF}{{\mathcal F}}
\newcommand{\ve}{\varepsilon}
\newcommand{\vphi}{{\varphi}}
\newcommand{\er}{\eqref}
\newcommand{\lb}{\label}
\newcommand{\qu}{\quad}
\newcommand{{\R}}{\mbox{$\mathbb R$}}
\begin{document}


\title{Traveling Waves of  the Vlasov--Poisson System}

\author{Masahiro Suzuki${}^1$ and Masahiro Takayama${}^2$}

\author{%
{\large\sc Masahiro Suzuki${}^1$},
\\
{\large\sc Masahiro Takayama${}^2$},
\\
and
{\large\sc Katherine Zhiyuan Zhang${}^3$}
}

\date{ \today \\
\bigskip
\normalsize
${}^1$%
Department of Computer Science and Engineering, 
Nagoya Institute of Technology,
\\
Gokiso-cho, Showa-ku, Nagoya, 466-8555, Japan
\\ [7pt]
${}^2$%
Department of Mathematics, 
Keio University, 
\\ 
Hiyoshi, Kohoku-ku, Yokohama, 223-8522, Japan
\\ [7pt]
${}^3$%
Courant Institute of Mathematical Sciences, New York University, \\
New York, NY 10012, USA
}

\maketitle

\begin{abstract}
We consider the Vlasov--Poisson system describing a two-species plasma with spatial dimension $1$ and the velocity variable in $\mathbb{R}^n$. 
We find the necessary and sufficient conditions for the existence of 
solitary waves, shock waves, and wave trains of the system, respectively.
To this end, we need to investigate the distribution of ions trapped by the electrostatic potential.
Furthermore, we classify completely in all possible cases whether or not the traveling wave is unique.
The uniqueness varies according to each traveling wave
when we exclude the variant caused by translation.
For the solitary wave, there are both cases that it is unique and nonunique.
The shock wave is always unique. No wave train is unique.
\end{abstract}

\begin{description}
\item[{\it Keywords:}]
Vlasov--Poisson system, Solitary Waves, Shock Waves,  Wave Trains

\item[{\it 2020 Mathematics Subject Classification:}]
35A01, 
35Q35, 
76X05 

\end{description}


\newpage

\section{Introduction}

The traveling waves of the Vlasov--Poisson--Amp\'ere system have been intensively studied in plasma physics \cite{AM1,BGK1,BG1,BD1,V.K.1,Goldman2,GIBFFS1,HRK1,MB1,PA1,SLASS1}.
The system is equivalent to the Vlasov--Poisson system:
\begin{gather}
\partial_{t} f_{\pm} + \xi_{1} \partial_{x} f_{\pm} \pm q_{\pm} \partial_{x} \phi  \partial_{\xi_{1}} f_{\pm} =0, \ \ t>0 , \ x \in \mathbb R, \ \xi \in \mathbb R^{n},
\label{eq1}
\\
\partial_{xx} \phi 
= e_{+}\int_{\mathbb R^{n}} f_{+} d\xi - e_{-}\int_{\mathbb R^{n}} f_{-}  d\xi, \ \ t>0, \ x \in \mathbb R,
\label{eq2}
\end{gather}
where $t>0$, $x \in \mathbb R$, and $\xi=(\xi_{1},\xi_{2},\ldots,\xi_{n})=(\xi_{1},\xi') \in \mathbb R \times \mathbb R^{n-1}=\mathbb R^{n}$ for $n \in \mathbb N$
are the time variable, the space variable, and velocity, respectively.
The unknown functions $f_{+} = f_{+}(t,x,\xi)$ and $f_{-} = f_{-}(t,x,\xi)$ stand for 
the velocity distribution of positive ions and electrons, respectively,
and the unknown function $-\phi=-\phi(t,x)$ stands for the electrostatic potential. 
Furthermore, $q_{\pm}$ and $e_{\pm}$ are positive constants.

We mention briefly the history of studies of the traveling waves in plasma physics.
Kampen \cite{V.K.1} completed a linear theory of the waves. 
For nonlinear traveling waves, Bohm and Gross \cite{BG1} pointed out that the difficulties of the analysis of the waves are associated with the fact that some particles are trapped by the electrostatic potential of the waves.
In 1957, Bernstein, Greene, and Kruskal \cite{BGK1} analyzed the nonlinear traveling waves in a physical regime that the effect of the trapped particles is negligible.
Nowadays the waves with no trapped particles is called as BGK waves, BGK modes, or BGK equilibria. 
Formulation and dynamics of BGK waves are studied extensively in the physics community, see, for instance, \cite{AM1,BD1,Goldman2,GIBFFS1,HRK1,MB1,PA1,SLASS1}.
Furthermore, the formation and propagation of solitary waves had been observed experimentally by Ikezi, Taylor, and Baker \cite{ITB1}.

We also review mathematical results with rigorous proofs. 
The references \cite{GS1,GS2,Z.L.1} established the existence of the BGK waves of 
the Vlasov--Poisson system and the Vlasov--Poisson--Amp\'ere system.
Furthermore, the stability and instability of the BGK waves were investigated in \cite{GL1, GS1, GS2, Z.L.1, Z.L.2, LZ1}.
For the traveling waves of the Euler--Poisson system which cannot describe trapping,
we refer the reader to the works \cite{BK1,CDMS2,HS1}.
It is of greater interest to study the traveling waves beyond the BGK waves. 

In this paper, we find the necessary and sufficient conditions of the existence of 
solitary waves, shock waves, and wave trains of the Vlasov--Poisson system. 
To this end, we need to investigate the distribution of particles trapped by the electrostatic potential.
For solitary waves and shock waves, we impose the following conditions at $x=\pm\infty$:
\begin{gather}
\lim_{x \to -\infty} f_{\pm} (t,x,\xi) =  F_{\pm}^{l}(\xi), \quad
\lim_{x \to +\infty} f_{\pm} (t,x,\xi) =  F_{\pm}^{r}(\xi), 
\quad t>0, \ \xi \in \mathbb R^{n},
\label{bc1} \\
\lim_{x \to -\infty} \phi (t,x) =  \Phi^{l}, \quad
\lim_{x \to +\infty} \phi (t,x) =  0, \quad t>0,
\label{bc2} 
\end{gather}
where we have taken a reference point of the potential $\phi$ at $x=+\infty$.
For wave trains, we impose a periodic boundary condition in the $x$-direction as usual.
Furthermore, we classify completely in all possible cases whether or not the traveling wave is unique.
The uniqueness varies according to each traveling wave,
when we exclude the variant caused by translation.
For the solitary wave, there are both cases that the solution is unique and nonunique.
The shock wave is always unique. Wave trains are never unique.

In Sections \ref{S2}, \ref{S3}, and \ref{S4}, 
we study the solitary waves, shock waves, and wave trains, respectively.
Before closing this section, we give our notation used throughout this paper.

\medskip

\noindent
{\bf Notation.} 
For $ 1 \leq p \leq +\infty$, $L^p(\Omega)$ is the Lebesgue space
equipped with the norm $\Vert\cdot\Vert_{L^{p}(\Omega)}$.
Let us denote by $(\cdot,\cdot)_{L^{2}(\Omega)}$ the inner product of $L^{2}(\Omega)$.
The notation $H^1(\Omega)$ means the Sobolev space in the $L^2$ sense.
For $1<q<+\infty$, $-\infty<a<b<+\infty$, and interval $I$, 
the function spaces $L^{q}(a,b; L^{1}(\mathbb R^{n-1}))$ and
$L^{q}_{loc}(I; L^{1}(\mathbb R^{n-1}))$ are defined by
\begin{align*}
L^{q}(a,b; L^{1}(\mathbb R^{n-1})):=&
\left\{
\begin{array}{ll}
\bigl\{ f \in L^{1}_{loc}((a,b)) \, | \, \|f\|_{L^{q}((a,b))}<+\infty  \bigr\},
& n= 1,
\\[5pt]
\bigl\{ f \in L^{1}_{loc}((a,b) \times \mathbb R^{n-1}) \, | \, \|f\|_{L^{q}(a,b;L^{1}(\mathbb R^{n-1}))}<+\infty  \bigr\}, & n\geq 2,
\end{array}
\right. 
\\
L^{q}_{loc}(I; L^{1}(\mathbb R^{n-1})):=&
\left\{
\begin{array}{ll}
\bigl\{ f \in L^{1}_{loc} (\mathbb R) \, | \, \|f\|_{L^{q}((a,b))}<+\infty \ \text{for $a,b \in I$, $a<b$} \bigr\}, & n=1,
\\[5pt]
\bigl\{ f \in L^{1}_{loc} (\mathbb R^{n}) \, | \,  \|f\|_{L^{q}(a,b; L^{1}(\mathbb R^{n-1}))}<+\infty \ \text{for $a,b \in I$, $a<b$} \bigr\}, & n\geq2,
\end{array}
\right.
\end{align*}
where
\begin{gather*}
\|f\|_{L^{q}(a,b;L^{1}(\mathbb R^{n-1}))}:=\left\{\int_{a}^{b} \left(\int_{\mathbb R^{n-1}} |f(\xi_{1},\xi')| d\xi' \right)^{q} d\xi_{1} \right\}^{\frac{1}{q}}, \quad n\geq2.
\end{gather*}
Furthermore, 
$\mathbb R_{+}^{n}:=\{\xi \in \mathbb R^{n} ; \xi_{1}>0 \}$ and $\mathbb R_{-}^{n}:=\{\xi \in \mathbb R^{n} ; \xi_{1}<0 \}$ 
stand for $n$-dimensional upper and lower half spaces, respectively.
We also use the one-dimensional indicator function $\chi(s)$ of the set $\{s>0\}$.

\section{Solitary Waves}\label{S2}

The main purpose of this section is to investigate {\it solitary waves} which
are special solutions of the problem \eqref{eq1}--\eqref{bc2} with
\begin{gather*}
F_{\pm}^{l}=F_{\pm}^{r}=F_{\pm}^{\infty}, \quad \Phi^{l}=0 . 
\end{gather*}
We look for solutions of the form 
\begin{gather*}
f_{\pm}(t,x,\xi)=F_{\pm}(x-\alpha t, \xi), \quad \phi(t,x)=\Phi(x-\alpha t) \quad \text{for some $\alpha \in \mathbb R$,}
\end{gather*}
and the potential $\Phi$ has a unique global maximum point.
It is seen by direct computations that $(F_{\pm},\Phi)=(F_{\pm}(X,\xi),\Phi(X))$ solves the following problem:
\begin{subequations}\label{VP2}
\begin{gather}
(\xi_{1}-\alpha) \partial_{X} F_{\pm} \pm q_{\pm}\partial_{X} \Phi  \partial_{\xi_{1}} F_{\pm} =0, \ \ X \in \mathbb R, \ \xi \in \mathbb R^{n},
\label{eq3}
\\
\partial_{XX} \Phi 
= e_{+}\int_{\mathbb R^{n}} F_{+} d\xi - e_{-}\int_{\mathbb R^{n}} F_{-}  d\xi , \ \ X \in \mathbb R,
\label{eq4}\\
\lim_{|X| \to+\infty} F_{\pm} (X,\xi) =  F_{\pm}^{\infty}(\xi), 
\quad \xi \in \mathbb R^{n},
\label{bc3} \\
\lim_{|X| \to+\infty} \Phi (X) =  0.
\label{bc4} 
\end{gather}
\end{subequations}
It is worth pointing out that for the case that the potential $\Phi$ has a unique global minimum point, 
we can reduce it to the case that $\Phi$ has a maximum point by replacing 
$(F_{\pm},\Phi,e_{\pm},q_{\pm})$ by $(F_{\mp},-\Phi,e_{\mp},q_{\mp})$ in \eqref{VP2}.
Therefore, we can focus on the study of the case that $\Phi$ has a unique global maximum point.
Let us give a definition of solutions of \eqref{VP2}.

\begin{defn}\label{DefS1}
We say that $(F_{\pm},\Phi)$ is a solution of the problem \eqref{VP2} if it satisfies the following:
\begin{enumerate}[(i)]
\item $F_{\pm} \in L^{1}_{loc}({\mathbb R}\times \mathbb R^{n}) \cap C({\mathbb R};L^{1}(\mathbb R^{n}))$
and $\Phi \in C^{2}({\mathbb R})$;
\item $F_{\pm}(X,\xi)\geq 0$, $\partial_{X}\Phi(0)=0$, and $\partial_{X}\Phi(X)\gtrless 0$ for $0\gtrless X$;
%
%
\item $F_{\pm}$ solve
\begin{subequations}\label{weak0}
\begin{gather}
(F_{\pm},(\xi_1-\alpha)\D_X\psi \pm q_{\pm} \D_X{\Phi}\D_{\xi_1}\psi)_{L^2({\mathbb R}\times {\mathbb R}^n)}
=0 \quad \hbox{for $\forall \psi \in C_0^1(\mathbb{R}\times \mathbb{R}^n)$},
\label{weak1}\\
\lim_{|X|\to +\infty} \| F_{\pm}(X,\cdot)-F_{\pm}^{\infty} \|_{L^{1}(\mathbb R^{n})}=0 \text{\it ;}
\label{weak2}
\end{gather}
\end{subequations}
\item $\Phi$ solves \eqref{eq4} with \eqref{bc4} in the classical sense.
\end{enumerate}
\end{defn}

The condition (ii) does not allow the variant of the solution caused by translation.
The equation \eqref{weak1} is a standard weak form of the equations \eqref{eq3}.
It is possible to replace {\it the classical sense} in the condition (iv) by {\it the weak sense}.
Indeed, a weak solution $\Phi$ of the problem of \eqref{eq4} with \eqref{bc4}
is a classical solution due to $F_{\pm} \in C({\mathbb R};L^{1}(\mathbb R^{n}))$.
Any solution $(F_{\pm},\Phi)$ satisfies the neutral condition
\begin{gather}\label{netrual2}
\int_{-\infty}^{+\infty} \left(\int_{\mathbb R^{n}} e_{+}F_{+} - e_{-}F_{-} d\xi \right) dX = 0.
\end{gather}
This equality will be shown in Lemma \ref{need1} (see also Theorem \ref{existence1}).
To solve the Poisson equation \eqref{eq4} with \eqref{bc4},
we must require the quasi-neutral condition
\begin{gather}\label{netrual1}
e_{+}\int_{\mathbb R^{n}} F_{+}^{\infty}(\xi) d\xi = e_{-}\int_{\mathbb R^{n}} F_{-}^{\infty}(\xi) d\xi.
\end{gather}

It is not easy to find a necessary and sufficient condition for the solvability of the problem \eqref{VP2}, since it has a nonlinear hyperbolic equation \eqref{eq3}. Moreover, some characteristics curves of \eqref{eq3} are closed loops (see Figure \ref{fig+} below). 
This means that some ions are trapped in bounded area by the electrostatics potential.
Therefore, the value of $F_{+}$ cannot be determined only by the end state $F_{+}^{\infty}$, 
and we need to consider degree of freedom of the distribution of the trapped ions.
Our approach is as follows. 
First we write the expected distributions $F_{\pm}$ by using the characteristics method 
with taking the freedom of the distribution of the trapped ions into account.
Then we substitute these into the Poisson equation \eqref{eq2}, 
and apply some techniques in \cite{M.S.1,MM2} to rewrite the resultant equation to a first order ordinary differential equation.
Eventually, we can reduce the problem \eqref{VP2} to the following problem of an ordinary differential equation only for $\Phi$ (for more details, see subsection \ref{S2.1}):
\begin{gather}\label{phieq2}
(\partial_{x}\Phi)^{2}=2V(\Phi;\beta,G), \quad \lim_{|X| \to+\infty} \Phi (X) =  0.
\end{gather}
The Sagdeev potential $V$ is defined 
for the maximum $\beta>0$ of the potential $\Phi$ and the distribution $G=G(\xi)\geq 0$ on the plane $\{x=0\}$ of the trapped ions   by
\begin{gather}
V=V(\Phi;\beta,G):=\int_{0}^{\Phi} \left(e_{+}\rho_{+}(\varphi;\beta,G) - e_{-}\rho_{-}(\varphi) \right)  d\vphi, \quad \Phi \in [0,\beta],
\label{V0}
\end{gather}
where the expected densities $\rho_{\pm}$ are defined by
\begin{align}
\rho_{+}(\Phi;\beta,G)
&:= \rho_{+}^{\infty}(\Phi) + \rho_{+}^{0}(\Phi;\beta,G)
\notag\\
&:=\int_{{\mathbb R}^n}F_{+}^\infty(\xi)\frac{|\xi_1-\alpha|}{\sqrt{(\xi_1-\alpha)^2+2q_{+}\Phi}}\,d\xi
\notag\\
&\quad +2\int_{\mathbb R^{n-1}}\int_{\sqrt{2q_{+}\beta-2q_{+}\Phi}+\alpha}^{\sqrt{2q_{+}\beta}+\alpha}
G(\xi)\frac{\xi_1-\alpha}{\sqrt{(\xi_1-\alpha)^2+2q_{+}\Phi-2q_{+}\beta}}\,d\xi_1\,d\xi',
\label{rho++}
\\
\rho_{-}(\Phi)
&:=\int_{{\mathbb R}^n}F_{-}^\infty(\xi)\frac{|\xi_1-\alpha|}{\sqrt{(\xi_1-\alpha)^2-2q_{-}\Phi}} \chi((\xi_1-\alpha)^2-2q_{-}{\Phi}) \,d\xi,
\label{rho--}
\end{align}
where we ignore simply the integral with respect to $\xi'$ in $\rho_{+}^{0}$ for the case $n=1$.
We will see in \eqref{rho+'} and \eqref{rho-'} below that $\rho_{\pm}$ really express the densities of the ion and electron.
The expected densities $\rho_{\pm}$ are well-defined for $F_{\pm}^{\infty} \in L^{1}_{loc}(\mathbb R^{n})$ with $F_{\pm}^{\infty}\geq 0$
and $G \in L^{1}_{loc}((\alpha,+\infty)\times \mathbb R^{n-1})$ with $G \geq 0$, since all the integrands are nonnegative. 
The properties of $\rho_{\pm}$ are summarized in the following lemma.
The proof is postponed until subsection \ref{S2.3}.
\begin{lem}\label{rhopm}
Let $\alpha \in \mathbb R$, $\beta >0$, $F_{+}^{\infty} \in L^1(\R^n)$,
$F_{-}^{\infty} \in L^1(\R^n) \cap L_{loc}^{p}({\mathbb R};L^{1}(\mathbb R^{n-1}))$, and
$G \in L^1_{loc}((\alpha,$ $+\infty) \times \R^n) \cap L_{loc}^{p}([\alpha,+\infty);L^{1}(\mathbb R^{n}))$ 
for some $p>2$. Then for any $M>0$, the following estimates hold:
\begin{gather}
 |\rho_{+}^{\infty}(\Phi)|  \leq \|F_{+}^{\infty}\|_{L^{1}(\mathbb R^{n})}, \quad \Phi \in [0,+\infty),
\label{rho+}\\
|\rho_{+}^{0}(\Phi;\beta,G)|  \leq  C\|G\|_{L^{p}(\alpha,\sqrt{2q_{+}\beta}+\alpha;L^{1}(\mathbb R^{n-1}))}, \quad \Phi \in [0,\beta],
\label{rho+1}\\
 |\rho_{-}(\Phi)|   \leq  \sqrt{2} \|F_{-}^{\infty}\|_{L^{1}(\mathbb R^{n})} +C(M)\|F_{-}^{\infty}\|_{L^{p}(-2\sqrt{q_{-}M}+\alpha,2\sqrt{q_{-}M}+\alpha;L^{1}(\mathbb R^{n-1}))},
 \quad \Phi \in [0,M],
\label{rho-}
\end{gather}
where $C$ is a positive constant depending only on $\alpha$, $\beta$, $p$, and $q_{\pm}$, and
$C(M)$ is a positive constant depending also on $M$.
Furthermore, $\rho_{+}^{\infty}(\cdot) \in C([0,+\infty))$,
$\rho_{+}^{0}(\cdot;\beta,G) \in C([0,\beta])$, and $\rho_{-}(\cdot) \in C([0,+\infty))$ hold.
\end{lem}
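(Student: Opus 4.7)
The three estimates admit different strategies: a pointwise bound for $\rho_+^\infty$, a change of variables combined with H\"older's inequality for $\rho_+^0$, and a splitting of the integration domain for $\rho_-$. The hypothesis $p>2$ enters only where the square-root singularity at a moving endpoint has to be absorbed via H\"older, and continuity in all three cases is obtained at the end by a density argument.

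The bound \eqref{rho+} on $\rho_+^\infty$ is immediate from the pointwise inequality $|\xi_1-\alpha|/\sqrt{(\xi_1-\alpha)^2+2q_+\Phi}\le 1$, and dominated convergence with $F_+^\infty$ as dominant yields continuity on $[0,+\infty)$.

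For \eqref{rho+1}, my plan is to apply Fubini in $\xi'$ to isolate a one-dimensional integral and then use the substitution $s=(\xi_1-\alpha)^2-2q_+(\beta-\Phi)$, which maps the interval $[\sqrt{2q_+(\beta-\Phi)}+\alpha,\sqrt{2q_+\beta}+\alpha]$ onto $[0,2q_+\Phi]$ and converts the singular kernel into $s^{-1/2}$:
\beqx
\rho_+^0(\Phi;\beta,G)=\int_0^{2q_+\Phi}s^{-1/2}\tilde g_\Phi(s)\,ds,\qquad \tilde g_\Phi(s):=\int_{\mathbb R^{n-1}}G\bigl(\sqrt{s+2q_+(\beta-\Phi)}+\alpha,\xi'\bigr)\,d\xi'.
\eeqx
H\"older's inequality with exponents $p,p'$ then gives $|\rho_+^0|\le\|s^{-1/2}\|_{L^{p'}(0,2q_+\Phi)}\|\tilde g_\Phi\|_{L^p(0,2q_+\Phi)}$; the first factor is finite precisely because $p>2$ forces $p'<2$, while reversing the change of variables in the second factor bounds it by a constant (depending on $\alpha,\beta,p,q_+$) times $\|G\|_{L^p(\alpha,\sqrt{2q_+\beta}+\alpha;L^1(\mathbb R^{n-1}))}$.

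For \eqref{rho-}, I would split the domain at $|\xi_1-\alpha|=2\sqrt{q_-\Phi}$. On $\{|\xi_1-\alpha|\ge 2\sqrt{q_-\Phi}\}$ the elementary bound $(\xi_1-\alpha)^2-2q_-\Phi\ge(\xi_1-\alpha)^2/2$ shows the kernel is at most $\sqrt 2$, producing the $\sqrt 2\,\|F_-^\infty\|_{L^1(\mathbb R^n)}$ term. On the remaining region $\{\sqrt{2q_-\Phi}<|\xi_1-\alpha|<2\sqrt{q_-\Phi}\}$, which is contained in $\{|\xi_1-\alpha|<2\sqrt{q_-M}\}$ for $\Phi\in[0,M]$, I split into the two branches $\xi_1-\alpha\gtrless 0$ and run the same Fubini-plus-$s$-substitution argument (with $2q_+(\beta-\Phi)$ replaced by $2q_-\Phi$) to obtain the second term with constant $C(M)$. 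Continuity of $\rho_+^0(\cdot;\beta,G)$ on $[0,\beta]$ and of $\rho_-$ on $[0,+\infty)$ is then obtained by approximating $G$ and $F_-^\infty$ in the relevant $L^p$-mixed norms by $C_c$ data, for which the change-of-variable formulae reduce continuity in $\Phi$ to a direct application of dominated convergence, and then passing to the limit using the uniform bounds just proved. The main obstacle is the interplay of the square-root singularity with the moving endpoint of integration, and the substitution that straightens the singularity to $s^{-1/2}$ is the unifying device that makes both the bounds and the continuity transparent.
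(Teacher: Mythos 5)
Your proposal is correct and follows essentially the same route as the paper: the trivial pointwise bound for $\rho_+^\infty$, H\"older's inequality exploiting that the square-root kernel lies in $L^{p'}$ with $p'<2$ for $\rho_+^0$ and for the near-singular part of $\rho_-$ (the paper splits at $(\xi_1-\alpha)^2=4q_-M$ rather than $4q_-\Phi$, an immaterial difference), and continuity via approximation of $G$, $F_\pm^\infty$ by nice data combined with the uniform bounds and dominated convergence. Your explicit substitution $s=(\xi_1-\alpha)^2-2q_+(\beta-\Phi)$ merely restates the paper's direct estimate $\sup_{\Phi}\|\cdot\|_{L^{p'}}<\infty$ of the kernel, so the two arguments coincide in substance.
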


We are now in a position to state our main theorem.

\begin{thm} \label{existence1}
Let $p>2$, $\alpha \in \mathbb R$, $F_{+}^{\infty} \in L^1(\R^n)$,
$F_{-}^{\infty} \in L^1(\R^n) \cap L_{loc}^{p}({\mathbb R};L^{1}(\mathbb R^{n-1}))$,
and $F_{\pm}^{\infty} \geq 0$.
Assume that the necessary condition \eqref{netrual1} hold.
Suppose that $\beta >0$, $G \in L^1_{loc}((\alpha,+\infty) \times \R^{n-1}) \cap L_{loc}^{p}([\alpha,+\infty);L^{1}(\mathbb R^{n-1}))$, and $G\geq 0$. 
Then the problem \eqref{VP2} has a solution $(F_{\pm},\Phi)$ 
if and only if there exists a pair $(\beta,G)$ with
\begin{subequations}\label{G-beta}
\begin{gather}
F_{-}^\infty(\xi_1+\alpha,\xi')=F_{-}^\infty(-\xi_1+\alpha,\xi'), \quad 
(\xi_{1},\xi')\in(0,\sqrt{2q_{-}\beta})\times\R^{n-1},
\label{G-beta1}\\
V(\Phi;\beta,G)>0 \ \ \text{for $\Phi \in (0,\beta)$}, \quad 
V(\beta;\beta,G)=0,
\label{G-beta2}\\
\int_{0}^{\beta/2} \frac{d\Phi}{\sqrt{V(\Phi;\beta,G)}} =+ \infty, \quad 
\int_{\beta/2}^{\beta} \frac{d\Phi}{\sqrt{V(\Phi;\beta,G)}} <+ \infty.
\label{G-beta3}
\end{gather}
\end{subequations}
The solution satisfies the neutral condition \eqref{netrual2} and $\Phi(0)=\beta$.
Moreover, $F_{\pm}$ can be written by 
\begin{gather}\label{fform+}
\begin{aligned}
&{F}_{+}(X,\xi)
\\
& =F_{+}^\infty(-\sqrt{(\xi_1-\alpha)^2-2q_{+}{\Phi}(X)}+\alpha,\xi')
\chi((\xi_{1}-\alpha)^{2}-2q_{+}\Phi(X))\chi(-(\xi_{1}-\alpha)) \\
& \quad
+G(\sqrt{(\xi_1-\alpha)^2-2q_{+}\Phi(X)+2q_{+}\beta}+\alpha,\xi')
\chi(-(\xi_1-\alpha)^2+2q_{+}\Phi(X)) \\
& \quad
+F_{+}^\infty((\sqrt{(\xi_1-\alpha)^2-2q_{+}\Phi(X)}+\alpha,\xi')
\chi((\xi_{1}-\alpha)^{2}-2q_{+}\Phi(X))\chi(\xi_{1}-\alpha)
\end{aligned}
\end{gather}
and
\begin{gather}\label{fform-}
\begin{aligned}
F_{-}(X,\xi)
&=F_{-}^\infty(-\sqrt{(\xi_1-\alpha)^2+2q_{-}\Phi(X)}+\alpha,\xi')\chi(-(\xi_1-\alpha))
\\
&\quad +F_{-}^\infty(\sqrt{(\xi_1-\alpha)^2+2q_{-}\Phi(X)}+\alpha,\xi')\chi(\xi_1-\alpha). 
\end{aligned}
\end{gather}
\end{thm}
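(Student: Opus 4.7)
I would prove both directions via the method of characteristics applied to \eqref{eq3}. Along characteristics, the energies $(\xi_1-\alpha)^2/2 \mp q_\pm \Phi$ are conserved. Since $\Phi$ attains a unique global maximum $\beta$ at $X=0$, the $(X,\xi_1)$-phase portrait splits into three orbit classes: orbits connecting $X=\pm\infty$ (\emph{untrapped}), electron orbits that turn around before reaching the peak (\emph{reflected}, where $(\xi_1-\alpha)^2 + 2q_-\Phi < 2q_-\beta$), and ion orbits oscillating in the potential well (\emph{trapped}, where $(\xi_1-\alpha)^2 < 2q_+\Phi$). Pulling back along characteristics to the prescribed data---namely $F_\pm^\infty$ on the region where $\Phi \to 0$ and a free distribution $G(\xi_1,\xi')$ on the section $\{X=0,\,\xi_1 > \alpha\}$ for the trapped ions---yields the explicit formulas \eqref{fform+}--\eqref{fform-}. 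Consistency on a reflected electron orbit, whose turning point lies at $\xi_1=\alpha$ and so glues the two signs of $\xi_1-\alpha$ into one trajectory, forces the symmetry \eqref{G-beta1}; consistency on a closed trapped ion orbit is absorbed by parameterizing $G$ on $\xi_1 > \alpha$ only.

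\textbf{Necessity.} Substituting \eqref{fform+}--\eqref{fform-} into $\int_{\mathbb R^n} F_\pm\,d\xi$ and using $u=\sqrt{(\xi_1-\alpha)^2\mp 2q_\pm\Phi}$ on each orbit class yields the densities $\rho_\pm(\Phi;\beta,G)$ of \eqref{rho++}--\eqref{rho--}, whose integrability is guaranteed by Lemma \ref{rhopm}. Inserting these into the Poisson equation \eqref{eq4}, multiplying by $\partial_X\Phi$, integrating from $+\infty$ down to $X$, and using $\partial_X\Phi\to 0$, $\Phi\to 0$ at $\pm\infty$ together with \eqref{netrual1} produces $(\partial_X\Phi)^2=2V(\Phi;\beta,G)$, i.e.,~\eqref{phieq2}. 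The requirement that $\Phi$ rise from $0$ at $-\infty$ to $\beta$ at $0$ and fall back to $0$ at $+\infty$ translates exactly into \eqref{G-beta2} (positivity on the interior and vanishing at the peak, so the square root is well defined and $\partial_X\Phi(0)=0$) and \eqref{G-beta3} (divergence near $0$ corresponds to $|X|\to\infty$, convergence near $\beta$ to a finite distance to the maximum). Integrating \eqref{eq4} once over $\mathbb R$ gives \eqref{netrual2}.

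\textbf{Sufficiency.} Conversely, given $(\beta,G)$ satisfying \eqref{G-beta1}--\eqref{G-beta3}, define $\Phi$ by the quadrature $|X|=\int_{\Phi(X)}^{\beta} d\vphi/\sqrt{2V(\vphi;\beta,G)}$ with $\Phi(0)=\beta$ and the sign of $\partial_X\Phi$ opposite to that of $X$; conditions \eqref{G-beta2}--\eqref{G-beta3} ensure this yields $\Phi\in C^2(\mathbb R)$ with the required asymptotic behavior. Define $F_\pm$ by \eqref{fform+}--\eqref{fform-}; nonnegativity and \eqref{weak2} follow because as $\Phi(X)\to 0$ the trapped window in \eqref{fform+} collapses and the arguments of $F_\pm^\infty$ converge to $\xi_1$, after which dominated convergence applies. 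The weak Vlasov equation \eqref{weak1} is checked by observing that the representation is constant along characteristics on each orbit class, and the interfaces where the formula changes branch---namely the separatrices $(\xi_1-\alpha)^2 = 2q_+\Phi$ for ions and $(\xi_1-\alpha)^2 = 2q_-\beta - 2q_-\Phi$ for electrons---are themselves characteristic curves, so jumps across them carry no flux and contribute nothing to the weak divergence; the remaining potential discontinuity across the turning locus $\xi_1=\alpha$ inside the reflected-electron region is precisely suppressed by \eqref{G-beta1}. Differentiating $(\partial_X\Phi)^2=2V$ recovers \eqref{eq4}.

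\textbf{Expected obstacle.} The main technical difficulty will be justifying \eqref{weak1} for the piecewise $F_\pm$: identifying the separatrices as characteristic curves, verifying that the apparent jumps contribute no divergence in the distributional sense, and pinpointing the exact role of \eqref{G-beta1} in eliminating the one genuine discontinuity at $\xi_1=\alpha$. A secondary but nontrivial step is the change-of-variables computation turning $\int F_\pm\,d\xi$ into the Sagdeev integrals \eqref{rho++}--\eqref{rho--} and controlling the resulting integrals uniformly via Lemma \ref{rhopm}, in particular so that multiplying by $\partial_X\Phi$ and integrating back up to \eqref{phieq2} is rigorous rather than formal.
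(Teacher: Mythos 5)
Your outline coincides with the paper's strategy: for necessity, the characteristics method forces the representations \eqref{fform+}--\eqref{fform-} and the symmetry \eqref{G-beta1}, the densities \eqref{rho++}--\eqref{rho--} arise by the change of variables, and multiplying the Poisson equation by $\partial_X\Phi$ and integrating yields \eqref{phieq2}, with the shape of $\Phi$ translating into \eqref{G-beta2}--\eqref{G-beta3}; this matches Lemma \ref{need1} essentially point for point (the paper first shows $\partial_X\Phi\to 0$ at infinity to justify the integrations, and obtains \eqref{netrual2}, \eqref{netrual1} the same way). For sufficiency, constructing $\Phi$ from the first-order equation (the paper solves $\partial_Y\Psi=-\sqrt{2V}$ and uses Lemma \ref{rhopm} plus \eqref{G-beta2} for local Lipschitz continuity of $\sqrt{V}$, and \eqref{G-beta3} to place the maximum at a finite point and push the zero to infinity) and defining $F_\pm$ by \eqref{fform+}--\eqref{fform-} is also the paper's route.

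The genuine gap is in the step you defer to an ``expected obstacle'': verifying conditions (i) and (iii) of Definition \ref{DefS1} for data that are merely $L^1\cap L^p_{loc}(\cdot;L^1)$. Since $F_\pm^\infty$ and $G$ have no pointwise regularity, $F_\pm$ is not piecewise smooth, so you cannot integrate by parts orbit class by orbit class and argue that ``jumps across the separatrices carry no flux''; likewise ``dominated convergence applies'' is not available for $X\mapsto F_\pm^\infty\bigl(\pm\sqrt{(\xi_1-\alpha)^2\mp 2q_\pm\Phi(X)}+\alpha,\xi'\bigr)$, and the change of variables produces the singular Jacobian $|\zeta_1-\alpha|/\sqrt{(\zeta_1-\alpha)^2\mp 2q_\pm\Phi}$ near the turning velocities. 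The paper's mechanism, which your proposal lacks, is an approximation scheme: choose $C_0^\infty$ approximations $F_\pm^{\infty k}$, $G^k$ whose supports avoid neighborhoods of the critical velocities $\xi_1=\alpha$, $\xi_1=\alpha\pm\sqrt{2q_-\Phi(0)}$, $\xi_1=\alpha+\sqrt{2q_+\Phi(0)}$, with $F_-^{\infty k}$ still satisfying \eqref{G-beta1}, converging in $L^1$ and in $L^p(\cdot;L^1)$; the corresponding $F_\pm^k$ are smooth, constant along characteristics, hence satisfy \eqref{weak1}, and dominated convergence is applied only to these fixed smooth approximants. Passing to the limit uses H\"older's inequality, where $p>2$ (conjugate $p'<2$) is exactly what makes the singular factor integrable; this yields both $F_\pm\in C(\mathbb R;L^1(\mathbb R^n))$ --- a requirement of Definition \ref{DefS1}(i) that your sketch does not address but which is needed for $\Phi$ to solve \eqref{eq4} classically --- and the weak form \eqref{weak1}. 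Your reading of the role of \eqref{G-beta1} is correct in spirit, but in the rigorous argument it enters through the construction of the approximations rather than through a flux computation; as written, the proposal is right in outline yet incomplete precisely where the analytic content of the theorem lies.
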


\begin{rem} {\rm
We do not use the information of solution to write the necessary and sufficient condition \eqref{G-beta}.
Indeed, it depends only on $e_{\pm}$, $q_{\pm}$, $F_{\pm}^{\infty}$, $\alpha$, $\beta$, and $G$. 
In subsection \ref{S2.5}, we will provide an example satisfying all the conditions in Theorem \ref{existence1}, and have solitary waves with trapped ions. 
Furthermore, the necessary and sufficient condition \eqref{G-beta} is easy to check by computers as follows. 
The condition \eqref{G-beta1} requires just symmetry with respect to the plane $\{\xi_{1}=\alpha\}$.
If $V(\Phi;\beta,G)$ is a $C^{2}$-function around $\Phi=0,\beta$,  
the condition \eqref{G-beta3} is equivalent to that $\frac{d}{d\Phi}V(0;\beta,G)=0$ and $\frac{d}{d\Phi}V(0;\beta,G)<0$ thanks to the Taylor theorem. 
Therefore, \eqref{G-beta2} and \eqref{G-beta3} are verified if the graph of $V(\Phi;\beta,G)$ is drawn as in Figure \ref{figV} below.
}
\end{rem}

\begin{figure}[H]
\begin{center}
   \includegraphics[width=8cm, bb=0 0 1720 1181]{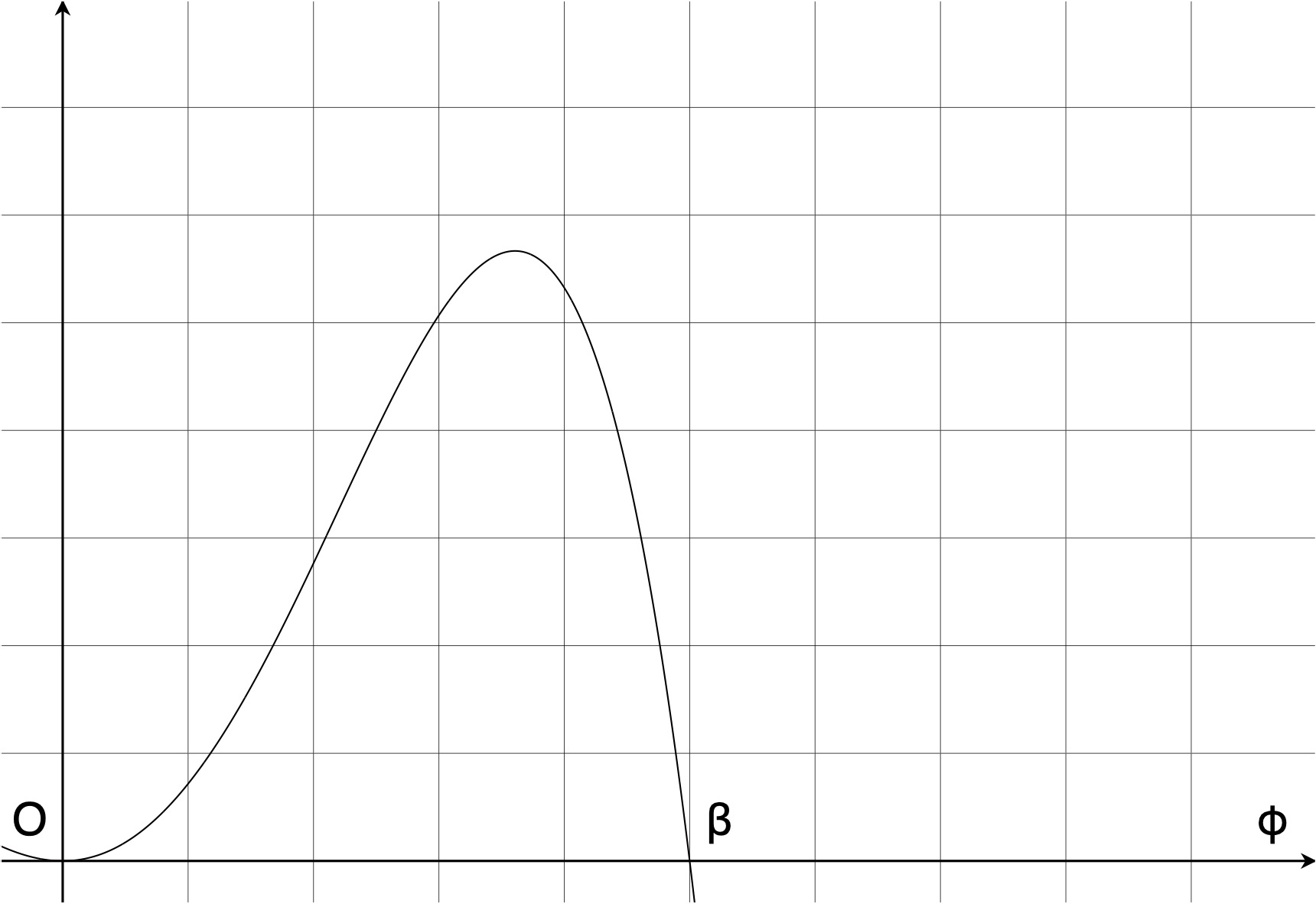}
\end{center}
  \caption{the graph of $V$}  
  \label{figV}
\end{figure}

There is the freedom of the distribution of the trapped ions.
The fact causes the nonuniqueness of solutions of the problem \eqref{VP2}.
The thing to note here is that the uniqueness also hold rarely. 
To discuss the uniqueness, we use the following notation:
\begin{gather}
\beta_{*}\!:=\!\frac{1}{2q_{-}} \!
\left(\sup\{\delta>0 \, | \, F_{-}^\infty(\xi_1\!+\!\alpha,\xi')\!=\!F_{-}^\infty(-\xi_1\!+\!\alpha,\xi'), \
(\xi_{1},\xi')\in(0,\delta)\!\times\!\R^{n-1}\} \right)^{2},
\label{beta*} \\
V^{\infty}=V^{\infty}(\Phi):=\int_{0}^{\Phi} \left(e_{+}\rho_{+}^{\infty}(\varphi) - e_{-}\rho_{-}(\varphi) \right)  d\vphi, \quad \Phi \in [0,+\infty),
\label{Vinfty}
\end{gather}
where $\rho_{+}^{\infty}$ and $\rho_{-}$ are defined in \eqref{rho++} and \eqref{rho--}, respectively.
The constant $\beta_{*}$ is the maximal kinetic energy for $F_{-}^{\infty}$ to be symmetric, and 
the function $V^{\infty}$ is the Sagdeev potential excluding the contribution of trapped ions.
The uniqueness holds only in the case that there is no trapped ion, i.e. $G\equiv0$, with the condition that {\it either} $\beta_{*}$ matches the maximum $\beta$ of the electrostatic potential {\it or} that $V^{\infty}$ is nonnegative. 

We state our main theorem on the uniqueness and nonuniqueness in Theorem \ref{uniqueness0} below.

\begin{thm} \label{uniqueness0}
Let $p>2$, $\alpha \in \mathbb R$, $F_{+}^{\infty} \in L^1(\R^n)$,
$F_{-}^{\infty} \in L^1(\R^n) \cap L_{loc}^{p}({\mathbb R};L^{1}(\mathbb R^{n-1}))$, and $F_{\pm}^{\infty}\geq 0$.
Suppose that the necessary condition \eqref{netrual1} hold, 
and there exists a pair $(\beta,G)$ with \eqref{G-beta}, 
$\beta >0$, $G \in L^1_{loc}((\alpha,+\infty) \times \R^{n-1}) \cap L_{loc}^{p}([\alpha,+\infty);L^{1}(\mathbb R^{n-1}))$, and $G\geq 0$.
Then the solution $(F_{\pm},\Phi)$ of the problem \eqref{VP2} is unique 
if and only if either the following condition (i) or (ii) holds:
\begin{enumerate}[(i)]
\item $G\equiv 0$ on $(\alpha,\sqrt{2q_{+}\beta}+\alpha)\times \mathbb R^{n-1}$ and $\beta=\beta_{*}$;
\item $G\equiv 0$ on $(\alpha,\sqrt{2q_{+}\beta}+\alpha)\times \mathbb R^{n-1}$ and ${V}^{\infty}(\Phi)\geq 0$ for \footnote{We note that $\beta \leq \beta_{*}$ holds due to \eqref{G-beta1}, and also suppose $\beta < \beta_{*}$ without loss of generality.}$\Phi \in (\beta,\beta_{*})$. 
\end{enumerate}
If neither condition (i) nor (ii) holds, there exist infinite many solutions of the problem \eqref{VP2}.
\end{thm}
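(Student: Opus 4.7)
The plan is to reformulate the question through the bijection established in Theorem \ref{existence1} between solitary wave solutions $(F_{\pm},\Phi)$ of \eqref{VP2} and admissible pairs $(\beta,G)$ satisfying \eqref{G-beta}. Since $F_{+}$ depends only on the restriction of $G$ to the slab $(\alpha,\sqrt{2q_{+}\beta}+\alpha)\times\mathbb R^{n-1}$ via \eqref{fform+}, uniqueness of the wave amounts to uniqueness of the pair on that slab. The main tools will be the decomposition
\beqx
V(\Phi;\tilde\beta,\tilde G)=V^{\infty}(\Phi)+e_{+}\int_{0}^{\Phi}\rho_{+}^{0}(\varphi;\tilde\beta,\tilde G)\,d\varphi,
\eeqx
the pointwise inequality $V(\cdot;\tilde\beta,\tilde G)\geq V^{\infty}$ on $[0,\tilde\beta]$ (coming from $\rho_{+}^{0}\geq 0$), and the bound $\tilde\beta\leq\beta_{*}$ forced by \eqref{G-beta1} and the definition \eqref{beta*}.

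\noindent\textbf{Sufficiency of (i) or (ii).} I will compare an alternative admissible pair $(\tilde\beta,\tilde G)$ with the given $(\beta,0)$ by splitting on the position of $\tilde\beta$. If $\tilde\beta<\beta$, then $V^{\infty}(\tilde\beta)>0$ by \eqref{G-beta2} applied to the original pair, while the key inequality and $V(\tilde\beta;\tilde\beta,\tilde G)=0$ give $V^{\infty}(\tilde\beta)\leq 0$, a contradiction. If $\tilde\beta=\beta$, then $V^{\infty}(\beta)=0$ forces $\int_{0}^{\beta}\rho_{+}^{0}(\varphi;\beta,\tilde G)\,d\varphi=0$, so $\tilde G\equiv 0$ on the slab and the pairs coincide. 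If $\tilde\beta>\beta$, then under (i) this is excluded by $\tilde\beta\leq\beta_{*}=\beta$; under (ii) the assumption $V^{\infty}(\tilde\beta)\geq 0$ together with $V^{\infty}(\tilde\beta)\leq 0$ gives $V^{\infty}(\tilde\beta)=0$, hence $\tilde G\equiv 0$ by the same integral argument, and then the point $\Phi=\beta\in(0,\tilde\beta)$ violates the strict positivity required by \eqref{G-beta2} since $V(\beta;\tilde\beta,0)=V^{\infty}(\beta)=0$.

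\noindent\textbf{Nonuniqueness.} If neither (i) nor (ii) holds I will split into two complementary cases. \emph{Case A:} if $G\not\equiv 0$ on the slab, I keep $\tilde\beta=\beta$ fixed. Since $\rho_{+}^{0}$ is linear in $G$, the scalar functional $h\mapsto\int_{0}^{\beta}\rho_{+}^{0}(\varphi;\beta,h)\,d\varphi$ has an infinite-dimensional kernel among admissible perturbations supported inside $\{G>\varepsilon\}$ for some small $\varepsilon>0$; for any nonzero $h$ in this kernel, the family $\tilde G_{\lambda}:=G+\lambda h$ is nonnegative for $|\lambda|$ small and the admissibility conditions \eqref{G-beta} are preserved by continuity, so varying $\lambda$ yields infinitely many distinct admissible pairs. \emph{Case B:} if $G\equiv 0$ on the slab, then (i) failing forces $\beta<\beta_{*}$ and (ii) failing gives $V^{\infty}(\Phi_{0})<0$ for some $\Phi_{0}\in(\beta,\beta_{*})$. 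For each $\tilde\beta$ in the nonempty open set $\{V^{\infty}<0\}\cap(\beta,\beta_{*})$ I will construct $\tilde G\geq 0$ with $\xi_{1}$-support in $(\alpha,\sqrt{2q_{+}(\tilde\beta-\beta)}+\alpha)$; a direct computation from \eqref{rho++} shows that $\rho_{+}^{0}(\varphi;\tilde\beta,\tilde G)=0$ for $\varphi\leq\beta$ and is strictly positive for $\varphi>\beta$, so $V$ agrees with the strictly positive $V^{\infty}$ on $(0,\beta]$ and the scalar equation $V(\tilde\beta;\tilde\beta,\tilde G)=0$ pins down the overall magnitude of $\tilde G$. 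Letting $\tilde\beta$ vary then produces infinitely many distinct admissible pairs.

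\noindent\textbf{Main obstacle.} The most delicate step is in Case B: one must guarantee the strict positivity of $V(\cdot;\tilde\beta,\tilde G)$ on the entire interval $(\beta,\tilde\beta)$, where $V^{\infty}$ may be negative, and the integrability condition \eqref{G-beta3} at $\Phi=\tilde\beta$. The plan is to pick $\tilde\beta$ close to the first zero of $V^{\infty}$ beyond $\beta$ and to concentrate $\tilde G$ near the top $\xi_{1}=\sqrt{2q_{+}(\tilde\beta-\beta)}+\alpha$ of its allowed support, so that $\rho_{+}^{0}$ grows sharply as $\varphi$ leaves $\beta$ and the corrective term dominates the negative part of $V^{\infty}$ on $(\beta,\tilde\beta)$. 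The integrability at $\Phi=0$ is automatic from the quasi-neutrality \eqref{netrual1}, which gives $V^{\infty\prime}(0)=0$ and hence a zero of $V$ of order at least two at the origin, while the integrability at $\Phi=\tilde\beta$ follows from the nondegeneracy of the total charge density at the crest of the potential.
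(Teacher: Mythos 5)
Your ``if'' direction is essentially the paper's argument: from $V(\cdot;\tilde\beta,\tilde G)=V^{\infty}+V^{0}\ge V^{\infty}$, the constraint $V(\tilde\beta;\tilde\beta,\tilde G)=0$, and $\tilde\beta\le\beta_{*}$ one forces $V^{0}(\tilde\beta;\tilde\beta,\tilde G)=0$, hence $\tilde G\equiv 0$ on the slab and then $\tilde\beta=\beta$ by comparing zero sets of $V^{\infty}$. One omission there: Theorem \ref{existence1} does not by itself give a bijection between solutions and pairs $(\beta,G)$; since $\sqrt{2V}$ is not Lipschitz at $\Phi=0,\beta$, you still must prove that a given admissible pair determines $\Phi$ uniquely (the paper does this with a short translation/Lipschitz-on-$[\ve,\beta-\ve]$ argument for the ODE $\partial_X\Phi=\pm\sqrt{2V}$). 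That gap is repairable. The serious problems are in the nonuniqueness half.

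In Case A the claim that the conditions \eqref{G-beta} are ``preserved by continuity'' along $\tilde G_{\lambda}=G+\lambda h$ is false in general, precisely because $V(\cdot;\beta,G)$ degenerates at both endpoints. Near $\Phi=\beta$ both $V(\cdot;\beta,G)$ and the perturbation $\lambda V^{0}(\cdot;\beta,h)$ vanish; \eqref{G-beta3} allows $V(\Phi;\beta,G)\sim c(\beta-\Phi)^{3/2}$, while a generic kernel element $h$ (with $\int h\,d\xi\neq 0$) gives $V^{0}(\Phi;\beta,h)\sim c'(\Phi-\beta)$ near $\beta$, so for one sign of $\lambda$ the perturbed $V$ is negative just below $\beta$ for \emph{every} small $|\lambda|$; a symmetric failure can occur near $\Phi=0$ when $h$ must live near the top of the slab (where $V^{0}(\Phi;\beta,h)\sim\Phi^{3/2}$ can beat a quadratic zero of $V$). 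Smallness of $\lambda$ cannot rescue this; what is needed is a perturbation whose effect on $V$ is pointwise nonnegative on $[0,\beta]$, zero at $\Phi=\beta$, and identically zero for small $\Phi$ --- exactly what the paper's matched-mass construction achieves ($H_{\tau}=-G$ on $(\alpha,\alpha^{*})$, $+G$ on $(\alpha^{*},\alpha^{0})$ with equal $(\xi_1-\alpha)^{2}$-weighted masses, plus the monotonicity of $f_{\Phi}$), and what the measure-preserving $\xi'$-rearrangement does trivially for $n\ge 2$.

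In Case B your construction fails outright at $\Phi=\beta$: supporting $\tilde G$ in $\xi_1-\alpha<\sqrt{2q_{+}(\tilde\beta-\beta)}$ makes $V^{0}(\Phi;\tilde\beta,\tilde G)\equiv 0$ on $[0,\beta]$, but \eqref{G-beta2} for the original pair (with $G\equiv0$) gives $V^{\infty}(\beta)=0$, so $V(\beta;\tilde\beta,\tilde G)=0$ at the interior point $\beta\in(0,\tilde\beta)$ and \eqref{G-beta2} is violated for every such $\tilde G$. The compensator must already be strictly positive at $\Phi=\beta$, i.e.\ its $\xi_1$-support must reach above $\sqrt{2q_{+}(\tilde\beta-\beta)}+\alpha$ (the paper uses $[\alpha,\sqrt{2q_{+}\beta_{\sharp}}+\alpha]$, resp.\ $[\sqrt{2q_{+}(\tilde\beta-\beta)}+\alpha,\sqrt{2q_{+}(\tilde\beta-\tfrac12\beta)}+\alpha]$). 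Moreover the genuinely hard step --- positivity of $V$ on $(\beta_{\sharp},\tilde\beta)$, where the total compensation available is only $|V^{\infty}(\tilde\beta)|$, which shrinks as $\tilde\beta\downarrow\beta_{\sharp}$ --- is not delivered by ``concentrate $\tilde G$ near the top of its support''; the paper needs two different arguments according to whether $\frac{d}{d\Phi}V^{\infty}(\beta_{\sharp})<0$ (a slope comparison giving $\frac{d}{d\Phi}V\le-\tfrac14 k<0$ on $(\beta_{\sharp},\tilde\beta)$ combined with $V(\tilde\beta)=0$) or $\frac{d}{d\Phi}V^{\infty}(\beta_{\sharp})=0$ (a concavity/chord bound $V^{0}(\Phi)\ge k(\Phi-\beta_{\sharp})$ with $\tilde\beta$ chosen to minimize $V^{\infty}(\Phi)/(\Phi-\beta_{\sharp})$), and your plan addresses neither. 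Finally, your justification of \eqref{G-beta3} at the origin via ``a zero of order at least two'' does not follow from $V^{\infty\prime}(0)=0$ alone ($V^{\infty}$ is only $C^{1}$ here); the correct and simpler route is that the compensator vanishes near $\Phi=0$, so the first condition in \eqref{G-beta3} is inherited from the original pair, and producing infinitely many solutions should then be done as in the paper by feeding the one nontrivial pair back into the Case A mechanism rather than by varying $\tilde\beta$, whose admissible values you have not shown to form an infinite set.
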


In plasma physics, it is often assumed that the electron density obeys the Boltzmann relation $\rho_{-}:=\int_{\mathbb R^{n}} f_{-} d\xi=\rho e^{-\kappa\phi}$ in \eqref{eq2} to reduce the transport equations for $f_{-}$, where $\rho$ and $\kappa$ are some positive constants.
The corresponding solitary waves solve
\begin{subequations}\label{VP3}
\begin{gather}
(\xi_{1}-\alpha) \partial_{X} F_{+} + q_{+}\partial_{X} \Phi  \partial_{\xi_{1}} F_{+} =0, \ \ X \in \mathbb R, \ \xi \in \mathbb R^{n},
\label{eq5}
\\
\partial_{XX} \Phi 
= e_{+}\int_{\mathbb R^{n}} F_{+} d\xi - e_{-}\rho e^{-\kappa \Phi}, \ \ X \in \mathbb R,
\label{eq6}\\
\lim_{|X| \to+\infty} F_{+} (X,\xi) =  F_{+}^{\infty}(\xi), 
\quad \xi \in \mathbb R^{n},
\label{bc5} \\
\lim_{|X| \to+\infty} \Phi (X) =  0.
\label{bc6} 
\end{gather}
\end{subequations}
Theorems \ref{existence1} and \ref{uniqueness0} are also applicable to the problem \eqref{VP3} by suitably choosing $F_{-}^{\infty}$ in the problem \eqref{VP2}. 
Indeed, the following corollary holds. 

\begin{cor}\label{cor1}
Suppose that $(F_{\pm},\Phi)$ is a solution of the problem \eqref{VP2} with 
\begin{gather*}
F_{-}^{\infty}(\xi)=\frac{\rho}{\displaystyle \int_{\mathbb R^{n}} e^{\frac{-\kappa}{2q_{-}}|\xi|^2} d\xi}
e^{\frac{-\kappa}{2q_{-}}\{(\xi_{1}-\alpha)^{2}+|\xi'|^{2}\}}.
\end{gather*}
Then $(F_{+},\Phi)$ is a solution of the problem \eqref{VP3}.
\end{cor}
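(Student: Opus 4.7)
The plan is to verify that the specified Maxwellian choice of $F_{-}^{\infty}$ makes the electron density $\int_{\mathbb R^{n}} F_{-}\,d\xi$ collapse exactly to the Boltzmann expression $\rho e^{-\kappa \Phi}$. Since equation \eqref{eq3} for $F_{+}$ and the boundary conditions \eqref{bc3}--\eqref{bc4} for $F_{+}$ and $\Phi$ coincide with \eqref{eq5} and \eqref{bc5}--\eqref{bc6}, the only substantive check is that \eqref{eq4} reduces to \eqref{eq6} under this identification.

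First I would note that the prescribed $F_{-}^{\infty}(\xi_{1},\xi')$ depends on $\xi_{1}$ only through $(\xi_{1}-\alpha)^{2}$, so the symmetry \eqref{G-beta1} holds automatically on all of $\mathbb R^{n}$; in particular $\beta_{*}=+\infty$ and the condition is compatible with Theorem \ref{existence1}. Starting from the explicit representation \eqref{fform-} of $F_{-}$ furnished by Theorem \ref{existence1}, plug in the Maxwellian: for either sign, evaluating $F_{-}^{\infty}$ at $\pm\sqrt{(\xi_{1}-\alpha)^{2}+2q_{-}\Phi(X)}+\alpha$ replaces $(\eta_{1}-\alpha)^{2}$ by $(\xi_{1}-\alpha)^{2}+2q_{-}\Phi(X)$. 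A single line of algebra then gives
\begin{equation*}
F_{-}^{\infty}\!\bigl(\pm\sqrt{(\xi_{1}-\alpha)^{2}+2q_{-}\Phi(X)}+\alpha,\xi'\bigr)
= F_{-}^{\infty}(\xi_{1},\xi')\, e^{-\kappa\Phi(X)}.
\end{equation*}

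Next I would combine the two pieces in \eqref{fform-}. Since $\chi(-(\xi_{1}-\alpha))+\chi(\xi_{1}-\alpha)=1$ for almost every $\xi_{1}$, this yields the pointwise identity $F_{-}(X,\xi)=F_{-}^{\infty}(\xi)\,e^{-\kappa\Phi(X)}$. Integrating in $\xi$ and performing the shift $\eta_{1}=\xi_{1}-\alpha$, the normalization constant in the definition of $F_{-}^{\infty}$ is exactly arranged so that
\begin{equation*}
\int_{\mathbb R^{n}} F_{-}(X,\xi)\,d\xi \;=\; \rho\, e^{-\kappa\Phi(X)}.
\end{equation*}
Substituting this into the Poisson equation \eqref{eq4} gives precisely \eqref{eq6}, and together with the already-inherited \eqref{eq5}, \eqref{bc5}, \eqref{bc6} this shows $(F_{+},\Phi)$ solves \eqref{VP3}.

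There is no serious obstacle here; the argument is a direct consequence of the structure of the characteristic formula \eqref{fform-} together with the Galilean-shifted Gaussian form of $F_{-}^{\infty}$. The only points that require a moment of care are keeping track of the normalization constant so that the integral equals $\rho$ (not some other constant), and observing that the two branches in \eqref{fform-} sum to a single Maxwellian thanks to the indicator functions partitioning $\mathbb R$. Neither step involves estimates, so the proof is essentially a computation.
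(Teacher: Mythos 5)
Your proposal is correct and follows essentially the same route as the paper: both start from the representation \eqref{fform-} of $F_{-}$ given by Theorem \ref{existence1}, substitute the Gaussian $F_{-}^{\infty}$, and compute $\int_{\mathbb R^{n}} F_{-}(X,\xi)\,d\xi=\rho e^{-\kappa\Phi(X)}$ so that \eqref{eq4} becomes \eqref{eq6}. The only difference is presentational: you make explicit the intermediate pointwise identity $F_{-}(X,\xi)=F_{-}^{\infty}(\xi)e^{-\kappa\Phi(X)}$ (valid off the null set $\{\xi_{1}=\alpha\}$), which the paper absorbs into a single direct computation of the integral.
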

\begin{proof}
Let $(F_{\pm},\Phi)$ be a solution of the problem \eqref{VP2} with $F_{-}^{\infty}(\xi)$ in the corollary.
From Theorem~\ref{existence1}, we see that $F_{-}$ is written as \eqref{fform-}. 
It is seen by direct computation that
\begin{gather*}
\int_{\mathbb R^{n}} F_{-}(X,\xi) d\xi
= \frac{\rho}{\displaystyle \int_{\mathbb R^{n}} e^{\frac{-\kappa}{2q_{-}}|\xi|^2} d\xi}
 \int_{\mathbb R^{n}} e^{\frac{-\kappa}{2q_{-}}\{(\xi_1-\alpha)^2+|\xi'|^{2}+2q_{-}\Phi(X)\}} d\xi
=\rho e^{-\kappa \Phi(X)}.
\end{gather*}
This means that $(F_{+},\Phi)$ also solves \eqref{VP3}.
The proof is complete.
\end{proof}

This section is organized as follows. 
In subsection \ref{S2.1}, we investigate necessary conditions for the solvability of the problem \eqref{VP2}.
Subsection \ref{S2.2} gives the proof of the solvability stated in Theorem \ref{existence1}.
We show Lemma \ref{rhopm} in subsection \ref{S2.3}.
Subsection \ref{S2.4} deals with the proof of the uniqueness and nonuniqueness stated in Theorem \ref{uniqueness0}.
Subsection \ref{S2.5} provides an example with all the conditions in Theorem \ref{existence1}.

\subsection{Necessary conditions of solvability}\label{S2.1}

In this section, we investigate necessary conditions for the solvability of the problem \eqref{VP2}.

\begin{lem}\label{need1}
Let $\alpha \in \mathbb R$, $F_{\pm}^{\infty} \in L^{1}(\mathbb R^{n})$, and $F_{\pm}^{\infty}\geq 0$.
Suppose that  the problem \eqref{VP2} has a solution $(F_{\pm},\Phi)$.
Then the conditions \eqref{netrual2} and \eqref{netrual1} hold;
$F_{-}^{\infty}$ satisfies the condition \eqref{G-beta1} with $\beta=\Phi(0)$; 
$F_{\pm}$ are written as \eqref{fform+} and \eqref{fform-} with $\beta=\Phi(0)$ and $G(\xi)=F_{+}(0,\xi)$;
$\rho_{\pm} \in C([0,\Phi(0)])$; 
$\Phi$ solves the problem \eqref{phieq2} with $\beta=\Phi(0)$ and $G(\xi)=F_{+}(0,\xi)$;
$V$ satisfies \eqref{G-beta2} and \eqref{G-beta3} with $\beta=\Phi(0)$ and  $G(\xi)=F_{+}(0,\xi)$.
\end{lem}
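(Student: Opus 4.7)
The plan is to exploit the method of characteristics to obtain the explicit formulas \eqref{fform+}--\eqref{fform-} for $F_{\pm}$, substitute them into the Poisson equation \eqref{eq4} to reduce everything to \eqref{phieq2}, and read off every remaining assertion from \eqref{phieq2} together with the behavior of $\Phi$ at infinity. Since $\Phi\in C^{2}(\R)$, the velocity field driving \eqref{eq3} is $C^{1}$ and the characteristic flow is a classical diffeomorphism with conserved energies $E_{\pm}:=\tfrac12(\xi_{1}-\alpha)^{2}\mp q_{\pm}\Phi(X)$. Condition (ii) of Definition \ref{DefS1} makes $\Phi$ attain its unique global maximum $\beta:=\Phi(0)>0$ at $X=0$ and be strictly monotone on each half-line, so the ion orbits split into \emph{untrapped} ones ($E_{+}>0$, running from $X=-\infty$ to $X=+\infty$ with the sign of $\xi_{1}-\alpha$ preserved) and \emph{trapped} closed orbits around $(0,\alpha)$ in phase space ($E_{+}<0$), while electron orbits are unbounded for $E_{-}\ge q_{-}\beta$ and otherwise \emph{bounce} off the potential hill on a single side of $X=0$. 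Standard transport theory with $C^{1}$ coefficients then turns \eqref{weak1} into the statement that $F_{\pm}$ is constant along each characteristic.

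Tracing each untrapped ion orbit to $X=\pm\infty$ and using \eqref{weak2} with $F_{+}^{l}=F_{+}^{r}=F_{+}^{\infty}$ yields the first and third terms of \eqref{fform+}; on each trapped orbit, setting $G(\xi):=F_{+}(0,\xi)$ captures the constant value of $F_{+}$, and because the orbit crosses $X=0$ twice (at $\xi_{1}=\alpha\pm v$ for the same $v$) the $+$-branch parametrization in the middle term of \eqref{fform+} is unambiguous. The same argument on unbouncing electron orbits produces \eqref{fform-} directly; for a bouncing orbit on, say, the right half-line, entering from $X=+\infty$ at $\xi_{1}=\alpha-v$ and exiting to $X=+\infty$ at $\xi_{1}=\alpha+v$, matching the single characteristic-constant value of $F_{-}$ to $F_{-}^{\infty}$ on both ends forces the symmetry $F_{-}^{\infty}(\alpha+v,\xi')=F_{-}^{\infty}(\alpha-v,\xi')$ for all $v\in(0,\sqrt{2q_{-}\beta})$, which is precisely \eqref{G-beta1} with $\beta=\Phi(0)$.

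Integrating \eqref{fform+}, \eqref{fform-} in $\xi$ through the substitutions $\xi_{1}\mapsto\alpha\pm\sqrt{(\xi_{1}-\alpha)^{2}-2q_{+}\Phi}$ on untrapped ion pieces, $\xi_{1}\mapsto\alpha+\sqrt{(\xi_{1}-\alpha)^{2}+2q_{+}(\beta-\Phi)}$ on the trapped piece, and $\xi_{1}\mapsto\alpha\pm\sqrt{(\xi_{1}-\alpha)^{2}+2q_{-}\Phi}$ for electrons, reproduces \eqref{rho++}, \eqref{rho--}; combining these explicit forms with the regularity $F_{\pm}\in C(\R;L^{1}(\R^{n}))$ delivers $\rho_{\pm}\in C([0,\beta])$. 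By \eqref{V0} one has $\partial_{\Phi}V=e_{+}\rho_{+}-e_{-}\rho_{-}$, so the Poisson equation reads $\partial_{XX}\Phi=\partial_{\Phi}V(\Phi(X);\beta,G)$; multiplying by $\partial_{X}\Phi$ and integrating gives $\tfrac12(\partial_{X}\Phi)^{2}=V(\Phi;\beta,G)+C$. Boundedness of $\rho_{\pm}$ on $[0,\beta]$ makes $\partial_{XX}\Phi$ bounded and $\partial_{X}\Phi$ Lipschitz; combined with condition (ii) and $\Phi\to 0$ at $\pm\infty$, this forces $\partial_{X}\Phi\to 0$ at $\pm\infty$, pinning $C=V(0;\beta,G)=0$ and yielding \eqref{phieq2}.

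Conditions \eqref{G-beta2} are then immediate from \eqref{phieq2} and condition (ii), while \eqref{G-beta3} follows by separating variables in \eqref{phieq2}: on $X>0$ one has $\partial_{X}\Phi=-\sqrt{2V}$, so $\int_{\beta/2}^{\beta}d\Phi/\sqrt{V}$ equals (up to $\sqrt{2}$) the finite $X$-value at which $\Phi=\beta/2$, whereas $\int_{0}^{\beta/2}d\Phi/\sqrt{V}=+\infty$ because $\Phi\to 0$ only in the limit $X\to+\infty$. Integrating the Poisson equation across $\R$ and using $\partial_{X}\Phi\to 0$ produces \eqref{netrual2}, and passing $X\to+\infty$ in the Poisson equation gives \eqref{netrual1}: the $L^{1}$-convergence in \eqref{weak2} yields $\rho_{\pm}(X)\to\int F_{\pm}^{\infty}d\xi$, and a nonzero limit of $\partial_{XX}\Phi$ at $+\infty$ would contradict $\partial_{X}\Phi\to 0$. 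The hard part will be the characteristic analysis on trapped ion and bouncing electron orbits, since that is where both the $+$-branch form of $G$ in \eqref{fform+} and the symmetry constraint \eqref{G-beta1} arise; once these are settled, everything else is a controlled manipulation of the Sagdeev potential $V$.
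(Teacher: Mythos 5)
Your proposal is correct and follows essentially the same route as the paper: constancy of $F_{\pm}$ along the characteristic curves of \eqref{eq3} to derive \eqref{fform+}--\eqref{fform-} and the symmetry \eqref{G-beta1} from the bouncing electron orbits, integration in $\xi$ to get $\rho_{\pm}$ and the reduction to \eqref{phieq2} via multiplication by $\partial_{X}\Phi$, with \eqref{G-beta2}--\eqref{G-beta3} read off by separating variables and the neutrality conditions obtained from $\partial_{X}\Phi\to 0$. The only (harmless) deviation is your derivation of \eqref{netrual1} by forcing the limit of $\partial_{XX}\Phi$ to vanish, where the paper instead extracts a sequence $X_{k}\to+\infty$ from the integrability in \eqref{netrual2}; both arguments are valid.
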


\begin{rem} \rm
The regularity of $F_{\pm}^{\infty}$ in Lemma \ref{need1} are weaker than that of Theorem \ref{existence1}.
\end{rem}

\begin{proof}
We first show \eqref{netrual2}.
Owing to \eqref{weak2} and $F_{\pm} \in C({\mathbb R}; L^{1}(\mathbb R^{n}))$, 
it follows from \eqref{eq4} that $\D_{XX} \Phi$ is bounded and 
therefore $\D_{X} \Phi$ is uniformly continuous on ${\mathbb R}$.
Then this fact together with \eqref{bc4} and the condition (ii) in Definition \ref{DefS1} ensures that
\begin{gather}\label{lim1}
\lim_{|X| \to +\infty}\D_{X}\Phi(X)=0.
\end{gather}
Integrating \eqref{eq4} from $-\infty$ to $+\infty$ and using \eqref{lim1}, we arrive at \eqref{netrual2}.
Let us also show \eqref{netrual1}. It is seen from \eqref{netrual2} 
that there exists a sequence $\{X_{k}\}_{k=1}^{+\infty}$ such that $X_{k} \to +\infty$
and $\lim_{k \to +\infty}(e_{+}\int_{\mathbb R^{n}}F_{+} d\xi-e_{-}\int_{\mathbb R^{n}}F_{-} d\xi)(X_{k})=0$. 
On the other hand, it follows from \eqref{weak2} that
$\lim_{X \to +\infty}(e_{+}\int_{\mathbb R^{n}}F_{+} d\xi-e_{-}\int_{\mathbb R^{n}}F_{-} d\xi)=e_{+}\int_{\mathbb R^{n}}F_{+}^{\infty} d\xi - e_{-}\int_{\mathbb R^{n}}F_{-}^{\infty} d\xi$.
Therefore, \eqref{netrual1} must hold.

Next we show that $F_{-}^{\infty}$ satisfies the condition \eqref{G-beta1} with $\beta=\Phi(0)$,
and $F_{\pm}$ are written as \eqref{fform+} and \eqref{fform-} with $\beta=\Phi(0)$ and  $G(\xi)=F_{+}(0,\xi)$.
Regarding $\Phi$ as a given function and then applying the characteristics method to \eqref{weak1}, 
we see that the values of $F_{+}$ must be the same on the following characteristics curve:
\begin{gather*}
\frac{1}{2}(\xi_{1}-\alpha)^{2}-q_{+}\Phi(X)=c,
\end{gather*}
where $c$ is some constant. We draw the illustration of characteristics for $\alpha=0$ in Figure \ref{fig+} below.
\begin{figure}[H]
\begin{center}
    \includegraphics[width=9.5cm, bb=0 0 1720 1104]{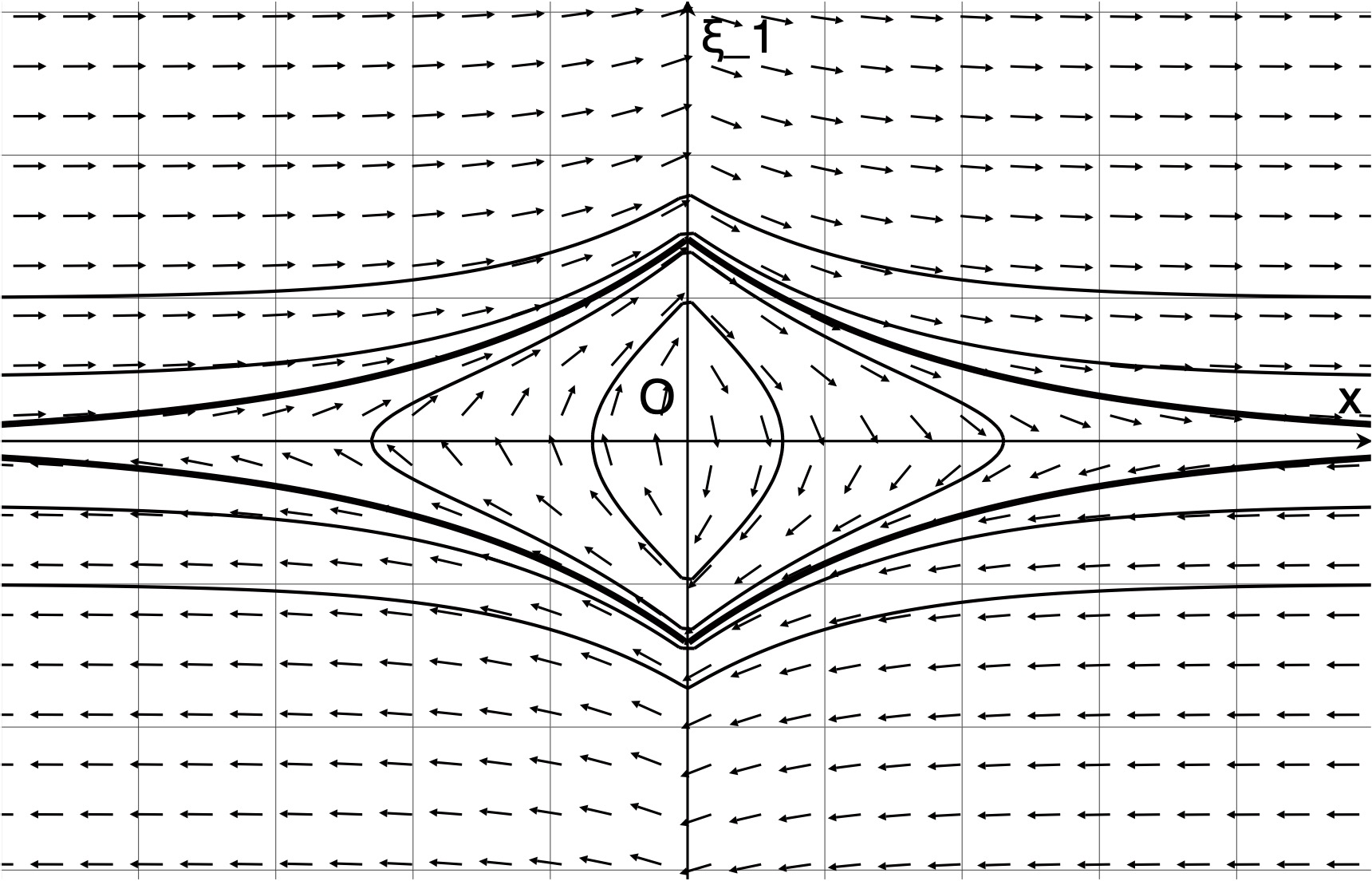}
\end{center}
  \caption{the characteristics of $F_{+}$}  
  \label{fig+}
\end{figure}
It tells us that
\begin{align*}
F_{+}(y,-\sqrt{\eta_{1}^{2}+2q_{+}\Phi(y)}+\alpha,\eta')=F_{+}^{\infty}(\eta_{1}+\alpha,\eta'), &\quad (y,\eta_{1},\eta') \in {\cal X}_{+}^{1},
 \\
F_{+}(y,\pm\sqrt{\eta_1^2-2q_{+}{\Phi}(0)+2q_{+}{\Phi}(y)}+\alpha,\eta')=F_{+}(0,\eta_{1}+\alpha,\eta'), &\quad (y,\eta_{1},\eta') \in {\cal X}_{+}^{2},
 \\
F_{+}(y,\sqrt{\eta_{1}^{2}+2q_{+}\Phi(y)}+\alpha,\eta')=F_{+}^{\infty}(\eta_{1}+\alpha,\eta'), &\quad  (y,\eta_{1},\eta') \in {\cal X}_{+}^{3},
\end{align*}
where $\eta=(\eta_{1},\eta_{2},\ldots,\eta_{n})=(\eta_{1},\eta')$ and
\begin{align*}
{\cal X}_{+}^{1}&:={\R}\times\R_-^n, 
\\
{\cal X}_{+}^{2}&:=\{(y,\eta_1,\eta')\in {\R}\times\R^{n}\;|\; 2q_{+}{\Phi}(0)-2q_{+}{\Phi}(y)<\eta_1^2< 2q_{+}\Phi(0)\}, 
\\
{\cal X}_{+}^{3}&:={\R}\times\R_+^n. 
\end{align*}
Furthermore, we conclude from these three equalities that $F_{+}$ must be written as \eqref{fform+} with $\beta=\Phi(0)$ and  $G(\xi)=F_{+}(0,\xi)$, i.e.
\begin{align*}
&{F}_{+}(X,\xi)
\\
& =F_{+}^\infty(-\sqrt{(\xi_1-\alpha)^2-2q_{+}{\Phi}(X)}+\alpha,\xi')
\chi((\xi_{1}-\alpha)^{2}-2q_{+}\Phi(X))\chi(-(\xi_{1}-\alpha)) \\
& \quad
+F_{+}(0,\sqrt{(\xi_1-\alpha)^2-2q_{+}\Phi(X)+2q_{+}\Phi(0)}+\alpha,\xi')
\chi(-(\xi_1-\alpha)^2+2q_{+}\Phi(X)) \\
& \quad
+F_{+}^\infty(\sqrt{(\xi_1-\alpha)^2-2q_{+}\Phi(X)}+\alpha,\xi')
\chi((\xi_{1}-\alpha)^{2}-2q_{+}\Phi(X))\chi(\xi_{1}-\alpha),
\end{align*}
where $\chi(s)$ is the one-dimensional indicator function of the set $\{s>0\}$.

Next we treat $F_{-}$. The values of $F_{-}$ must be the same on the following characteristics curve:
\begin{gather*}
\frac{1}{2}(\xi_{1}-\alpha)^{2}+q_{-}\Phi(X)=c,
\end{gather*}
where $c$ is some constant. We draw the illustration of characteristics for $\alpha=0$ in Figure \ref{fig-} below.
\begin{figure}[H]
\begin{center}
    \includegraphics[width=9.5cm, bb=0 0 1720 1337]{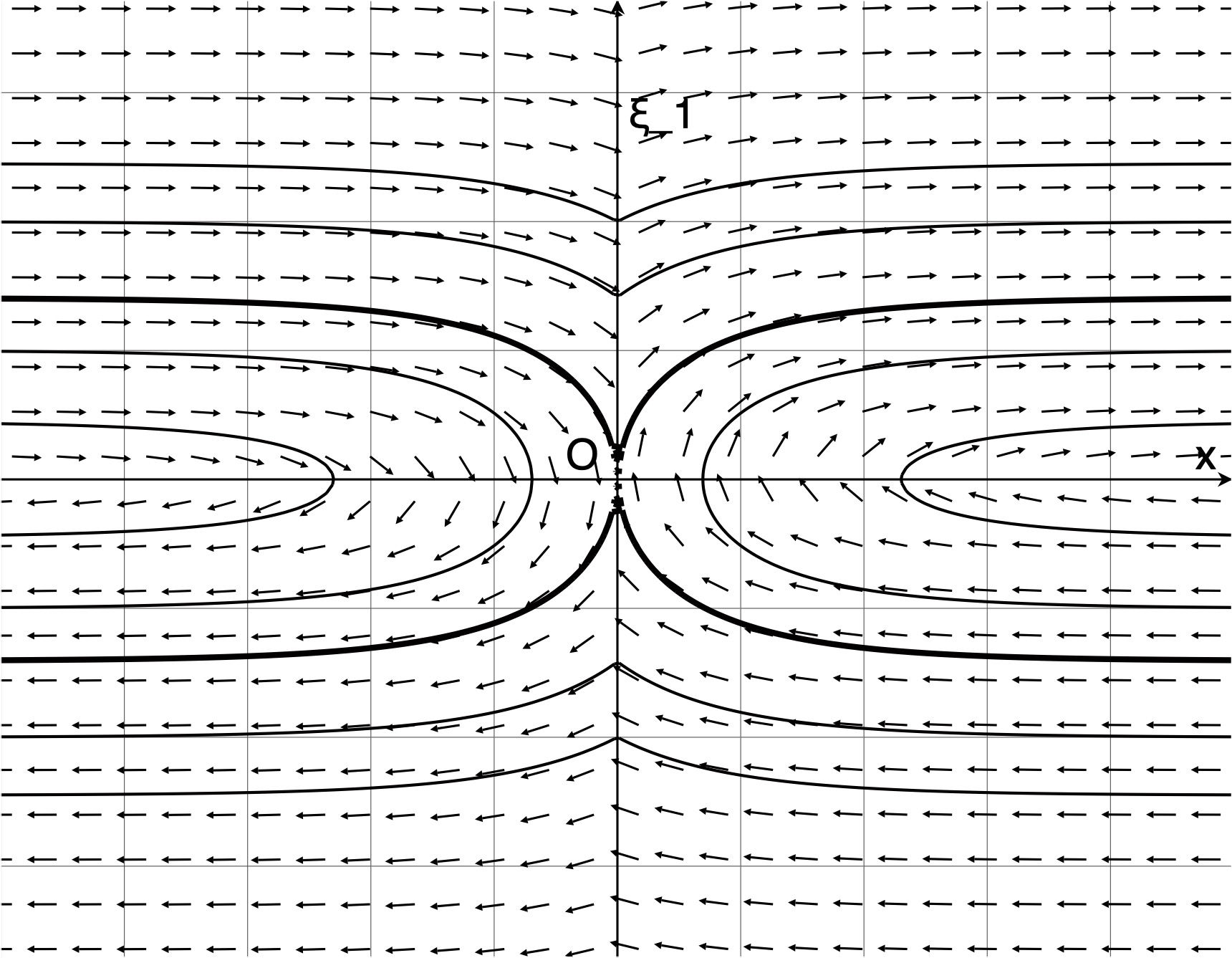}
\end{center}
  \caption{the characteristics of $F_{-}$}  
  \label{fig-}
\end{figure}
It tells us that
\begin{align*}
F_{-}(y,-\sqrt{\eta_{1}^{2}-2q_{-}\Phi(y)}+\alpha,\eta')=F_{-}^{\infty}(\eta_{1}+\alpha,\eta'), &\quad (y,\eta_{1},\eta') \in {\cal X}_{-}^{1},
 \\
F_{-}(y,\pm\sqrt{\eta_1^2-2q_{-}{\Phi}(y)}+\alpha,\eta')=F_{-}^{\infty}(\eta_{1}+\alpha,\eta'), &\quad (y,\eta_{1},\eta') \in {\cal X}_{-}^{2},
 \\
F_{-}(y,\sqrt{\eta_1^2-2q_{-}{\Phi}(y)}+\alpha,\eta')=F_{-}^{\infty}(\eta_{1}+\alpha,\eta'), & \quad  (y,\eta_{1},\eta') \in {\cal X}_{-}^{3},
\end{align*}
where 
\begin{align*}
{\cal X}_{-}^{1}&:={\R}\times(-\infty,-\sqrt{2q_{-}\Phi(0)})\times\R^{n-1},
\\
{\cal X}_{-}^{2}&:=\{(y,\eta_1,\eta')\in\R\times\R^n\;|\;2q_{-}{\Phi}(y)<\eta_1^2<2q_{-}\Phi(0)\}, \quad
\\
{\cal X}_{-}^{3}&:={\R}\times(\sqrt{2q_{-}\Phi(0)},+\infty)\times\R^{n-1}. 
\end{align*}
Due to the second equality, the following must hold:
\begin{align*}
F_{-}^\infty(\eta_1+\alpha,\xi')=F_{-}^\infty(-\eta_1+\alpha,\xi'), \quad 
(\eta_{1},\xi')\in(-\sqrt{2q_{-}\Phi(0)},\sqrt{2q_{-}\Phi(0)})\times\R^{n-1},
\end{align*}
which is the same equality as \eqref{G-beta1} with $\beta=\Phi(0)$.
Furthermore, we conclude from these three equalities that $f$ must be written as \eqref{fform-}, i.e.
\begin{align*}
F_{-}(X,\xi)
&=F_{-}^\infty(-\sqrt{(\xi_1-\alpha)^2+2q_{-}\Phi(X)}+\alpha,\xi')\chi(-(\xi_1-\alpha))
\\
&\quad +F_{-}^\infty(\sqrt{(\xi_1-\alpha)^2+2q_{-}\Phi(X)}+\alpha,\xi')\chi(\xi_1-\alpha). 
\end{align*}

Now we can reduce the problem \eqref{VP2} to a problem of an ordinary differential equation only for $\Phi$.
Integrating \eqref{fform+} over $\mathbb R^{n}$ and using the change of variables 
$\sqrt{(\xi_{1}-\alpha)^{2}-2q_{+}\Phi}=-(\zeta_{1}-\alpha)$,
$\sqrt{(\xi_{1}-\alpha)^{2}-2q_{+}\Phi+2q_{+}\Phi(0)}=(\zeta_{1}-\alpha)$,
and $\sqrt{(\xi_{1}-\alpha)^{2}-2q_{+}\Phi}=\zeta_{1}-\alpha$ 
for the first, second, and third terms on the right hand side, respectively, we see that
\begin{align}
&\int_{\mathbb R^{n}} F_{+}(X,\xi) \,d\xi  
\notag\\
&= \int_{{\mathbb R}^n}F_{+}^\infty(\xi)\frac{|\xi_1-\alpha|}{\sqrt{(\xi_1-\alpha)^2+2q_{+}\Phi(X)}}\,d\xi
\notag\\
&\quad +2\int_{\mathbb R^{n-1}}\int_{\sqrt{2q_{+}\Phi(0)-2q_{+}\Phi(X)}+\alpha}^{\sqrt{2q_{+}\Phi(0)}+\alpha}
F_{+}(0,\xi)\frac{\xi_1-\alpha}{\sqrt{(\xi_1-\alpha)^2+2q_{+}\Phi(X)-2q_{+}\Phi(0)}}\,d\xi_1\,d\xi'
\notag \\
&=  \rho_{+}(\Phi(X);\Phi(0),F_{+}(0,\cdot)),
\label{rho+'}
\end{align}
where $\rho_{+}$ is the same function defined in \eqref{rho++}.
On the other hand, integrating \eqref{fform-} over $\mathbb R^{n}$ and using the change of variables 
$\sqrt{(\xi_{1}-\alpha)^{2}+2q_{-}\Phi}=-(\zeta_{1}-\alpha)$ and 
$\sqrt{(\xi_{1}-\alpha)^{2}+2q_{-}\Phi}=\zeta_{1}-\alpha$ for the first and second terms on the right hand side, respectively,
we see that
\begin{align}
\int_{\mathbb R^{n}} F_{-}(X,\xi) \,d\xi  
&=\int_{{\mathbb R}^n}F_{-}^\infty(\xi)\frac{|\xi_1-\alpha|}{\sqrt{(\xi_1-\alpha)^2-2q_{-}\Phi(X)}} \chi((\xi_1-\alpha)^2-2q_{-}\Phi(X)) \,d\xi
\notag \\
&=  \rho_{-}(\Phi(X)),
\label{rho-'}
\end{align}
where $\rho_{-}$ is the same function defined in \eqref{rho--}.
Substituting \eqref{rho+'} and \eqref{rho-'} into \er{eq4}, we arrive at
\begin{equation}\lb{phieq1} 
\partial_{XX} \Phi 
= e_{+}\rho_{+}(\Phi;\Phi(0),F_{+}(0,\cdot)) - e_{-}\rho_{-}(\Phi), \quad X\in \mathbb R.
\end{equation}
Now we claim that $\rho_{\pm} \in C([0,\Phi(0)])$.
Indeed, owing to $ F_{\pm} \in C({\R};L^{1}(\R^{n}))$ in Definition \ref{DefS1} and \eqref{weak2}, there hold that
\begin{align*}
\rho_{\pm}(\Phi(X))= \| F_{\pm}(X) \|_{L^{1}} \in C({\R}),
\quad
\lim_{|X|\to+\infty}\| F_{\pm}(X) \|_{L^{1}} = \| F_{\pm}^{\infty} \|_{L^{1}}.
\end{align*}
These with $\rho_{\pm}(0)=\int_{\mathbb R^{n}} F_{\pm}^{\infty} d\xi$
imply $\rho_{\pm} \in C([0,\Phi(0)])$ with the aid of \eqref{bc4},
$ \Phi \in C(\R)$, and $\partial_{X}\Phi(X)\gtrless 0$ for $0\gtrless X$ in Definition \ref{DefS1}.
Thus the claim is valid. 
Multiply \er{phieq1} by $\D_{X}\Phi$, integrate it over $(X,+\infty)$, and use \eqref{bc4} and \eqref{lim1} to obtain the first equation in \eqref{phieq2} with $\beta=\Phi(0)$ and $G(\xi)=F_{+}(0,\xi)$, i.e.
\begin{gather*}
(\D_{X} \Phi)^2=2V(\Phi;\Phi(0),F_{+}(0,\cdot)), 
\\
V(\Phi;\Phi(0),F_{+}(0,\cdot))=\int_{0}^{\Phi} \left(e_{+}\rho_{+}(\varphi;\Phi(0),F_{+}(0,\cdot)) - e_{-}\rho_{-}(\varphi) \right)  d\vphi,
\notag
\end{gather*}
where $V$ is the same function defined in \eqref{V0}.
Thus $\Phi$ must solve the problem \eqref{phieq2}.

We prove that $V(\cdot;\Phi(0),F_{+}(0,\cdot))$ satisfies \eqref{G-beta2} and \eqref{G-beta3} with $\beta=\Phi(0)$ and  $G(\xi)=F_{+}(0,\xi)$.
From the first equation in \eqref{phieq2} and the condition (ii) in Definition \ref{DefS1}, the following holds:
\begin{gather*}
V(\Phi;\Phi(0),F_{+}(0,\cdot))>0 \ \ \text{for $\Phi \in (0,\Phi(0))$}, \quad 
V(\Phi(0);\Phi(0),F_{+}(0,\cdot))=0,
\end{gather*}
which are the same conditions as in \eqref{G-beta2} with $\beta=\Phi(0)$ and  $G(\xi)=F_{+}(0,\xi)$.
Thus \eqref{G-beta2} with $\beta=\Phi(0)$ and  $G(\xi)=F_{+}(0,\xi)$ must hold.
Let us also show that 
\begin{gather*}
\int_{0}^{\Phi(0)/2} \frac{d\Phi}{\sqrt{V(\Phi;\Phi(0),F_{+}(0,\cdot))}} =+ \infty, \quad 
\int_{\Phi(0)/2}^{\Phi(0)} \frac{d\Phi}{\sqrt{V(\Phi;\Phi(0),F_{+}(0,\cdot))}} <+ \infty,
\end{gather*}
which are the same conditions as in \eqref{G-beta3} with $\beta=\Phi(0)$ and  $G(\xi)=F_{+}(0,\xi)$.
We note that there exists a unique positive number $X^{*}$ so that $\Phi(X^{*})=\Phi(0)/2$,
and then see from the first equation in \eqref{phieq2} that the first integral is unbounded as follows:
\begin{align*}
\int_{0}^{\Phi(0)/2}\!\! \frac{d\Phi}{\sqrt{2V(\Phi;\Phi(0),F_{+}(0,\cdot))}}
=\int_{X_{*}}^{+\infty} \!\! \frac{-\partial_{X} \Phi (X)}{\sqrt{2V(\Phi(X);\Phi(0),F_{+}(0,\cdot))} } dX
=\int_{X_{*}}^{+\infty} 1dX
=+\infty.
\end{align*}
Similarly, it is seen that the second integral is bounded. 
Thus \eqref{G-beta3} with $\beta=\Phi(0)$ and  $G(\xi)=F_{+}(0,\xi)$ must hold.
The proof is complete.
\end{proof}

\subsection{Solvability}\label{S2.2}

In this subsection, we prove Theorem \ref{existence1}.

\begin{proof}[Proof of Theorem \ref{existence1}.]
Due to Lemma \ref{need1}, it suffices to show the solvability of the problem \eqref{VP2} provided that there exists a pair $(\beta,G)$ with \eqref{G-beta}. 
We first construct $\Phi$ by solving the problem \eqref{phieq2}.
To this end, we solve the following problem for $Y>0$:
\begin{gather}
\D_{Y}{\Psi}(Y)=-\sqrt{2V(\Psi(Y);\beta,G)}, \quad \Psi(0)=\frac{1}{2}\beta.
\label{psieq1}
\end{gather}
Lemma \ref{rhopm} ensures $V(\cdot;\beta,G) \in C^{1}([0,\beta])$. 
From this and \eqref{G-beta2}, we see that $\sqrt{V(\cdot;\beta,G)}$ is Lipschitz continuous on $[\ve,\beta-\ve]$ for any suitably small $\ve>0$.
Therefore, the problem \eqref{psieq1} has a solution $\Psi$ which is strictly decreasing unless $\Psi$ attains zero.
Suppose that $\Psi$ attains zero at a point $Y=Y_{*}<+\infty$. We observe that 
\begin{align*}
Y_{*}&=\int_{0}^{Y_{*}} 1dY
=\int_{0}^{Y_{*}} \frac{-\partial_{Y} \Psi (Y)}{\sqrt{2V(\Psi(Y);\beta,G)} } dY 
= \int_{0}^{\beta/2} \frac{d\Phi}{\sqrt{2V(\Phi;\beta,G)}}=+\infty,
\end{align*}
where we have used \eqref{G-beta3} in deriving the last equality. 
This is a contradiction.
Thus $\Psi$ cannot attain zero at some finite point.
Now it is clear that the problem \eqref{psieq1} is solvable for any $Y>0$ and $\lim_{Y \to +\infty} \Psi(Y)=0$.

Next we solve the problem \eqref{psieq1} for $Y<0$. Similarly as above, 
the problem \eqref{psieq1} has a solution $\Psi$ which is strictly decreasing.
For this case, it is seen by a similar method as above with the second condition in \eqref{G-beta3} 
that the solution $\Psi$ attains $\beta$ at some point $Y=Y_{\#}<0$.
Therefore, setting $X=Y-Y_{\#}$, we conclude that $\Psi \in C^{1}([0,+\infty))$ solves
\begin{gather}
\D_{X}{\Psi}(X)=-\sqrt{2V(\Psi(X);\beta,G)}, \quad \Psi(0)=\beta, \quad \lim_{X \to +\infty} \Psi(X)=0.
\label{psieq2}
\end{gather}

We now set $\Phi$ as
\begin{gather}
\Phi(X)=\left\{
\begin{array}{ll}
\Psi(X) & \text{for $X\geq 0$},
\\
\Psi(-X) & \text{for $X<0$}.
\end{array}
\right.
\end{gather}
It is straightforward to see that $\Phi \in C^{2}(\R)$ satisfies \eqref{phieq1}, \eqref{bc4}, 
and the condition (ii) in Definition \ref{DefS1}.

Now by using $\Phi$ and $\beta=\Phi(0)$, we define $F_{+}$  and  $F_{-}$ as \eqref{fform+} and \eqref{fform-}, respectively, i.e. 
\begin{align*}
{F}_{+}(X,\xi)
& =F_{+}^\infty(-\sqrt{(\xi_1-\alpha)^2-2q_{+}{\Phi}(X)}+\alpha,\xi')
\chi((\xi_{1}-\alpha)^{2}-2q_{+}\Phi(X))\chi(-(\xi_{1}-\alpha)) 
\\
& \quad
+G(\sqrt{(\xi_1-\alpha)^2-2q_{+}\Phi(X)+2q_{+}\Phi(0)}+\alpha,\xi')
\chi(-(\xi_1-\alpha)^2+2q_{+}\Phi(X)) 
\\
& \quad
+F_{+}^\infty((\sqrt{(\xi_1-\alpha)^2-2q_{+}\Phi(X)}+\alpha,\xi')
\chi((\xi_{1}-\alpha)^{2}-2q_{+}\Phi(X))\chi(\xi_{1}-\alpha)
\\
&=:F_{+}^{1}+F_{+}^{2}+F_{+}^{3},
\\
F_{-}(X,\xi)
&=F_{-}^\infty(-\sqrt{(\xi_1-\alpha)^2+2q_{-}\Phi(X)}+\alpha,\xi')\chi(-(\xi_1-\alpha))
\\
&\quad +F_{-}^\infty(\sqrt{(\xi_1-\alpha)^2+2q_{-}\Phi(X)}+\alpha,\xi')\chi(\xi_1-\alpha)
\\
& =:F_{-}^{1}+F_{-}^{2}, 
\end{align*}
and prove that $F_{\pm}$ satisfy the conditions (i)--(iii) in Definition \ref{DefS1}.
Owing to $F_{+}^{\infty} \in L^1(\R^n)$, 
$F_{-}^{\infty} \in L^1(\R^n) \cap L_{loc}^{p}({\mathbb R};L^{1}(\mathbb R^{n-1}))$,
and $G \in L_{loc}^{p}([\alpha,+\infty);L^{1}(\mathbb R^{n-1}))$,
there exist sequences $\{F_{\pm}^{\infty k}\}$, $\{G^{k}\}$ $\subset C^{\infty}_{0}(\mathbb R^{n})$ 
such that $F_{+}^{\infty k}(\xi)=0$ holds if $|\xi_{1}-\alpha |<1/k$;
$F_{-}^{\infty k}(\xi)=0$ holds if $|\xi_{1}-\alpha-\sqrt{2q_{-}\Phi(0)}|<1/k$ or $|\xi_{1}-\alpha+\sqrt{2q_{-}\Phi(0)}|<1/k$;
$F_{-}^{\infty k}$ satisfies \eqref{G-beta1} with $\beta=\Phi(0)$;
$G^{k}(\xi)=0$ holds if $|\xi_{1}-\alpha-\sqrt{2q_{+}\Phi(0)}|<1/k$;
$F_{+}^{\infty k} \to F_{+}^{\infty}$  in $L^{1}(\mathbb R^{n})$ as $k \to +\infty$;
$F_{-}^{\infty k} \to F_{-}^{\infty}$  in $L^{1}(\mathbb R^{n}) \cap L^{p}(-2\sqrt{q_{-}\Phi(0)}+\alpha,2\sqrt{q_{-}\Phi(0)}+\alpha;L^{1}(\mathbb R^{n-1}))$ as $k \to +\infty$;
$G^{k} \to G$  in $L^{p}(\alpha,2\sqrt{q_{+}\Phi(0)}+\alpha;L^{1}(\mathbb R^{n-1}))$ as $k \to +\infty$.

Let us show $F_{+} \in C({\mathbb R};L^{1}(\mathbb R^{n}))\cap L^{1}_{loc}(\mathbb R \times\mathbb R^{n})$ in the condition (i). 
First it is seen from \eqref{rho+}, \eqref{rho+1}, and \eqref{rho+'} that $F_{+}(X,\cdot), F_{+}^{i}(X,\cdot) \in L^{1}(\mathbb R^{n})$.
To investigate the continuity of $F_{+}^{1}$,
we set $J(X,\xi)=\chi((\xi_{1}-\alpha)^{2}-2q_{+}\Phi(X))\chi(-(\xi_{1}-\alpha)) $ and observe that for $X,X_{0} \in \mathbb R$,
\begin{align*}
&\|F_{+}^{1}(X)-F_{+}^{1}(X_{0})\|_{L^{1}(\mathbb R^{n})}
\\
&=\int_{\mathbb R^{n}}\left| F_{+}^\infty(-\sqrt{(\xi_1-\alpha)^2-2q_{+}{\Phi}(X)}+\alpha,\xi')J(X,\xi) \right.
\\
& \mspace{70mu} \left. - F_{+}^\infty(-\sqrt{(\xi_1-\alpha)^2-2q_{+}{\Phi}(X_{0})}+\alpha,\xi')J(X_{0},\xi) \right| d\xi
\\
&\leq \int_{\mathbb R^{n}}\left| F_{+}^\infty(-\sqrt{(\xi_1-\alpha)^2-2q_{+}{\Phi}(X)}+\alpha,\xi')J(X,\xi) \right.
\\
& \mspace{70mu} \left. - F_{+}^{\infty k}(-\sqrt{(\xi_1-\alpha)^2-2q_{+}{\Phi}(X)}+\alpha,\xi')J(X,\xi) \right| d\xi
\\
&\quad + \int_{\mathbb R^{n}}\left| F_{+}^{\infty k}(-\sqrt{(\xi_1-\alpha)^2-2q_{+}{\Phi}(X)}+\alpha,\xi')J(X,\xi) \right.
\\
& \mspace{70mu} \left. - F_{+}^{\infty k}(-\sqrt{(\xi_1-\alpha)^2-2q_{+}{\Phi}(X_{0})}+\alpha,\xi')J(X_{0},\xi) \right| d\xi
\\
&\quad + \int_{\mathbb R^{n}}\left| F_{+}^{\infty k}(-\sqrt{(\xi_1-\alpha)^2-2q_{+}{\Phi}(X_{0})}+\alpha,\xi')J(X_{0},\xi) \right.
\\
& \mspace{70mu} \left. - F_{+}^{\infty}(-\sqrt{(\xi_1-\alpha)^2-2q_{+}{\Phi}(X_{0})}+\alpha,\xi')J(X_{0},\xi) \right| d\xi
\\
&=:K_{1}+K_{2}+K_{3}.
\end{align*}
Using the change of variable $\sqrt{(\xi_{1}-\alpha)^{2}-2q_{+}\Phi}=-(\zeta_{1}-\alpha)$ and the fact $\Phi(X)>0$, we can estimate $K_{1}$ as
\begin{align*}
K_{1}
\leq  \int_{\mathbb R^{n}} | F_{+}^{\infty}(\xi) - F_{+}^{\infty k}(\xi) |  \frac{|\xi_1-\alpha|}{\sqrt{(\xi_1-\alpha)^2+2q_{+}\Phi(X)}} \,d\xi
\leq \|F_{+}^{\infty} - F_{+}^{\infty k} \|_{L^{1}(\mathbb R^{n})}   \to 0 \quad \text{as $k\to +\infty$}.
\end{align*}
Similarly, $K_{3} \to 0$ as $k \to +\infty$. 
Thus $K_{1}$ and $K_{3}$ can be arbitrarily small for suitably large $k$. 
For the fixed $k$, the dominated convergence theorem ensures that $K_{2}$ converges to zero as $X \to X_{0}$.
Hence, we deduce $F_{+}^{1} \in C(\mathbb R;L^{1}(\mathbb R^{n}))$.
Similarly,  $F_{+}^{3} \in C(\mathbb R;L^{1}(\mathbb R^{n}))$ holds.
To investigate the continuity of $F_{+}^{2}$, 
we set $\tilde{J}(X,\xi)=\chi(-(\xi_1-\alpha)^2+2q_{+}\Phi(X)) $ and observe that for $X,X_{0} \in \mathbb R$,
\begin{align*}
&\|F_{+}^{2}(X)-F_{+}^{2}(X_{0})\|_{L^{1}(\mathbb R^{n})}
\\
&=\int_{\mathbb R^{n}}\left| G(\sqrt{(\xi_1-\alpha)^2-2q_{+}\Phi(X)+2q_{+}\Phi(0)}+\alpha,\xi')\tilde{J}(X,\xi) \right.
\\
& \mspace{70mu} \left.  -G(\sqrt{(\xi_1-\alpha)^2-2q_{+}\Phi(X_{0})+2q_{+}\Phi(0)}+\alpha,\xi')\tilde{J}(X_{0},\xi) \right| d\xi
\\
&\leq \int_{\mathbb R^{n}}\left| G(\sqrt{(\xi_1-\alpha)^2-2q_{+}\Phi(X)+2q_{+}\Phi(0)}+\alpha,\xi')\tilde{J}(X,\xi) \right.
\\
& \mspace{70mu} \left.  -G^{k}(\sqrt{(\xi_1-\alpha)^2-2q_{+}\Phi(X)+2q_{+}\Phi(0)}+\alpha,\xi')\tilde{J}(X,\xi) \right| d\xi
\\
&\quad + \int_{\mathbb R^{n}}\left| G^{k}(\sqrt{(\xi_1-\alpha)^2-2q_{+}\Phi(X)+2q_{+}\Phi(0)}+\alpha,\xi')\tilde{J}(X,\xi) \right.
\\
& \mspace{70mu} \left.  -G^{k}(\sqrt{(\xi_1-\alpha)^2-2q_{+}\Phi(X_{0})+2q_{+}\Phi(0)}+\alpha,\xi')\tilde{J}(X_{0},\xi) \right| d\xi
\\
&\quad + \int_{\mathbb R^{n}}\left| G^{k}(\sqrt{(\xi_1-\alpha)^2-2q_{+}\Phi(X_{0})+2q_{+}\Phi(0)}+\alpha,\xi')\tilde{J}(X_{0},\xi) \right.
\\
& \mspace{70mu} \left.  -G(\sqrt{(\xi_1-\alpha)^2-2q_{+}\Phi(X_{0})+2q_{+}\Phi(0)}+\alpha,\xi')\tilde{J}(X_{0},\xi) \right| d\xi
\\
&=:\tilde{K}_{1}+\tilde{K}_{2}+\tilde{K}_{3}.
\end{align*}
Using the change of variable $\sqrt{(\xi_{1}-\alpha)^{2}-2q_{+}\Phi+2q_{+}\Phi(0)}=(\zeta_{1}-\alpha)$, we can estimate $\tilde{K}_{1}$ for the case $n\geq2$ as
\begin{align*}
\tilde{K}_{1}&\!=\! 2\!\int_{\sqrt{2q_{+}\Phi(0)-2q_{+}\Phi(X)}+\alpha}^{\sqrt{2q_{+}\Phi(0)}+\alpha}
\frac{\xi_1-\alpha}{\sqrt{(\xi_1-\alpha)^2\!+\!2q_{+}\Phi(X)\!-\!2q_{+}\Phi(0)}}\!\left(\! \int_{\mathbb R^{n-1}}\!\! |G(\xi)\!-\!G^{k}(\xi)| d\xi' \!\!\right) \! d\xi_1
\\
& \leq 2 \sup_{X \in \mathbb R }\left(\int_{\sqrt{2q_{+}\Phi(0)-2q_{+}\Phi(X)}+\alpha}^{\sqrt{2q_{+}\Phi(0)}+\alpha} \frac{|\xi_1-\alpha|^{p'}}{((\xi_1-\alpha)^2+2q_{+}\Phi(X)-2q_{+}\Phi(0))^{\frac{p'}{2}}} \,d\xi_1\right)^{\frac{1}{p'}}
\\
& \quad \times \| G-G^{k} \|_{L^{p}(\alpha,\sqrt{2q_{+}\Phi(0)}+\alpha;L^{1}(\mathbb R^{n-1}))} 
\\
&\leq  C \| G-G^{k} \|_{L^{p}(\alpha,\sqrt{2q_{+}\Phi(0)}+\alpha;L^{1}(\mathbb R^{n-1}))}    \to 0 \quad \text{as $k\to +\infty$},
\end{align*}
where $p'<2$ is the H\"older conjugate of $p>2$, and $C$ is some positive constant. 
The convergence holds for the case $n=1$ as well.
Similarly, $\tilde{K}_{3} \to 0$ as $k \to +\infty$. 
Thus $\tilde{K}_{1}$ and $\tilde{K}_{3}$ can be arbitrarily small for suitably large $k$. 
For the fixed $k$, the dominated convergence theorem ensures that $\tilde{K}_{2}$ converges to zero as $X \to X_{0}$.
Hence, we deduce that $F_{+}^{2} \in C(\mathbb R;L^{1}(\mathbb R^{n}))$ holds and so does 
$F_{+} \in C(\mathbb R;L^{1}(\mathbb R^{n}))$.
Now it is straightforward to show $F_{+} \in L^{1}_{loc}(\mathbb R\times\mathbb R^{n})$.

Let us show $F_{-} \in C(\mathbb R;L^{1}(\mathbb R^{n}))\cap L^{1}_{loc}(\mathbb R\times\mathbb R^{n})$ in the condition (i).
First $F_{-}(X,\cdot),F_{-}^{i}(X,\cdot) \in L^{1}(\mathbb R^{n})$ follows from \eqref{rho-} and \eqref{rho-'}.
To investigate the continuity of $F_{-}$, we observe that for $X,X_{0} \in \mathbb R$,
\begin{align*}
&\|F_{-}^{1}(X)-F_{-}^{1}(X_{0})\|_{L^{1}(\mathbb R^{n})}
\\
&=\int_{\mathbb R^{n}}\left| F_{-}^{\infty}(-\sqrt{(\xi_1-\alpha)^2+2q_{-}\Phi(X)}+\alpha,\xi')\chi(-(\xi_1-\alpha)) \right.
\\
& \mspace{70mu} \left. - F_{-}^{\infty}(-\sqrt{(\xi_1-\alpha)^2+2q_{-}\Phi(X_{0}))}+\alpha,\xi')\chi(-(\xi_1-\alpha)) \right| d\xi
\\
&\leq \int_{\mathbb R^{n}}\left| F_{-}^{\infty}(-\sqrt{(\xi_1-\alpha)^2+2q_{-}\Phi(X)}+\alpha,\xi')\chi(-(\xi_1-\alpha)) \right.
\\
& \mspace{70mu} \left. - F_{-}^{\infty k}(-\sqrt{(\xi_1-\alpha)^2+2q_{-}\Phi(X)}+\alpha,\xi')\chi(-(\xi_1-\alpha)) \right| d\xi
\\
&\quad + \int_{\mathbb R^{n}}\left| F_{-}^{\infty k}(-\sqrt{(\xi_1-\alpha)^2+2q_{-}\Phi(X)}+\alpha,\xi')\chi(-(\xi_1-\alpha)) \right.
\\
& \mspace{70mu}  \left. - F_{-}^{\infty k}(-\sqrt{(\xi_1-\alpha)^2+2q_{-}\Phi(X_{0}))}+\alpha,\xi')\chi(-(\xi_1-\alpha)) \right| d\xi
\\
&\quad + \int_{\mathbb R^{n}}\left| F_{-}^{\infty k}(-\sqrt{(\xi_1-\alpha)^2+2q_{-}\Phi(X_{0}))}+\alpha,\xi')\chi(-(\xi_1-\alpha)) \right.
\\
& \mspace{70mu} \left. - F_{-}^{\infty}(-\sqrt{(\xi_1-\alpha)^2+2q_{-}\Phi(X_{0}))}+\alpha,\xi')\chi(-(\xi_1-\alpha)) \right| d\xi
\\
&=:\hat{K}_{1}+\hat{K}_{2}+\hat{K}_{3}.
\end{align*}
Using the change of variables $\sqrt{(\xi_1-\alpha)^2+2q_{-}\Phi(X)}=- (\zeta_{1}-\alpha)$, we can estimate $\hat{K}_{1}$ for the case $n\geq 2$ as
\begin{align*}
\hat{K}_{1}&=\int_{{\mathbb R}^n}|F_{-}^{\infty}(\xi)-F_{-}^{\infty k}(\xi)| \frac{|\xi_1\!-\!\alpha|}{\sqrt{(\xi_1\!-\!\alpha)^2-2q_{-}\Phi(X)}} \chi(-(\xi_1\!-\!\alpha)) \chi((\xi_1\!-\!\alpha)^2\!-\!2q_{-}\Phi(X)) \,d\xi
\\
& \leq \int_{{\mathbb R}^n}|F_{-}^{\infty}(\xi)-F_{-}^{\infty k}(\xi)| \frac{|\xi_1-\alpha|}{\sqrt{(\xi_1-\alpha)^2-2q_{-}\Phi(X)}} \chi((\xi_1-\alpha)^2-4q_{-}\Phi(X)) \,d\xi
\\
& \quad + \int_{{\mathbb R}^n} |F_{-}^{\infty}(\xi)-F_{-}^{\infty k}(\xi)| \frac{|\xi_1-\alpha|}{\sqrt{(\xi_1-\alpha)^2-2q_{-}\Phi(X)}} 
\\
& \qquad \qquad \times \{\chi((\xi_1-\alpha)^2-2q_{-}\Phi(X)) - \chi((\xi_1-\alpha)^2-4q_{-}\Phi(X))  \} \,d\xi
\\
& \leq \sqrt{2}\|F_{-}^{\infty}-F_{-}^{\infty k}\|_{L^{1}(\mathbb R^{n})} 
\\
& \quad + \int_{-2\sqrt{q_{-}\Phi(0)}+\alpha}^{2\sqrt{q_{-}\Phi(0)}+\alpha}  \frac{|\xi_1-\alpha|\chi((\xi_1-\alpha)^2-2q_{-}\Phi(X)) }{\sqrt{(\xi_1-\alpha)^2-2q_{-}\Phi(X)}} \ \left( \int_{{\mathbb R}^{n-1}} |F_{-}^{\infty}(\xi)-F_{-}^{\infty k}(\xi)| d\xi' \right)d\xi_{1}
\\
&\leq  \sqrt{2}\|F_{-}^{\infty}-F_{-}^{\infty k}\|_{L^{1}(\mathbb R^{n})} 
\\
& \quad +\sup_{X \in \mathbb R }\left( \int_{-2\sqrt{q_{-}\Phi(0)}+\alpha}^{2\sqrt{q_{-}\Phi(0)}+\alpha}  \frac{|\xi_1-\alpha|^{p'}\chi((\xi_1-\alpha)^2-2q_{-}\Phi(X))}{((\xi_1-\alpha)^2-2q_{-}\Phi(X))^{\frac{p'}{2}}} \,d\xi_1\right)^{\frac{1}{p'}} 
\\
& \qquad \qquad \times \|F_{-}^{\infty}-F_{-}^{\infty k}\|_{L^{p}(-2\sqrt{q_{-}\Phi(0)}+\alpha,2\sqrt{q_{-}\Phi(0)}+\alpha;L^{1}(\mathbb R^{n-1}))}
\\
&\leq \sqrt{2}\|F_{-}^{\infty}-F_{-}^{\infty k}\|_{L^{1}(\mathbb R^{n})} 
+C \|F_{-}^{\infty}-F_{-}^{\infty k}\|_{L^{p}(-2\sqrt{q_{-}\Phi(0)}+\alpha,2\sqrt{q_{-}\Phi(0)}+\alpha;L^{1}(\mathbb R^{n-1}))}
\\
& \to 0 \ \ \text{as $k\to +\infty$},
\end{align*}
where $p'<2$ is the H\"older conjugate of $p>2$, and $C$ is some positive constant. 
The convergence holds for the case $n=1$ as well.
Similarly, $\hat{K}_{3} \to 0$ as $k \to +\infty$. 
Thus $\hat{K}_{1}$ and $\hat{K}_{3}$ can be arbitrarily small for suitably large $k$. 
For the fixed $k$, the dominated convergence theorem ensures that $\hat{K}_{2}$ converges to zero as $X \to X_{0}$.
Hence, we deduce $F_{-}^{1} \in C(\mathbb R;L^{1}(\mathbb R^{n}))$.
Similarly,  $F_{-}^{2} \in C(\mathbb R;L^{1}(\mathbb R^{n}))$ and hence 
$F_{-} \in C(\mathbb R;L^{1}(\mathbb R^{n}))$.
Now it is straightforward to show $F_{-} \in L^{1}_{loc}(\mathbb R \times\mathbb R^{n})$. 
Thus the condition (i) holds.

It is clear that  the condition (ii), i.e. $F_{\pm} \geq 0$, holds.
Let us prove \eqref{weak1} and \eqref{weak2} in the condition (iii). 
Obviously, \eqref{weak2} follows from the same manner as above.
It is also evident that the function $F_{\pm}^{k}$ defined by replacing $F_{\pm}^{\infty}$ and $G$ by  $F_{\pm}^{\infty k}$ and $G^{k}$ in \eqref{fform+} and \eqref{fform-} belongs to $C^{\infty}_{0}(\mathbb R^{n})$, and satisfies the weak form \eqref{weak1} for each $k$.
Using  the same change of variable as above and letting $k \to +\infty$, 
we see that $F_{\pm}$ also satisfies \eqref{weak1}.
Consequently, all the conditions (i)--(iii) hold.

The condition (iv) is validated by \eqref{fform+}, \eqref{fform-}, and \eqref{rho+'}--\eqref{phieq1}.
The proof is complete.
\end{proof}

\subsection{Properties of the functions $\rho_{\pm}(\Phi)$}\label{S2.3}

This subsection is devoted to the proof of  Lemma \ref{rhopm}.

\begin{proof}[Proof of Lemma \ref{rhopm}.]
It is obvious that \eqref{rho+} holds, since $\Phi$ is nonnegative.
We show \eqref{rho+1} and \eqref{rho-} only for the case $n \geq 2$, since the case $n=1$ can be shown in the same way.
Let us estimate $\rho_{+}^{0}(\Phi;\beta,G)$ by the H\"older inequality as follows:
\begin{align*}
&|\rho_{+}^{0}(\Phi;\beta,G)| 
\notag \\
& \leq 2  \int_{\sqrt{2q_{+}\beta-2q_{+}\Phi}+\alpha}^{\sqrt{2q_{+}\beta}+\alpha} \frac{\xi_1-\alpha}{\sqrt{(\xi_1-\alpha)^2+2q_{+}\Phi-2q_{+}\beta}}\left( \int_{\mathbb R^{n-1}} G(\xi)\, \,d\xi'  \right) d\xi_1
\\
&\leq 2 \sup_{\Phi \in [0,\beta]} \left( \int_{\sqrt{2q_{+}\beta-2q_{+}\Phi}+\alpha}^{\sqrt{2q_{+}\beta}+\alpha}\frac{|\xi_1-\alpha|^{p'}}{\{(\xi_1-\alpha)^2+2q_{+}\Phi-2q_{+}\beta\}^{\frac{p'}{2}}}d\xi_1 \right)^{\frac{1}{p'}} 
\\
&\mspace{150mu} \times
\|G\|_{L^{p}(\alpha,\sqrt{2q_{+}\beta}+\alpha;L^{1}(\mathbb R^{n-1}))}
\\
& \leq C\|G\|_{L^{p}(\alpha,\sqrt{2q_{+}\beta}+\alpha;L^{1}(\mathbb R^{n-1}))}
\end{align*}
for $\Phi \in [0,\beta]$, where $p'<2$ is the H\"older conjugate of $p>2$. 
Similarly,  we observe that 
\begin{align*}
&|\rho_{-}(\Phi)| 
\\
& = \int_{{\mathbb R}^n}F_{-}^\infty(\xi)\frac{|\xi_1-\alpha|}{\sqrt{(\xi_1-\alpha)^2-2q_{-}\Phi}} \chi((\xi_1-\alpha)^2-4q_{-}M) \,d\xi 
\\
&\quad +\int_{{\mathbb R}^n} F_{-}^\infty(\xi)\frac{|\xi_1-\alpha|}{\sqrt{(\xi_1-\alpha)^2-2q_{-}\Phi}}  \{\chi((\xi_1-\alpha)^2-2q_{-}\Phi)-\chi((\xi_1-\alpha)^2-4q_{-}M)\}\,d\xi 
\\
& \leq  \sqrt{2}\|F_{-}^\infty\|_{L^{1}(\mathbb R^{n})} 
\\
& \quad + \int_{-2\sqrt{q_{-}M}+\alpha}^{2\sqrt{q_{-}M}+\alpha} \frac{|\xi_1-\alpha|}{\sqrt{(\xi_1-\alpha)^2-2q_{-}\Phi}}  \chi((\xi_1-\alpha)^2-2q_{-}\Phi) \ \left( \int_{{\mathbb R}^{n-1}} F_{-}^\infty(\xi) d\xi' \right)d\xi_{1}
\\
& \leq \sqrt{2}\|F_{-}^\infty\|_{L^{1}(\mathbb R^{n})} +C(M)\|F_{-}^\infty\|_{L^{p}(-2\sqrt{q_{-}M}+\alpha,2\sqrt{q_{-}M}+\alpha;L^{1}(\mathbb R^{n-1}))}.
\end{align*}
Thus \eqref{rho+1} and \eqref{rho-} hold for the case $n \geq 2$.

It remains to show the regularity $\rho_{+}^{\infty}(\cdot) \in C([0,+\infty))$,
$\rho_{+}^{0}(\cdot;\beta,G) \in C([0,\beta])$, and $\rho_{-}(\cdot) \in C([0,+\infty))$.  
First we note that
\begin{align*}
\rho_{+}^{0}(\Phi)&= \int_{\mathbb R^{n}}  
F_{+}^\infty(-\sqrt{(\xi_1-\alpha)^2-2q_{+}{\Phi}}+\alpha,\xi')
\chi((\xi_{1}-\alpha)^{2}-2q_{+}\Phi)\chi(-(\xi_{1}-\alpha)) 
\\
& \qquad+F_{+}^\infty((\sqrt{(\xi_1-\alpha)^2-2q_{+}\Phi}+\alpha,\xi')
\chi((\xi_{1}-\alpha)^{2}-2q_{+}\Phi)\chi(\xi_{1}-\alpha) d\xi,
\\
\rho_{+}^{0}(\Phi;\beta,G)&= \int_{\mathbb R^{n}}  
G(\sqrt{(\xi_1-\alpha)^2-2q_{+}\Phi+2q_{+}\beta}+\alpha,\xi')
\chi(-(\xi_1-\alpha)^2+2q_{+}\Phi) d\xi,
\\
\rho_{-}(\Phi)&= \int_{\mathbb R^{n}} 
F_{-}^\infty(-\sqrt{(\xi_1-\alpha)^2+2q_{-}\Phi}+\alpha,\xi')\chi(-(\xi_1-\alpha))
\\
&\qquad +F_{-}^\infty(\sqrt{(\xi_1-\alpha)^2+2q_{-}\Phi}+\alpha,\xi')\chi(\xi_1-\alpha) d\xi.
\end{align*}
The regularity follows from just replacing $\Phi(X)$, $\Phi(X_{0})$, $\Phi(0)$, and $X \to X_{0}$ 
in the \footnote{It is written in the fourth and fifth paragraphs of the proof of Theorem \ref{existence1}.}proof of $F_{\pm} \in C({\mathbb R};L^{1}(\mathbb R^{n}))$ in Theorem \ref{existence1}
by $\Phi$, $\Phi_{0}$, $\beta$, and $\Phi \to \Phi_{0}$, respectively.
\end{proof}

\subsection{Uniqueness and nonuniqueness}\label{S2.4}

This section is devoted to the proof of Theorem \ref{uniqueness0}. 
For this purpose, we decompose $V$ into two parts as follows:
\begin{gather}\label{deco1}
V(\Phi;\beta,G)=V^{\infty}(\Phi)+V^{0}(\Phi;\beta,G), \quad 
V^{0}(\Phi;\beta,G) :=\int_{0}^{\Phi} e_{+}\rho_{+}^{0}(\varphi;\beta,G) d\vphi,
\end{gather}
where $\rho_{+}^{0}$ and $V^{\infty}$ are defined in \eqref{rho++} and \eqref{Vinfty}, respectively.

We first prove the uniqueness assuming either condition (i) or (ii) in Theorem \ref{uniqueness0}.
Namely, we show the following two lemmas, where $\beta_{*}$ is defined in \eqref{beta*}.

\begin{lem}\label{uniqueness1}
Suppose that the same assumptions in Theorem \ref{uniqueness0} hold.
Assume that $G\equiv 0$ holds on $(\alpha,\sqrt{2q_{+}\beta}+\alpha)\times \mathbb R^{n-1}$, and $\beta=\beta_{*}$ holds.
Then the solution of the problem \eqref{VP2} is unique.
\end{lem}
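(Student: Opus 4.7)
The plan is to show that every solution of \eqref{VP2} necessarily has $\tilde\beta := \tilde\Phi(0) = \beta$ and $\tilde G := \tilde F_{+}(0,\cdot) \equiv 0$ on $(\alpha,\sqrt{2q_{+}\beta}+\alpha)\times\mathbb{R}^{n-1}$; the ODE for the potential then determines $\Phi$ uniquely, and the representation formulas recover $F_{\pm}$ uniquely. Let $(\tilde F_{\pm},\tilde\Phi)$ be an arbitrary solution of \eqref{VP2}. By Lemma \ref{need1}, the pair $(\tilde\beta,\tilde G)$ satisfies the analogues of \eqref{G-beta1}--\eqref{G-beta3}, the profile $\tilde\Phi$ solves \eqref{phieq2} with parameters $(\tilde\beta,\tilde G)$, and $\tilde F_{\pm}$ are given by \eqref{fform+}--\eqref{fform-}. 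The symmetry \eqref{G-beta1} with parameter $\tilde\beta$, combined with the definition \eqref{beta*} of $\beta_{*}$, forces $\tilde\beta \leq \beta_{*} = \beta$.

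To rule out the strict inequality $\tilde\beta < \beta$, I would use the decomposition \eqref{deco1} on $[0,\tilde\beta]$, namely $V(\Phi;\tilde\beta,\tilde G) = V^{\infty}(\Phi) + V^{0}(\Phi;\tilde\beta,\tilde G)$. Since the hypothesized reference pair with $\beta = \beta_{*}$ and $G \equiv 0$ on the relevant set yields $V(\cdot;\beta_{*},G) = V^{\infty}$ (as $\rho_{+}^{0}$ only sees $G$ on that set), the condition \eqref{G-beta2} applied to the reference solution gives $V^{\infty}(\Phi) > 0$ for $\Phi \in (0,\beta_{*})$, and in particular $V^{\infty}(\tilde\beta) > 0$. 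Since $\tilde G \geq 0$ implies $\rho_{+}^{0}(\cdot;\tilde\beta,\tilde G) \geq 0$ and hence $V^{0}(\tilde\beta;\tilde\beta,\tilde G) \geq 0$, we obtain $V(\tilde\beta;\tilde\beta,\tilde G) > 0$, contradicting \eqref{G-beta2} for $(\tilde\beta,\tilde G)$. Thus $\tilde\beta = \beta$.

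With $\tilde\beta = \beta$ fixed, the equality $V^{\infty}(\beta) = V(\beta;\beta,0) = 0$ together with $V(\beta;\beta,\tilde G) = 0$ yields
\begin{equation*}
V^{0}(\beta;\beta,\tilde G) \;=\; \int_{0}^{\beta} e_{+}\,\rho_{+}^{0}(\varphi;\beta,\tilde G)\,d\varphi \;=\; 0.
\end{equation*}
By Lemma \ref{rhopm} the integrand is continuous on $[0,\beta]$, and it is nonnegative, so $\rho_{+}^{0}(\varphi;\beta,\tilde G) \equiv 0$ on $[0,\beta]$. Evaluating the explicit formula \eqref{rho++} at $\varphi = \beta$ collapses the Abel-type denominator to $|\xi_{1}-\alpha|$ on the integration interval $(\alpha,\sqrt{2q_{+}\beta}+\alpha)$, giving
\begin{equation*}
0 \;=\; \rho_{+}^{0}(\beta;\beta,\tilde G) \;=\; 2\int_{\alpha}^{\sqrt{2q_{+}\beta}+\alpha}\!\int_{\mathbb{R}^{n-1}} \tilde G(\xi_{1},\xi')\,d\xi'\,d\xi_{1},
\end{equation*}
which together with $\tilde G \geq 0$ forces $\tilde G \equiv 0$ a.e.\ on $(\alpha,\sqrt{2q_{+}\beta}+\alpha)\times\mathbb{R}^{n-1}$.

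Finally, since $(\tilde\beta,\tilde G)$ and $(\beta,G)$ now induce the same Sagdeev potential in \eqref{phieq2}, both $\tilde\Phi$ and $\Phi$ satisfy $(\partial_{X}\Phi)^{2} = 2V(\Phi;\beta,G)$ with common maximum value $\beta$ at $X=0$ pinned by condition (ii) of Definition \ref{DefS1}. Separating variables on $(0,\infty)$ gives $X = \int_{\tilde\Phi(X)}^{\beta} d\varphi/\sqrt{2V(\varphi;\beta,G)}$ for both profiles, and since the right-hand side is a strictly decreasing continuous bijection from $(0,\beta)$ onto $(0,\infty)$ by \eqref{G-beta3}, we conclude $\tilde\Phi(X) = \Phi(X)$ for $X > 0$; condition (ii) extends this to $X < 0$. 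Inserting $\tilde\Phi = \Phi$ and $\tilde G \equiv G \equiv 0$ on the relevant set into \eqref{fform+}--\eqref{fform-} yields $\tilde F_{\pm} = F_{\pm}$. The main obstacle is the extraction of $\tilde G \equiv 0$ from $V^{0} \equiv 0$: the formula \eqref{rho++} hides an Abel-type integral transform in $G$, and the key trick is to evaluate at the endpoint $\varphi = \beta$, where the singular kernel degenerates to a pure sign and obviates the need to invert any transform.
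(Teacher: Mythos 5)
Your proposal is correct and follows the same overall strategy as the paper: by Lemma \ref{need1} any competing solution produces a pair $(\tilde\beta,\tilde G)=(\tilde\Phi(0),\tilde F_{+}(0,\cdot))$ satisfying \eqref{G-beta}, and the heart of the matter is to force $(\tilde\beta,\tilde G)=(\beta,0)$ using the decomposition \eqref{deco1} together with $V^{0}\geq 0$, after which the scalar problem determines $\Phi$ and the formulas \eqref{fform+}--\eqref{fform-} determine $F_{\pm}$. You handle two sub-steps differently. You first rule out $\tilde\beta<\beta$ (via $V^{\infty}(\tilde\beta)>0$ contradicting $V(\tilde\beta;\tilde\beta,\tilde G)=0$) and then extract $\tilde G\equiv 0$ from $V^{0}(\beta;\beta,\tilde G)=0$ by showing $\rho_{+}^{0}(\cdot;\beta,\tilde G)\equiv 0$ and evaluating at $\Phi=\beta$, where the Abel-type kernel in \eqref{rho++} collapses to $1$; the paper argues in the opposite order, getting $\tilde G\equiv 0$ first from $V^{\infty}(\tilde\beta)\geq 0$ and then $\tilde\beta=\beta$, and it reads off $\tilde G\equiv 0$ from the explicit endpoint formula for $V^{0}$ (the interchange-of-integration computation appearing in Lemma \ref{nonunique1}) rather than from pointwise vanishing of $\rho_{+}^{0}$. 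One small repair: the continuity of $\rho_{+}^{0}(\cdot;\beta,\tilde G)$ on $[0,\beta]$ should not be cited from Lemma \ref{rhopm} alone, since that lemma assumes an $L^{p}_{loc}$ bound on $G$ which is not known a priori for $\tilde G=\tilde F_{+}(0,\cdot)$; instead use Lemma \ref{need1}, which gives $\rho_{+}\in C([0,\tilde\Phi(0)])$ for any solution, together with $\rho_{+}^{\infty}\in C([0,+\infty))$ from Lemma \ref{rhopm}, so $\rho_{+}^{0}=\rho_{+}-\rho_{+}^{\infty}$ is continuous (alternatively, the paper's explicit formula avoids the issue entirely). Finally, for uniqueness of the potential you use separation of variables and the strict monotonicity and bijectivity of $s\mapsto\int_{s}^{\beta}d\varphi/\sqrt{2V(\varphi;\beta,G)}$ guaranteed by \eqref{G-beta2}--\eqref{G-beta3}, which is a clean and somewhat more elementary alternative to the paper's translation-plus-Lipschitz contradiction argument for \eqref{uniquePhi}; both routes are valid.
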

\begin{proof}
We claim that it is sufficient to show that the pair $(\beta,G)$ with \eqref{G-beta} is unique.
To prove this claim, let us first show the uniqueness of the solution of the following problem:
\begin{gather}\label{uniquePhi}
\D_{X}{\Phi}=\pm\sqrt{2V(\Phi;\beta,G)},  \quad \pm \,X<0, \quad 
\lim_{|X| \to +\infty} \Phi (X) =  0, \quad 
\Phi(0)=\beta.
\end{gather}
Suppose that  $\Phi^{1}$ and $\Phi^{2}$ are solutions of the problem, and $\Phi^{1} \not\equiv \Phi^{2}$ holds.
We may assume without loss of generality that
$0<\Phi^{1}(-x)<\Phi^{2}(-x)<\beta$ holds for some $-x \in(-\infty,0)$.
There exists a positive constant $\delta$
such that $\Phi^{1}(-x)=\Phi^{2}(-x-\delta)$.
Let 
\begin{gather*}
\Psi^{1}(X):=\Phi^{1}(-x+X), \qu \Psi^{2}(X):=\Phi^{2}(-x-\delta+X).
\end{gather*}
It is clear that $\Psi^{1}(X)$ and $\Psi^{2}(X)$ solve
\begin{gather*} 
\D_{X}{\Psi}=\sqrt{2V(\Psi;\beta,G)} \quad \text{for $X \in (0,x)$}, \quad \Psi(0)=\Phi^{1}(-x)=\Phi^{2}(-x-\delta).
\end{gather*}
From \eqref{G-beta2} and Lemma \ref{rhopm}, we see that 
$\sqrt{V(\cdot;\beta,G)}$ is Lipschitz continuous on $[\ve,\beta-\ve]$, and also $\Psi(0) \in [\ve,\beta-\ve]$ holds
for any suitably small $\ve>0$. 
Using these facts and the continuity of $\Psi$, we deduce that $\Psi^{1}=\Psi^{2}$ holds on $[0,x]$.
Then evaluating $\Phi^{1}$ and $\Phi^{2}$ at $X=x$  gives 
$$
\beta=\Phi^{1}(0)=\Psi^{1}(x)=\Psi^{2}(x)=\Phi^{2}(-\delta)<\beta,
$$
which is a contradiction. 
Therefore, $\Phi^{1}=\Phi^{2}$ holds and the solution of \eqref{uniquePhi} is unique.

Now we prove the claim above.
Suppose that the pair $(\beta,G)$ with \eqref{G-beta} is unique, 
and $(F_{\pm},\Phi)$ and  $(\tilde{F}_{\pm},\tilde{\Phi})$ are the solutions of the problem \eqref{VP2}.
Then $\beta=\Phi(0)=\tilde{\Phi}(0)$ and $G(\xi)=F_{+}(0,\xi)=\tilde{F}_{+}(0,\xi)$ hold due to Lemma \ref{need1}. 
Moreover, $\Phi$ and $\tilde{\Phi}$ solve the problem \eqref{phieq2} with $\Phi(0)=\beta$.
From this fact, \eqref{G-beta2}, and $\partial_{X}\Phi(X), \partial_{X}\tilde{\Phi}(X)\gtrless 0$ for $0\gtrless X$, 
it is seen that $\Phi$ and $\tilde{\Phi}$ solves \eqref{uniquePhi} and therefore $\Phi=\tilde{\Phi}$ holds.
Using this and the forms \eqref{fform+}--\eqref{fform-}, we also have $F_{\pm}=\tilde{F}_{\pm}$.
Thus the claim is valid.

Let us show the pair $(\beta,G)$ with \eqref{G-beta} is unique.
We suppose that another pair $(\tilde{\beta},\tilde{G})$ with \eqref{G-beta} exists, 
and show that $\tilde{\beta}=\beta$ and $\tilde{G}\equiv 0$ on $(\alpha,\sqrt{2q_{+}\beta}+\alpha)\times \mathbb R^{n-1}$.
First $\tilde{\beta} \leq \beta$ follows from \eqref{G-beta1} and the assumption $\beta=\beta_{*}$.
Then $V(\tilde{\beta};\beta,G)=V^{\infty}(\tilde{\beta}) \geq 0$ holds, since the pair $(\beta,G)$ satisfies \eqref{G-beta2} and $G\equiv 0$ holds on $(\alpha,\sqrt{2q_{+}\beta}+\alpha)\times \mathbb R^{n-1}$.
On the other hand, we see from \eqref{G-beta2} that 
\begin{gather*}
0=V(\tilde{\beta};\tilde{\beta},\tilde{G})=V^{\infty}(\tilde{\beta})+V^{0}(\tilde{\beta};\tilde{\beta},\tilde{G}) \geq V^{0}(\tilde{\beta};\tilde{\beta},\tilde{G}).
\end{gather*}
From this and the nonnegativity of $\tilde{G}$, we deduce $V^{0}(\tilde{\beta};\tilde{\beta},\tilde{G})=0$.
This implies that 
\begin{gather}\label{tG=0}
\tilde{G}\equiv0 \quad \text{on $(\alpha,\sqrt{2q_{+}\beta}+\alpha)\times \mathbb R^{n-1}$}.
\end{gather}

Let us complete the proof by showing $\tilde{\beta}=\beta$.
Owing to \eqref{tG=0}, it is clear that $V(\Phi;\tilde{\beta},\tilde{G})=V^{\infty}(\Phi)$ holds for $\Phi \in [0,\tilde{\beta}]$.
Furthermore, $V(\Phi;\beta,G)=V^{\infty}(\Phi)$ holds for $\Phi \in [0,\beta]$.
Using these and the fact that both the pairs $(\beta,G)$ and $(\tilde{\beta},\tilde{G})$ satisfy \eqref{G-beta2}, we can arrive at $\tilde{\beta}=\beta$.
\end{proof}

\begin{lem}\label{uniqueness2}
Suppose that the same assumptions in Theorem \ref{uniqueness0} hold.
Assume that  $G\equiv 0$ holds on $(\alpha,\sqrt{2q_{+}\beta}+\alpha)\times \mathbb R^{n-1}$, and ${V}^{\infty}(\Phi)\geq 0$ holds for $\Phi \in (\beta,\beta_{*})$.
Then the solution of the problem \eqref{VP2} is unique.
\end{lem}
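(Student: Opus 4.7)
My plan is to mirror the structure of the proof of Lemma~\ref{uniqueness1}. First, the reduction step in that proof carries over verbatim: if uniqueness of the pair $(\beta,G)$ modulo the values of $G$ outside $(\alpha,\sqrt{2q_{+}\beta}+\alpha)\times\mathbb{R}^{n-1}$ is established, then the ODE uniqueness for \eqref{uniquePhi} (relying only on Lipschitz continuity of $\sqrt{V}$ on $[\ve,\beta-\ve]$, which is still available here thanks to \eqref{G-beta2} and Lemma~\ref{rhopm}) together with the explicit formulas \eqref{fform+} and \eqref{fform-} yields $(F_{\pm},\Phi)=(\tilde F_{\pm},\tilde\Phi)$. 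Let $(\tilde\beta,\tilde G)$ be any second pair satisfying \eqref{G-beta}; recall from \eqref{G-beta1} that $\tilde\beta \leq \beta_{*}$.

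Because $G\equiv 0$ on $(\alpha,\sqrt{2q_{+}\beta}+\alpha)\times\mathbb{R}^{n-1}$, the definition \eqref{rho++} of $\rho_{+}^{0}$ gives $\rho_{+}^{0}(\,\cdot\,;\beta,G)\equiv 0$ on $[0,\beta]$, hence $V(\Phi;\beta,G)=V^{\infty}(\Phi)$ for $\Phi\in[0,\beta]$. Combined with \eqref{G-beta2} for $(\beta,G)$ and the hypothesis $V^{\infty}\geq 0$ on $(\beta,\beta_{*})$, this yields $V^{\infty}>0$ on $(0,\beta)$, $V^{\infty}(\beta)=0$, and $V^{\infty}\geq 0$ on $[\beta,\beta_{*}]$ by continuity.

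The main obstacle, and the only place where the argument departs meaningfully from Lemma~\ref{uniqueness1}, is ruling out $\tilde\beta\in(\beta,\beta_{*}]$. In that range $V^{\infty}(\tilde\beta)\geq 0$, while $V(\tilde\beta;\tilde\beta,\tilde G)=0$ from \eqref{G-beta2} combined with $V^{0}\geq 0$ forces $V^{\infty}(\tilde\beta)=0$ and $V^{0}(\tilde\beta;\tilde\beta,\tilde G)=0$. The latter equality is the delicate point: since $\rho_{+}^{0}(\,\cdot\,;\tilde\beta,\tilde G)$ is nonnegative and continuous on $[0,\tilde\beta]$, it must vanish identically there, and evaluating \eqref{rho++} at $\varphi=\tilde\beta$ (where the factor $\frac{\xi_1-\alpha}{\sqrt{(\xi_1-\alpha)^2+2q_{+}\varphi-2q_{+}\tilde\beta}}$ collapses to $1$ on $\xi_1>\alpha$), together with $\tilde G\geq 0$, forces $\tilde G\equiv 0$ a.e.\ on $(\alpha,\sqrt{2q_{+}\tilde\beta}+\alpha)\times\mathbb{R}^{n-1}$. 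Consequently $V(\Phi;\tilde\beta,\tilde G)=V^{\infty}(\Phi)$ on $[0,\tilde\beta]$, but \eqref{G-beta2} applied to $(\tilde\beta,\tilde G)$ demands $V^{\infty}>0$ on the open interval $(0,\tilde\beta)$, contradicting $V^{\infty}(\beta)=0$ since $\beta\in(0,\tilde\beta)$.

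The remaining case $\tilde\beta\leq\beta$ is essentially identical to the corresponding step in Lemma~\ref{uniqueness1}: from $V(\tilde\beta;\tilde\beta,\tilde G)=0$ and $V^{0}\geq 0$ one obtains $V^{\infty}(\tilde\beta)\leq 0$, which combined with $V^{\infty}>0$ on $(0,\beta)$ and $\tilde\beta>0$ yields $\tilde\beta=\beta$; then $V^{0}(\beta;\beta,\tilde G)=0$ forces $\tilde G\equiv 0$ a.e.\ on $(\alpha,\sqrt{2q_{+}\beta}+\alpha)\times\mathbb{R}^{n-1}$ by the same boundary evaluation, completing the proof.
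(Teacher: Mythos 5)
Your proposal is correct and follows essentially the same route as the paper: both reduce uniqueness of the solution to uniqueness of the pair $(\beta,G)$ via the ODE argument of Lemma \ref{uniqueness1}, and both exploit $V^{\infty}\geq 0$ (on $[0,\beta]$ from \eqref{G-beta2} with $G\equiv 0$, on $[\beta,\beta_{*}]$ from the hypothesis and continuity) together with $0=V^{\infty}(\tilde\beta)+V^{0}(\tilde\beta;\tilde\beta,\tilde G)$ to force $V^{0}(\tilde\beta;\tilde\beta,\tilde G)=0$, hence $\tilde G\equiv 0$ and then $\tilde\beta=\beta$. Your split into the cases $\tilde\beta>\beta$ and $\tilde\beta\leq\beta$ is only a cosmetic reorganization of the paper's uniform argument, and your extra detail on why $V^{0}(\tilde\beta;\tilde\beta,\tilde G)=0$ forces $\tilde G\equiv 0$ is a valid elaboration of a step the paper states without proof.
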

\begin{proof}
We suppose that another pair $(\tilde{\beta},\tilde{G})$ with \eqref{G-beta} exists, 
and show that $\tilde{\beta}=\beta$ and $\tilde{G}\equiv 0$ on $(\alpha,\sqrt{2q_{+}\beta}+\alpha)\times \mathbb R^{n-1}$.
It is seen from \eqref{G-beta1} that $\tilde{\beta} \leq \beta_{*}$. 
It is also clear that  ${V}^{\infty}(\Phi)\geq 0$ for $\Phi \in [\beta,\beta_{*}]$ owing to the continuity of $V^{\infty}$ ensured by Lemma \ref{rhopm} and the assumption that ${V}^{\infty}(\Phi)\geq 0$ for $\Phi \in (\beta,\beta_{*})$.
Moreover, $V^{\infty}(\Phi)=V(\Phi;\beta,G)\geq0$ holds for $\Phi \in [0,\beta]$ thanks to \eqref{G-beta2} and the assumption that $G\equiv 0$ on $(\alpha,\sqrt{2q_{+}\beta}+\alpha)\times \mathbb R^{n-1}$.
Therefore, $V^{\infty}(\Phi)\geq0$ holds for $\Phi \in [0,\beta_{*}]$.
Then we see from \eqref{G-beta2} that $0=V(\tilde{\beta};\tilde{\beta},\tilde{G})=V^{\infty}(\tilde{\beta})+V^{0}(\tilde{\beta};\tilde{\beta},\tilde{G}) \geq V^{0}(\tilde{\beta};\tilde{\beta},\tilde{G})$. 
From this and the nonnegativity of $\tilde{G}$, we deduce $V^{0}(\tilde{\beta};\tilde{\beta},\tilde{G})=0$.
This implies that \eqref{tG=0} holds.
Now we can complete the proof in the same way as in the proof of Lemma \ref{uniqueness1}.
\end{proof}

Next we study the case that neither condition (i) nor (ii) holds.
This case can be divided into the following three subcases:
\begin{enumerate}[(a)]
\item $G\not\equiv 0$ on $(\alpha,\sqrt{2q_{+}\beta}+\alpha)\times \mathbb R^{n-1}$;
\item $G\equiv 0$ on $(\alpha,\sqrt{2q_{+}\beta}+\alpha)\times \mathbb R^{n-1}$, $\exists \Phi_0\in(\beta,\beta_{*})$ s.t. $V^{\infty}(\Phi_0)<0$, and $\frac{d}{d\Phi} V^{\infty}(\beta_{\sharp})<0$;
\item $G\equiv 0$ on $(\alpha,\sqrt{2q_{+}\beta}+\alpha)\times \mathbb R^{n-1}$, $\exists \Phi_0\in(\beta,\beta_{*})$ s.t. $V^{\infty}(\Phi_0)<0$, and $\frac{d}{d\Phi} V^{\infty}(\beta_{\sharp})=0$,
\end{enumerate}
where 
\begin{gather*}
\beta_{\sharp}:=\inf\{\Phi\in[0,\beta_{*})\;|\;V^{\infty}(\Phi)<0\}.
\end{gather*}
We prove Lemmas \ref{nonunique1}--\ref{nonunique3} which state the nonuniqueness in the subcases (a)--(c), respectively.

\begin{lem}\label{nonunique1}
Suppose that the same assumptions in Theorem \ref{uniqueness0} hold.
Assume that $G\not\equiv 0$ on $(\alpha,\sqrt{2q_{+}\beta}+\alpha)\times \mathbb R^{n-1}$.
Then the problem \eqref{VP2} has infinite many solutions.
\end{lem}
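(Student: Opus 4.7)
My plan. By Theorem \ref{existence1} combined with Lemma \ref{need1}, every solution $(F_\pm,\Phi)$ of \eqref{VP2} is uniquely encoded by the pair $(\Phi(0),F_+(0,\cdot))$, and this pair is always admissible in the sense of \eqref{G-beta}. Consequently, distinct admissible pairs yield distinct solutions, and it suffices to construct infinitely many admissible pairs that differ from $(\beta,G)$. I would do so by holding $\beta$ fixed and perturbing $G$ in a carefully chosen direction.

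The key identity I would derive first is a Fubini reformulation of $V^0(\beta;\beta,G)$. Swapping the order of integration in $\varphi$ and $\xi_1$ in the definition \eqref{deco1} and performing the elementary $\varphi$-integral yields
\begin{equation*}
V^0(\beta;\beta,G) \;=\; \frac{2e_+}{q_+}\int_\alpha^{\sqrt{2q_+\beta}+\alpha}\!\!\int_{\mathbb R^{n-1}} G(\xi_1,\xi')(\xi_1-\alpha)^2\,d\xi'\,d\xi_1.
\end{equation*}
This is a linear functional of $G$, strictly positive under the standing hypothesis $G\not\equiv 0$ on $(\alpha,\sqrt{2q_+\beta}+\alpha)\times\mathbb R^{n-1}$. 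Its kernel is therefore a codimension-one subspace, within which perturbations of $G$ automatically preserve the equality $V(\beta;\beta,\cdot)=0$.

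For the explicit construction, I would choose two disjoint open subsets $A_1,A_2$ of the trapped strip on which $G$ is bounded away from zero, and nonnegative test functions $H_i\in C_0^\infty(A_i)$ with $c_i:=\int\!\!\int H_i(\xi_1-\alpha)^2\,d\xi_1\,d\xi'>0$. Setting $H:=c_2H_1-c_1H_2$ yields a sign-changing function satisfying $\int\!\!\int H(\xi_1-\alpha)^2\,d\xi_1\,d\xi'=0$, and for all sufficiently small $|\epsilon|$, the function $G_\epsilon:=G+\epsilon H$ is nonnegative and differs from $G$ on the trapped strip. The verification that $(\beta,G_\epsilon)$ satisfies \eqref{G-beta} proceeds as follows: condition \eqref{G-beta1} does not involve $G$; the equality in \eqref{G-beta2} holds by linearity together with the kernel property of $H$; strict positivity of $V(\cdot;\beta,G_\epsilon)$ on every compact sub-interval of $(0,\beta)$ follows from the positive lower bound on $V(\cdot;\beta,G)$ there, combined with the uniform perturbation bound supplied by Lemma \ref{rhopm}; for \eqref{G-beta3}, the identity $V'(0;\beta,G_\epsilon)=0$---a consequence of the quasi-neutral condition \eqref{netrual1} and of $\rho_+^0(0;\beta,\cdot)=0$---preserves the divergence at $\Phi=0$, while a short computation gives $V'(\beta;\beta,G_\epsilon)=V'(\beta;\beta,G)+2e_+\epsilon\int\!\!\int H\,d\xi_1\,d\xi'$, which remains strictly negative for $|\epsilon|$ small, preserving the convergence at $\Phi=\beta$. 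Distinctness of the resulting solutions is immediate since, by Lemma \ref{need1}, the perturbed $G_\epsilon$ coincides with $F_+(0,\cdot)$ on the trapped strip.

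\textbf{Main obstacle.} The most delicate aspect is the endpoint behavior in \eqref{G-beta3}: one must ensure that the quantitative asymptotics of $V(\cdot;\beta,G_\epsilon)$ at both $\Phi=0$ and $\Phi=\beta$ survive the perturbation uniformly in $\epsilon$. This requires controlling how $V^0(\Phi;\beta,H)$ vanishes as $\Phi\to 0^+$---which should be of order $\Phi^2$ because $\rho_+^0(0;\beta,H)=0$ and $H\in C_0^\infty$---and bounding its one-sided derivative at $\Phi=\beta^-$ so that the sign of $V'(\beta;\beta,G_\epsilon)$ is preserved. Once this is established, the one-parameter family $\{G_\epsilon\}_{|\epsilon|<\epsilon_0}$ yields, via Theorem \ref{existence1}, a continuum of distinct solutions of \eqref{VP2}.
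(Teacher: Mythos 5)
Your overall strategy (fix $\beta$ and perturb $G$ by a sign-changing $H$ whose $(\xi_1-\alpha)^2$-weighted integral vanishes, so that $V(\beta;\beta,\cdot)=0$ is preserved) is the same as the paper's argument in the case $n=1$, and your Fubini identity for $V^0(\beta;\beta,G)$ is the same computation. However, there is a genuine gap exactly at the point you label the ``main obstacle'', and it is not a removable technicality: positivity of $V(\cdot;\beta,G_\epsilon)$ on all of $(0,\beta)$ and the convergence in \eqref{G-beta3} at $\Phi=\beta$ do not follow from smallness of $\epsilon$. On a compact subinterval $[\delta,\beta-\delta]$ the lower bound for $V(\cdot;\beta,G)$ degenerates as $\delta\to 0$, so the admissible range of $\epsilon$ shrinks with $\delta$ and no single $\epsilon$ is produced; worse, near $\beta$ the comparison can genuinely fail for one sign of $\epsilon$. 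Indeed, since $\rho_+^0(\beta;\beta,H)=2\int\!\!\int H\,d\xi$, one has $\epsilon V^0(\Phi;\beta,H)\approx-2e_+\epsilon\bigl(\int\!\!\int H\,d\xi\bigr)(\beta-\Phi)$ as $\Phi\to\beta^-$, while the hypotheses on $(\beta,G)$ allow $V(\Phi;\beta,G)$ to vanish faster than linearly at $\beta$ (e.g. like $(\beta-\Phi)^{3/2}$, still compatible with $\int_{\beta/2}^{\beta}V^{-1/2}\,d\Phi<\infty$); then $V(\cdot;\beta,G_\epsilon)<0$ just below $\beta$ no matter how small $|\epsilon|$ is. For the same reason, your claim that $V'(\beta;\beta,G_\epsilon)$ ``remains strictly negative'' presupposes $V'(\beta;\beta,G)<0$, which is not implied by \eqref{G-beta}. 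The paper closes this gap structurally rather than by smallness: it takes $H_\tau=-G$ on $(\alpha,\alpha^*)$ and $+G$ on $(\alpha^*,\alpha^0)$ with equal $(\xi_1-\alpha)^2$-weighted masses, and, writing the integrand of $V^0(\Phi;\beta,H_\tau)$ as $H_\tau(\xi_1)(\xi_1-\alpha)^2f_\Phi(\xi_1)$ with $f_\Phi\ge 0$ increasing, the positive block (at larger $\xi_1$) dominates the negative block for every $\Phi\in[0,\beta]$; hence $V^0(\Phi;\beta,H_\tau)\ge 0$ and $V(\cdot;\beta,\tilde G_\tau)\ge V(\cdot;\beta,G)$ pointwise, so \eqref{G-beta2} and the finiteness of $\int_{\beta/2}^{\beta}V^{-1/2}\,d\Phi$ transfer immediately. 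If you impose the analogous ordering on your bumps (negative bump at smaller $\xi_1$ than the positive one, equal weighted masses) and prove this pointwise inequality, your construction works without any smallness argument near $\beta$.

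A secondary gap: you require two disjoint open sets on which $G$ is bounded away from zero, but $G$ is only a nonnegative $L^1_{loc}\cap L^p_{loc}$ function with $G\not\equiv 0$, so such open sets need not exist. The paper avoids this by taking the perturbation proportional to $G$ itself on measurable sets, which makes $G+H_\tau\ge 0$ automatic and is the natural fix here as well. (Incidentally, for $n\ge 2$ the paper also uses a much shorter route: composing $G$ with measure-preserving changes of the transverse variables $\xi'$ leaves $\rho_+$ and hence $V$ unchanged, producing infinitely many admissible $\tilde G$ at once.)
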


\begin{proof}
If pairs $(\beta,\tilde{G})$ with \eqref{G-beta} and $\tilde{G} \neq G$ exist,
we can have a solution $(\tilde{F}_{\pm},\tilde{\Phi})$ for each $(\beta,\tilde{G})$ by following the proof of Theorem \ref{existence1}.
It is also seen from the form \eqref{fform+} that those solutions are different from each other.
Therefore, it is sufficient to find infinite many $\tilde{G}$ so that $(\beta,\tilde{G})$ satisfies \eqref{G-beta}.

First let us find $\tilde{G}$ for the case $n\geq2$.
We take any changes of valuables $\zeta=g(\xi)$ so that $g$ is a smooth bijection map, and $\xi_{1}=\xi_{1}$ and $|\partial(\xi_{2},\ldots,\xi_{n})/\partial(\zeta_{2},\ldots,\zeta_{n})|=1$ hold. Then the pair $(\beta,\tilde{G}(\xi))=(\beta,G(g(\xi)))$ satisfies \eqref{G-beta}.
Indeed, $\rho_{+}(\Phi;\beta,G)=\rho_{+}(\Phi;\beta,\tilde{G})$ and therefore $V(\Phi;\beta,G)=V(\Phi;\beta,\tilde{G})$.
Thus we have infinite many solutions.

Next let us find $\tilde{G}$ for the case $n=1$. Set 
\begin{gather*}
A:=\frac{2e_+}{q_+}\int_\alpha^{\sqrt{2q_+\beta}+\alpha}G(\xi_{1})(\xi_1-\alpha)^2\,
d\xi_1.
\end{gather*}
It is clear that $A>0$, since $G\not\equiv 0$ holds on $(\alpha,\sqrt{2q_+\beta}+\alpha)$.
We also see that 
\begin{align*}
&A \leq C \int_\alpha^{\sqrt{2q_+\beta}+\alpha}
G(\xi_{1})\,d\xi_1
\leq  C\|G\|_{L^p((\alpha,\sqrt{2q_+\beta}+\alpha))}
<+\infty.
\end{align*}
We introduce a parameter $\tau\in(0,\frac{1}{2})$ to construct infinite many pairs $(\beta,\tilde{G}_{\tau})$ with  \eqref{G-beta}.
For any $\tau\in(0,\frac{1}{2})$, we can find $\alpha^*$ and $\alpha^0$ such that 
\begin{gather*}
\alpha<\alpha^*<\alpha^0<\sqrt{2q_+\beta}+\alpha,
\\
\frac{2e_+}{q_+}
\int_\alpha^{\alpha^*} G(\xi_{1})(\xi_1-\alpha)^2\,d\xi_1 =\tau A, \quad 
\frac{2e_+}{q_+}
\int_{\alpha^*}^{\alpha^0} G(\xi_{1})(\xi_1-\alpha)^2\,d\xi_1 =\tau A.
\end{gather*}
Now we define $H_\tau(\xi_{1})$ and $\tilde{G}_\tau(\xi_{1})$ by
\begin{align*}
H_\tau(\xi_{1})&:=\left\{
\begin{array}{ll}
-G(\xi_{1}) & \text{if $\xi_{1} \in (\alpha,\alpha^*)$},
\\
G(\xi_{1}) & \text{if $\xi_{1} \in (\alpha^*,\alpha^0)$},
\\
0 & \text{otherwise},
\end{array}
\right.
\\
\tilde{G}_\tau(\xi_{1})&:=G(\xi_{1})+H_\tau(\xi_{1}).
\end{align*}
It is clear that $\tilde{G}_\tau\in L^1((\alpha,+\infty)) \cap L^p_{loc}([\alpha,+\infty))$ and $\tilde{G}_\tau\geq 0$.
Therefore, it is sufficient to show that $(\beta,\tilde{G}_\tau)$ satisfies \eqref{G-beta} in order to complete the proof.

To this end, we compute $V^{0}(\Phi;\beta,G)$ defined in \eqref{deco1} as follows:
\begin{align*}
&V^{0}(\Phi;\beta,G)
\\
&=2e_{+} \int_{0}^{\Phi} \int_{\sqrt{2q_+\beta-2q_+\varphi}+\alpha}^{\sqrt{2q_+\beta}+\alpha}
G(\xi_{1})\frac{\xi_1-\alpha}{\sqrt{(\xi_1-\alpha)^2+2q_+\varphi-2q_+\beta}}\,
d\xi_1d\varphi
\\
&=2e_+\int_{\sqrt{2q_+\beta-2q_+\Phi}+\alpha}
^{\sqrt{2q_+\beta}+\alpha}
\Bigl(\int_{\frac{1}{2q_+}\{-(\xi_1-\alpha)^2+2q_+\beta\}}^\Phi
G(\xi_{1})\frac{\xi_1-\alpha}{\sqrt{(\xi_1-\alpha)^2+2q_+\varphi-2q_+\beta}}
\,d\varphi\Bigr)\,d\xi_1
\\
&=\frac{2e_+}{q_+}\int_{\sqrt{2q_+\beta-2q_+\Phi}+\alpha}
^{\sqrt{2q_+\beta}+\alpha}
G(\xi_{1})(\xi_1-\alpha)\sqrt{(\xi_1-\alpha)^2+2q_+\Phi-2q_+\beta}
\,d\xi_1
\quad \text{for $\Phi\in[0,\beta]$.}
\end{align*}
It is also seen from \eqref{V0} that for $\Phi\in[0,\beta]$,
\begin{gather}
V(\Phi;\beta,\tilde{G}_\tau)
=V(\Phi;\beta,G)
+V^{0}(\Phi;\beta,H_\tau), 
\label{VV0}\\
V^{0}(\Phi;\beta,H_\tau)
=\frac{2e_+}{q_+}\int_{\sqrt{2q_+\beta-2q_+\Phi}+\alpha}
^{\sqrt{2q_+\beta}+\alpha}
H_\tau(\xi_{1})(\xi_1-\alpha)\sqrt{(\xi_1-\alpha)^2+2q_+\Phi-2q_+\beta}
\,d\xi_1.
\notag
\end{gather}

Let us complete the the proof by showing that $(\beta,\tilde{G}_\tau)$ satisfies \eqref{G-beta}.
It is obvious that \eqref{G-beta1} holds. 
Furthermore, by the definitions of $H_{\tau}$, $\alpha^{*}$, and  $\alpha^{0}$, the following holds:
\begin{align*}
V^{0}(\beta;\beta,H_\tau)
&=\frac{2e_+}{q_+}\int_\alpha^{\sqrt{2q_+\beta}+\alpha}
H_\tau(\xi_{1})(\xi_1-\alpha)^2\,d\xi_{1}
\\
&=-\frac{2e_+}{q_+}
\int_\alpha^{\alpha^*}
G(\xi_{1})(\xi_1-\alpha)^2\,d\xi
+\frac{2e_+}{q_+}
\int_{\alpha^*}^{\alpha^0}
G(\xi_{1})(\xi_1-\alpha)^2\,d\xi_{1}
\\
&=-\tau A+\tau A
=0.
\end{align*}
From this, \eqref{VV0}, and the assumption that that $(\beta,G)$ satisfies \eqref{G-beta2}, we have the last condition in \eqref{G-beta2}, i.e $V(\beta;\beta,\tilde{G}_\tau)=0$.
To show another condition in \eqref{G-beta2}, it suffices to prove that $V^{0}(\Phi;\beta,H_\tau)\geq 0$ holds for $\Phi\in[0,\beta]$. 
We divide into the two cases $\Phi\in[0,\beta']$ and $\Phi\in(\beta',\beta]$, where $\beta':=\beta-\frac{1}{2q_+}(\alpha^*-\alpha)^2 \in (0,\beta)$. For the case $\Phi\in[0,\beta']$, it is seen that the integrant in $V^{0}(\Phi;\beta,H_\tau)$ is nonnegative and so is  $V^{0}(\Phi;\beta,H_\tau)$.
Let us consider the case $\Phi\in(\beta',\beta]$. We use the  function $f_\Phi(\xi_1)$ defined by
\begin{gather*}
f_\Phi(\xi_1)
:=\frac{\sqrt{(\xi_1-\alpha)^2+2q_+\Phi-2q_+\beta}}{\xi_1-\alpha}, \quad
\xi_1\in I:=[\sqrt{2q_+\beta-2q_+\Phi}+\alpha,\sqrt{2q_+\beta}+\alpha].
\end{gather*}
We note that $f_\Phi(\xi_1)$ is nonnegative and increasing on $I$, and $\alpha^{*} \in I$ holds. 
Then it follows from the fact $f_\Phi(\xi_{1}) \leq f_\Phi(\alpha^*)$ for $\xi_{1} \in I \cap (-\infty,\alpha^{*}]$ that
\begin{gather*}
(\xi_1-\alpha)\sqrt{(\xi_1-\alpha)^2+2q_+\Phi-2q_+\beta}
\leq f_\Phi(\alpha^*)(\xi_1-\alpha)^2.
\end{gather*}
From this, we observe that
\begin{align}
& \frac{2e_+}{q_+}\int_{\sqrt{2q_+\beta-2q_+\Phi}+\alpha}^{\alpha^*}
G(\xi_{1})(\xi_1-\alpha)\sqrt{(\xi_1-\alpha)^2+2q_+\Phi-2q_+\beta}
\,d\xi_{1}
\notag \\
& \leq\frac{2e_+}{q_+}f_\Phi(\alpha^*)
\int_{\alpha}^{\alpha^*}G(\xi_{1})(\xi_1-\alpha)^2\,d\xi_{1}
=f_\Phi(\alpha^*)\tau A.
\label{fPhi1}
\end{align}
Similarly, there holds that 
\begin{gather}\label{fPhi2}
\frac{2e_+}{q_+}\int_{\alpha^*}^{\alpha^0}
G(\xi_{1})(\xi_1-\alpha)\sqrt{(\xi_1-\alpha)^2+2q_+\Phi-2q_+\beta}\,d\xi_{1}
\geq f_\Phi(\alpha^*)\tau A.
\end{gather}
From \eqref{fPhi1} and \eqref{fPhi2}, we arrive at 
\begin{align*}
V^{0}(\Phi;\beta,H_\tau)
&=-\frac{2e_+}{q_+}\int_{\sqrt{2q_+\beta-2q_+\Phi}+\alpha}^{\alpha^*}
G(\xi_{1})(\xi_1-\alpha)\sqrt{(\xi_1-\alpha)^2+2q_+\Phi-2q_+\beta}
\,d\xi_{1}
\\
& \quad +\frac{2e_+}{q_+}\int_{\alpha^*}^{\alpha^0}
G(\xi_{1})(\xi_1-\alpha)\sqrt{(\xi_1-\alpha)^2+2q_+\Phi-2q_+\beta}\,d\xi_{1}
\\
& \geq -f_\Phi(\alpha^*)\tau A+f_\Phi(\alpha^*)\tau A =0.
\end{align*}
Hence, $V^{0}(\Phi;\beta,H_\tau)\geq 0$ holds for both the two cases $\Phi\in[0,\beta']$ and $\Phi\in(\beta',\beta]$.
Thus $(\beta,\tilde{G}_\tau)$ satisfies \eqref{G-beta2}.

Let us show that $(\beta,\tilde{G}_\tau)$ satisfies \eqref{G-beta3}. 
It is easy to see that $V^{0}(\Phi;\beta,H_\tau)=0$ if $\Phi \ll1$.
From \eqref{VV0} and the assumption that $(\beta,G)$ satisfies \eqref{G-beta3}, we have the first condition in \eqref{G-beta3}.
Furthermore, another condition  in \eqref{G-beta3} follows from 
\begin{gather*}
\int_{\beta/2}^\beta\frac{d\Phi}{\sqrt{V(\Phi;\beta,\tilde{G}_\tau)}}
\leq\int_{\beta/2}^\beta\frac{d\Phi}{\sqrt{V(\Phi;\beta,G)}}
<+\infty,
\end{gather*}
where we have used \eqref{VV0} and $V^{0}(\Phi;\beta,H_\tau)\geq 0$.
Thus  $(\beta,\tilde{G}_\tau)$ satisfies \eqref{G-beta3}.
The proof is complete.
\end{proof}

\begin{lem}\label{nonunique2}
Suppose that the same assumptions in Theorem \ref{uniqueness0} hold.
Assume that $G\equiv 0$ holds on $(\alpha,\sqrt{2q_{+}\beta}+\alpha)\times \mathbb R^{n-1}$, there exists $\Phi_0\in(\beta,\beta_{*})$ with $V^{\infty}(\Phi_0)<0$, and $\frac{d}{d\Phi} V^{\infty}(\beta_{\sharp})<0$ holds.
Then the problem \eqref{VP2} has infinite many solutions with trapped ions.
\end{lem}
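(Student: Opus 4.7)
The plan is to construct a single pair $(\tilde\beta,\tilde G)$ satisfying \eqref{G-beta} with $\tilde G\not\equiv 0$ on $(\alpha,\sqrt{2q_+\tilde\beta}+\alpha)\times\mathbb R^{n-1}$, after which Lemma~\ref{nonunique1}, applied with $(\tilde\beta,\tilde G)$ playing the role of $(\beta,G)$, will produce infinitely many solutions of \eqref{VP2}, each carrying trapped ions. I would take $\tilde\beta:=\beta_{\sharp}+\eta$ for a small parameter $\eta>0$, and set $\tilde G(\xi_1,\xi'):=c\,g(\xi_1)\psi(\xi')$, where $g\in C_0^\infty((\alpha,\sqrt{2q_+\tilde\beta}+\alpha))$ is a nonnegative bump supported on a narrow interval $[a,b]$, $\psi\in C_0^\infty(\mathbb R^{n-1})$ is a nonnegative bump with $\int\psi\,d\xi'=1$ (dropped when $n=1$), and $c>0$ is chosen to enforce $V(\tilde\beta;\tilde\beta,\tilde G)=0$, i.e.\ $c=-V^\infty(\tilde\beta)/V^{0}(\tilde\beta;\tilde\beta,g\psi)$. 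Taylor's theorem together with $\frac{d}{d\Phi}V^\infty(\beta_{\sharp})<0$ and $V^\infty(\beta_{\sharp})=0$ gives $V^\infty(\tilde\beta)=\frac{d}{d\Phi}V^\infty(\beta_{\sharp})\,\eta+o(\eta)<0$, and $V^{0}(\tilde\beta;\tilde\beta,g\psi)>0$ since $g,\psi\not\equiv 0$; hence $c>0$ and $c=O(\eta)$. I would additionally require $[a,b]$ so that the threshold $\Phi^{*}:=\tilde\beta-(b-\alpha)^2/(2q_+)$ satisfies $\Phi^{*}<\beta$, which is possible for $\eta$ small.

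Verification of \eqref{G-beta} proceeds as follows. Condition \eqref{G-beta1} is immediate from $\tilde\beta<\beta_{*}$. For \eqref{G-beta2}, I would split $(0,\tilde\beta)$ into three regimes. On $(0,\Phi^{*}]$, the support of $g$ forces $V^{0}\equiv 0$, so $V=V^\infty>0$ by the original hypothesis on $(\beta,G)$ (valid because $\Phi^{*}<\beta$). On $[\Phi^{*},\beta_{\sharp}]$, both $V^\infty\geq 0$ and $cV^{0}>0$, and positivity at the zeros of $V^\infty$ (in particular at $\beta$ and at $\beta_{\sharp}$) is rescued by the strictly positive $cV^{0}$. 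On $(\beta_{\sharp},\tilde\beta)$, using $V(\tilde\beta)=0$ together with $V^{\infty\prime}<0$ throughout $[\beta_{\sharp},\tilde\beta]$ for $\eta$ small, one rewrites
\[
V(\Phi)=\int_\Phi^{\tilde\beta}\bigl(|V^{\infty\prime}(\varphi)|-c\,V^{0\prime}(\varphi)\bigr)\,d\varphi,
\]
and bounds the integrand from below by $m-cC$, where $m:=\min_{[\beta_{\sharp},\tilde\beta]}|V^{\infty\prime}|>0$ and $C$ is the uniform bound on $V^{0\prime}=e_+\rho_+^{0}$ furnished by Lemma~\ref{rhopm}; since $c=O(\eta)$, this lower bound is strictly positive once $\eta$ is small. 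For \eqref{G-beta3}, the integral at $\Phi=0$ diverges because $V=V^\infty$ on $(0,\Phi^{*}]$ and the original pair $(\beta,G)$ already yields $\int_0^{\beta/2}d\Phi/\sqrt{V^\infty}=+\infty$, while the integral at $\Phi=\tilde\beta$ converges because $V'(\tilde\beta)=V^{\infty\prime}(\beta_{\sharp})+O(\eta)<0$, so $V(\Phi)\sim|V'(\tilde\beta)|\,(\tilde\beta-\Phi)$ as $\Phi\nearrow\tilde\beta$.

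The hard part is the positivity estimate on $(\beta_{\sharp},\tilde\beta)$: both the negative piece $V^\infty$ and the positive correction $cV^{0}$ are of size $\eta$, so the argument has to quantitatively beat one against the other, and this succeeds only because the strict transversality hypothesis $\frac{d}{d\Phi}V^\infty(\beta_{\sharp})<0$ supplies a uniform lower bound $|V^{\infty\prime}|\geq m$ in a neighborhood of $\beta_{\sharp}$ which dominates $cV^{0\prime}=O(\eta)$. Once the pair $(\tilde\beta,\tilde G)$ is in hand with $\tilde G\not\equiv 0$ on $(\alpha,\sqrt{2q_+\tilde\beta}+\alpha)\times\mathbb R^{n-1}$, Lemma~\ref{nonunique1} applied to this pair produces infinitely many distinct solutions, and each of them carries trapped ions because the perturbed distributions $\tilde G+H_\tau$ produced in the proof of that lemma remain nonzero on the trapping region.
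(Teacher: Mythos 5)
Your proposal is correct and follows essentially the same route as the paper: take $\tilde\beta$ slightly beyond $\beta_{\sharp}$, add a small trapped-ion distribution whose density contribution switches on below $\beta$, normalize it so that $V(\tilde\beta;\tilde\beta,\tilde G)=0$, verify \eqref{G-beta} by the same three-regime case analysis (inheriting positivity and the divergence in \eqref{G-beta3} below the threshold, using $V^{0}>0$ on $[\beta,\beta_{\sharp}]$, and using the transversality $\frac{d}{d\Phi}V^{\infty}(\beta_{\sharp})<0$ to dominate the small correction on $(\beta_{\sharp},\tilde\beta)$), and then invoke Lemma \ref{nonunique1}. The only difference is cosmetic: the paper uses an explicit indicator-function $H$ supported on $[\alpha,\sqrt{2q_{+}\beta_{\sharp}}+\alpha]\times[0,1]^{n-1}$ with closed-form $\rho_{+}^{0}$, $V^{0}$ and explicit constants in place of your generic bump and $O(\eta)$ smallness argument.
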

\begin{proof}
It is sufficient to find a pair $(\tilde{\beta},\tilde{G})$ with \eqref{G-beta} and $\tilde{G}\not\equiv0$ on $(\alpha,\sqrt{2q_{+}\tilde{\beta}}+\alpha)\times \mathbb R^{n-1}$ thanks to Lemma \ref{nonunique1}.
We first recall that $\beta\leq \beta_{\sharp}$ and $V^{\infty}$ is defined in \eqref{Vinfty}.
Set $k:=-\frac{d}{d\Phi}V^{\infty}(\beta_{\sharp})>0$, where the positivity follows from the assumption $\frac{d}{d\Phi} V^{\infty}(\beta_{\sharp})<0$. 
We fix a constant $\tilde{\beta}\in(\beta_{\sharp},\min\{\beta_{\sharp}+\beta,\beta_{*}\})$ so that 
\begin{gather}\label{tbeta}
V^{\infty}(\Phi)\geq-\frac{\,1\,}{6}e_+\beta_{\sharp}k, \quad
\frac{d}{d\Phi}V^{\infty}(\Phi)\leq -\frac{\,1\,}{2}k, \quad 
\Phi\in(\beta_{\sharp},\tilde{\beta}],
\\
V^{\infty}(\tilde{\beta})<0,
\label{p0}
\end{gather}
where we have used $V^{\infty}(\beta_{\sharp})=0$ and $V^{\infty} \in C^{1}([0,+\infty))$ ensured in Lemma \ref{rhopm}  to find such constant $\tilde{\beta}$.
We also define the functions $H$ and $\tilde{G}$ by 
\begin{align*}
H=H(\xi)&:=\Biggl\{
\begin{array}{ll}
\frac{1}{2\sqrt{2q_+}\,e_+} & \hbox{if} \quad
(\xi_1,\xi')\in[\alpha,\sqrt{2q_+\beta_{\sharp}}+\alpha]\times[0,1]^{n-1}, \\
0 & \hbox{otherwise},
\end{array}
\\
\tilde{G}=\tilde{G}(\xi)&:=\lambda H(\xi), \quad
\lambda
:=-\frac{1}{V^{0}(\tilde{\beta};\tilde{\beta},H)}\,
V^{\infty}(\tilde{\beta}).
\end{align*}
It is clear that $\tilde{G} \in L^1_{loc}((\alpha,+\infty) \times \R^{n-1}) \cap L_{loc}^{p}([\alpha,+\infty);L^{1}(\mathbb R^{n-1}))$.
It is also seen by direct computation that
\begin{align}
\rho_{+}^{0}(\Phi;\tilde{\beta},H)=\Biggl\{
\begin{array}{ll}
0& \hbox{if} \quad
\Phi\in[0,\tilde{\beta}-\beta_{\sharp}], \\
\frac{1}{e_+}
\sqrt{\Phi-(\tilde{\beta}-\beta_{\sharp})} & \hbox{if} \quad
\Phi\in[\tilde{\beta}-\beta_{\sharp},\tilde{\beta}],
\end{array}
\label{rho+01} \\
V^{0}(\Phi;\tilde{\beta},H)=\Biggl\{
\begin{array}{ll}
0& \hbox{if} \quad
\Phi\in[0,\tilde{\beta}-\beta_{\sharp}], \\
\frac{\,2\,}{3}
\{\Phi-(\tilde{\beta}-\beta_{\sharp})\}^{\frac{3}{2}} & \hbox{if} \quad
\Phi\in[\tilde{\beta}-\beta_{\sharp},\tilde{\beta}],
\end{array}
\label{V01}
\end{align}
where $\rho_{+}^{0}(\Phi;\beta,G)$ and $V^{0}(\Phi;\beta,G)$ are defined in \eqref{rho++} and \eqref{deco1}, respectively.
Then we have $V^{0}(\tilde{\beta};\tilde{\beta},H)=\frac{2}{3}\beta_{\sharp}^{\frac{3}{2}}>0$.
This and \eqref{p0} mean that $\lambda>0$, $\tilde{G}\not\equiv 0$, and $\tilde{G} \geq 0$ on $(\alpha,\sqrt{2q_+\tilde{\beta}}+\alpha)\times\R^{n-1}$.
Therefore, it sufficient to show that $(\tilde{\beta},\tilde{G})$ satisfies \eqref{G-beta} in order to complete the proof.

First \eqref{G-beta1} follows from $\tilde{\beta}<\beta_{*}$  and the definition of $\beta_{*}$.
Let us prove \eqref{G-beta2}. Recalling \eqref{deco1}, we have the last condition in \eqref{G-beta2}, i.e.
\begin{gather}\label{Vbeta0}
V(\tilde{\beta};\tilde{\beta},\tilde{G})
=V^{\infty}(\tilde{\beta})+V^{0}(\tilde{\beta};\tilde{\beta},\tilde{G})
=V^{\infty}(\tilde{\beta})
+\lambda V^{0}(\tilde{\beta};\tilde{\beta},H)
=0.
\end{gather}
We also show another condition i.e. $V(\Phi;\tilde{\beta},\tilde{G})>0$ for $\Phi\in(0,\tilde{\beta})$.
To this end, we divide into the three cases $\Phi\in(0,\beta)$, $\Phi\in[\beta,\beta_{\sharp}]$, and $\Phi\in(\beta_{\sharp},\tilde{\beta})$.
For the case $\Phi\in(0,\beta)$, we first see $V^{\infty}(\Phi)=V(\Phi;\beta,G)>0$ from the assumptions that the pair $(\beta,G)$ satisfies \eqref{G-beta2} and $G\equiv 0$ on $(\alpha,\sqrt{2q_{+}\beta}+\alpha)\times \mathbb R^{n-1}$. 
This fact with \eqref{V01} yields
$V(\Phi;\tilde{\beta},\tilde{G})=V^{\infty}(\Phi)+V^{0}(\Phi;\tilde{\beta},\tilde{G})>0$. 
For the case $\Phi\in[\beta,\beta_{\sharp}]$, it is seen from the definition of $\beta_{\sharp}$ that $V^{\infty}(\Phi)\geq 0$.
From this, \eqref{V01}, and the fact $\tilde{\beta}-\beta_{\sharp}< \beta\leq\Phi$,
we deduce that $V(\Phi;\tilde{\beta},\tilde{G})=V^{\infty}(\Phi)+V^{0}(\Phi;\tilde{\beta},\tilde{G})>0$.
Let us consider the case $\Phi\in(\beta_{\sharp},\tilde{\beta})$.
It follows from \eqref{rho+01} that $\rho_{+}^{0}(\Phi;\tilde{\beta},H)$ is strictly increasing on $[\beta_{\sharp},\tilde{\beta}]$ and therefore
\begin{gather*}
\frac{d}{d\Phi}V^{0}(\Phi;\tilde{\beta},\tilde{G})
=\rho_{+}^{0}(\Phi;\tilde{\beta},\tilde{G})
=\lambda\rho_{+}^{0}(\Phi;\tilde{\beta},H)
\leq\lambda\rho_{+}^{0}(\tilde{\beta};\tilde{\beta},H). 
\end{gather*}
It is seen from \eqref{rho+01} and \eqref{V01} that $V^{0}(\tilde{\beta};\tilde{\beta},H)=\frac{2}{3}\beta_{\sharp}^{\frac{3}{2}}$ and $\rho_{+}^{0}(\tilde{\beta};\tilde{\beta},H)=\frac{3}{2e_+\beta_{\sharp}}V^{0}(\tilde{\beta};\tilde{\beta},H)$. 
Recalling $\lambda =-V^{\infty}(\tilde{\beta})/V^{0}(\tilde{\beta};\tilde{\beta},H)$, we know that
\begin{gather*}
\lambda\rho_{+}^{0}(\tilde{\beta};\tilde{\beta},H)
=\frac{3}{2e_+\beta_{\sharp}}\lambda V^{0}(\tilde{\beta};\tilde{\beta},H)
=-\frac{3}{2e_+\beta_{\sharp}}V^{\infty}(\tilde{\beta}).
\end{gather*}
This and \eqref{tbeta} imply that
\begin{gather*}
\frac{d}{d\Phi}V^{0}(\Phi;\tilde{\beta},\tilde{G})
\leq\lambda\rho_{+}^{0}(\tilde{\beta};\tilde{\beta},H)
=-\frac{3}{2e_+\beta_{\sharp}}V^{\infty}(\tilde{\beta})
\leq\frac{\,1\,}{4}k. 
\end{gather*}
From this and \eqref{tbeta}, we conclude that
\begin{gather*}
\frac{d}{d\Phi}V(\Phi;\tilde{\beta},\tilde{G})
=\frac{d}{d\Phi}V^{\infty}(\Phi)
+\frac{d}{d\Phi}V^{0}(\Phi,\tilde{\beta},\tilde{G})
\leq -\frac{\,1\,}{2}k+\frac{\,1\,}{4}k
=-\frac{\,1\,}{4}k
<0,
\end{gather*}
which together with \eqref{Vbeta0} gives $V(\Phi;\tilde{\beta},\tilde{G})>0$.
Hence, $V(\Phi;\tilde{\beta},\tilde{G})>0$ holds for all the three cases $\Phi\in(0,\beta)$, $\Phi\in[\beta,\beta_{\sharp}]$, and $\Phi\in(\beta_{\sharp},\tilde{\beta})$.
Thus $(\tilde{\beta},\tilde{G})$ satisfies \eqref{G-beta2}.

Finally, we show that $(\tilde{\beta},\tilde{G})$ satisfies \eqref{G-beta3}.
It is easy to see that $V(\Phi;\tilde{\beta},\tilde{G})=V^{\infty}(\Phi)=V(\Phi;\beta,G)$ holds on $[0,\tilde{\beta}-\beta_{\sharp}]$, since $V^{0}(\Phi;\tilde{\beta},\tilde{G})\equiv 0$ holds on $[0,\tilde{\beta}-\beta_{\sharp}]$.
From this and the assumption that $(\beta,G)$ satisfies \eqref{G-beta}, 
we have the first condition in \eqref{G-beta3}, i.e.
\begin{gather*}
\int_0^{\tilde{\beta}/2}\frac{d\Phi}{\sqrt{V(\Phi;\tilde{\beta},\tilde{G})}}
\geq \int_0^{\min\{\tilde{\beta}-\beta_{\sharp},\tilde{\beta}/2\}}\frac{d\Phi}{\sqrt{V(\Phi;\tilde{\beta},\tilde{G})}}
=\int_0^{\min\{\tilde{\beta}-\beta_{\sharp},\tilde{\beta}/2\}}\frac{d\Phi}{\sqrt{V(\Phi;\beta,G)}}
=+\infty.
\end{gather*}
Furthermore, applying the Taylor theorem with \eqref{Vbeta0} and
$\frac{d}{d\Phi}V(\tilde{\beta};\tilde{\beta},\tilde{G})<0$, we obtain another condition, i.e.
\begin{gather*}
\int_{\tilde{\beta}/2}^{\tilde{\beta}}
\frac{d\Phi}{\sqrt{V(\Phi;\tilde{\beta},\tilde{G})}}
<+\infty.
\end{gather*}
Thus $(\tilde{\beta},\tilde{G})$ satisfies \eqref{G-beta3}.
The proof is complete.
\end{proof}

\begin{lem}\label{nonunique3}
Suppose that the same assumptions in Theorem \ref{uniqueness0} hold.
Assume that $G\equiv 0$ holds on $(\alpha,\sqrt{2q_{+}\beta}+\alpha)\times \mathbb R^{n-1}$, there exists $\Phi_0\in(\beta,\beta_{*})$ with $V^{\infty}(\Phi_0)<0$, and $\frac{d}{d\Phi} V^{\infty}(\beta_{\sharp})=0$ holds.
Then the problem \eqref{VP2} has infinite many solutions with trapped ions.
\end{lem}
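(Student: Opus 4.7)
By Lemma~\ref{nonunique1}, it is enough to exhibit a single pair $(\tilde{\beta},\tilde{G})$ with $\tilde{G}\not\equiv 0$ on $(\alpha,\sqrt{2q_{+}\tilde{\beta}}+\alpha)\times\mathbb{R}^{n-1}$ satisfying \eqref{G-beta}: that lemma then produces infinitely many. The plan parallels Lemma~\ref{nonunique2}, but the degeneracy $\frac{d}{d\Phi}V^{\infty}(\beta_{\sharp})=0$ kills the slope estimate \eqref{tbeta} at $\beta_{\sharp}$, so I would shift the base of the construction from $\beta_{\sharp}$ to a regular descent of $V^{\infty}$ further to the right.

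First I would apply Sard's theorem to the $C^{1}$ function $V^{\infty}$ (whose set of critical values has Lebesgue measure zero) and pick a regular value $v_{0}\in(V^{\infty}(\Phi_{0}),0)$. Set $\tilde{\beta}:=\inf\{\Phi>\beta_{\sharp}\,:\,V^{\infty}(\Phi)\le v_{0}\}$. Continuity and the infimum property yield $V^{\infty}(\tilde{\beta})=v_{0}<0$, while $V^{\infty}\ge 0>v_{0}$ on $[0,\beta_{\sharp}]$ combined with the infimum on $(\beta_{\sharp},\tilde{\beta})$ gives $V^{\infty}>v_{0}$ strictly on $[0,\tilde{\beta})$; regularity of $v_{0}$ and descent at $\tilde{\beta}$ force $\frac{d}{d\Phi}V^{\infty}(\tilde{\beta})<0$ strictly. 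Next take $\tilde{G}:=\lambda H_{\gamma}$, where $H_{\gamma}(\xi)=\frac{1}{2\sqrt{2q_{+}}\,e_{+}}$ on $[\alpha,\sqrt{2q_{+}\gamma}+\alpha]\times[0,1]^{n-1}$ and $0$ elsewhere; the same computation as in Lemma~\ref{nonunique2} gives $V^{0}(\Phi;\tilde{\beta},H_{\gamma})=0$ for $\Phi\le\tilde{\beta}-\gamma$ and $V^{0}(\Phi;\tilde{\beta},H_{\gamma})=\frac{2}{3}(\Phi-(\tilde{\beta}-\gamma))^{3/2}$ for $\Phi\ge\tilde{\beta}-\gamma$. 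I would choose $\gamma\in(\tilde{\beta}-\beta,\tilde{\beta})$ large enough that $\frac{d}{d\Phi}V^{\infty}(\tilde{\beta})+\frac{3|v_{0}|}{2\gamma}<0$, and then set $\lambda:=-\frac{3v_{0}}{2\gamma^{3/2}}$, so that $V^{0}(\tilde{\beta};\tilde{\beta},\tilde{G})=-v_{0}$ and $\frac{d}{d\Phi}V(\tilde{\beta};\tilde{\beta},\tilde{G})<0$.

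Finally I would verify \eqref{G-beta}. Condition \eqref{G-beta1} holds since $\tilde{\beta}\le\Phi_{0}<\beta_{*}$. For \eqref{G-beta2}, the endpoint equality $V(\tilde{\beta};\tilde{\beta},\tilde{G})=0$ is by construction, and positivity of $V(\cdot;\tilde{\beta},\tilde{G})$ on $(0,\tilde{\beta})$ is immediate on $[0,\tilde{\beta}-\gamma]$ (where $V=V^{\infty}>0$ since $\tilde{\beta}-\gamma<\beta$) and on $[\tilde{\beta}-\gamma,\beta_{\sharp}]$ (where $V^{\infty}\ge 0$ and $V^{0}>0$); on $[\beta_{\sharp},\tilde{\beta})$ one splits into a left-neighborhood of $\tilde{\beta}$, handled by Taylor plus the strict slope $\frac{d}{d\Phi}V(\tilde{\beta})<0$, and its complement, where the compactness gap $\mu(\epsilon):=\min_{[\beta_{\sharp},\tilde{\beta}-\epsilon]}V^{\infty}-v_{0}>0$ combined with $V^{0}\ge V^{0}(\beta_{\sharp})=|v_{0}|(1-h/\gamma)^{3/2}$ (with $h:=\tilde{\beta}-\beta_{\sharp}$) gives $V>0$ provided $\gamma$ is large enough that $V^{0}(\beta_{\sharp})+\mu(\epsilon)>|v_{0}|$. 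Condition \eqref{G-beta3}'s divergence at the origin is inherited from the reference solution because $V=V^{\infty}$ on $[0,\tilde{\beta}-\gamma]$, and its convergence near $\tilde{\beta}$ follows from Taylor at $\tilde{\beta}$ with $\frac{d}{d\Phi}V(\tilde{\beta})<0$. The principal obstacle is the simultaneous tuning of $v_{0}$, $\gamma$ and $\epsilon$ so that the Taylor and compactness estimates on $[\beta_{\sharp},\tilde{\beta})$ close jointly; this is feasible by first taking $|v_{0}|$ small (via Sard's freedom) and then $\gamma$ large, and it is precisely what distinguishes Lemma~\ref{nonunique3} from Lemma~\ref{nonunique2}.
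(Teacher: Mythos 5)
Your overall architecture is the same as the paper's (reduce via Lemma~\ref{nonunique1} to exhibiting one pair $(\tilde{\beta},\tilde{G})$ with $\tilde{G}\not\equiv 0$, take $\tilde{G}=\lambda\cdot$(bump) so that $V^{0}$ is an explicit $3/2$-power, and fix $\lambda$ so that $V(\tilde{\beta};\tilde{\beta},\tilde{G})=0$), and your formula $V^{0}(\Phi;\tilde\beta,\tilde G)=|v_{0}|\bigl(\tfrac{\Phi-\tilde\beta+\gamma}{\gamma}\bigr)^{3/2}_{+}$ is correct. But the two quantitative steps that your joint tuning of $(v_{0},\gamma,\epsilon)$ must close do not close as described, and this is precisely where the degenerate hypothesis $\frac{d}{d\Phi}V^{\infty}(\beta_{\sharp})=0$ bites. (i) The slope condition requires $|v_{0}|<\tfrac{2}{3}\bigl|\frac{d}{d\Phi}V^{\infty}(\tilde\beta)\bigr|\,\tilde\beta$, since $\gamma$ must stay below $\tilde\beta$. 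Sard only gives $\frac{d}{d\Phi}V^{\infty}(\tilde\beta)\neq 0$, with no lower bound relative to $|v_{0}|$, and the proposed remedy ``take $|v_{0}|$ small, then $\gamma$ large'' fails: by the definition of $\beta_{\sharp}$ there are points with $V^{\infty}<0$ arbitrarily close to its right, so $\tilde\beta(v_{0})\to\beta_{\sharp}$ as $v_{0}\to 0^{-}$, hence $\gamma<\tilde\beta(v_{0})$ stays bounded near $\beta_{\sharp}$ while, by $C^{1}$-continuity and the degeneracy, $\bigl|\frac{d}{d\Phi}V^{\infty}(\tilde\beta(v_{0}))\bigr|\to 0$; both sides of your inequality shrink and nothing in the hypotheses guarantees it for some admissible $v_{0}$. (ii) Interior positivity is not secured: your $V^{0}$ is convex in $\Phi$, so it lies below its chord, and if $V^{\infty}$ has an interior local minimum at some $\Phi_{m}\in(\beta_{\sharp},\tilde\beta)$ with value $v_{0}+\delta$, then $V(\Phi_{m})=\delta-|v_{0}|\bigl\{1-\bigl(1-\tfrac{\tilde\beta-\Phi_{m}}{\gamma}\bigr)^{3/2}\bigr\}<0$ whenever $\delta$ is small compared with $|v_{0}|(\tilde\beta-\Phi_{m})/\tilde\beta$, for every admissible $\gamma<\tilde\beta$; your compactness gap $\mu(\epsilon)$ is then too small and \eqref{G-beta2} fails. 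Choosing $v_{0}$ to be a Sard-regular value does not exclude such near-touches of the level $v_{0}$ in the interior.

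The paper resolves both points with one device that your proposal lacks: it picks $\tilde\beta$ as a minimizer of $W^{\infty}(\Phi)=V^{\infty}(\Phi)/(\Phi-\beta_{\sharp})$ (continuous up to $\beta_{\sharp}$ exactly because $\frac{d}{d\Phi}V^{\infty}(\beta_{\sharp})=0$), which yields simultaneously the global linear bound $V^{\infty}(\Phi)\geq -k(\Phi-\beta_{\sharp})$ on $(\beta_{\sharp},\tilde\beta)$ with equality at $\tilde\beta$, and the slope bound $\frac{d}{d\Phi}V^{\infty}(\tilde\beta)\leq -k$; and it supports the bump on the band $[\sqrt{2q_{+}(\tilde\beta-\beta)}+\alpha,\sqrt{2q_{+}(\tilde\beta-\frac{1}{2}\beta)}+\alpha]$, which makes $V^{0}$ \emph{concave} on $[\beta_{\sharp},\tilde\beta]$, so the chord estimate gives the matching bounds $V^{0}(\Phi)>k(\Phi-\beta_{\sharp})$ and $\frac{d}{d\Phi}V^{0}(\tilde\beta)\leq k'<k$. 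These matched linear bounds deliver \eqref{G-beta2} and the second half of \eqref{G-beta3} in one stroke and are exactly what replaces your pointwise Sard information; note that the uniform-slope argument of Lemma~\ref{nonunique2} is unavailable here for the same reason your tuning fails. To salvage your route you would need an additional selection lemma (a first-crossing level at which the slope exceeds a fixed multiple of the depth \emph{and} at which $V^{\infty}$ stays uniformly above the level in the interior), which is essentially what the $W^{\infty}$-minimization provides; as written, the proposal has a genuine gap.
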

\begin{proof}
It is sufficient to find a pair $(\tilde{\beta},\tilde{G})$ with \eqref{G-beta} and $\tilde{G}\not\equiv0$ on $(\alpha,\sqrt{2q_{+}\tilde{\beta}}+\alpha)\times \mathbb R^{n-1}$ thanks to Lemma \ref{nonunique1}.
It is seen from the definition of $\beta_{\sharp}$ that $V^{\infty}(\beta_{\sharp})=0$.
Furthermore, we note that $\beta\leq \beta_{\sharp}$.
We fix a constant $\beta_1\in(\beta_{\sharp},\beta_*)$ so that $V^{\infty}(\beta_1)<0$ holds,
where we have used the assumption that there exists $\Phi_0\in(\beta,\beta_{*})$ with $V^{\infty}(\Phi_0)<0$ to find such $\beta_{1}$.
Furthermore, we define the function $W^{\infty}$ by
\begin{gather*}
W^{\infty}=W^{\infty}(\Phi):=\Biggl\{
\begin{array}{ll}
0 & \hbox{if} \quad \Phi=\beta_{\sharp}, \\
\frac{1}{\Phi-\beta_{\sharp}}V^{\infty}(\Phi) & \hbox{if} \quad
\Phi\in(\beta_{\sharp},\beta_1].
\end{array}
\end{gather*}
Using $V^{\infty}(\beta_{\sharp})=0$ and the assumption $\frac{d}{d\Phi}V^{\infty}(\beta_{\sharp})=0$, we observe that
\begin{gather*}
W^{\infty}(\Phi)
=\frac{V^{\infty}(\Phi)-V^{\infty}(\beta_{\sharp})}{\Phi-\beta_{\sharp}}
\to \frac{dV^{\infty}}{d\Phi}(\beta_{\sharp})=0 \quad
\text{as $\Phi\to\beta_{\sharp}+0.$}
\end{gather*}
Therefore, $W^{\infty}$ is continuous on $[\beta_{\sharp},\beta_1]$ and hence we can define $k:=-\min_{\Phi\in[\beta_{\sharp},\beta_1]}W^{\infty}(\Phi)$. 
We note that $-k \leq W^{\infty}(\beta_1) =\frac{1}{\beta_1-\beta_{\sharp}}V^{\infty}(\beta_1)<0$. 
Now we fix a constant $\tilde{\beta}$ so that 
\begin{gather}
\tilde{\beta}\in(\beta_{\sharp},\beta_1], \quad
W^{\infty}(\tilde{\beta})=-k<0.
\label{tbeta2}
\end{gather}
From this and the definition of $W^{\infty}$, we deduce that
\begin{gather}\label{p2}
V^{\infty}(\tilde{\beta})<0, \quad
V^{\infty}(\Phi)\geq -k(\Phi-\beta_{\sharp}), \quad \Phi\in(\beta_{\sharp},\tilde{\beta}).
\end{gather}

Now we define the functions $H$ and $G$ by
\begin{align*}
H=H(\xi)&:=\Biggl\{
\begin{array}{ll}
\frac{1}{2e_+\sqrt{2q_+}} & \hbox{if} \ (\xi_1,\xi')\!\in\!\Bigl[\sqrt{2q_+(\tilde{\beta}-\beta)}+\alpha,
\sqrt{2q_+(\tilde{\beta}-\frac{1}{2}\beta)}+\alpha\Bigr]\times[0,1]^{n-1}, \\
0 & \hbox{otherwise},
\end{array}
\\
G=\tilde{G}(\xi)&:=\lambda H(\xi), \quad 
\lambda :=-\frac{1}{V^{0}(\tilde{\beta};\tilde{\beta},H)}\,V^{\infty}(\tilde{\beta}).
\end{align*}
It is clear that $\tilde{G} \in L^1_{loc}((\alpha,+\infty) \times \R^{n-1}) \cap L_{loc}^{p}([\alpha,+\infty);L^{1}(\mathbb R^{n-1}))$.
It is also seen by direct computation that
\begin{align}
\rho_{+}^{0}(\Phi;\tilde{\beta},H)
&=\left\{
\begin{array}{ll}
0& \hbox{if} \quad
\Phi\in[0,\frac{1}{2}\beta], \\
\frac{1}{e_+}
\sqrt{\Phi-\frac{1}{2}\beta} & \hbox{if} \quad
\Phi\in[\frac{1}{2}\beta,\beta], \\
\frac{1}{e_+}
(\sqrt{\Phi-\frac{1}{2}\beta}
-\sqrt{\Phi-\beta}) & \hbox{if} \quad
\Phi\in[\beta,\tilde{\beta}],
\end{array}\right.
\label{rho+02}\\
V^{0}(\Phi;\tilde{\beta},H)
&=\left\{
\begin{array}{ll}
0& \hbox{if} \quad
\Phi\in[0,\frac{1}{2}\beta], \\
\frac{\,2\,}{3}
(\Phi-\frac{1}{2}\beta)^{\frac{3}{2}} & \hbox{if} \quad
\Phi\in[\frac{1}{2}\beta,\beta], \\
\frac{\,2\,}{3}
\{(\Phi-\frac{1}{2}\beta)^{\frac{3}{2}}
-(\Phi-\beta)^{\frac{3}{2}}\} & \hbox{if} \quad
\Phi\in[\beta,\tilde{\beta}],
\end{array}\right.
\label{V02}
\end{align}
where $\rho_{+}^{0}(\Phi;\beta,G)$ and $V^{0}(\Phi;\beta,G)$ are defined in \eqref{rho++} and \eqref{deco1}, respectively.
Then we have $V^{0}(\tilde{\beta};\tilde{\beta},H)>0$.
This and the first inequality in \eqref{p2} mean that $\lambda>0$, $\tilde{G}\not\equiv 0$, and $G \geq 0$ on $(\alpha,\sqrt{2q_+\tilde{\beta}}+\alpha)\times\R^{n-1}$.
Therefore, to complete the proof, it sufficient to show that $(\tilde{\beta},\tilde{G})$ satisfies \eqref{G-beta}.

First \eqref{G-beta1} follows from $\tilde{\beta}\leq\beta_1<\beta_{*}$  and the definition of $\beta_{*}$.
Let us prove \eqref{G-beta2}. 
Recalling \eqref{deco1}, we have the last condition in \eqref{G-beta2}, i.e.
\begin{gather}\label{Vbeta2}
V(\tilde{\beta};\tilde{\beta},\tilde{G})
=V^{\infty}(\tilde{\beta})+V^{0}(\tilde{\beta};\tilde{\beta},\tilde{G})
=V^{\infty}(\tilde{\beta})
+\lambda V^{0}(\tilde{\beta};\tilde{\beta},H)
=0.
\end{gather}
We also show another condition, i.e. $V(\Phi;\tilde{\beta},\tilde{G})>0$ for $\Phi\in(0,\tilde{\beta})$.
To this end, we divide into three cases: $\Phi\in(0,\beta)$, $\Phi\in[\beta,\beta_{\sharp}]$, and $\Phi\in(\beta_{\sharp},\tilde{\beta})$.
For the first case $\Phi\in(0,\beta)$, we observe that $V^{\infty}(\Phi)=V(\Phi;\beta,G)>0$ from the assumptions that the pair $(\beta,G)$ satisfies \eqref{G-beta2} and $G\equiv 0$ on $(\alpha,\sqrt{2q_{+}\beta}+\alpha)\times \mathbb R^{n-1}$. 
This fact with \eqref{V02} yields
$V(\Phi;\tilde{\beta},\tilde{G})=V^{\infty}(\Phi)+V^{0}(\Phi;\tilde{\beta},\tilde{G})>0$. 
For the second case $\Phi\in[\beta,\beta_{\sharp}]$, it is seen from the definition of $\beta_{\sharp}$ that $V^{\infty}(\Phi)\geq 0$.
From this, \eqref{V02}, and $\Phi \geq \beta$,
we deduce that $V(\Phi;\tilde{\beta},\tilde{G})=V^{\infty}(\Phi)+V^{0}(\Phi;\tilde{\beta},\tilde{G})>0$.
Let us consider the last case $\Phi\in(\beta_{\sharp},\tilde{\beta})$.
It is seen by direct computation that
\begin{gather*}
\frac{d^2}{d\Phi^{2}}V^{0}(\Phi;\tilde{\beta},H)
=\frac{1}{2}\left(\frac{1}{\sqrt{\Phi-\frac{1}{2}\beta}}
-\frac{1}{\sqrt{\Phi-\beta}}\right)<0, \quad
\Phi\in(\beta_{\sharp},\tilde{\beta}).
\end{gather*}
This means that $V^{0}(\Phi;\tilde{\beta},\tilde{G})$ is convex upward on $[\beta_{\sharp},\tilde{\beta}]$, and hence the following holds:
\begin{gather*}
V^{0}((1-\theta)\beta_{\sharp}+\theta\tilde{\beta};\tilde{\beta},\tilde{G}) \geq (1-\theta) V^{0}(\beta_{\sharp};\tilde{\beta},\tilde{G})
+ \theta V^{0}(\tilde{\beta};\tilde{\beta},\tilde{G}), \quad \theta\in[0,1].
\end{gather*}
Taking $\theta=(\Phi-\beta_{\sharp})(\tilde{\beta}-\beta_{\sharp})^{-1}$ for $\Phi \in (\beta_{\sharp},\tilde{\beta})$, we see that
\begin{align}
V^{0}(\Phi;\tilde{\beta},\tilde{G}) 
&\geq \frac{V^{0}(\tilde{\beta};\tilde{\beta},\tilde{G})}{\tilde{\beta}-\beta_{\sharp}} (\Phi-\beta_{\sharp}) 
+ \frac{V^{0}(\beta_{\sharp};\tilde{\beta},\tilde{G})}{\tilde{\beta}-\beta_{\sharp}} (\tilde{\beta} - \Phi)
\notag \\
&> \frac{V^{0}(\tilde{\beta};\tilde{\beta},\tilde{G})}{\tilde{\beta}-\beta_{\sharp}} (\Phi-\beta_{\sharp})
=-\frac{V^{\infty}(\tilde{\beta})}{\tilde{\beta}-\beta_{\sharp}}(\Phi-\beta_{\sharp})
=k(\Phi-\beta_{\sharp}),
\label{V03}
\end{align}
where we have used $V^{0}(\beta_{\sharp};\tilde{\beta},\tilde{G})>0$ in deriving the second inequality;
we have used \eqref{Vbeta2} in deriving the first equality;
we have used \eqref{tbeta2} and the definition of $W^{\infty}$ in deriving the last equality.
Now it follows from \eqref{p2} and \eqref{V03} that
\begin{gather*}
V(\Phi;\tilde{\beta},\tilde{G})
=V^{\infty}(\Phi)+V^{0}(\Phi;\tilde{\beta},\tilde{G})
>-k(\Phi-\beta_{\sharp})+k(\Phi-\beta_{\sharp})
=0.
\end{gather*}
Hence, $V(\Phi;\tilde{\beta},\tilde{G})>0$ holds for all the three cases $\Phi\in(0,\beta)$, $\Phi\in[\beta,\beta_{\sharp}]$, and $\Phi\in(\beta_{\sharp},\tilde{\beta})$.
Thus $(\tilde{\beta},\tilde{G})$ satisfies \eqref{G-beta2}.

Finally, we show that $(\tilde{\beta},\tilde{G})$ satisfies \eqref{G-beta3}.
It is easy to see that $V(\Phi;\tilde{\beta},\tilde{G})=V^{\infty}(\Phi)=V(\Phi;\beta,G)$ holds on $[0,\tilde{\beta}/2]$, since $V^{0}(\Phi;\tilde{\beta},\tilde{G})\equiv 0$ holds on $[0,\tilde{\beta}/2]$.
From this and the assumption that $(\beta,G)$ satisfies \eqref{G-beta}, 
we have the first condition in \eqref{G-beta3}, i.e.
\begin{gather*}
\int_0^{\tilde{\beta}/2}\frac{d\Phi}{\sqrt{V(\Phi;\tilde{\beta},\tilde{G})}}
= \int_0^{\tilde{\beta}/2}\frac{d\Phi}{\sqrt{V^{\infty}(\Phi)}}
=\int_0^{\tilde{\beta}/2}\frac{d\Phi}{\sqrt{V(\Phi;\beta,G)}}
=+\infty.
\end{gather*}
To obtain another condition, it suffices to prove $\frac{d}{d\Phi}V(\tilde{\beta};\tilde{\beta},\tilde{G})
< 0$. Indeed, using this, \eqref{Vbeta2}, and the Taylor theorem, 
we obtain the desired condition, i.e.
\begin{gather*}
\int_{\tilde{\beta}/2}^{\tilde{\beta}}
\frac{d\Phi}{\sqrt{V(\Phi;\tilde{\beta},\tilde{G})}}
<+\infty.
\end{gather*}

Let us show $\frac{d}{d\Phi}V(\tilde{\beta};\tilde{\beta},\tilde{G}) < 0$.
It is seen from \eqref{tbeta2} and \eqref{p2} that for any $\Phi\in(\beta_{\sharp},\tilde{\beta})$,
\begin{gather*}
\frac{d}{d\Phi}V^{\infty}(\tilde{\beta})
=\lim_{\Phi\to\tilde{\beta}-0}
\frac{V^{\infty}(\Phi)-V^{\infty}(\tilde{\beta})}{\Phi-\tilde{\beta}}
\leq -k.
\end{gather*}
We note that the first inequality in \eqref{V03} is equivalent to 
\begin{gather*}
V^{0}(\Phi;\tilde{\beta},\tilde{G}) 
\geq  k' (\Phi-\tilde{\beta}) + V^{0}(\tilde{\beta};\tilde{\beta},\tilde{G}),
\quad 
k':=\frac{V^{0}(\tilde{\beta};\tilde{\beta},\tilde{G})-V^{0}(\beta_{\sharp};\tilde{\beta},\tilde{G})}{\tilde{\beta}-\beta_{\sharp}}.
\end{gather*}
This implies that
\begin{gather*}
\frac{d}{d\Phi}V^{0}(\tilde{\beta};\tilde{\beta},\tilde{G})
=\lim_{\Phi\to\tilde{\beta}-0}
\frac{V^{0}(\Phi;\tilde{\beta},\tilde{G})
-V^{0}(\tilde{\beta};\tilde{\beta},\tilde{G})}{\Phi-\tilde{\beta}}
\leq k'.
\end{gather*}
On the other hand, we also see from $V^{0}(\beta_{\sharp};\tilde{\beta},\tilde{G})>0$, \eqref{Vbeta2}, and \eqref{tbeta2} that 
\begin{gather*}
k'<\frac{V^{0}(\tilde{\beta};\tilde{\beta},\tilde{G})}
{\tilde{\beta}-\beta_{\sharp}}
=-\frac{V^{\infty}(\tilde{\beta})}{\tilde{\beta}-\beta_{\sharp}}
=-W^{\infty}(\tilde{\beta})
=k. 
\end{gather*}
From these, we arrive at 
\begin{gather*}
\frac{d}{d\Phi}V(\tilde{\beta};\tilde{\beta},\tilde{G})
=\frac{d}{d\Phi}V^{\infty}(\tilde{\beta})
+\frac{d}{d\Phi}V^{0}(\tilde{\beta};\tilde{\beta},\tilde{G})
\leq -k+k'<0.
\end{gather*}
Thus $(\tilde{\beta},\tilde{G})$ satisfies \eqref{G-beta3}.
The proof is complete.
\end{proof}

\subsection{An example with all the conditions in Theorem \ref{existence1}}\label{S2.5}

This subsection provides examples satisfying all the conditions in Theorem \ref{existence1}. 
For simplicity, we let $\alpha=0$. We choose $F_{\pm}^{\infty}$ and $G$ as
\begin{align*}
F^\infty_+(\xi)&\!:=\!\left\{
\begin{array}{ll}
\!\!\!\frac{1}{2e_+\sqrt{2q_+}} & \hbox{if} \
\xi\!\in\!\Bigl(\bigl[-2\sqrt{2q_+},-\sqrt{2q_+}\bigr]
\!\cup\!\bigl[\sqrt{2q_+},2\sqrt{2q_+}\bigr]\Bigr)
\times[0,1]^{n-1}, \\
\!\!\!0 & \hbox{otherwise}, 
\end{array}\right.
\\
F^\infty_-(\xi)&\!:=\!\left\{
\begin{array}{ll}
\!\!\!\frac{1}{2e_-\sqrt{2q_-}} & \hbox{if} \
\xi\!\in\!\Bigl(\!\!\bigl[-\frac{19}{10}\sqrt{2q_-},-\sqrt{2q_-}\bigr]
\!\cup\!\bigl[\frac{-1}{10}\sqrt{2q_-},\frac{1}{10}\sqrt{2q_-}\bigr]
\!\cup\!\bigl[\!\sqrt{2q_-},\frac{19}{10}\sqrt{2q_-}\bigr]\!\!\Bigr) \\
& \qquad\qquad\qquad\qquad
\times[0,1]^{n-1}, \\
\!\!\!0 & \hbox{otherwise}, 
\end{array}\right.
\\
G(\xi)&:=0.
\end{align*}
It is clear that 
\begin{gather*}
F^\infty_+\in L^1(\R^n), \quad
F^\infty_-\in L^1(\R^n)\cap L^3_{loc}(\R;L^1(\R^{n-1})), \quad
F^\infty_\pm\geq 0, \\
e_+\int_{\mathbb R^n}F^\infty_+(\xi)\,d\xi
=e_-\int_{\mathbb R^n}F^\infty_-(\xi)\,d\xi
=1,
\\
G\in L^1_{loc}((0,+\infty)\times\R^{n-1})
\cap L^3_{loc}([0,+\infty);L^1(\R^{n-1})), \quad
G\geq 0,
\end{gather*}
where the second line is the same as \eqref{netrual1}.
It remains to show \eqref{G-beta}. 

To this end, let us find $\beta>0$ so that $V(\beta;\beta,0)=V^{\infty}(\beta)=0$.  
It is seen by direct computation that
\begin{align*}
\rho^\infty(\Phi)
&:=e_+\rho^\infty_+(\Phi)-e_-\rho_-(\Phi)
\\
&=\Biggl\{
\begin{array}{ll}
\sqrt{4+\Phi}-\sqrt{1+\Phi}
-\sqrt{{\textstyle\frac{1}{100}}-\Phi}
-\sqrt{{\textstyle\frac{361}{100}}-\Phi}
+\sqrt{1-\Phi}
& \hbox{if} \ \
\Phi\in I_{1}:=[0,\frac{1}{100}], \\
\sqrt{4+\Phi}-\sqrt{1+\Phi}
-\sqrt{{\textstyle\frac{361}{100}}-\Phi}
+\sqrt{1-\Phi}
& \hbox{if} \ \
\Phi\in I_{2}:=[\frac{1}{100},1],
\end{array}
\\
V^\infty(\Phi)
&=\int_0^\Phi\rho^\infty(\varphi)\,d\varphi
\\
&=\Biggl\{
\begin{array}{ll}
\!\!\!\frac{\,3\,}{2}\Bigl\{
(4\!+\!\Phi)^{\frac{3}{2}}-(1\!+\!\Phi)^{\frac{3}{2}}
+({\textstyle\frac{1}{100}}\!-\!\Phi)^{\frac{3}{2}}
+({\textstyle\frac{361}{100}}\!-\!\Phi)^{\frac{3}{2}}
-(1\!-\!\Phi)^{\frac{3}{2}}-{\textstyle\frac{1286}{100}}
\Bigr\}
& \hbox{if} \ \
\Phi\in I_{1},
\\[7pt]
\!\!\!\frac{\,3\,}{2}\Bigl\{
(4\!+\!\Phi)^{\frac{3}{2}}-(1\!+\!\Phi)^{\frac{3}{2}}
+({\textstyle\frac{361}{100}}\!-\!\Phi)^{\frac{3}{2}}
-(1\!-\!\Phi)^{\frac{3}{2}}-{\textstyle\frac{1286}{100}}
\Bigr\} 
& \hbox{if} \ \
\Phi\in I_{2},
\end{array}
\end{align*}
where $\rho_{+}^{\infty}(\Phi)$, $\rho_{-}(\Phi)$, and $V^{\infty}(\Phi)$ are defined in \eqref{rho++}, \eqref{rho--}, and \eqref{Vinfty}, respectively.
We note that $\rho^\infty$ is continuous on $I_{1} \cup I_{2}$, and smooth except the points $\Phi=\frac{1}{100},1$;
$\rho^\infty$ is strictly increasing on $I_{1}$ and strictly decreasing on $I_{2}$.

Evaluating $\rho^\infty(\Phi)$ at $\Phi=0,\frac{1}{100},1$ and using the monotonicity of $\rho^\infty$, we obtain that 
$\rho^\infty\left(\frac{1}{100}\right)$ $>\rho^\infty(0)=0$ and $\rho^\infty(1)<0$.
These together with the intermediate value theorem ensure there exist $\beta_{0} \in {\mathring{I}_{2}}$ such that $\rho^\infty(\beta_{0})=0$. This fact implies that 
\begin{gather*}
\rho^\infty(\Phi)\left\{
\begin{array}{ll}
>0 & \hbox{if} \ \ \Phi\in(0,\beta_0), \\
=0 & \hbox{if} \ \ \Phi=0 \ \hbox{or} \ \beta_0, \\
<0 & \hbox{if} \ \ \Phi\in(\beta_0,1]. 
\end{array}\right.
\end{gather*}
Owing to  $\frac{d}{d\Phi}V^\infty(\Phi)=\rho^\infty(\Phi)$,
it follows that $V^\infty$ is strictly increasing on $[0,\beta_{0}]$ and strictly decreasing on $[\beta_{0},1]$.
On the other hand, evaluating $V^\infty(\Phi)$ at $\Phi=0,\beta_{0},1$ and using the monotonicity of $V^\infty$, we obtain that $V^\infty(\beta_{0})>V^\infty(0)=0$ and $V^\infty(1)<0$.
These together with the intermediate value theorem ensure there exist $\beta_{1} \in (\beta_{0},1)$ such that $V^\infty(\beta_{1})=0$. This fact implies that 
\begin{gather}\label{Vinfty1}
V^\infty(\Phi)\left\{
\begin{array}{ll}
>0 & \hbox{if} \ \ \Phi\in(0,\beta_1), \\
=0 & \hbox{if} \ \ \Phi=0 \ \hbox{or} \ \beta_1, \\
<0 & \hbox{if} \ \ \Phi\in(\beta_1,1]. 
\end{array}\right.
\end{gather}
Now we set $\beta:=\beta_{1}$. 

We complete the proof by showing \eqref{G-beta}. 
First we note that $V(\Phi;\beta,G)=V^\infty(\Phi)$ due to $G\equiv 0$.
It is clear that \eqref{G-beta1} holds.
Furthermore, \eqref{G-beta2} immediately follows from \eqref{Vinfty1}.
Let us show \eqref{G-beta3}. We know that $V^\infty(0)=\frac{d}{d\Phi}V^\infty(0)=\rho^\infty(0)=0$.
This with the aid of the Taylor theorem gives the equality in \eqref{G-beta3}.
The inequality in \eqref{G-beta3} also follows from $\frac{d}{d\Phi}V^\infty(\beta_1)=\rho^\infty(\beta_1)<0$.
Consequently, all the conditions in Theorem \ref{existence1} are valid.

\begin{rem}
These $F_{\pm}^{\infty}$, $G$, and $\beta$ also satisfy the assumption of Lemma \ref{nonunique2}, and hence 
we have infinite many solutions of the problem \eqref{VP2}.
These solutions have trapped ions.
\end{rem}

\section{Shock Waves}\label{S3}
The main purpose of this section is to investigate {\it shock waves} which
are special solutions of the problem \eqref{eq1}--\eqref{bc2} with
\begin{gather*}
\Phi^{l}>0 . 
\end{gather*}
We look for solutions of the \footnote{The constant $\alpha$ can be determined uniquely by $F_{\pm}^{l}$ and $F_{\pm}^{r}$ except the case that $\int_{\mathbb R^{n}} (F_{\pm}^{r}(\xi)-F_{\pm}^{l}(\xi))  d\xi=\int_{\mathbb R^{n}} \xi_{1} (F_{\pm}^{r}(\xi)-F_{\pm}^{l}(\xi)) d\xi=0$. See \eqref{nalpha0}.}form 
\begin{gather*}
f_{\pm}(t,x,\xi)=F_{\pm}(x-\alpha t, \xi), \quad \phi(t,x)=\Phi(x-\alpha t) \quad \text{for some $\alpha \in \mathbb R$,}
\end{gather*}
and the potential $\Phi$ is strictly decreasing.
It is seen by direct computations that $(F_{\pm},\Phi)=(F_{\pm}(X,\xi),\Phi(X))$ solves the following problem:
\begin{subequations}\label{nVP2}
\begin{gather}
(\xi_{1}-\alpha) \partial_{X} F_{\pm} \pm q_{\pm}\partial_{X} \Phi  \partial_{\xi_{1}} F_{\pm} =0, \ \ X \in \mathbb R, \ \xi \in \mathbb R^{n},
\label{neq3}
\\
\partial_{XX} \Phi 
= e_{+}\int_{\mathbb R^{n}} F_{+} d\xi - e_{-}\int_{\mathbb R^{n}} F_{-}  d\xi , \ \ X \in \mathbb R,
\label{neq4}\\
\lim_{X \to-\infty} F_{\pm} (X,\xi) =  F_{\pm}^{l}(\xi), \quad
\lim_{X \to+\infty} F_{\pm} (X,\xi) =  F_{\pm}^{r}(\xi), 
\quad \xi \in \mathbb R^{n},
\label{nbc3} \\
\lim_{X \to -\infty} \Phi (X) =  \Phi^{l}, \quad
\lim_{X \to +\infty} \Phi (X) =  0.
\label{nbc4} 
\end{gather}
\end{subequations}

It is worth pointing out that for the case that the potential $\Phi$ is strictly increasing, 
we can reduce it to the case that $\Phi$ is strictly decreasing by replacing 
$(F_{\pm},\Phi,e_{\pm},q_{\pm})$ by $(F_{\mp},-\Phi,e_{\mp},q_{\mp})$ in \eqref{nVP2}.
Therefore, we can focus on the study of the case that $\Phi$ is strictly decreasing.

\begin{defn}\label{nDefS1}
We say that $(F_{\pm},\Phi)$ is a solution of the problem \eqref{nVP2} if it satisfies the following:
\begin{enumerate}[(i)]
\item $F_{\pm} \in L^{1}_{loc}({\mathbb R}\times \mathbb R^{n}) \cap C({\mathbb R};L^{1}(\mathbb R^{n}))$
and $\Phi \in C^{2}({\mathbb R})$;
\item $F_{\pm}(X,\xi)\geq 0$, $\partial_{X}\Phi(X)<0$, and $\Phi(0)={\Phi^{l}}/{2}>0$;
%
%
\item $F_{\pm}$ solve
\begin{subequations}\label{nweak0}
\begin{gather}
(F_{\pm},(\xi_1-\alpha)\D_X\psi \pm q_{\pm} \D_X{\Phi}\D_{\xi_1}\psi)_{L^2({\mathbb R}\times {\mathbb R}^n)}
=0 \quad \hbox{for $\forall \psi \in C_0^1(\mathbb{R}\times \mathbb{R}^n)$},  
\label{nweak1}\\
\lim_{X\to -\infty} \| F_{\pm}(X,\cdot)-F_{\pm}^{l} \|_{L^{1}(\mathbb R^{n})}=\lim_{X\to +\infty} \| F_{\pm}(X,\cdot)-F_{\pm}^{r} \|_{L^{1}(\mathbb R^{n})}=0\text{\it ;}
\label{nweak2}
\end{gather}
\end{subequations}
\item $\Phi$ solves \eqref{neq4} with \eqref{nbc4} in the classical sense.
\end{enumerate}
\end{defn}

The condition (ii) does not allow the variant of the solution caused by translation.
The equation \eqref{nweak1} is a standard weak form of the equations \eqref{neq3}.
It is possible to replace {\it the classical sense} in the condition (iv) by {\it the weak sense}.
Indeed, a weak solution $\Phi$ of the problem of \eqref{neq4} with \eqref{nbc4}
is a classical solution due to $F_{\pm} \in C({\mathbb R};L^{1}(\mathbb R^{n}))$.
Any solution $(F_{\pm},\Phi)$ satisfies the neutral condition
\begin{gather}\label{nnetrual2}
\int_{-\infty}^{+\infty} \left( \int_{\mathbb R^{n}} e_{+}F_{+} - e_{-}F_{-} d\xi \right) dX= 0.
\end{gather}
This equality will be shown in Lemma \ref{nneed1} (see also Theorem \ref{nexistence1}).

We first discuss some necessary conditions which are used to state our main theorem.
To solve the Poisson equation \eqref{neq4} with \eqref{nbc4},
we must require the quasi-neutral condition
\begin{gather}\label{nnetrual1}
e_{+}\int_{\mathbb R^{n}} F_{+}^{l}(\xi) d\xi = e_{-}\int_{\mathbb R^{n}} F_{-}^{l}(\xi) d\xi, \quad
e_{+}\int_{\mathbb R^{n}} F_{+}^{r}(\xi) d\xi = e_{-}\int_{\mathbb R^{n}} F_{-}^{r}(\xi) d\xi.
\end{gather}  
By integrating \eqref{neq3} over ${\mathbb R} \times {\mathbb R}^{n}$, 
it is seen that we must choose $\alpha$ so that
\begin{gather}\label{nalpha0}
\alpha \int_{\mathbb R^{n}} (F_{\pm}^{r}(\xi)-F_{\pm}^{l}(\xi))  d\xi=\int_{\mathbb R^{n}} \xi_{1} (F_{\pm}^{r}(\xi)-F_{\pm}^{l}(\xi)) d\xi.
\end{gather}
Furthermore, the following are necessary conditions:
\begin{subequations}\label{Flr0}
\begin{align}
F_{+}^l(\xi_1+\alpha,\xi')=F_{+}^l(-\xi_1+\alpha,\xi'), &\quad (\xi_{1},\xi')\in(0,\sqrt{2q_{+}\Phi^{l}})\times\R^{n-1},
\label{Flr5}\\
F_{-}^r(\xi_1+\alpha,\xi')=F_{-}^r(-\xi_1+\alpha,\xi'),  &\quad (\xi_{1},\xi')\in(0,\sqrt{2q_{-}\Phi^{l}})\times\R^{n-1}
\label{Flr6}
\end{align}
and
\begin{align}
F_{+}^{r}(-\sqrt{\xi_{1}^{2}-2q_{+}\Phi^{l}}+\alpha,\xi')=F_{+}^{l}(\xi_{1}+\alpha,\xi'), &\quad (\xi_{1},\xi') \in (-\infty,-\sqrt{2q_{+}\Phi^{l}})\times\R^{n-1},
\label{Flr1} \\
F_{+}^{r}(\sqrt{\xi_1^2-2q_{+}\Phi^{l}}+\alpha,\xi')=F_{+}^{l}(\xi_{1}+\alpha,\xi'), & \quad  (\xi_{1},\xi') \in (\sqrt{2q_{+}\Phi^{l}},+\infty)\times\R^{n-1},
\label{Flr2} \\
F_{-}^{l}(-\sqrt{\xi_{1}^{2}-2q_{-}\Phi^{l}}+\alpha,\xi')=F_{-}^{r}(\xi_{1}+\alpha,\xi'), &\quad (\xi_{1},\xi') \in (-\infty,-\sqrt{2q_{-}\Phi^{l}})\times\R^{n-1},
\label{Flr3} \\
F_{-}^{l}(\sqrt{\xi_1^2-2q_{-}\Phi^{l}}+\alpha,\xi')=F_{-}^{r}(\xi_{1}+\alpha,\xi'), & \quad  (\xi_{1},\xi') \in (\sqrt{2q_{-}\Phi^{l}},+\infty)\times\R^{n-1}.
\label{Flr4}
\end{align}
\end{subequations}
These facts will be shown in Lemma \ref{nneed1}.

Our approach is the same as in Section \ref{S2}. We reduce the problem \eqref{nVP2} to the following problem:
\begin{equation}\lb{nphieq2}
(\D_{X} \Phi)^2=2V(\Phi), \quad 
\lim_{X \to -\infty} \Phi (X) =  \Phi^{l}, \quad
\lim_{X \to +\infty} \Phi (X) =  0.
\end{equation}
The Sagdeev potential $V$ is defined by
\begin{gather}
V=V(\Phi):=\int_{0}^{\Phi} \left(e_{+}\rho_{+}(\varphi) - e_{-}\rho_{-}(\varphi) \right)  d\vphi, \quad \Phi \geq 0,
\label{nV0}
\end{gather}
where the expected densities $\rho_{\pm}$ are defined by
\begin{align}
\rho_{+}(\Phi)
&=\int_{{\mathbb R}^n}F_{+}^l(\xi)\frac{|\xi_1-\alpha|}{\sqrt{(\xi_1-\alpha)^2-2q_{+}(\Phi^{l}-\Phi)}} \chi((\xi_1-\alpha)^2-2q_{+}(\Phi^{l}-\Phi)) \,d\xi,
\label{nrho++}\\
\rho_{-}(\Phi)
&=\int_{{\mathbb R}^n}F_{-}^r(\xi)\frac{|\xi_1-\alpha|}{\sqrt{(\xi_1-\alpha)^2-2q_{-}\Phi}} \chi((\xi_1-\alpha)^2-2q_{-}{\Phi}) \,d\xi.
\label{nrho--}
\end{align}
We will see in \eqref{nrho+'} and  \eqref{nrho-'} below that $\rho_{\pm}$ really express the densities of the ion and electron.
The functions $\rho_{\pm}$ are well-defined for $F_{+}^{l}, F_{-}^{r} \in L^{1}_{loc}(\mathbb R^{n})$ with $F_{+}^{l}, F_{-}^{r} \geq 0$, since all the integrants are nonnegative. 
The properties of $\rho_{\pm}$ are summarized in the following lemma.
We omit the proof, since it is the same as that of Lemma \ref{rhopm}.
\begin{lem}\label{nrhopm}
Let $\alpha \in \mathbb R$, $\Phi_{l}>0$, and $F_{\pm}^{l},F_{\pm}^{r} \in L^1(\R^n) \cap L_{loc}^{p}({\mathbb R};L^{1}(\mathbb R^{n-1}))$ for some $p>2$. 
Then the following estimates hold for $\Phi \in [0,\Phi^{l}]$:
\begin{align}
|\rho_{+}(\Phi)| &\leq  \sqrt{2} \|F_{+}^{l}\|_{L^{1}(\mathbb R^{n})} +C\|F_{+}^{l}\|_{L^{p}(-2\sqrt{q_{+}\Phi^{l}}+\alpha,2\sqrt{q_{+}\Phi^{l}}+\alpha;L^{1}(\mathbb R^{n-1}))},
\label{nrho+}\\
|\rho_{-}(\Phi)|  &\leq  \sqrt{2} \|F_{-}^{r}\|_{L^{1}(\mathbb R^{n})} +C\|F_{-}^{r}\|_{L^{p}(-2\sqrt{q_{-}\Phi^{l}}+\alpha,2\sqrt{q_{-}\Phi^{l}}+\alpha;L^{1}(\mathbb R^{n-1}))},
\label{nrho-}
\end{align}
where $C$ is a positive constant depending only on $\alpha$, $p$, $\Phi^{l}$, and $q_{\pm}$.
Furthermore, the functions $\rho_{\pm}(\cdot)$ belong to $C([0,\Phi^{l}])$.
\end{lem}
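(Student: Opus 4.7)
The plan is to follow verbatim the strategy the authors used to prove Lemma \ref{rhopm}, since the expressions for $\rho_{\pm}$ here in \eqref{nrho++}--\eqref{nrho--} are structurally identical to the $\rho_{-}$ in \eqref{rho--}: each has the integrable-singularity factor $|\xi_{1}-\alpha|/\sqrt{(\xi_{1}-\alpha)^{2}-c}$ cut off by $\chi((\xi_{1}-\alpha)^{2}-c)$, with $c=2q_{+}(\Phi^{l}-\Phi)\in[0,2q_{+}\Phi^{l}]$ for $\rho_{+}$ and $c=2q_{-}\Phi\in[0,2q_{-}\Phi^{l}]$ for $\rho_{-}$. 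Thus no new mechanism beyond what appears in the $\rho_{-}$ estimate of Lemma \ref{rhopm} is required.

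For the bound \eqref{nrho+}, I would split the integrand into the far region $\{(\xi_{1}-\alpha)^{2}\geq 4q_{+}\Phi^{l}\}$ and the near-singularity region $\{(\xi_{1}-\alpha)^{2}<4q_{+}\Phi^{l}\}$. On the far region, the inequality $(\xi_{1}-\alpha)^{2}-2q_{+}(\Phi^{l}-\Phi)\geq \tfrac{1}{2}(\xi_{1}-\alpha)^{2}$ is immediate, which produces the $\sqrt{2}\|F_{+}^{l}\|_{L^{1}(\mathbb{R}^{n})}$ term. On the near region, $\xi_{1}$ is confined to $[-2\sqrt{q_{+}\Phi^{l}}+\alpha,\,2\sqrt{q_{+}\Phi^{l}}+\alpha]$, so I apply H\"older in $\xi_{1}$ with conjugate exponents $p$ and $p'=p/(p-1)<2$: the $L^{p'}$ norm of the weight $|\xi_{1}-\alpha|/\sqrt{(\xi_{1}-\alpha)^{2}-2q_{+}(\Phi^{l}-\Phi)}$ on that compact interval is finite (uniformly in $\Phi\in[0,\Phi^{l}]$) because $p'<2$ makes the square-root singularity integrable, and it is paired with $\|F_{+}^{l}\|_{L^{p}(-2\sqrt{q_{+}\Phi^{l}}+\alpha,2\sqrt{q_{+}\Phi^{l}}+\alpha;L^{1}(\mathbb{R}^{n-1}))}$. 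The estimate \eqref{nrho-} is obtained by the identical split applied to $\rho_{-}$.

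For the continuity statement $\rho_{\pm}\in C([0,\Phi^{l}])$, I would copy the density argument used in the fourth and fifth paragraphs of the proof of Theorem \ref{existence1}: approximate $F_{+}^{l}$ (resp.\ $F_{-}^{r}$) in the mixed norm $L^{1}(\mathbb{R}^{n})\cap L^{p}(-2\sqrt{q_{+}\Phi^{l}}+\alpha,2\sqrt{q_{+}\Phi^{l}}+\alpha;L^{1}(\mathbb{R}^{n-1}))$ by functions in $C_{0}^{\infty}(\mathbb{R}^{n})$ that vanish in a $1/k$-neighborhood of the singular values $\xi_{1}=\pm\sqrt{2q_{+}\Phi^{l}}+\alpha$ (respectively $\xi_{1}=\pm\sqrt{2q_{-}\Phi^{l}}+\alpha$). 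The estimates \eqref{nrho+}--\eqref{nrho-} then control the approximation error uniformly in $\Phi$, and for each fixed smooth approximant the dominated convergence theorem gives continuity in $\Phi$; an $\varepsilon/3$ splitting into approximation-error, pointwise-continuity, and approximation-error pieces finishes the argument.

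The only mild obstacle is verifying that the $L^{p'}$-norm of the singular weight is bounded \emph{uniformly} in the parameter $\Phi\in[0,\Phi^{l}]$; but after the change of variable $s=\xi_{1}-\alpha$ this reduces to showing $\sup_{c\in[0,2q_{\pm}\Phi^{l}]}\int_{\{s^{2}>c\}\cap[-2\sqrt{q_{\pm}\Phi^{l}},2\sqrt{q_{\pm}\Phi^{l}}]}|s|^{p'}(s^{2}-c)^{-p'/2}\,ds<+\infty$, which holds because $p'<2$ and the integrand is dominated by $(s^{2}-c)^{-p'/2+\,\cdot\,}$ times a bounded factor with the singular point integrable. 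Everything else is bookkeeping identical to Lemma \ref{rhopm}, which is exactly why the authors elect to omit the proof.
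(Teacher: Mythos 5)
Your proposal is correct and follows essentially the same route as the paper, which omits the proof precisely because it is the argument of Lemma \ref{rhopm}: the split into a far region where the weight is at most $\sqrt{2}$ and a compact near-singularity region handled by H\"older with $p'<2$ (uniformly in the parameter), plus the density/dominated-convergence argument borrowed from the proof of Theorem \ref{existence1} for continuity. Nothing further is needed.
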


We are now in a position to state our main theorem.

\begin{thm} \label{nexistence1}
Let $p>2$, $\alpha \in \mathbb R$, $\Phi_{l}>0$, $F_{\pm}^{l},F_{\pm}^{r} \in L^1(\R^n) \cap L_{loc}^{p}({\mathbb R};L^{1}(\mathbb R^{n}))$,
$|\xi_{1}|F_{\pm}^{l}$, $|\xi_{1}|F_{\pm}^{r} \in L^1(\R^n)$, and
$F_{\pm}^{l},F_{\pm}^{r} \geq 0$. 
Assume that the necessary conditions \eqref{nnetrual1}--\eqref{Flr0} hold.
Then the problem \eqref{nVP2} has a solution $(F_{\pm},\Phi)$ 
if and only if $\Phi^{l}$ satisfies
\begin{subequations}\label{Phil}
\begin{gather}
V(\Phi)>0 \ \ \text{for $\Phi \in (0,\Phi^{l})$}, \quad  V(\Phi^{l})=0,
\label{Phil1}\\
\int_{0}^{\Phi^{l}/2} \frac{d\Phi}{\sqrt{V(\Phi)}} =+ \infty, \quad 
\int_{\Phi^{l}/2}^{\Phi^{l}} \frac{d\Phi}{\sqrt{V(\Phi)}} =+ \infty.
\label{Phil2}
\end{gather}
\end{subequations}
Furthermore, the solution is unique, the neutral condition \eqref{nnetrual2} holds, 
and $F_{\pm}$ can be written by 
\begin{align}
F_{+}(X,\xi)
&=F_{+}^l(-\sqrt{(\xi_1-\alpha)^2+2q_{+}(\Phi^{l}-\Phi(X))}+\alpha,\xi')\chi(-(\xi_1-\alpha))
\notag \\
&\quad +F_{+}^l(\sqrt{(\xi_1-\alpha)^2+2q_{+}(\Phi^{l}-\Phi(X))}+\alpha,\xi')\chi(\xi_1-\alpha),
\label{nfform+} \\
F_{-}(X,\xi)
&=F_{-}^r(-\sqrt{(\xi_1-\alpha)^2+2q_{-}\Phi(X)}+\alpha,\xi')\chi(-(\xi_1-\alpha))
\notag\\
&\quad +F_{-}^r(\sqrt{(\xi_1-\alpha)^2+2q_{-}\Phi(X)}+\alpha,\xi')\chi(\xi_1-\alpha). 
\label{nfform-}
\end{align}
\end{thm}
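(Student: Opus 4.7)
My plan mirrors the solitary-wave analysis of Section \ref{S2} but is simpler because the strict monotonicity of $\Phi$ forbids closed characteristics, so there is no trapped-ion freedom analogous to $G$. I will prove necessity (the shock analogue of Lemma \ref{need1}), then solvability, then uniqueness, in that order.

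\medskip

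For necessity, I first show $\partial_X \Phi \to 0$ at $\pm\infty$ from uniform continuity of $\partial_{XX}\Phi$ (bounded via \eqref{neq4}) together with \eqref{nbc4}; integrating \eqref{neq4} then yields \eqref{nnetrual2}, and comparing \eqref{nweak2} at the two ends yields the quasi-neutrality \eqref{nnetrual1}. Integrating the Vlasov equation \eqref{neq3} against $\mathbf{1}$ on $\mathbb R\times\mathbb R^n$ (using $|\xi_1|F_\pm^{l,r}\in L^1$ to kill $\xi_1$-boundary terms) gives \eqref{nalpha0}. Next I apply the method of characteristics to \eqref{neq3}, using the conserved quantities $\frac12(\xi_1-\alpha)^2 \mp q_\pm \Phi(X)$. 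Because $\Phi$ is strictly decreasing from $\Phi^l$ to $0$, each $F_+$-characteristic is either a monotone trajectory crossing from $X=-\infty$ to $X=+\infty$ (energetic ions) or a parabolic arc reflecting back to $X=-\infty$ (low-energy ions with $|\xi_1-\alpha|<\sqrt{2q_+\Phi^l}$ at the left boundary); a symmetric statement holds for $F_-$ reflecting on the right. Tracking these curves produces the explicit formulas \eqref{nfform+}-\eqref{nfform-} together with the reflection symmetries \eqref{Flr5}-\eqref{Flr6} on the appropriate side and the cross-shock matching relations \eqref{Flr1}-\eqref{Flr4}. Integrating \eqref{nfform+}-\eqref{nfform-} over $\mathbb R^n$ under the substitutions $\zeta_1=\pm\sqrt{(\xi_1-\alpha)^2+2q_+(\Phi^l-\Phi)}+\alpha$ and $\zeta_1=\pm\sqrt{(\xi_1-\alpha)^2+2q_-\Phi}+\alpha$ yields $\int F_\pm\,d\xi=\rho_\pm(\Phi)$ with $\rho_\pm$ as in \eqref{nrho++}-\eqref{nrho--}. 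Substituting into \eqref{neq4}, multiplying by $\partial_X\Phi$, and integrating from $X$ to $+\infty$ using $\partial_X\Phi\to 0$ gives \eqref{nphieq2}. The inequality \eqref{Phil1} follows from strict monotonicity, and \emph{both} divergences in \eqref{Phil2} follow from the change of variables $\Phi=\Phi(X)$ because the asymptotes $\Phi=0$ and $\Phi=\Phi^l$ are each attained only as $|X|\to\infty$, in contrast with the single divergence of \eqref{G-beta3}.

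\medskip

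For solvability, given \eqref{Phil1}-\eqref{Phil2} I solve $\partial_X\Phi=-\sqrt{2V(\Phi)}$ with $\Phi(0)=\Phi^l/2$ in both directions. Lemma \ref{nrhopm} gives $V\in C^1([0,\Phi^l])$, and \eqref{Phil1} makes $\sqrt{2V}$ locally Lipschitz on every compact subinterval of $(0,\Phi^l)$, so local existence is standard; the two divergent integrals in \eqref{Phil2} prevent $\Phi$ from reaching $0$ or $\Phi^l$ at any finite $X$, delivering a $C^2$ strictly decreasing global solution with the desired asymptotes. I then define $F_\pm$ by \eqref{nfform+}-\eqref{nfform-} and verify the conditions of Definition \ref{nDefS1} by the same density/approximation scheme used in the proof of Theorem \ref{existence1}: approximate $F_\pm^{l,r}$ by smooth compactly supported sequences that vanish near the critical velocities $\xi_1=\alpha\pm\sqrt{2q_\pm\Phi^l}$ and preserve the symmetries \eqref{Flr5}-\eqref{Flr6}, verify the weak form \eqref{nweak1} for the approximants by direct computation along the characteristics (which are either smooth transits or smooth reflections), and pass to the limit using H\"older-type bounds analogous to \eqref{rho+1}-\eqref{rho-}.

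\medskip

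Uniqueness is then automatic: any two solutions $(F_\pm^i,\Phi^i)$ produce the \emph{same} Sagdeev potential $V$ (which depends only on $F_\pm^{l,r}$ and $\alpha$, with no trapping parameter), the same normalization $\Phi^i(0)=\Phi^l/2$, and the same first-order ODE $\partial_X\Phi=-\sqrt{2V(\Phi)}$ on the intervals $\{\partial_X\Phi<0\}=\mathbb R$; local Lipschitz continuity of $\sqrt{2V}$ on any compact subinterval of $(0,\Phi^l)$ then forces $\Phi^1=\Phi^2$ by Cauchy--Lipschitz, and the formulas \eqref{nfform+}-\eqref{nfform-} force $F_\pm^1=F_\pm^2$. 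The main technical obstacle is the characteristics bookkeeping in the necessity step: unlike Section \ref{S2}, where closed-loop characteristics condensed into the single freedom parameter $G$, here every characteristic must be traced from boundary to boundary through the monotone landscape of $\Phi$, simultaneously producing the species-asymmetric reflection conditions \eqref{Flr5}-\eqref{Flr6} and the cross-shock matching \eqref{Flr1}-\eqref{Flr4}, and these must be self-consistent with the $\alpha$-constraint \eqref{nalpha0}.
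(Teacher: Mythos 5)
Your proposal is correct and follows essentially the same route as the paper: necessity via the characteristics method (monotone $\Phi$, transiting and reflected trajectories giving \eqref{nfform+}--\eqref{nfform-}, \eqref{Flr0}), reduction to the Sagdeev ODE \eqref{nphieq2} with the normalization $\Phi(0)=\Phi^{l}/2$, sufficiency by solving $\partial_X\Phi=-\sqrt{2V(\Phi)}$ with the two divergent integrals preventing $\Phi$ from hitting $0$ or $\Phi^{l}$ at finite $X$, verification of Definition \ref{nDefS1} by the same approximation scheme as Theorem \ref{existence1}, and uniqueness from Cauchy--Lipschitz for $\sqrt{2V}$ plus the explicit formulas. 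One minor caveat: you obtain \eqref{nalpha0} by integrating \eqref{neq3} over $\mathbb R\times\mathbb R^{n}$, but the flux terms at $X=\pm\infty$ require convergence of the first moments $\int(\xi_1-\alpha)F_{\pm}(X,\xi)\,d\xi$, which does not follow from the $L^{1}$ convergence in \eqref{nweak2} alone; the paper instead deduces \eqref{nalpha0} purely algebraically from the matching relations \eqref{Flr5}--\eqref{Flr2} and a change of variables applied to the explicit forms, and since you derive those forms anyway, your argument is repaired by the same computation.
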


\begin{rem} {\rm
We do not use the information of solution to write the necessary and sufficient condition \eqref{Phil}.
Indeed, it depends only on $e_{\pm}$, $q_{\pm}$, $F_{\pm}^{l}$, and $F_{\pm}^{r}$. 
In subsection \ref{S3.3}, we will provide an example satisfying all the conditions in Theorem \ref{nexistence1}, and hence have a shock wave. 
Furthermore, the necessary and sufficient condition \eqref{Phil} is easy to check by computers as follows. 
If $V(\Phi)$ is a $C^{2}$-funciton around $\Phi=0,\Phi^{l}$,  
the condition \eqref{Phil2} is equivalent to that $\frac{d}{d\Phi}V(0)=\frac{d}{d\Phi}V(\Phi^{l})=0$ thanks to the Taylor theorem. 
Therefore, \eqref{Phil} is verified if the graph of $V(\Phi)$ is drawn as in Figure \ref{nfigV} below.
It is also worth pointing out that the shock wave is always unique whereas the solitary wave is not unique for almost cases. 
This difference comes from the monotonicity of the potential $\Phi$. It cannot trap ions.
}
\end{rem}

\begin{figure}[H]
\begin{center}
    \includegraphics[width=8cm, bb=0 0 1720 1239]{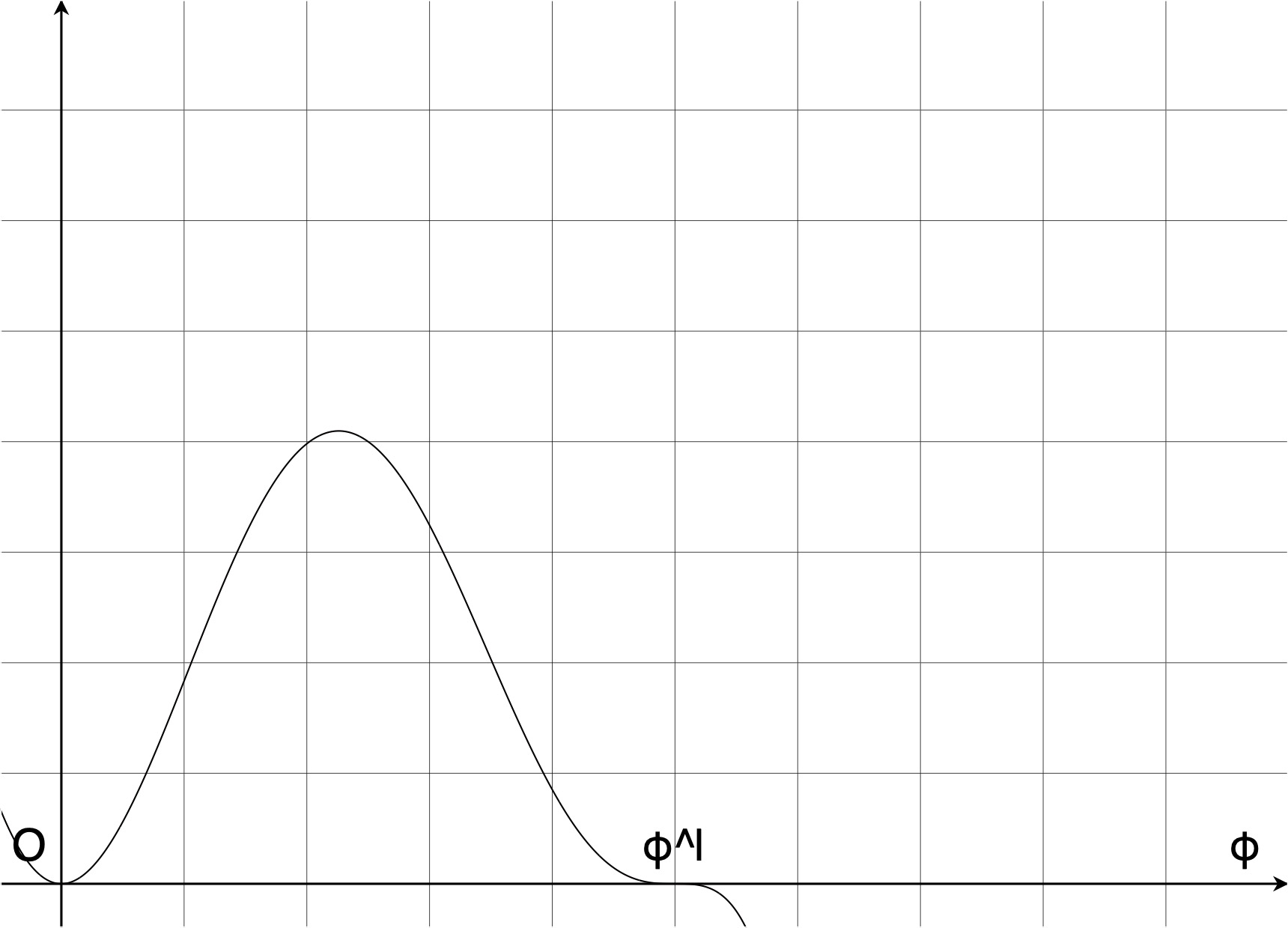}
\end{center}
  \caption{the graph of $V$}  
  \label{nfigV}
\end{figure}

As mentioned in Section \ref{S2}, it is often assumed in plasma physics that the electron density obeys the Boltzmann relation $\rho_{-}:=\int_{\mathbb R^{n}} f_{-} d\xi=\rho e^{-\kappa\phi}$ in \eqref{eq2}.
The corresponding shock waves solve
\begin{subequations}\label{nVP3}
\begin{gather}
(\xi_{1}-\alpha) \partial_{X} F_{+} + q_{+}\partial_{X} \Phi  \partial_{\xi_{1}} F_{+} =0, \ \ X \in \mathbb R, \ \xi \in \mathbb R^{n},
\label{neq5}
\\
\partial_{XX} \Phi 
= e_{+}\int_{\mathbb R^{n}} F_{+} d\xi - e_{-}\rho e^{-\kappa \Phi}, \ \ X \in \mathbb R,
\label{neq6}\\
\lim_{X \to-\infty} F_{+} (X,\xi) =  F_{+}^{l}(\xi), \quad
\lim_{X \to+\infty} F_{+} (X,\xi) =  F_{+}^{r}(\xi), 
\quad \xi \in \mathbb R^{n},
\label{nbc5} \\
\lim_{X \to -\infty} \Phi (X) =  \Phi^{l}, \quad
\lim_{X \to +\infty} \Phi (X) =  0.
\label{nbc6} 
\end{gather}
\end{subequations}
Theorem \ref{nexistence1} is also applicable to the problem \eqref{nVP3} by suitably choosing $F_{-}^{l}$ and $F_{-}^{r}$ in the problem \eqref{nVP2}. 
Namely, the following corollary holds. We omit the proof, since it is the same as that of Corollary \ref{cor1}.

\begin{cor}
Suppose that $(F_{\pm},\Phi)$ is a solution of the problem \eqref{nVP2} with 
\begin{gather*}
F_{-}^{r}(\xi)=\frac{\rho}{\displaystyle \int_{\mathbb R^{n}} e^{\frac{-\kappa}{2q_{-}}|\xi|^2} d\xi}
e^{\frac{-\kappa}{2q_{-}}\{(\xi_{1}-\alpha)^{2}+|\xi'|^{2}\}},
\quad
F_{-}^{l}(\xi)=\frac{\rho}{\displaystyle \int_{\mathbb R^{n}} e^{\frac{-\kappa}{2q_{-}}|\xi|^2} d\xi}
e^{\frac{-\kappa}{2q_{-}}\{(\xi_{1}-\alpha)^{2}+2q_{-}\Phi^{l}+|\xi'|^{2}\}}.
\end{gather*}
Then $(F_{+},\Phi)$ is a solution of the problem \eqref{nVP3}.
\end{cor}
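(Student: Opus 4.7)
The plan is to mimic the proof of Corollary \ref{cor1} almost verbatim, since the structural situation is identical: we exploit the explicit representation formula for $F_{-}$ provided by Theorem \ref{nexistence1} and the special Maxwellian shape of $F_{-}^{r}$ to reduce the $F_{-}$-integral in \eqref{neq4} to the Boltzmann expression $\rho e^{-\kappa\Phi}$ appearing in \eqref{neq6}.

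First, I would invoke Theorem \ref{nexistence1} to deduce that the electron distribution admits the representation \eqref{nfform-}. Plugging the prescribed Maxwellian
\[
F_{-}^{r}(\xi)=\frac{\rho}{\int_{\mathbb R^{n}} e^{-\frac{\kappa}{2q_{-}}|\xi|^2}\,d\xi}\,e^{-\frac{\kappa}{2q_{-}}\{(\xi_{1}-\alpha)^{2}+|\xi'|^{2}\}}
\]
into \eqref{nfform-}, the argument $\pm\sqrt{(\xi_{1}-\alpha)^{2}+2q_{-}\Phi(X)}+\alpha$ immediately combines with the Gaussian to give
\[
F_{-}^{r}\bigl(\pm\sqrt{(\xi_{1}-\alpha)^{2}+2q_{-}\Phi(X)}+\alpha,\xi'\bigr)
=\frac{\rho\,e^{-\kappa\Phi(X)}}{\int_{\mathbb R^{n}} e^{-\frac{\kappa}{2q_{-}}|\xi|^2}\,d\xi}\,
e^{-\frac{\kappa}{2q_{-}}\{(\xi_{1}-\alpha)^{2}+|\xi'|^{2}\}}
\]
regardless of the sign, so that the two indicator functions $\chi(\pm(\xi_{1}-\alpha))$ simply partition $\mathbb R^{n}$ and add up to the full Maxwellian times $e^{-\kappa\Phi(X)}$.

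Integrating over $\xi\in\mathbb R^{n}$ and applying a straightforward change of variable to normalize the Gaussian, I would obtain
\[
\int_{\mathbb R^{n}} F_{-}(X,\xi)\,d\xi
=\rho\,e^{-\kappa\Phi(X)}, \qquad X\in\mathbb R.
\]
Substituting this identity into the Poisson equation \eqref{neq4} converts it into \eqref{neq6} with the same $F_{+}$. Since the Vlasov equation and boundary data for $F_{+}$ in \eqref{nVP2} coincide exactly with \eqref{neq5}, \eqref{nbc5}, and the limits \eqref{nbc4} coincide with \eqref{nbc6}, the pair $(F_{+},\Phi)$ solves \eqref{nVP3}.

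There is no real obstacle here: the computation is purely algebraic, and the only thing that might be worth a line of remark is consistency with the given $F_{-}^{l}$. Taking $X\to-\infty$ in \eqref{nfform-} together with $\Phi(X)\to\Phi^{l}$ reproduces precisely the prescribed
\[
F_{-}^{l}(\xi)=\frac{\rho}{\int_{\mathbb R^{n}} e^{-\frac{\kappa}{2q_{-}}|\xi|^2}\,d\xi}\,e^{-\frac{\kappa}{2q_{-}}\{(\xi_{1}-\alpha)^{2}+2q_{-}\Phi^{l}+|\xi'|^{2}\}},
\]
so the hypotheses of Theorem \ref{nexistence1} (in particular the symmetry conditions in \eqref{Flr0} and the matching relations \eqref{Flr3}--\eqref{Flr4}) are automatically compatible with the data specified in the corollary. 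Hence the verification reduces to the one-line integration above, and the proof is complete.
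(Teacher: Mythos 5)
Your proposal is correct and follows essentially the same route as the paper, which omits the proof precisely because it repeats that of Corollary \ref{cor1}: use the representation \eqref{nfform-} of $F_{-}$ in terms of $F_{-}^{r}$, observe that the Maxwellian turns the shifted argument into the factor $e^{-\kappa\Phi(X)}$ so that $\int_{\mathbb R^{n}}F_{-}(X,\xi)\,d\xi=\rho e^{-\kappa\Phi(X)}$, and substitute into \eqref{neq4} to obtain \eqref{neq6}. Your additional check that letting $X\to-\infty$ in \eqref{nfform-} recovers the prescribed $F_{-}^{l}$ is a harmless (and correct) consistency remark beyond what the paper records.
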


This section is organized as follows. 
In subsection \ref{S3.1}, we find necessary conditions for the solvability of the problem \eqref{nVP2}.
Subsection \ref{S3.2} gives the proof of the unique solvability stated in Theorem \ref{nexistence1}.
Subsection \ref{S3.3} provides an example with all the conditions in Theorem \ref{nexistence1}.

\subsection{Necessary conditions and uniqueness}\label{S3.1}

In this section, we investigate necessary conditions for the solvability of the problem \eqref{nVP2} as well as the uniqueness provided that the solution exists.

\begin{lem}\label{nneed1}
Let $\alpha \in \mathbb R$, $\Phi_{l}>0$, $F_{\pm}^{l},F_{\pm}^{r} \in L^{1}(\mathbb R^{n})$, $|\xi_{1}|F_{\pm}^{l},|\xi_{1}|F_{\pm}^{r} \in L^1(\R^n)$, and $F_{\pm}^{l},F_{\pm}^{r}\geq 0$.
Suppose that  the problem \eqref{nVP2} has a solution $(F_{\pm},\Phi)$.
Then the conditions \eqref{nnetrual2} and \eqref{nnetrual1} hold;
$F_{\pm}^{l}$ and $F_{\pm}^{r}$ satisfies the condition \eqref{Flr0}; 
$\alpha$ satisfies \eqref{nalpha0};
$F_{\pm}$ are written as \eqref{nfform+} and \eqref{nfform-} by $\Phi$;
$\alpha$ satisfies \eqref{nalpha0}; 
$\rho_{\pm} \in C([0,\Phi^{l}])$;
$\Phi$ solves the problem \eqref{nphieq2} with $\Phi(0)=\Phi^{l}/2$;
$V$ satisfies \eqref{Phil}; 
the solution $(F_{\pm},\Phi)$ is unique.
\end{lem}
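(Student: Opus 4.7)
The approach is to adapt the argument of Lemma~\ref{need1} to the shock-wave setting. Two new features distinguish the analysis: the strict monotonicity of $\Phi$ on $\mathbb{R}$ (which rules out closed characteristics and hence trapped particles), and the different boundary data at $\pm\infty$ (which forces the matching conditions \eqref{Flr0} and the speed condition \eqref{nalpha0}).

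First I would show that $\partial_X\Phi(X)\to 0$ as $|X|\to\infty$: the Poisson equation \eqref{neq4} together with $F_\pm\in C(\mathbb{R};L^1(\mathbb{R}^n))$ and \eqref{nweak2} renders $\partial_{XX}\Phi$ bounded, hence $\partial_X\Phi$ uniformly continuous on $\mathbb{R}$; combined with \eqref{nbc4} this forces the decay. Integrating \eqref{neq4} over $\mathbb{R}$ then yields the neutral condition \eqref{nnetrual2}, and combining it with \eqref{nweak2} at $\pm\infty$ yields the quasi-neutral relation \eqref{nnetrual1}. For the speed condition \eqref{nalpha0}, I would test \eqref{nweak1} against approximations of $\xi_1\eta(X)$ (truncating in $\xi$ and using the integrability $|\xi_1|F_\pm^{l,r}\in L^1$ to justify the limit): the $\partial_{\xi_1}$-term integrates to a vanishing boundary term and what remains is $\partial_X\!\int(\xi_1-\alpha)F_\pm\,d\xi=0$ in the distributional sense, which passes to \eqref{nalpha0} thanks to \eqref{nweak2}.

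Next, regarding $\Phi$ as a given $C^2$ strictly decreasing function, I would apply the method of characteristics to \eqref{nweak1}. The conserved energies along characteristics are $\frac{1}{2}(\xi_1-\alpha)^2\mp q_\pm\Phi(X)$. For $F_+$, the characteristics split into \emph{transmitted} ones with $(\xi_1-\alpha)^2\geq 2q_+(\Phi^l-\Phi(X))$ connecting $-\infty$ to $+\infty$, and \emph{reflected} ones in the region $|\xi_1-\alpha|<\sqrt{2q_+\Phi(X)}$ whose two ends both lie at $-\infty$, one with $\xi_1^{-\infty}>\alpha$ and the other with $\xi_1^{-\infty}<\alpha$; no closed characteristic can occur because $\Phi$ is monotone on all of $\mathbb{R}$. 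Consistency of $F_+^l$ along reflected characteristics produces the symmetry \eqref{Flr5}, while matching transmitted characteristics to the data at $+\infty$ via \eqref{nweak2} produces \eqref{Flr1}--\eqref{Flr2}; tracing values back to $-\infty$ along both families yields the explicit form \eqref{nfform+}. The argument for $F_-$ is symmetric with $\pm\infty$ exchanged: reflected electrons live near $+\infty$ and give \eqref{Flr6}, and transmitted ones give \eqref{Flr3}--\eqref{Flr4} and the form \eqref{nfform-}.

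Inserting \eqref{nfform+}--\eqref{nfform-} into \eqref{neq4} and performing the changes of variables $\zeta_1-\alpha=\pm\sqrt{(\xi_1-\alpha)^2+2q_+(\Phi^l-\Phi(X))}$ and $\pm\sqrt{(\xi_1-\alpha)^2+2q_-\Phi(X)}$ respectively rewrites the densities as $\rho_\pm(\Phi(X))$ in \eqref{nrho++}--\eqref{nrho--}; continuity $\rho_\pm\in C([0,\Phi^l])$ then follows from $F_\pm\in C(\mathbb{R};L^1)$, \eqref{nweak2}, and the fact that $X\mapsto\Phi(X)$ is a homeomorphism $\mathbb{R}\to(0,\Phi^l)$. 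Multiplying the resulting ODE by $\partial_X\Phi$ and integrating on $(X,+\infty)$ gives $(\partial_X\Phi)^2=2V(\Phi)$; condition \eqref{Phil1} follows from $\partial_X\Phi<0$ on $\mathbb{R}$ together with $V(0)=V(\Phi^l)=0$ (the latter equals $(\partial_X\Phi(-\infty))^2/2=0$), and the divergence of the two integrals in \eqref{Phil2} corresponds to $X\to+\infty$ as $\Phi\to 0^+$ and $X\to-\infty$ as $\Phi\to\Phi^{l-}$, by the same computation used at the end of the proof of Lemma~\ref{need1}. For uniqueness, since $V$ now depends only on $F_\pm^l,F_\pm^r$ (no trapped-ion freedom $G$), the ODE $\partial_X\Phi=-\sqrt{2V(\Phi)}$ with normalization $\Phi(0)=\Phi^l/2$ has at most one $C^1$-solution by the Lipschitz argument already used in Lemma~\ref{uniqueness1}, and then \eqref{nfform+}--\eqref{nfform-} determine $F_\pm$ uniquely. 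The main technical obstacle is the characteristic bookkeeping of the third paragraph: verifying that the transmitted/reflected splitting of ion and electron populations yields exactly \eqref{Flr0} with no extra constraints, which is where the strict monotonicity of $\Phi$ and the careful identification of the turning points of reflected trajectories play the decisive role.
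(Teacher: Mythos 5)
Your overall architecture coincides with the paper's proof: decay of $\partial_X\Phi$ via uniform continuity, hence \eqref{nnetrual2} and then \eqref{nnetrual1}; characteristics for a strictly decreasing $\Phi$ splitting into transmitted and reflected families (reflected ions turning back to $-\infty$, reflected electrons to $+\infty$), which yields \eqref{Flr5}--\eqref{Flr6}, \eqref{Flr1}--\eqref{Flr4} and the forms \eqref{nfform+}--\eqref{nfform-}; substitution into the Poisson equation, the change of variables giving $\rho_\pm(\Phi(X))$, the ODE $(\partial_X\Phi)^2=2V(\Phi)$, the conditions \eqref{Phil}, and uniqueness from the Lipschitz continuity of $\sqrt{V}$ on $[\ve,\Phi^l-\ve]$ together with the normalization $\Phi(0)=\Phi^l/2$. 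All of that is sound and is essentially the paper's argument.

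The one genuine problem is your derivation of \eqref{nalpha0}. Testing \eqref{nweak1} with (truncations of) $\psi=\xi_1\eta(X)$ does \emph{not} make the $\partial_{\xi_1}$-term a vanishing boundary term: since $\partial_{\xi_1}(\xi_1\eta)=\eta$, that term is $\pm q_\pm\int \partial_X\Phi\,\eta\,\bigl(\int F_\pm\,d\xi\bigr)dX$, i.e.\ you obtain the momentum balance with a nonzero force term, not $\partial_X\int(\xi_1-\alpha)F_\pm\,d\xi=0$. The mass-flux identity requires $\psi$ depending on $X$ only (then the $\xi_1$-truncation error does vanish), but even so your final limit passage is unjustified: \eqref{nweak2} gives $L^1$ convergence of $F_\pm(X,\cdot)$ without the weight $|\xi_1|$, so it does not yield convergence of the first moments $\int(\xi_1-\alpha)F_\pm(X,\xi)\,d\xi$ as $X\to\pm\infty$; the hypotheses $|\xi_1|F_\pm^{l,r}\in L^1$ concern only the end states, and a priori you have no uniform weighted integrability of $F_\pm(X,\cdot)$ at this stage of your argument. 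The paper avoids both issues by proving \eqref{nalpha0} \emph{after} the matching conditions: writing $\cF_\pm^{r,l}(\xi)=\xi_1F_\pm^{r,l}(\xi_1+\alpha,\xi')$, it splits $\int\cF_+^{l}$ into the regions $\xi_1<-\sqrt{2q_+\Phi^l}$, $|\xi_1|<\sqrt{2q_+\Phi^l}$, $\xi_1>\sqrt{2q_+\Phi^l}$, uses \eqref{Flr1}--\eqref{Flr2} with the change of variables $\sqrt{\xi_1^2-2q_+\Phi^l}=\mp\zeta_1$ to match the outer pieces with $\int\cF_+^{r}$, and \eqref{Flr5} to kill the middle piece, so that $\int(\cF_\pm^r-\cF_\pm^l)\,d\xi=0$, which is \eqref{nalpha0}; here $|\xi_1|F_\pm^{l,r}\in L^1$ enters only to make these integrals finite. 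Since you already establish \eqref{Flr0} and the explicit forms, replacing your second-paragraph argument for \eqref{nalpha0} by this change-of-variables computation closes the gap.
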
 

\begin{proof}
We prove \eqref{nnetrual2}.
Owing to \eqref{nweak2} and $F_{\pm} \in C({\mathbb R}; L^{1}(\mathbb R^{n}))$, 
it follows from \eqref{neq4} that $\D_{XX} \Phi$ is bounded and 
therefore $\D_{X} \Phi$ is uniformly continuous on ${\mathbb R}$.
Then the fact together with \eqref{nbc4} and the condition (ii) in Definition \ref{nDefS1} ensures that
\begin{gather}\label{nlim1}
\lim_{|X| \to +\infty}\D_{X}\Phi(X)=0.
\end{gather}
Integrating \eqref{neq4} from $-\infty$ to $+\infty$ and using \eqref{nlim1}, we arrive at \eqref{nnetrual2}.
Let us next show \eqref{nnetrual1}. It is seen from \eqref{nnetrual2} 
that there exists a sequence $\{X_{k}\}_{k=1}^{+\infty}$ such that $X_{k} \to +\infty$
and $\lim_{k \to +\infty}(e_{+}\int_{\mathbb R^{n}}F_{+} d\xi-e_{-}\int_{\mathbb R^{n}}F_{+} d\xi)(X_{k})=0$. 
On the other hand, it follows from \eqref{nweak2} that
$\lim_{X \to +\infty}(e_{+}\int_{\mathbb R^{n}}F_{+} d\xi-e_{-}\int_{\mathbb R^{n}}F_{+} d\xi)=e_{+}\int_{\mathbb R^{n}}F_{+}^{\infty} d\xi - e_{-}\int_{\mathbb R^{n}}F_{-}^{\infty} d\xi$.
Therefore, \eqref{nnetrual1} must hold.

Next we show that $F_{\pm}^{r}$ and $F_{\pm}^{l}$ satisfies the conditions in \eqref{Flr0},
and $F_{\pm}$ are written as \eqref{nfform+} and \eqref{nfform-} by $\Phi$.
Regarding $\Phi$ as a given function and then applying the characteristics method to \eqref{nweak1}, 
we see that the values of $F_{+}$ must be the same on the following characteristics curve:
\begin{gather*}
\frac{1}{2}(\xi_{1}-\alpha)^{2}-q_{+}\Phi(X)=c,
\end{gather*}
where $c$ is some constant. We draw the illustration of characteristics for $\alpha=0$ in Figure \ref{nfig+}.
\begin{figure}[H]
\begin{center}
    \includegraphics[width=9.5cm, bb=0 0 1720 1220]{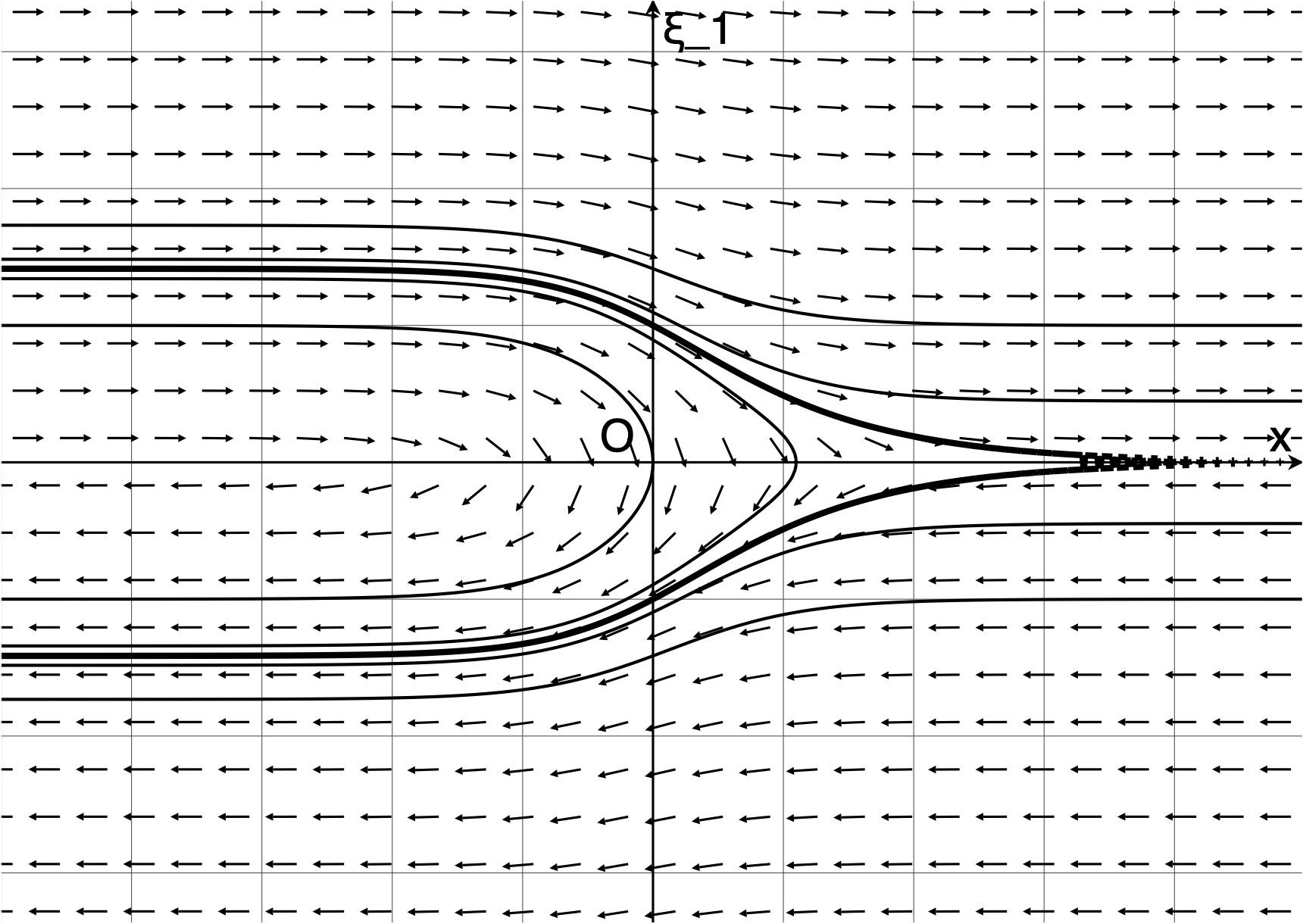}
\end{center}
  \caption{the characteristics of $F_{+}$}  
  \label{nfig+}
\end{figure}
The curve can also be written as
\begin{gather*}
\frac{1}{2}(\xi_{1}-\alpha)^{2}+q_{+}(\Phi^{l}-\Phi(X))=c+q_{+}\Phi^{l}.
\end{gather*}
It tells us that
\begin{align}
F_{+}(y,-\sqrt{\eta_{1}^{2}-2q_{+}(\Phi^{l}-\Phi(y))}+\alpha,\eta')=F_{+}^{l}(\eta_{1}+\alpha,\eta'), &\quad (y,\eta_{1},\eta') \in {\cal X}_{+}^{1},
\label{nchara1}
 \\
F_{+}(y,\pm\sqrt{\eta_1^2-2q_{+}(\Phi^{l}-\Phi(y))}+\alpha,\eta')=F_{+}^{l}(\eta_{1}+\alpha,\eta'), &\quad (y,\eta_{1},\eta') \in {\cal X}_{+}^{2},
\label{nchara2}
 \\
F_{+}(y,\sqrt{\eta_1^2-2q_{+}(\Phi^{l}-\Phi(y))}+\alpha,\eta')=F_{+}^{l}(\eta_{1}+\alpha,\eta'), & \quad  (y,\eta_{1},\eta') \in {\cal X}_{+}^{3},
\label{nchara3}
\end{align}
where 
\begin{align*}
{\cal X}_{+}^{1}&:={\R}\times(-\infty,-\sqrt{2q_{+}\Phi^{l}})\times\R^{n-1},
\\
{\cal X}_{+}^{2}&:=\{(y,\eta_1,\eta')\in\R\times\R\times\R^{n-1}\;|\;2q_{+}(\Phi^{l}-\Phi(y))<\eta_1^2<2q_{+}\Phi^{l}\}, \quad
\\
{\cal X}_{+}^{3}&:={\R}\times(\sqrt{2q_{+}\Phi^{l}},+\infty)\times\R^{n-1}. 
\end{align*}
Now we conclude from these three equalities that $F_{+}$ must be written as \eqref{nfform+}, i.e.
\begin{align*}
F_{+}(X,\xi)
&=F_{+}^l(-\sqrt{(\xi_1-\alpha)^2+2q_{+}(\Phi^{l}-\Phi(X))}+\alpha,\xi')\chi(-(\xi_1-\alpha))
\\
&\quad +F_{+}^l(\sqrt{(\xi_1-\alpha)^2+2q_{+}(\Phi^{l}-\Phi(X))}+\alpha,\xi')\chi(\xi_1-\alpha).
\end{align*}
Furthermore, \eqref{nchara2} means that \eqref{Flr5} must hold.
Letting $y \to +\infty$ in \eqref{nchara1} and \eqref{nchara3}, we also see that $F_{+}^{l}$ and $F_{+}^{r}$ must satisfy
\begin{align*}
F_{+}^{r}(-\sqrt{\eta_{1}^{2}-2q_{+}\Phi^{l}}+\alpha,\eta')=F_{+}^{l}(\eta_{1}+\alpha,\eta'), 
 \\
F_{+}^{r}(\sqrt{\eta_1^2-2q_{+}\Phi^{l}}+\alpha,\eta')=F_{+}^{l}(\eta_{1}+\alpha,\eta'), 
\end{align*}
which mean \eqref{Flr1} and \eqref{Flr2}.

Next we treat $F_{-}$. The values of $F_{-}$ must be the same on the following characteristics curve:
\begin{gather*}
\frac{1}{2}(\xi_{1}-\alpha)^{2}+q_{-}\Phi(X)=c,
\end{gather*}
where $c$ is some constant. We draw the illustration of characteristics for $\alpha=0$ in Figure \ref{nfig-}.
\begin{figure}[H]
\begin{center}
    \includegraphics[width=9.5cm, bb=0 0 1720 1220]{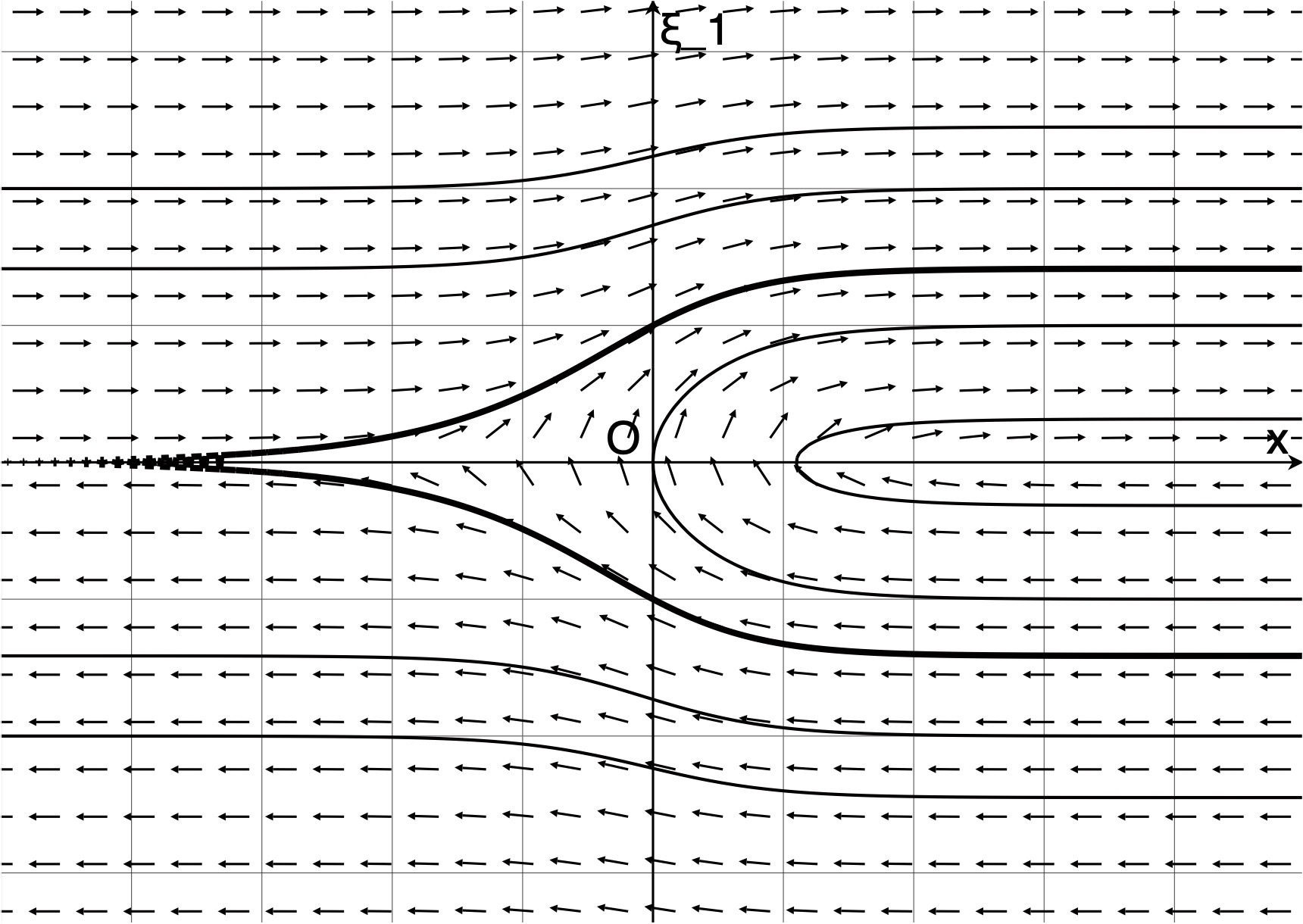}
\end{center}
  \caption{the characteristics of $F_{-}$}  
  \label{nfig-}
\end{figure}
It tells us that
\begin{align}
F_{-}(y,-\sqrt{\eta_{1}^{2}-2q_{-}\Phi(y)}+\alpha,\eta')=F_{-}^{r}(\eta_{1}+\alpha,\eta'), &\quad (y,\eta_{1},\eta') \in {\cal X}_{-}^{1},
\label{nchara4}
 \\
F_{-}(y,\pm\sqrt{\eta_1^2-2q_{-}{\Phi}(y)}+\alpha,\eta')=F_{-}^{r}(\eta_{1}+\alpha,\eta'), &\quad (y,\eta_{1},\eta') \in {\cal X}_{-}^{2},
\label{nchara5}
 \\
F_{-}(y,\sqrt{\eta_1^2-2q_{-}{\Phi}(y)}+\alpha,\eta')=F_{-}^{r}(\eta_{1}+\alpha,\eta'), & \quad  (y,\eta_{1},\eta') \in {\cal X}_{-}^{3},
\label{nchara6}
\end{align}
where 
\begin{align*}
{\cal X}_{-}^{1}&:={\R}\times(-\infty,-\sqrt{2q_{-}\Phi^{l}})\times\R^{n-1},
\\
{\cal X}_{-}^{2}&:=\{(y,\eta_1,\eta')\in\R\times\R\times\R^{n-1}\;|\;2q_{-}{\Phi}(y)<\eta_1^2<2q_{-}\Phi^{l}\}, \quad
\\
{\cal X}_{-}^{3}&:={\R}\times(\sqrt{2q_{-}\Phi^{l}},+\infty)\times\R^{n-1}. 
\end{align*}
Now we conclude from these three equalities that $F_{-}$ must be written as \eqref{nfform-}, i.e.
\begin{align*}
F_{-}(X,\xi)
&=F_{-}^r(-\sqrt{(\xi_1-\alpha)^2+2q_{-}\Phi(X)}+\alpha,\xi')\chi(-(\xi_1-\alpha))
\\
&\quad +F_{-}^r(\sqrt{(\xi_1-\alpha)^2+2q_{-}\Phi(X)}+\alpha,\xi')\chi(\xi_1-\alpha). 
\end{align*}
Furthermore, \eqref{nchara5} means that \eqref{Flr6} must hold.
Letting $y \to -\infty$ in \eqref{nchara4} and \eqref{nchara6}, we also see that $F_{+}^{l}$ and $F_{+}^{r}$ must satisfy
\begin{align*}
F_{-}^{l}(-\sqrt{\eta_{1}^{2}-2q_{-}\Phi^{l}}+\alpha,\eta')=F_{-}^{r}(\eta_{1}+\alpha,\eta'), 
 \\
F_{-}^{l}(\sqrt{\eta_1^2-2q_{-}\Phi^{l}}+\alpha,\eta')=F_{-}^{r}(\eta_{1}+\alpha,\eta'), 
\end{align*}
which mean \eqref{Flr3} and \eqref{Flr4}.

In order to obtain \eqref{nalpha0}, it suffices to show that 
\begin{gather}\label{alpha'}
\int_{\mathbb R^{n}}  (\cF_{\pm}^{r}(\xi)-\cF_{\pm}^{l}(\xi)) d\xi =0,
\end{gather}
where $\cF_{\pm}^{r}(\xi):=\xi_{1} F_{\pm}^{r}(\xi_{1}+\alpha,\xi')$ and $\cF_{\pm}^{l}(\xi):=\xi_{1}F_{\pm}^{l}(\xi_{1}+\alpha,\xi')$.
Indeed, 
\begin{gather*}
\int_{\mathbb R^{n}} \xi_{1} (F_{\pm}^{r}(\xi)-F_{\pm}^{l}(\xi)) d\xi
-\alpha\int_{\mathbb R^{n}} (F_{\pm}^{r}(\xi)-F_{\pm}^{l}(\xi)) d\xi
=\int_{\mathbb R^{n}}  (\cF_{\pm}^{r}(\xi)-\cF_{\pm}^{l}(\xi)) d\xi =0.
\end{gather*}
Let us show only \eqref{alpha'} with $+$, since another case can be similarly shown.
We divide the integrals as follows:
\begin{gather*}
\int_{\mathbb R^{n}}  {\cal F}_{+}^{r} d\xi 
=I_{1}+I_{2}
:= \int_{\xi_{1}<0} {\cal F}_{+}^{r} d\xi
+\int_{\xi_{1}>0} {\cal F}_{+}^{r} d\xi,
\\
\int_{\mathbb R^{n}}  {\cal F}_{+}^{l} d\xi 
=J_{1}+J_{2}+J_{3}
:= \int_{\xi_{1}<-\sqrt{2q_{+}\Phi^{l}}} {\cal F}_{+}^{l} d\xi
+ \int_{|\xi_{1}|<\sqrt{2q_{+}\Phi^{l}}} {\cal F}_{+}^{l} d\xi
+\int_{\xi_{1}>\sqrt{2q_{+}\Phi^{l}}} {\cal F}_{+}^{l} d\xi.
\end{gather*}
Using \eqref{Flr1} and the change of variable $\sqrt{\xi_1^{2}-2q_{+}\Phi^{l}}=-\zeta_{1}$, we arrive at $I_{1}=J_{1}$. 
Similarly, $I_{2}=J_{3}$ holds thanks to \eqref{Flr2}.
Furthermore,  $J_{2}=0$ immediately follows from \eqref{Flr5}.
From these, we deduce \eqref{alpha'} with $+$.
Thus $\alpha$ must satisfy \eqref{nalpha0}.

Now we can reduce the problem \eqref{nVP2} to a problem of an ordinary differential equation only for $\Phi$.
Integrating \eqref{nfform+} over $\mathbb R^{n}$ and using the change of variables $\sqrt{(\xi_1-\alpha)^2+2q_{+}(\Phi^{l}-\Phi)}$ $=-(\zeta_{1}-\alpha)$ and $\sqrt{(\xi_1-\alpha)^2+2q_{+}(\Phi^{l}-\Phi)}=\zeta_{1}-\alpha$ 
for the first and second terms on the right hand side, respectively, we see that 
\begin{align}
& \int_{{\mathbb R}^n}F_{+}(X,\xi) d\xi
\notag \\
&=\int_{{\mathbb R}^n}F_{+}^l(\xi)\frac{|\xi_1-\alpha|}{\sqrt{(\xi_1-\alpha)^2-2q_{+}(\Phi^{l}-\Phi(X))}} \chi((\xi_1-\alpha)^2-2q_{+}(\Phi^{l}-\Phi(X))) \,d\xi
\notag \\
&=\rho_{+}(\Phi(X)),
\label{nrho+'}
\end{align}
where $\rho_{+}$ is the same function defined in \eqref{nrho++}.
On the other hand, integrating \eqref{nfform-} over $\mathbb R^{n}$ and using the change of variables
$\sqrt{(\xi_{1}-\alpha)^{2}+2q_{-}\Phi}=-(\zeta_{1}-\alpha)$ and 
$\sqrt{(\xi_{1}-\alpha)^{2}+2q_{-}\Phi}=\zeta_{1}-\alpha$ for the first and second terms on the right hand side, 
respectively, we see that
\begin{align}
\int_{{\mathbb R}^n}F_{-}(X,\xi) d\xi
&=\int_{{\mathbb R}^n}F_{-}^r(\xi)\frac{|\xi_1-\alpha|}{\sqrt{(\xi_1-\alpha)^2-2q_{-}\Phi(X)}} \chi((\xi_1-\alpha)^2-2q_{-}{\Phi(X)}) \,d\xi
\notag \\
&=\rho_{-}(\Phi(X)),
\label{nrho-'}
\end{align}
where $\rho_{-}$ is the same function defined in \eqref{nrho--}.
Substituting \eqref{nrho+'} and \eqref{nrho-'} into \er{neq4}, we arrive at
\begin{equation}\lb{nphieq1} 
\partial_{XX} \Phi 
= e_{+}\rho_{+}(\Phi) - e_{-}\rho_{-}(\Phi), \quad X\in \mathbb R.
\end{equation}
Now we claim that $\rho_{\pm} \in C([0,\Phi^{l}])$. 
Indeed, owing to $ F_{\pm} \in C({\R};L^{1}(\R^{n}))$ in Definition \ref{nDefS1} and \eqref{nweak2}, there hold that
\begin{align*}
\rho_{\pm}(\Phi(X))= \| F_{\pm}(X) \|_{L^{1}} \in C({\R}),
\ \
\lim_{X \to -\infty}\| F_{\pm}(X) \|_{L^{1}} = \| F_{\pm}^{l} \|_{L^{1}},
\ \
\lim_{X \to +\infty}\| F_{\pm}(X) \|_{L^{1}} = \| F_{\pm}^{r} \|_{L^{1}}.
\end{align*}
These with \footnote{To find $\rho_{+}(0)$, we have used \eqref{Flr1}--\eqref{Flr2}.
To find $\rho_{-}(\Phi^{l})$, we have used \eqref{Flr3}--\eqref{Flr4}.}$\rho_{\pm}(0)=\int_{\mathbb R^{n}} F_{\pm}^{r} d\xi$ and $\rho_{\pm}(\Phi^{l})=\int_{\mathbb R^{n}} F_{\pm}^{l} d\xi$
imply $\rho_{\pm} \in C([0,\Phi^{l}])$ with the aid of \eqref{nbc4},
$ \Phi \in C(\R)$, and $\partial_{X}\Phi(X)< 0$ in Definition \ref{nDefS1}.
Thus the claim is valid.
Multiply \er{nphieq1} by $\D_{X}\Phi$, integrate it over $(X,+\infty)$, and use \eqref{nbc4} and \eqref{nlim1} to obtain the first equation in \eqref{nphieq2}, i.e.
\begin{equation*}
(\D_{X} \Phi)^2=2V(\Phi), 
\quad V(\Phi)=\int_{0}^{\Phi} \left(e_{+}\rho_{+}(\varphi) - e_{-}\rho_{-}(\varphi) \right)  d\vphi,
\end{equation*}
where $V$ is the same function defined in \eqref{nV0}.
Thus $\Phi$ must solve the problem \eqref{nphieq2} with $\Phi(0)=\Phi^{l}/2$.

We prove that $V$ satisfies \eqref{Phil}.
From \eqref{nphieq2}, \eqref{nlim1}, and $\partial_{x} \Phi(X)< 0$ in Definition \ref{nDefS1}, the following holds:
\begin{gather*}
V(\Phi)>0 \ \ \text{for $\Phi \in (0,\Phi^{l})$}, \quad  V(\Phi^{l})=0,
\end{gather*}
which are the same conditions as the conditions in \eqref{Phil1}.
Thus \eqref{Phil1} must hold.
Let us also show \eqref{Phil2}, i.e. 
\begin{gather*}
\int_{0}^{\Phi^{l}/2} \frac{d\Phi}{\sqrt{V(\Phi)}} =+ \infty, \quad 
\int_{\Phi^{l}/2}^{\Phi^{l}} \frac{d\Phi}{\sqrt{V(\Phi)}} =+ \infty.
\end{gather*}
Using $\Phi(0)=\Phi^{l}/2$ in Definition \ref{nDefS1} and the first equation in \eqref{nphieq2}, we see that 
the first integral is unbounded as follows:
\begin{align*}
\int_{0}^{\Phi^{l}/2} \frac{d\Phi}{\sqrt{2V(\Phi)}}
=\int_{0}^{+\infty} \frac{-\partial_{X} \Phi (X)}{\sqrt{2V(\Phi(X))} } dX
=\int_{0}^{+\infty} 1dX
=+\infty.
\end{align*}
Similarly, it is seen that the second integral is unbounded. 
Thus \eqref{Phil2} must hold.

Finally, let us show the uniqueness of solutions $(F_{\pm},\Phi)$.
Recall that $\Phi$ solves the problem \eqref{nphieq2} with $\Phi(0)=\Phi^{l}/2$.
The problem is equivalent to the following owing to $\partial_{X}\Phi<0$:
\begin{gather}
\D_{X}{\Phi}=-\sqrt{2V(\Phi)}, \quad 
\lim_{X \to -\infty} \Phi (X) =  \Phi^{l}, \quad
\lim_{X \to +\infty} \Phi (X) =  0, \quad 
\Phi(0)=\frac{1}{2}\Phi^{l}.
\label{neq1}
\end{gather}
On the other hand, $V$ belong to $C^{1}([0,\Phi^{l}])$ due to $\rho_{\pm} \in C([0,\Phi^{l}])$ shown above.
Moreover, $\sqrt{V}$ is Lipschitz continuous on $[\ve,\Phi^{l}-\ve]$ for any suitably small $\ve>0$ 
thanks to \eqref{Phil1} shown above. 
Therefore, the solution of \eqref{neq1} is unique.
Suppose that $(F_{\pm}^{1},\Phi^{1})$ and  $(F_{\pm}^{2},\Phi^{2})$ are solutions of \eqref{nVP2}.
Then $\Phi^{1}=\Phi^{2}$ follows from the uniqueness of solutions of \eqref{neq1}.
By using this and the forms \eqref{nfform+}--\eqref{nfform-}, it is also seen that $F_{\pm}^{1}=F_{\pm}^{2}$.
The proof is complete.
\end{proof}

\subsection{Solvability}\label{S3.2}

In this subsection, we prove Theorem \ref{nexistence1}.

\begin{proof}[Proof of Theorem \ref{nexistence1}.]
Due to Lemma \ref{nneed1}, it suffices to show the solvability of the problem \eqref{nVP2} provided that \eqref{Phil} holds.
We first construct $\Phi$ by solving the problem \eqref{neq1}, which is equivalent to the problem \eqref{nphieq2} with $\Phi(0)=\Phi^{l}/2$. To this end, we consider the following problem:
\begin{gather}\label{neq1'}
\D_{X}{\Phi}=-\sqrt{2V(\Phi)}, \quad 
\Phi(0)=\frac{1}{2}\Phi^{l}.
\end{gather}
Let us solve it for $X>0$.
It follows from \eqref{Phil1} and Lemma \ref{nrhopm} that $\sqrt{V}$ is Lipschitz continuous on $[\ve,\Phi^{l}-\ve]$ for any suitably small $\ve>0$.
Therefore, the problem \eqref{neq1'} has a solution $\Phi$ which is strictly decreasing unless $\Phi$ attains zero.
Suppose that $\Phi$ attains zero at a point $X=X_{*}<+\infty$. We observe that 
\begin{align*}
X_{*}&=\int_{0}^{X_{*}} 1dX
=\int_{0}^{X_{*}} \frac{-\partial_{x} \Phi (X)}{\sqrt{2V(\Phi(X))} } dX 
= \int_{0}^{\Phi^{l}/2} \frac{d\Phi}{\sqrt{2V(\Phi)}}=+\infty,
\end{align*}
where we have used \eqref{Phil2} in deriving the last equality. 
This is a contradiction.
Thus $\Phi$ cannot attain zero at some finite point.
Now it is clear that the problem \eqref{neq1'} is solvable uniquely for any $X>0$ and $\lim_{X \to +\infty} \Phi(X)=0$.
Similarly, we can solve uniquely the problem \eqref{neq1'} for $X<0$ and also see $\lim_{X \to -\infty} \Phi(X)=\Phi^{l}$.
Consequently, the problem \eqref{nphieq2} with $\Phi(0)=\Phi^{l}/2$ has a unique solution $\Phi \in C^{2}(\R)$.
Furthermore, it is easy to show by differentiating the first equation in \eqref{nphieq2} and using $\D_{X} \Phi<0$ that $\Phi$ satisfies \eqref{nphieq1}.

Now by using $\Phi$, we define $F_{+}$  and  $F_{-}$ as \eqref{nfform+} and \eqref{nfform-}, respectively, i.e. 
\begin{align*}
F_{+}(X,\xi)
&=F_{+}^l(-\sqrt{(\xi_1-\alpha)^2+2q_{+}(\Phi^{l}-\Phi(X))}+\alpha,\xi')\chi(-(\xi_1-\alpha))
\\
&\quad +F_{+}^l(\sqrt{(\xi_1-\alpha)^2+2q_{+}(\Phi^{l}-\Phi(X))}+\alpha,\xi')\chi(\xi_1-\alpha),
\\
F_{-}(X,\xi)
&=F_{-}^r(-\sqrt{(\xi_1-\alpha)^2+2q_{-}\Phi(X)}+\alpha,\xi')\chi(-(\xi_1-\alpha))
\\
&\quad +F_{-}^r(\sqrt{(\xi_1-\alpha)^2+2q_{-}\Phi(X)}+\alpha,\xi')\chi(\xi_1-\alpha).
\end{align*}
One can see that $F_{\pm}$ satisfy the conditions (i)--(iii) in Definition \ref{nDefS1} 
in much the same way as in the proof of Theorem \ref{existence1}. 
The condition (iv) is also validated by \eqref{nfform+}, \eqref{nfform-}, and \eqref{nrho+'}--\eqref{nphieq1}.
The proof is complete.
\end{proof}

\subsection{An example with all the conditions in Theorem \ref{nexistence1}}\label{S3.3}

This subsection provides an example satisfying all the conditions in Theorem \ref{nexistence1}. 
For simplicity, we treat the case $\alpha=0$. 
We choose $F_{\pm}^{l}$ and $F_{\pm}^{r}$ for any $\Phi^{l}>0$ as
\begin{align*}
F^l_+(\xi)&:=\Biggl\{
\begin{array}{ll}
\frac{1}{2e_+\sqrt{q_+}} & \hbox{if} \ \
\xi\in\Bigl(\bigl[-\frac{3}{2}\sqrt{q_+\Phi^l},-\sqrt{q_+\Phi^l}\bigr]
\cup\bigl[\sqrt{q_+\Phi^l},\frac{3}{2}\sqrt{q_+\Phi^l}\bigr]\Bigr)
\times[0,1]^{n-1}, \\
0 & \hbox{otherwise}, 
\end{array}
\\
F^r_+(\xi)&:=\Biggl\{
\begin{array}{ll}
\frac{1}{2e_+\sqrt{q_+}} & \hbox{if} \ \
\xi\in\bigl[-\frac{1}{2}\sqrt{q_+\Phi^l},\frac{1}{2}\sqrt{q_+\Phi^l}\bigr]
\times[0,1]^{n-1}, \\
0 & \hbox{otherwise}, 
\end{array}
\\
F^l_-(\xi)&:=\Biggl\{
\begin{array}{ll}
\frac{1}{2e_-\sqrt{q_-}} & \hbox{if} \ \
\xi\in\bigl[-\frac{1}{2}\sqrt{q_-\Phi^l},\frac{1}{2}\sqrt{q_-\Phi^l}\bigr]
\times[0,1]^{n-1}, \\
0 & \hbox{otherwise}. 
\end{array}
\\
F^r_-(\xi)&:=\Biggl\{
\begin{array}{ll}
\frac{1}{2e_-\sqrt{q_-}} & \hbox{if} \ \
\xi\in\Bigl(\bigl[-\frac{3}{2}\sqrt{q_-\Phi^l},-\sqrt{q_-\Phi^l}\bigr]
\cup\bigl[\sqrt{q_-\Phi^l},\frac{3}{2}\sqrt{q_-\Phi^l}\bigr]\Bigr)
\times[0,1]^{n-1}, \\
0 & \hbox{otherwise}, 
\end{array}
\end{align*}
It is clear that
\begin{gather*}
F^l_\pm,F^r_\pm\in L^1(\R^n)\cap L^3_{loc}(\R;L^1(\R^{n-1})), \quad 
|\xi_1|F^l_\pm,|\xi_1|F^r_\pm\in L^1(\R^n), \quad
F^l_\pm,F^r_\pm\geq 0,
\\
e_{\pm}\int_{\mathbb R^{n}}F^l_{\pm}(\xi)\,d\xi
=e_{\pm}\int_{\mathbb R^{n}}F^r_{\pm}(\xi)\,d\xi
=\frac{\sqrt{\Phi^l}}{2}.
\end{gather*}
It is easy to see that \eqref{nalpha0} holds for $\alpha=0$, and also \eqref{Flr0} holds.
It remains to show \eqref{Phil}. 

To this end, we compute as follows:
\begin{align*}
\rho(\Phi)&:=e_+\rho_+(\Phi)-e_-\rho_-(\Phi)
\\
&=\Biggl\{
\begin{array}{ll}
\sqrt{2\Phi+{\textstyle\frac{1}{4}}\Phi^l}
-\sqrt{-2\Phi+{\textstyle\frac{9}{4}}\Phi^l}
+\sqrt{-2\Phi+\Phi^l} 
& \hbox{if} \quad
\Phi\in[0,\frac{1}{2}\Phi^l], \\
\sqrt{2\Phi+{\textstyle\frac{1}{4}}\Phi^l}
-\sqrt{2\Phi-\Phi^l}
-\sqrt{-2\Phi+{\textstyle\frac{9}{4}}\Phi^l} 
& \hbox{if} \quad
\Phi\in[\frac{1}{2}\Phi^l,\Phi^l],
\end{array}
\end{align*}
where $\rho_{+}(\Phi)$ and $\rho_{-}(\Phi)$ are defined in \eqref{nrho++} and \eqref{nrho--}, respectively.
We note that $\rho(\Phi)=-\rho(\Phi^l-\Phi)$ holds; $\rho=\rho(\Phi)$ is continuous on $[0,\Phi^l]$ and smooth except the point
$\Phi=\frac{1}{2}\Phi^l$; the Sagdeev potential $V(\Phi)=\int_{0}^{\Phi} \rho(\varphi) \,d\varphi$ defined in \eqref{nV0} satisfies $V(\Phi)=V(\Phi^l-\Phi)$.

Let us show \eqref{Phil1}. Using $V(0)=0$ and $V(\Phi)=V(\Phi^l-\Phi)$, we obtain $V(0)=V(\Phi^l)=0$. 
Moreover, it is easy to see that $\frac{d}{d\Phi}V(\Phi)>0$ on $(0,\frac{1}{2}\Phi^l)$.
From this and $V(\Phi)=V(\Phi^l-\Phi)$, we deduce \eqref{Phil1}.
To show \eqref{Phil2}, it suffices to show the first condition in \eqref{Phil2} owing to $V(\Phi)=V(\Phi^l-\Phi)$.
We know that $\rho\in C([0,\frac{1}{2}\Phi^l]) \cap C^\infty([0,\frac{1}{2}\Phi^l))$ which means
$V\in C^1([0,\frac{1}{2}\Phi^l]) \cap C^\infty([0,\frac{1}{2}\Phi^l))$.
On the other hand, $V(0)=\frac{d}{d\Phi}V(0)=\rho(0)=0$ holds. 
Therefore, the application of the Taylor theorem ensures the first condition in \eqref{Phil2}.
Consequently, all the conditions in Theorem \ref{nexistence1} are valid.

\newcommand{\Ruby}[2]{\stackrel{\scriptsize\textcircled{\tiny #2}}{#1}}

\section{Wave Trains}\label{S4}

The main purpose of this section is to investigate {\it waves trains} which
are special solutions of the Vlasov--Poisson system \eqref{eq1}--\eqref{eq2}
so that the following hold:
\begin{enumerate}[(i)]
\item $f_{\pm}(t,x,\xi)=F_{\pm}(x-\alpha t, \xi), \quad \phi(t,x)=\Phi(x-\alpha t) \quad \text{for some $\alpha \in \mathbb R$}$;
\item $(F_{\pm}(X+\gamma,\xi),\Phi(X+\gamma))=(F_{\pm}(X,\xi),\Phi(X)) \quad \text{for some $\gamma > 0$ and any $X \in \mathbb R$}$;
\item $\Phi$ takes an extremum at $X=0$ and a reference point of $\Phi$ is located at $X=0$, i.e. $\D_{X}\Phi(0)=\Phi(0)=0$;
\item $\Phi$ is positive and has a unique local maximum on $(0,\gamma)$.
\end{enumerate}
Notice that the condition (iii) can be assumed to hold without loss of generality.
From the conditions (iii) and (iv), it is clear that $\Phi$ takes a local minimum at $x=0$.
It is also worth pointing out that for the case that the potential $\Phi$ is negative and has a unique local minimum on $(0,\gamma)$, 
we can reduce it to the case that the condition (iv) holds by replacing 
$(F_{\pm},\Phi,e_{\pm},q_{\pm})$ by $(F_{\mp},-\Phi,e_{\mp},q_{\mp})$ in \eqref{tVP2}.

It is seen from direct computations that $(F_{\pm},\Phi)=(F_{\pm}(X,\xi),\Phi(X))$ solves the following problem:
\begin{subequations}\label{tVP2}
\begin{gather}
(\xi_{1}-\alpha) \partial_{X} F_{\pm} \pm q_{\pm}\partial_{X} \Phi  \partial_{\xi_{1}} F_{\pm} =0, \ \ X \in \mathbb R, \ \xi \in \mathbb R^{n},
\label{teq3}
\\
\partial_{XX} \Phi 
= e_{+}\int_{\mathbb R^{n}} F_{+} d\xi - e_{-}\int_{\mathbb R^{n}} F_{-}  d\xi , \ \ X \in \mathbb R,
\label{teq4}\\
(F_{\pm}(X+\gamma,\xi),\Phi(X+\gamma))=(F_{\pm}(X,\xi),\Phi(X)).
\label{tbc1}
\end{gather}
\end{subequations}

Let us give a definition of solutions of \eqref{tVP2}, where $\mathbb T_{\gamma}:= \mathbb R / \gamma\mathbb Z$.

\begin{defn}\label{tDefS1}
We say that $(F_{\pm},\Phi)$ is a solution of the problem \eqref{tVP2} if it satisfies the following:
\begin{enumerate}[(i)]
\item $F_{\pm} \in L^{1}_{loc}({\mathbb T}_{\gamma}\times \mathbb R^{n}) \cap C({\mathbb T}_{\gamma};L^{1}(\mathbb R^{n}))$
and $\Phi \in C^{2}({\mathbb T}_{\gamma})$;
\item $F_{\pm}(X,\xi)\geq 0$, $\Phi(0)=\partial_{X}\Phi(0)= 0$, and $\Phi(X)>0$ has a unique local maximum on $(0,\gamma)$;
\item $F_{\pm}$ solve
\begin{gather}
(F_{\pm},(\xi_1-\alpha)\D_X\psi \pm q_{\pm} \D_X{\Phi}\D_{\xi_1}\psi)_{L^2({\mathbb T}_{\gamma} \times {\mathbb R}^n)}
=0 \quad \hbox{for $\forall \psi \in C_0^1({\mathbb T}_{\gamma} \times \mathbb{R}^n)$}\text{\it ;}
\label{tweak1}
\end{gather}
%
\item $\Phi$ solves \eqref{teq4} in the classical sense.
\end{enumerate}
\end{defn}

The condition (ii) does not allow the variant of the solution caused by translation.
The equation \eqref{tweak1} is a standard weak form of the equations \eqref{teq3}.
It is possible to replace {\it the classical sense} in the condition (iv) by {\it the weak sense}.
Indeed, a weak solution $\Phi$ of the problem of \eqref{teq4} 
is a classical solution due to $F_{\pm} \in C({\mathbb T}_{\gamma};L^{1}(\mathbb R^{n}))$.
It follows from the conditions (i) and (iv) that any solution $(F_{\pm},\Phi)$ satisfies the neutral condition
\begin{gather}\label{tnetrual2}
\int_{0}^{\gamma} \left( \int_{\mathbb R^{n}} e_{+}F_{+} - e_{-}F_{-} d\xi \right) dX= 0.
\end{gather}

Our approach is the same as in Section \ref{S2}, 
but we have the freedom of the distribution even for the non-trapped ions and electrons.
We reduce the problem \eqref{tVP2} to the following problem:
\begin{gather}\lb{tphieq2}
(\D_{X} \Phi)^2=2V(\Phi;\beta,G,H_{\pm}).
\end{gather}
The Sagdeev potential $V$ is defined 
for the maximum $\beta>0$ of the potential $\Phi$, 
the distribution $G=G(\xi)\geq 0$ on the plane $\{x=\frac{\gamma}{2}\}$ of the trapped ions,
and the distribution $H_{\pm}=H_{\pm}(\xi)\geq 0$ on the plane $\{x=0\}$ of the non-trapped ions and electrons by
\begin{gather}
V=V(\Phi;\beta,G,H_{\pm}):=\int_{0}^{\Phi} \left( e_{+}\rho_{+}(\varphi;\beta,G,H_{+}) - e_{-}\rho_{-}(\varphi;H_{-} )\right)  d\vphi, \quad \Phi \in [0,\beta],
\label{tV0}
\end{gather}
where the expected densities $\rho_{\pm}$ are defined by
\begin{align}
\rho_{+}(\Phi;\beta,G,H_{+})
&:=\int_{{\mathbb R}^n} H_{+}(\xi)\frac{|\xi_1-\alpha|}{\sqrt{(\xi_1-\alpha)^2+2q_{+}\Phi}}\,d\xi
\notag\\
&\quad +2\int_{\mathbb R^{n-1}}\int_{\sqrt{2q_{+}\beta-2q_{+}\Phi}+\alpha}^{\sqrt{2q_{+}\beta}+\alpha}
G(\xi)\frac{\xi_1-\alpha}{\sqrt{(\xi_1-\alpha)^2+2q_{+}\Phi-2q_{+}\beta}}\,d\xi_1\,d\xi',
\label{trho++}
\\
\rho_{-}(\Phi;H_{-})
&:=\int_{{\mathbb R}^n} H_{-}(\xi)\frac{|\xi_1-\alpha|}{\sqrt{(\xi_1-\alpha)^2-2q_{-}\Phi}} \chi((\xi_1-\alpha)^2-2q_{-}{\Phi}) \,d\xi,
\label{trho--}
\end{align}
where we ignore simply the integral with respect to $\xi'$ in the last term in \eqref{trho++} for the case $n=1$.
We will see in \eqref{trho+'} and  \eqref{trho-'} below that $\rho_{\pm}$ really express the densities of the ion and electron.
The functions $\rho_{\pm}$ are well-defined for $H_{\pm} \in L^{1}_{loc}(\mathbb R^{n})$ with $H_{\pm} \geq 0$
and $G \in L^{1}_{loc}((\alpha,+\infty) \times \mathbb R^{n-1})$ with $G \geq 0$, since all the integrants are nonnegative. 
The properties of $\rho_{\pm}$ are summarized in the following lemma.
We omit the proof, since it is the same as that of Lemma \ref{rhopm}.

\begin{lem}\label{trhopm}
Let $\alpha \in \mathbb R$, $\beta>0$, $H_{+} \in L^1(\R^n)$,
$H_{-} \in L^1(\R^n) \cap L_{loc}^{p}({\mathbb R};L^{1}(\mathbb R^{n-1}))$, and
$G \in L^1_{loc}((\alpha,+\infty) \times \R^{n-1}) \cap L_{loc}^{p}([\alpha,+\infty);L^{1}(\mathbb R^{n-1}))$ for some $p>2$.
Then the following estimates hold for $\Phi \in [0,\beta]$:
\begin{align}
& |\rho_{+}(\Phi;\beta,G,H_{+})|  \leq \|H_{+}\|_{L^{1}(\mathbb R^{n})} + C\|G\|_{L^{p}(\alpha,\sqrt{2q_{+}\beta}+\alpha;L^{1}(\mathbb R^{n-1}))},
\label{trho+}\\
& |\rho_{-}(\Phi;H_{-})|  \leq  \sqrt{2} \|H_{-}\|_{L^{1}(\mathbb R^{n})} +C\|H_{-}\|_{L^{p}(-2\sqrt{q_{-}\beta}+\alpha,2\sqrt{q_{-}\beta}+\alpha;L^{1}(\mathbb R^{n-1}))},
\label{trho-}
\end{align}
where $C$ is a positive constant depending only on $\alpha$, $\beta$, $p$, and $q_{\pm}$.
Furthermore, the functions $\rho_{+}(\cdot;\beta,G,H_{+})$ and $\rho_{-}(\cdot;H_{-})$ belong to $C([0,\beta])$.
\end{lem}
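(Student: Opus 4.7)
The plan is to follow the argument of Lemma \ref{rhopm} essentially verbatim, since the function $\rho_{+}(\,\cdot\,;\beta,G,H_{+})$ in \eqref{trho++} has exactly the same structure as $\rho_{+}^{\infty}+\rho_{+}^{0}$ from \eqref{rho++} with $F_{+}^{\infty}$ replaced by $H_{+}$, and $\rho_{-}(\,\cdot\,;H_{-})$ in \eqref{trho--} matches $\rho_{-}$ from \eqref{rho--} with $F_{-}^{\infty}$ replaced by $H_{-}$. First I would split $\rho_{+}$ into its two summands and deal with them separately.

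For the $H_{+}$ summand, use $|\xi_1-\alpha|/\sqrt{(\xi_1-\alpha)^2+2q_{+}\Phi}\leq 1$, valid since $\Phi\geq 0$, to obtain the $\|H_{+}\|_{L^{1}(\mathbb R^{n})}$ bound. For the trapped-ion summand involving $G$, apply the H\"older inequality in the $\xi_{1}$ variable with conjugate exponents $p>2$ and $p'=p/(p-1)<2$ (treating the $\xi'$ integration as an $L^{1}$ norm): this yields the factor $\|G\|_{L^{p}(\alpha,\sqrt{2q_{+}\beta}+\alpha;L^{1}(\mathbb R^{n-1}))}$ together with the weight
\begin{equation*}
\sup_{\Phi\in[0,\beta]}\left(\int_{\sqrt{2q_{+}\beta-2q_{+}\Phi}+\alpha}^{\sqrt{2q_{+}\beta}+\alpha}\frac{|\xi_{1}-\alpha|^{p'}}{\{(\xi_{1}-\alpha)^{2}+2q_{+}\Phi-2q_{+}\beta\}^{p'/2}}\,d\xi_{1}\right)^{1/p'},
\end{equation*}
which is finite precisely because $p'<2$ makes the boundary singularity integrable; this gives \eqref{trho+}. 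The case $n=1$ is handled by ignoring the $\xi'$ integration, as the paper's convention prescribes.

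For $\rho_{-}$, split the integration region into $\{(\xi_{1}-\alpha)^{2}\geq 4q_{-}\beta\}$ and $\{2q_{-}\Phi<(\xi_{1}-\alpha)^{2}<4q_{-}\beta\}$. On the first region, the weight $|\xi_{1}-\alpha|/\sqrt{(\xi_{1}-\alpha)^{2}-2q_{-}\Phi}$ is bounded by $\sqrt{2}$ (since $(\xi_{1}-\alpha)^{2}\geq 4q_{-}\beta\geq 4q_{-}\Phi$), yielding the $\sqrt{2}\|H_{-}\|_{L^{1}(\mathbb R^{n})}$ term. On the second region, the $\xi_{1}$ interval is compact in $[-2\sqrt{q_{-}\beta}+\alpha,2\sqrt{q_{-}\beta}+\alpha]$, and the H\"older argument with $p'<2$ produces the second term of \eqref{trho-}. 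This is where the $L^{p}_{\text{loc}}$ hypothesis on $H_{-}$ is essential: without $p>2$ the boundary singularity $((\xi_{1}-\alpha)^{2}-2q_{-}\Phi)^{-1/2}$ would not be integrable in $\xi_{1}$ against an $L^{1}$ factor. I expect this singularity-bookkeeping to be the only subtle point.

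Finally, for continuity of $\rho_{+}(\,\cdot\,;\beta,G,H_{+})$ and $\rho_{-}(\,\cdot\,;H_{-})$ on $[0,\beta]$, I would follow the density-of-smooth-functions argument carried out in the fourth and fifth paragraphs of the proof of Theorem \ref{existence1}, simply replacing $\Phi(X),\Phi(X_{0}),\Phi(0)$ and the limit $X\to X_{0}$ there by the independent variables $\Phi,\Phi_{0},\beta$ and the limit $\Phi\to\Phi_{0}$, and replacing $F_{+}^{\infty},F_{-}^{\infty}$ by $H_{+},H_{-}$ respectively. Pick approximating sequences $H_{\pm}^{k},G^{k}\in C_{0}^{\infty}$ that vanish in shrinking neighborhoods of the critical sets $\{\xi_{1}=\alpha\pm\sqrt{2q_{\pm}\beta}\}$ (for the square-root singularities) and converge to $H_{\pm},G$ in the appropriate $L^{1}\cap L^{p}$ norms; the same three-term decomposition $K_{1}+K_{2}+K_{3}$ shows that the tails $K_{1},K_{3}$ are small uniformly in $\Phi$ by the estimates above, while $K_{2}$ tends to zero as $\Phi\to\Phi_{0}$ by dominated convergence applied to the smooth approximants. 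This yields $\rho_{+}(\,\cdot\,;\beta,G,H_{+}),\rho_{-}(\,\cdot\,;H_{-})\in C([0,\beta])$ and completes the proof.
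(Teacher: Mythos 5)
Your proposal is correct and coincides with the paper's own treatment: the paper omits the proof of this lemma precisely because it is the same as that of Lemma \ref{rhopm}, and your argument reproduces that proof (the trivial bound for the $H_{+}$ term, the H\"older estimate with $p'<2$ for the $G$ term, the splitting at $(\xi_{1}-\alpha)^{2}=4q_{-}\beta$ for $\rho_{-}$, and the approximation/dominated-convergence argument for continuity) with $F_{\pm}^{\infty}$ replaced by $H_{\pm}$ and $M$ by $\beta$.
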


We are now in a position to state our main theorem.

\begin{thm} \label{texistence1}
Let $p>2$ and $\alpha \in \mathbb R$.
Suppose that $\beta>0$, $H_{+} \in L^1(\R^n)$,
$H_{-} \in L^1(\R^n) \cap L_{loc}^{p}({\mathbb R};L^{1}(\mathbb R^{n-1}))$,
$G \in L^1_{loc}((\alpha,+\infty) \times \R^{n-1}) \cap L_{loc}^{p}([\alpha,+\infty);L^{1}(\mathbb R^{n-1}))$,
and $H_{\pm},  G\geq 0$.
Then the problem \eqref{tVP2} has a solution $(F_{\pm},\Phi)$ for some period $\gamma>0$
if and only if there exists a quadruplet $(\beta,G,H_{\pm})$ with
\begin{subequations}\label{tG-beta}
\begin{gather}
H_{-}(\xi_1+\alpha,\xi')=H_{-}(-\xi_1+\alpha,\xi'), \quad 
(\xi_{1},\xi')\in(0,\sqrt{2q_{-}\beta})\times\R^{n-1},
\label{tG-beta1}\\
V(\Phi;\beta,G,H_{\pm})>0 \ \ \text{for $\Phi \in (0,\beta)$}, \quad 
V(\beta;\beta,G,H_{\pm})=0,
\label{tG-beta2}\\
\int_{0}^{\beta} \frac{d\Phi}{\sqrt{V(\Phi;\beta,G,H_{\pm})}} <+ \infty.
\label{tG-beta3}
\end{gather}
\end{subequations}
The solution satisfies \eqref{tnetrual2}, 
\begin{gather}
\gamma = 2\int_{0}^{\beta} \frac{d\Phi}{\sqrt{V(\Phi;\beta,G,H_{\pm})}},
\label{tperiod1} \\
\max_{X \in [0,\gamma]}\Phi(X) = \Phi\left(\frac{\gamma}{2}\right)= \beta, 
\label{symm1}\\
\Phi\left(\frac{\gamma}{2}+X\right)=\Phi\left(\frac{\gamma}{2}-X\right), \quad \Phi(X)=\Phi(-X), \quad X \in \mathbb R.
\label{symm2}
\end{gather}
Furthermore, $F_{\pm}$ can be written on $[0,\gamma]\times \mathbb R^{n}$ by 
\begin{gather}\label{tfform+}
\begin{aligned}
&{F}_{+}(X,\xi)
\\
& =H_{+}(-\sqrt{(\xi_1-\alpha)^2-2q_{+}{\Phi}(X)}+\alpha,\xi')
\chi((\xi_{1}-\alpha)^{2}-2q_{+}\Phi(X))\chi(-(\xi_{1}-\alpha)) \\
& \qquad
+G(\sqrt{(\xi_1-\alpha)^2-2q_{+}\Phi(X)+2q_{+}\beta}+\alpha,\xi')
\chi(-(\xi_1-\alpha)^2+2q_{+}\Phi(X)) \\
& \qquad
+H_{+}((\sqrt{(\xi_1-\alpha)^2-2q_{+}\Phi(X)}+\alpha,\xi')
\chi((\xi_{1}-\alpha)^{2}-2q_{+}\Phi(X))\chi(\xi_{1}-\alpha)
\end{aligned}
\end{gather}
and
\begin{gather}\label{tfform-}
\begin{aligned}
F_{-}(X,\xi)
&=H_{-}(-\sqrt{(\xi_1-\alpha)^2+2q_{-}\Phi(X)}+\alpha,\xi')\chi(-(\xi_1-\alpha))
\\
&\quad +H_{-}(\sqrt{(\xi_1-\alpha)^2+2q_{-}\Phi(X)}+\alpha,\xi')\chi(\xi_1-\alpha). 
\end{aligned}
\end{gather}
\end{thm}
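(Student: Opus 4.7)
The plan is to follow the two-step strategy of Theorems~\ref{existence1} and \ref{nexistence1}: first extract necessary conditions and the explicit formulas \eqref{tfform+}--\eqref{tfform-} from any solution via the characteristics method, then invert the construction to establish sufficiency. Regarding $\Phi$ as given, the weak Vlasov equation \eqref{tweak1} propagates $F_\pm$ along the level sets of $\frac{1}{2}(\xi_1-\alpha)^2\mp q_\pm\Phi(X)$. Because $\Phi$ attains its minimum $0$ at $X=0$ and its maximum $\beta:=\max\Phi$ at some interior critical point $X_{\max}\in(0,\gamma)$, every characteristic of $F_+$ with $(\xi_1-\alpha)^2>2q_+\Phi(X)$ crosses $X=0$ (parametrized by $H_+(\xi):=F_+(0,\xi)$), while trapped characteristics with $(\xi_1-\alpha)^2<2q_+\Phi(X)$ close around $X_{\max}$ (parametrized by $G(\xi):=F_+(X_{\max},\xi)$ for $\xi_1\geq\alpha$, since trapped orbits are symmetric in $\xi_1-\alpha$ at $X_{\max}$); this gives the three-piece formula \eqref{tfform+}. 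Every characteristic of $F_-$ crosses $X=0$, so $H_-:=F_-(0,\cdot)$ alone determines $F_-$ via \eqref{tfform-}, but bouncing electrons (those with $(\xi_1-\alpha)^2<2q_-\beta$, reflected at turning points) require matching values on incoming and outgoing branches at $X=0$, forcing the symmetry \eqref{tG-beta1}.

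Integrating \eqref{tfform+}--\eqref{tfform-} over $\xi$ with the same substitutions as in Lemma~\ref{need1} yields $\int F_\pm\,d\xi=\rho_\pm$ as in \eqref{trho++}--\eqref{trho--}. Substituting into \eqref{teq4}, multiplying by $\partial_X\Phi$, and integrating from $0$ (where both $\Phi$ and $\partial_X\Phi$ vanish by the condition (ii) in Definition~\ref{tDefS1}) produces \eqref{tphieq2}. Strict positivity of $(\partial_X\Phi)^2$ on $\{0<\Phi<\beta\}$ together with $\partial_X\Phi=0$ at the critical points gives \eqref{tG-beta2}, while finiteness of the period forces \eqref{tG-beta3}. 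For the symmetries \eqref{symm1}--\eqref{symm2}, I would observe that both $\Phi(-X)$ and $\Phi(2X_{\max}-X)$ satisfy the same first-order relation \eqref{tphieq2} with the same data at $X=0$ and $X=X_{\max}$ respectively; the local Lipschitz property of $\sqrt{V}$ on compact subintervals of $(0,\beta)$ (guaranteed by Lemma~\ref{trhopm} and \eqref{tG-beta2}) then yields both reflection symmetries, forcing $X_{\max}=\gamma/2$ and the period formula \eqref{tperiod1} via separation of variables. The neutral condition \eqref{tnetrual2} follows by integrating \eqref{teq4} over one period and using periodicity of $\partial_X\Phi$.

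For sufficiency, given $(\beta,G,H_\pm)$ satisfying \eqref{tG-beta}, I would solve $\partial_X\Phi=\sqrt{2V(\Phi;\beta,G,H_\pm)}$ forward from $\Phi(0)=0$: Lemma~\ref{trhopm} gives $V\in C^1([0,\beta])$, \eqref{tG-beta2} makes $\sqrt{V}$ Lipschitz on every compact subinterval of $(0,\beta)$, and \eqref{tG-beta3} guarantees that $\Phi$ reaches $\beta$ in finite time, which I set as $\gamma/2$. Reflecting $\Phi$ about $X=\gamma/2$ and extending $\gamma$-periodically yields $\Phi\in C^2(\mathbb{T}_\gamma)$, and differentiating \eqref{tphieq2} recovers \eqref{teq4}. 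The fields $F_\pm$ are defined on $[0,\gamma]\times\mathbb R^n$ by \eqref{tfform+}--\eqref{tfform-} and shown to lie in $C(\mathbb{T}_\gamma;L^1(\mathbb R^n))\cap L^1_{loc}(\mathbb{T}_\gamma\times\mathbb R^n)$ and to satisfy \eqref{tweak1} by the approximation argument of subsection~\ref{S2.2}, smoothing $H_\pm$ and $G$ by $C_0^\infty$ data that vanish near the separatrix velocities $(\xi_1-\alpha)^2=2q_\pm\beta$. The main obstacle is verifying that $F_\pm$ so defined is genuinely $\gamma$-periodic in $X$ and matches continuously across the separatrices $(\xi_1-\alpha)^2=2q_+\Phi(X)$: periodicity of $F_-$ at the seam $X=0\equiv\gamma$ reduces to the symmetry \eqref{tG-beta1} of $H_-$ (reflecting incoming into outgoing bouncing electrons), while the three-piece formula \eqref{tfform+} collapses consistently at the separatrix because the arguments of the three $H_+$/$G$ contributions coincide there, exactly as in the solitary wave case treated in Theorem~\ref{existence1}; no uniqueness statement is expected, since the triple $(\beta,G,H_\pm)$ carries substantial freedom.
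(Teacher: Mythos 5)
Your overall plan coincides with the paper's: necessity via the characteristics method (yielding \eqref{tfform+}--\eqref{tfform-}, the symmetry \eqref{tG-beta1} from the bouncing electrons, the reduction to \eqref{tphieq2}, and then \eqref{tG-beta2}--\eqref{tG-beta3}, \eqref{tperiod1}--\eqref{symm2} by separation of variables and a reflection/uniqueness argument), and sufficiency by solving the Sagdeev ODE, reflecting and periodizing, then defining $F_{\pm}$ by the explicit forms and verifying Definition \ref{tDefS1} through the approximation scheme of subsection \ref{S2.2}. This is exactly the structure of Lemma \ref{tneed1} and the proof in subsection \ref{S4.2}.

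There is, however, one step in your sufficiency argument that does not work as written: you propose to solve $\partial_{X}\Phi=\sqrt{2V(\Phi;\beta,G,H_{\pm})}$ \emph{forward from} $\Phi(0)=0$, citing Lipschitz continuity of $\sqrt{V}$ on compact subintervals of $(0,\beta)$ and \eqref{tG-beta3}. At $\Phi=0$ one has $V=0$, so the initial value problem is degenerate: $\Phi\equiv 0$ is a solution, $\sqrt{V}$ is not Lipschitz there, and Picard--Lindel\"of gives neither existence of a nontrivial increasing solution leaving $0$ nor any selection of it; the Lipschitz property you invoke only controls the flow away from the endpoints. This is precisely why the paper starts the ODE at the interior value $\Psi(0)=\beta/2$ (where $V>0$), solves forward and backward, and uses \eqref{tG-beta3} to show that $\Psi$ reaches $\beta$ and $0$ at finite points $Y_{*}$ and $Y_{\#}$, after which $\gamma:=2(Y_{*}-Y_{\#})$ and the reflection give $\Phi\in C^{2}(\mathbb T_{\gamma})$. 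Your construction can be repaired either by adopting this midpoint trick or by defining $\Phi$ on $[0,\gamma/2]$ as the inverse of the strictly increasing function $X(\Phi)=\int_{0}^{\Phi}\frac{d\varphi}{\sqrt{2V(\varphi;\beta,G,H_{\pm})}}$, which is finite by \eqref{tG-beta3}; but some such argument must be supplied, since the claim that the forward solution from $\Phi(0)=0$ ``reaches $\beta$ in finite time'' presupposes that it leaves $0$ at all. The rest of the proposal (periodic matching of $F_{-}$ at $X=0\equiv\gamma$ via \eqref{tG-beta1}, consistency at the separatrix, the neutrality \eqref{tnetrual2} by integrating \eqref{teq4} over a period) is in line with the paper.
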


\begin{rem} {\rm
We do not use the information of solution to write the necessary and sufficient condition \eqref{tG-beta}.
Indeed, it depends only on $e_{\pm}$, $q_{\pm}$, $\alpha$, $\beta$, $G$, and $H_{\pm}$. 
Furthermore, the necessary and sufficient condition \eqref{tG-beta} is easy to check by computers as follows. 
The condition \eqref{tG-beta1} requires just symmetricity with respect to the plan $\{\xi_{1}=\alpha\}$.
If $V(\Phi;\beta,G,H_{\pm})$ is a $C^{2}$-function around $\Phi=0,\beta$,  
the condition \eqref{tG-beta3} is equivalent to that $\frac{d}{d\Phi}V(0;\beta,G,H_{\pm})>0$ and $\frac{d}{d\Phi}V(\beta;\beta,G,H_{\pm})<0$ thanks to the Taylor theorem. 
Therefore, \eqref{tG-beta2} and \eqref{tG-beta3} are verified if the graph of $V(\Phi;\beta,G,H_{\pm})$ is drawn as in Figure \ref{tfigV}.
}
\end{rem}

\begin{figure}[H]
\begin{center}
    \includegraphics[width=8cm, bb=0 0 1720 1294]{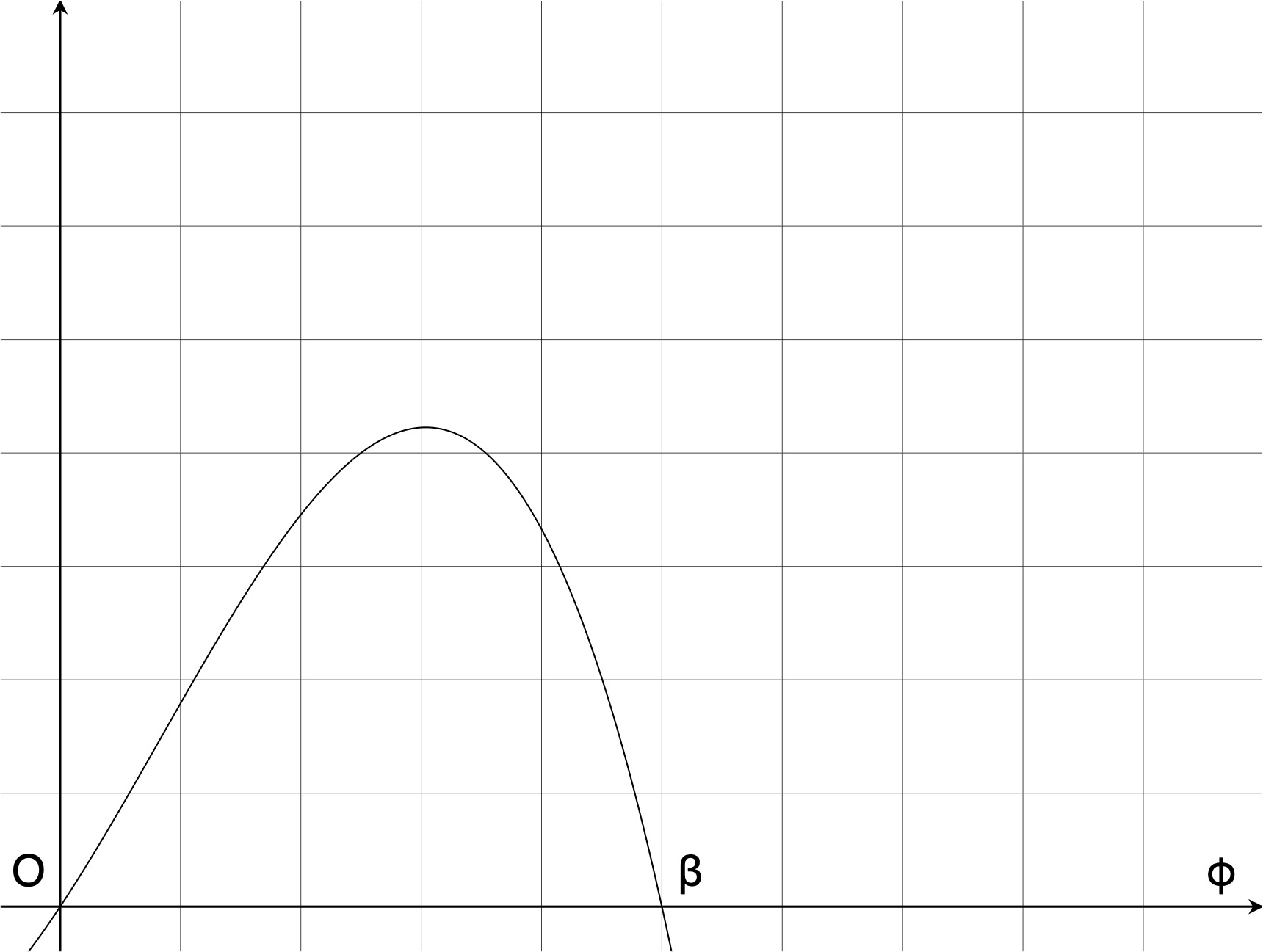}
\end{center}
  \caption{the graph of $V$}  
  \label{tfigV}
\end{figure}

The uniqueness always fails, because the degree of freedom is much higher compared to solitary waves.
Specifically, the following theorems hold.

\begin{thm}\label{tnonunique1}
For any $\alpha \in \mathbb R$ and $\gamma>0$,
the problem \eqref{tVP2} with the period $\gamma$ admits infinite many solutions $(F_{\pm},\Phi)$ with no trapped ions.
\end{thm}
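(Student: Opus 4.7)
The plan is to exhibit an infinite family of admissible quadruplets $(\beta^t,0,H_+^t,H_-^t)$, each satisfying the hypotheses of Theorem~\ref{texistence1} and producing period exactly $\gamma$; Theorem~\ref{texistence1} then yields a distinct solution $(F_\pm^t,\Phi^t)$ for each~$t$. Setting $G\equiv 0$ ensures no trapped ions. With $G=0$, the density $\rho_+(\Phi;\beta,0,H_+)$ is independent of $\beta$ and \emph{linear} in $H_+$, and $\rho_-$ is linear in $H_-$; hence $V(\Phi;\beta,0,H_\pm)=:\tilde V(\Phi)$ is independent of $\beta$ and jointly linear in $(H_+,H_-)$.

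First, I construct one base quadruplet $(\beta_0,0,H_\pm^0)$. Imitating the template of subsections~\ref{S2.5} and \ref{S3.3}, take $H_+^0$ and $H_-^0$ to be nonnegative compactly supported step functions, with $H_-^0$ symmetric in $\xi_1-\alpha$ (so that \eqref{tG-beta1} holds). The resulting $\tilde V^0$ is then explicitly computable and piecewise smooth. By tuning the amplitudes and supports of $H_\pm^0$, one can arrange $\tilde V^0(0)=0$, $\tfrac{d}{d\Phi}\tilde V^0(0)=e_+\int H_+^0\,d\xi-e_-\int H_-^0\,d\xi>0$, and (via the intermediate value theorem) the existence of $\beta_0>0$ with $\tilde V^0(\beta_0)=0$, $\tilde V^0>0$ on $(0,\beta_0)$, and $\tfrac{d}{d\Phi}\tilde V^0(\beta_0)<0$. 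Taylor's theorem then secures \eqref{tG-beta3}, so \eqref{tG-beta} holds.

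Next, I produce infinitely many such quadruplets. For $n\geq 2$ it suffices to vary the $\xi'$-dependence of $H_+^0$ while keeping the same $\xi_1$-marginal: this leaves $\rho_\pm$, $\tilde V$, $\beta_0$, and the period unchanged, yet produces a genuinely different $F_+$ via formula \eqref{tfform+}, and the admissible $\xi'$-profiles form an infinite-dimensional family. For $n=1$, perturb instead $H_+^0$ by $t\eta$ with $\eta\geq 0$ compactly supported and $t>0$ small; the zero $\beta^t$ of the perturbed $\tilde V^t$ persists by the implicit function theorem (thanks to $\tfrac{d}{d\Phi}\tilde V^0(\beta_0)<0$), the open conditions \eqref{tG-beta2}--\eqref{tG-beta3} survive for small $t$, and the period $\gamma^t=2\int_0^{\beta^t}d\Phi/\sqrt{\tilde V^t(\Phi)}$ depends continuously on $t$. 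Since $\tilde V$ is jointly linear in $(H_+,H_-)$, the joint rescaling $(H_+^t,H_-^0)\mapsto(\lambda_t H_+^t,\lambda_t H_-^0)$ with $\lambda_t=(\gamma^t/\gamma)^2$ multiplies $\tilde V^t$ by $\lambda_t$ and hence restores period $\gamma$, without moving the zero at $\beta^t$ or spoiling \eqref{tG-beta}. Distinct values of $t$ (or of $\eta$) yield distinct $F_+^t$.

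The main obstacle is the construction in the first step: producing $H_\pm^0$ so that $\tilde V^0$ has \emph{two simple} zeros on $[0,\beta_0]$ with the shape depicted in Figure~\ref{tfigV}. This differs from the solitary-wave example of subsection~\ref{S2.5}, where $\tilde V$ has a double zero at the origin to accommodate the infinite ``period'' $|X|\to\infty$; here one must instead deliberately break quasi-neutrality at $\Phi=0$ by taking $e_+\int H_+^0\,d\xi\neq e_-\int H_-^0\,d\xi$. A direct computation with indicator-function profiles of well-chosen widths and amplitudes produces such a $\tilde V^0$, completing the construction.
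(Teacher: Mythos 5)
Your overall strategy is the same as the paper's: restrict to quadruplets $(\beta,G,H_{\pm})$ with $G\equiv 0$ (no trapped ions), invoke Theorem \ref{texistence1}, and exploit the homogeneity $V(\Phi;\beta,\lambda G,\lambda H_{\pm})=\lambda V(\Phi;\beta,G,H_{\pm})$ to rescale $(H_{+},H_{-})$ by $(\tilde\gamma/\gamma)^{2}$ and force the period to be exactly $\gamma$, with distinctness read off from the explicit forms \eqref{tfform+}--\eqref{tfform-}; these steps of yours are correct and match the paper. Where you diverge is in how the admissible data are produced. The paper writes down one explicit two-parameter family: $H_{\tau,+}$ a plateau of ions at low energies and $H_{\tau,-}$ a plateau of electrons at energies in $[\beta,\beta+\tau]$, chosen so that $e_{+}\rho_{+}(\Phi)=f_{\tau}(\Phi)$ and $e_{-}\rho_{-}(\Phi)=f_{\tau}(\beta-\Phi)$ with $f_{\tau}(\Phi)=\sqrt{\Phi+\tau}-\sqrt{\Phi}$; the antisymmetry of $f_{\tau}(\Phi)-f_{\tau}(\beta-\Phi)$ about $\beta/2$ gives $V(\beta)=0$, $V>0$ on $(0,\beta)$, and simple zeros at both ends for free, so \eqref{tG-beta} is verified by one monotonicity observation and the infinite family comes directly from varying $\tau$. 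You instead posit a single base pair $H_{\pm}^{0}$ (broken quasi-neutrality at $\Phi=0$, second zero by the intermediate value theorem, \eqref{tG-beta3} by Taylor) and then multiply solutions either by redistributing the $\xi'$-profile of $H_{+}^{0}$ with fixed $\xi_{1}$-marginal (valid for $n\geq 2$, and the same device the paper uses in Lemma \ref{nonunique1}) or by a small perturbation $H_{+}^{0}+t\eta$ plus the implicit function theorem for $n=1$; both mechanisms are sound, and your rescaling bookkeeping ($\lambda_{t}=(\gamma^{t}/\gamma)^{2}$ leaves the zero and \eqref{tG-beta} intact) is right. The only thin point is the base construction itself, which you assert rather than exhibit: a generic compactly supported symmetric $H_{-}^{0}$ does not automatically make $V$ return to zero with negative slope, since $\rho_{-}$ vanishes once $\Phi$ exceeds the maximal electron energy and $V'$ then turns positive again, so the supports and the surplus $e_{+}\int H_{+}^{0}-e_{-}\int H_{-}^{0}$ must be coordinated (e.g.\ by taking the surplus small relative to fixed profiles, or simply by quoting the paper's $H_{\tau,\pm}$ as an instance of your template). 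With that one example made explicit, your proof is complete; the paper's symmetric choice is the more economical route because it delivers the base example and the infinite family simultaneously, without the IVT/IFT apparatus.
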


\begin{thm}\label{tnonunique2}
For any $\alpha \in \mathbb R$ and $\gamma>0$,
the problem \eqref{tVP2} with the period $\gamma$ admits infinite many solutions $(F_{\pm},\Phi)$ with trapped ions. 
\end{thm}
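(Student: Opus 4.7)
The plan is to apply Theorem \ref{texistence1} and, for any prescribed $\gamma > 0$, construct infinitely many quadruplets $(\beta, G, H_{\pm})$ satisfying \eqref{tG-beta1}--\eqref{tG-beta3} with $G \not\equiv 0$ on $(\alpha, \sqrt{2q_{+}\beta} + \alpha) \times \mathbb{R}^{n-1}$ and for which the period formula \eqref{tperiod1} yields exactly $\gamma$. Two such quadruplets with different $G$ produce genuinely distinct solutions, since the trapped part of $F_{+}$ in \eqref{tfform+} depends explicitly on $G$ on the region $\{(\xi_{1} - \alpha)^{2} < 2q_{+}\Phi(X)\}$.

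First I would build one reference quadruplet with trapped ions. Fix $\beta > 0$ and choose $H_{-}$ as an even-in-$(\xi_{1} - \alpha)$ step function so that \eqref{tG-beta1} holds automatically, $H_{+}$ as a step function with $e_{+}\int H_{+}\,d\xi > e_{-}\int H_{-}\,d\xi$, and $G_{0}$ as a nonnegative step function supported in $(\alpha, \sqrt{2q_{+}\beta} + \alpha) \times [0,1]^{n-1}$ with $G_{0} \not\equiv 0$, in the spirit of the examples in subsections \ref{S2.5} and \ref{S3.3} and the construction in Lemma \ref{nonunique2}. Direct computations analogous to \eqref{rho+01}--\eqref{V01} give the trapped contribution to $V_{0}(\Phi) := V(\Phi; \beta, G_{0}, H_{\pm})$ in closed form. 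By tuning the amplitudes of $H_{\pm}$ and of $G_{0}$, I arrange $V_{0}(\beta) = 0$, $V_{0} > 0$ on $(0, \beta)$, together with the two-sided transversality $V_{0}'(0) = e_{+}\rho_{+}(0; \beta, G_{0}, H_{+}) - e_{-}\rho_{-}(0; H_{-}) > 0$ and $V_{0}'(\beta) < 0$; the transversality plus the Taylor theorem yields $\int_{0}^{\beta} d\Phi/\sqrt{V_{0}} < \infty$, verifying \eqref{tG-beta3}. This produces one wave-train solution with trapped ions of some period $\gamma_{0}$.

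Next I would tune the period to the prescribed $\gamma$ by amplitude scaling. Under $(H_{\pm}, G_{0}) \mapsto (\mu H_{\pm}, \mu G_{0})$ for $\mu > 0$, the densities $\rho_{\pm}$ scale by $\mu$ and $V \mapsto \mu V$, so \eqref{tG-beta1}--\eqref{tG-beta3} and the transversality are preserved, while \eqref{tperiod1} gives $\gamma(\mu) = \gamma_{0}/\sqrt{\mu}$. Hence $\mu = (\gamma_{0}/\gamma)^{2}$ produces a quadruplet of exactly period $\gamma$. To obtain infinitely many solutions at this fixed period, exploit the freedom in $G$ that leaves $V$ invariant. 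For $n \geq 2$, any measurable bijection $g : \mathbb{R}^{n-1} \to \mathbb{R}^{n-1}$ with unit Jacobian applied as $\tilde{G}(\xi_{1}, \xi') := G(\xi_{1}, g(\xi'))$ preserves the marginal $\int G(\xi_{1}, \cdot)\,d\xi'$, so $\rho_{+}^{0}$, $V$, $\Phi$, and $\gamma$ are all unchanged, while the trapped part of $F_{+}$ in \eqref{tfform+} becomes genuinely different. For $n = 1$, I invoke the $\xi_{1}$-redistribution used in Lemma \ref{nonunique1}: a one-parameter family of sign-changing perturbations $H_{\tau}$ supported on small intervals and chosen to preserve the moment $\int G(\xi_{1})(\xi_{1} - \alpha)\sqrt{(\xi_{1} - \alpha)^{2} + 2q_{+}\Phi - 2q_{+}\beta}\,d\xi_{1}$ for each $\Phi \in [0, \beta]$ yields distinct $G_{\tau} = G + H_{\tau}$ with identical $V$.

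The hardest step is arranging the two-sided transversality required by \eqref{tG-beta3}. The solitary-wave analogue \eqref{G-beta3} required instead $V'(0) = 0$ (an infinite integral near $\Phi = 0$ corresponding to the asymptotic rest state at $|X| = \infty$), whereas wave trains demand a finite period and hence transversal zeros of $V$ at both endpoints of the Sagdeev well. The wave-train setting does permit this, since only the period-averaged neutrality \eqref{tnetrual2} is required, and the latter is automatic once $V(0) = V(\beta) = 0$ via the change of variable $dX = d\Phi/\sqrt{2V}$; therefore one can freely choose $e_{+}\int H_{+}\,d\xi \neq e_{-}\int H_{-}\,d\xi$ to produce $V'(0) > 0$. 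The transversality at $\Phi = \beta$ follows by a further adjustment of the $G_{0}$-amplitude, and the redistributions of $G$ described above preserve \eqref{tG-beta1}--\eqref{tG-beta3} by construction, completing the proof.
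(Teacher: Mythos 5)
Your overall scheme --- construct quadruplets $(\beta,G,H_{\pm})$ with $G\not\equiv 0$ satisfying \eqref{tG-beta}, rescale amplitudes $(G,H_{\pm})\mapsto(\mu G,\mu H_{\pm})$ so that $V\mapsto\mu V$ and the period becomes $\gamma_{0}/\sqrt{\mu}$, and read off distinctness from the trapped part of \eqref{tfform+} --- is essentially the paper's strategy (the paper reuses the $H_{\tau,\pm}$ of Theorem \ref{tnonunique1}, adds trapped ions by the mechanism of Lemma \ref{nonunique2}, and adjusts the period by constant multiples). The genuine gap is your infinitude step for $n=1$. You propose sign-changing perturbations of $G$ chosen to preserve $\int G(\xi_{1})(\xi_{1}-\alpha)\sqrt{(\xi_{1}-\alpha)^{2}+2q_{+}\Phi-2q_{+}\beta}\,d\xi_{1}$ for \emph{every} $\Phi\in[0,\beta]$, i.e.\ to leave $V$ identically unchanged so that the period stays $\gamma$. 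No nontrivial such perturbation exists: with $s=(\xi_{1}-\alpha)^{2}$, $w=2q_{+}(\beta-\Phi)$ and $g(s)=G(\sqrt{s}+\alpha)$, one has $\rho_{+}^{0}=\int_{w}^{2q_{+}\beta} g(s)(s-w)^{-1/2}\,ds$, an Abel transform, which is injective; hence in one velocity dimension $V^{0}(\cdot\,;\beta,G)$ determines $G$ a.e.\ on $(\alpha,\sqrt{2q_{+}\beta}+\alpha)$. Moreover Lemma \ref{nonunique1} does not do what you attribute to it: its perturbation $H_{\tau}$ preserves $V^{0}$ only at $\Phi=\beta$ and merely gives $V^{0}(\Phi;\beta,H_{\tau})\geq 0$ elsewhere, so $V$ changes --- harmless for solitary waves, but it changes the period of a wave train and so cannot be imported here. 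As written, your argument therefore covers only $n\geq 2$, where the measure-preserving rearrangement in $\xi'$ does leave $V$ invariant; the theorem, however, is claimed for all $n$.

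The repair is the paper's: generate infinitely many quadruplets with trapped ions by varying a parameter of the construction itself (e.g.\ the parameter $\tau$ in $H_{\tau,\pm}$, or the choice of $\tilde{\beta}$ in the Lemma \ref{nonunique2}-type construction), accept that each has its own period, and rescale each by its own constant to restore the prescribed $\gamma$; since rescaling does not change supports, the resulting data, and hence the solutions, remain pairwise distinct. Separately, your first step (``by tuning the amplitudes \dots I arrange $V_{0}(\beta)=0$, $V_{0}>0$ on $(0,\beta)$, and two-sided transversality'') is asserted rather than proved: adding $G_{0}\not\equiv 0$ contributes a nonnegative, nondecreasing term to $V$, so restoring $V(\beta)=0$ forces a recompensation in $H_{\pm}$, after which interior positivity and the sign of $\frac{d}{d\Phi}V$ at $\beta$ must be re-checked; this is exactly the delicate point that Lemma \ref{nonunique2} settles through the choice of $\tilde{\beta}$ near $\beta_{\sharp}$ and the smallness conditions \eqref{tbeta}. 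For your explicit step functions this is presumably verifiable, but it needs to be carried out.
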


As mentioned in Section \ref{S2}, it is often assumed in plasma physics that the electron density obeys the Boltzmann relation $\rho_{-}:=\int_{\mathbb R^{n}} f_{-} d\xi=\rho e^{-\kappa\phi}$ in \eqref{eq2}.
The corresponding train waves solve
\begin{subequations}\label{tVP3}
\begin{gather}
(\xi_{1}-\alpha) \partial_{X} F_{+} + q_{+}\partial_{X} \Phi  \partial_{\xi_{1}} F_{+} =0, \ \ X \in \mathbb R, \ \xi \in \mathbb R^{n},
\label{teq5}
\\
\partial_{XX} \Phi 
= e_{+}\int_{\mathbb R^{n}} F_{+} d\xi - e_{-}\rho e^{-\kappa \Phi}, \ \ X \in \mathbb R,
\label{teq6}\\
(F_{\pm}(X+\gamma,\xi),\Phi(X+\gamma))=(F_{\pm}(X,\xi),\Phi(X)).
\label{tbc2}
\end{gather}
\end{subequations}
Theorem \ref{texistence1} is also applicable to the problem \eqref{tVP3} by suitably choosing $H_{-}$.
Furthermore, we have infinite many solutions of the problem \eqref{tVP3} with the same period $\gamma$.
Namely, the following corollary holds. 

\begin{cor}\label{tnonunique3}
Suppose that $(\beta,G,H_{\pm})$ is a quadruplet with \eqref{tG-beta} and 
\begin{gather*}
H_{-}(\xi)=\frac{\rho}{\displaystyle \int_{\mathbb R^{n}} e^{\frac{-\kappa}{2q_{-}}|\xi|^2} d\xi}
e^{\frac{-\kappa}{2q_{-}}\{(\xi_{1}-\alpha)^{2}+|\xi'|^{2}\}}.
\end{gather*}
Let $(F_{\pm},\Phi)$ be a solution of the problem \eqref{tVP2}, 
which is obtained by the application of Theorem \ref{texistence1} with the quadruplet.
Then $(F_{+},\Phi)$ solves the problem \eqref{tVP3}.
There is a constant $\gamma_{\star}>0$ satisfying the following properties: 
for any $\alpha$ and $\gamma \in (0,\gamma_{\star}]$, 
there exist infinite many solutions of the problem \eqref{tVP3} with the period $\gamma$. 
\end{cor}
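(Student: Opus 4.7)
The strategy has two components: (i) showing that the Maxwellian choice of $H_-$ turns any solution of \eqref{tVP2} into a solution of \eqref{tVP3}, and (ii) constructing, for each target period $\gamma\in(0,\gamma_\star]$, infinitely many admissible quadruplets $(\beta,G,H_+,H_-)$ with this $H_-$ fixed. For (i), Theorem \ref{texistence1} furnishes $(F_\pm,\Phi)$ from any admissible quadruplet, and the formula \eqref{tfform-} combined with the substitutions $\zeta_1=\mp\sqrt{(\xi_1-\alpha)^2+2q_-\Phi(X)}+\alpha$ on the two pieces reduces $\int_{\R^n}F_-(X,\xi)\,d\xi$ to a Gaussian evaluating to $\rho\,e^{-\kappa\Phi(X)}$, exactly as in the proof of Corollary \ref{cor1}. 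Substituting into \eqref{teq4} then yields \eqref{teq6}, so $(F_+,\Phi)$ solves \eqref{tVP3}.

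For (ii), I fix $H_-$ as the Maxwellian so that $\rho_-(\varphi;H_-)=\rho e^{-\kappa\varphi}$ is rigid, and construct a one-parameter family $\lambda\mapsto(\beta(\lambda),G_\lambda,H_+^{(\lambda)})$ satisfying \eqref{tG-beta}. A convenient base is a piecewise-constant symmetric $H_+^0$ of the type used in Section \ref{S2.5}, together with a nontrivial $G^0\geq 0$ supported on the trapped-ion region, tuned so that $V(\cdot;\beta_0,G^0,H_+^0,H_-)$ has a single-well shape on $[0,\beta_0]$ with nondegenerate endpoints. Scaling the amplitude of $H_+^{(\lambda)}$ (and adjusting $\beta(\lambda)$ to preserve the relation $V(\beta(\lambda))=0$) varies $V$ continuously by Lemma \ref{trhopm}, and hence varies the period
\begin{equation*}
\gamma(\lambda)=2\int_0^{\beta(\lambda)}\frac{d\Phi}{\sqrt{V(\Phi;\beta(\lambda),G_\lambda,H_+^{(\lambda)},H_-)}}
\end{equation*}
continuously in $\lambda$. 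Letting $\gamma_\star$ denote the period at a reference parameter, continuity together with the intermediate-value theorem ensures that every $\gamma\in(0,\gamma_\star]$ is realized by some quadruplet in this family.

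To promote "one solution per period" to "infinitely many", I transpose the nonuniqueness mechanism of Lemma \ref{nonunique1} to the wave-train setting: since the base has $G\not\equiv 0$ on $(\alpha,\sqrt{2q_+\beta}+\alpha)\times\R^{n-1}$, there exist infinitely many $\tilde G\geq 0$ producing the same Sagdeev potential $V$ (via a measure-preserving transverse reparametrization when $n\geq 2$, and via the $\tau$-deformation construction when $n=1$), and each such $\tilde G$ yields through \eqref{tfform+} a distinct $F_+$ and hence a distinct solution of \eqref{tVP3} with the same period. The main obstacle is the period-control step: because the electron contribution $e_-\rho e^{-\kappa\varphi}$ to $V$ is rigidly fixed, both \eqref{tG-beta2} and \eqref{tG-beta3} must be maintained solely by tuning $(H_+,G)$, which in particular bounds the maximal attainable period from above and accounts for the restriction $\gamma\in(0,\gamma_\star]$ rather than $\gamma\in(0,+\infty)$; verifying the single-well structure and the correct order of vanishing of $V$ at the endpoints throughout the continuous deformation is the delicate part of the argument.
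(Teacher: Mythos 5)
Your first step (the Maxwellian $H_-$ forces $\int F_-\,d\xi=\rho e^{-\kappa\Phi}$, so $(F_+,\Phi)$ solves \eqref{tVP3}) is correct and identical to the paper's use of Corollary \ref{cor1}. The rest has genuine gaps. First, the period-control step is not carried out: continuity of $\gamma(\lambda)=2\int_0^{\beta(\lambda)}d\Phi/\sqrt{V}$ does not follow from Lemma \ref{trhopm} alone, because the integrand is singular at both endpoints and one needs an integrable majorant uniform in the parameter (the paper constructs the comparison functions $\hat V^*_{\tau,\beta}$ and $\hat U^*$ for exactly this, Lemmas \ref{LemBa}--\ref{LemC}). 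More seriously, the intermediate value theorem only covers all of $(0,\gamma_\star]$ if you prove that arbitrarily small periods are attained along your family; the paper proves $\tilde\gamma_{\tau,\beta}\to 0$ as $\beta\to+0$ via a convexity argument and L'H\^opital (Lemma \ref{LemC}), and you have no analogue. Finally, keeping \eqref{tG-beta2}--\eqref{tG-beta3} along the deformation is precisely the hard part when $\rho_-$ is rigidly $\rho e^{-\kappa\varphi}$: an amplitude scaling of $H_+$ with $\beta$ readjusted so that $V(\beta)=0$ does not obviously preserve the single-well shape or the nondegenerate vanishing at the endpoints. The paper secures this with an explicit family ($G\equiv0$, $H_{\tau,\beta,+}$ piecewise constant, amplitude $A_{\tau,\beta}$ fixed by $\tilde V_{\tau,\beta}(\beta)=0$) and the quantitative smallness $\tau,\beta\le\tfrac{1}{10\kappa}$, which yields $\frac{d}{d\Phi}\tilde V_{\tau,\beta}(0)>0$, $\frac{d}{d\Phi}\tilde V_{\tau,\beta}(\beta)<0$ and a unique interior critical point (Lemma \ref{LemA}). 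You acknowledge this as ``the delicate part'' but do not supply it, and without it the existence of your family is unproven.

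Second, your mechanism for producing infinitely many solutions with the same period fails for $n=1$. The $\tau$-deformation of Lemma \ref{nonunique1} does \emph{not} preserve the Sagdeev potential: it changes $V$ by the nonnegative, not identically zero term $V^{0}(\Phi;\beta,H_\tau)$ (only the value at $\Phi=\beta$ is preserved), hence it changes the period integral \eqref{tperiod1}; and the period cannot be restored by the rescaling trick of Theorem \ref{tnonunique1}, because that trick rescales $H_\pm$ and $H_-$ here is pinned to the prescribed Maxwellian. The transverse reparametrization does preserve $V$, but it only exists for $n\ge2$, so your argument does not cover all dimensions. The paper sidesteps trapped ions entirely: it uses the genuinely two-parameter family $(\tau,\beta)$ with $G\equiv0$, strict monotonicity of $\tilde\gamma_{\tau,\beta}$ in $\tau$ (Lemma \ref{LemB}) and the limit $\tilde\gamma_{\tau,\beta}\to0$ as $\beta\to+0$, so that for each target $\tilde\gamma$ every $\tau$ in an interval $(\tau_\star-\delta,\tau_\star]$ admits a $\beta$ with $\tilde\gamma_{\tau,\beta}=\tilde\gamma$, giving infinitely many distinct quadruplets and hence solutions with the same period for every $n$.
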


This section is organized as follows. 
In subsection \ref{S4.1}, we find necessary conditions for the solvability of the problem \eqref{tVP2}.
In subsection \ref{S4.2}, we show the solvability stated in Theorem \ref{texistence1}.
Subsection \ref{S4.3} gives the proofs of Theorems \ref{tnonunique1} and \ref{tnonunique2}.
We prove Corollary \ref{tnonunique3} in subsection \ref{S4.4}.

\subsection{Necessary conditions}\label{S4.1}

In this section, we investigate necessary conditions for the solvability of the problem \eqref{tVP2}.

\begin{lem}\label{tneed1}
Let $\alpha \in \mathbb R$.
Suppose that  the problem \eqref{tVP2} has a solution $(F_{\pm},\Phi)$ for some period $\gamma>0$.
Then the condition \eqref{tnetrual2} holds;
$H_{-}(\xi)=F_{-}(0,\xi)$ satisfies the condition \eqref{tG-beta1} with $\beta=\Phi_{max}:=\max_{X \in [0,\gamma]} \Phi(X)$; 
$F_{\pm}$ are written on $[0,\gamma]\times \mathbb R^{n}$ as \eqref{tfform+} and \eqref{tfform-} with $\beta=\Phi_{max}$, 
$G(\xi)=F_{+}(X_{*},\xi)$, and $H_{\pm}(\xi)=F_{\pm}(0,\xi)$, where \footnote{$X_{*}$ is uniquely determined owing to the condition (ii) in Definition \ref{tDefS1}.}$\Phi(X_{*})=\Phi_{\max}$;
$\rho_{\pm} \in C([0,\Phi_{max}])$;
$\Phi$ solves \eqref{tphieq2} with $\beta=\Phi_{max}$, 
$G(\xi)=F_{+}(X_{*},\xi)$, and $H_{\pm}(\xi)=F_{\pm}(0,\xi)$;
$V$ satisfies \eqref{tG-beta2} and \eqref{tG-beta3} with $\beta=\Phi_{max}$, $G(\xi)=F_{+}(X_{*},\xi)$, and $H_{\pm}(\xi)=F_{\pm}(0,\xi)$; the conditions \eqref{tperiod1}--\eqref{symm2} hold.
\end{lem}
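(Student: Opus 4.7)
The proof will mirror Lemmas \ref{need1} and \ref{nneed1}, adapting the characteristic analysis to the periodic setting in which, by condition (iv) of Definition \ref{tDefS1}, $\Phi$ has an interior minimum at $X=0$ and a unique interior maximum $\beta:=\Phi_{\max}$ at some $X_{*}\in(0,\gamma)$. First, the neutrality \eqref{tnetrual2} follows by integrating \eqref{teq4} over $[0,\gamma]$: periodicity and $\partial_{X}\Phi(0)=0$ force $\partial_{X}\Phi(\gamma)=\partial_{X}\Phi(0)=0$, so the boundary terms vanish.

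Next I apply the method of characteristics to \eqref{tweak1}, treating $\Phi$ as given. The level curves $\frac{1}{2}(\xi_{1}-\alpha)^{2}\mp q_{\pm}\Phi(X)=c$ split into two regimes. For ions, the trapped range $c\in(-q_{+}\beta,0)$ gives closed loops encircling $(X_{*},\alpha)$ that never meet $\{X=0\}$ (since there $\Phi=0$ forces $c\geq 0$), while $c\geq 0$ gives non-trapped curves which do meet $\{X=0\}$; trapped ion values are therefore pinned at $X=X_{*}$ by $G(\xi):=F_{+}(X_{*},\xi)$ and non-trapped values at $X=0$ by $H_{+}(\xi):=F_{+}(0,\xi)$. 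Assembling the three characteristic regions (two non-trapped halves plus the trapped middle) produces \eqref{tfform+} exactly as in the proof of Lemma \ref{need1}. For electrons, the trapped range $c\in[0,q_{-}\beta)$ yields closed loops encircling $(0,\alpha)$, each crossing $\{X=0\}$ at the pair of points $\xi_{1}=\alpha\pm\sqrt{2c}$; single-valuedness of $F_{-}$ on these loops forces the symmetry \eqref{tG-beta1} on $H_{-}(\xi):=F_{-}(0,\xi)$. Non-trapped electrons also cross $\{X=0\}$, and the two regions together yield \eqref{tfform-}.

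Using the substitutions $\zeta_{1}=\pm\sqrt{(\xi_{1}-\alpha)^{2}\pm 2q_{\pm}\Phi+c}$ as in the proofs of Theorems \ref{existence1} and \ref{nexistence1}, integrating \eqref{tfform+}--\eqref{tfform-} over $\mathbb{R}^{n}$ gives $\int F_{\pm}\,d\xi=\rho_{\pm}$, with continuity $\rho_{\pm}\in C([0,\beta])$ following from $F_{\pm}\in C(\mathbb{T}_{\gamma};L^{1}(\mathbb{R}^{n}))$ together with the piecewise monotonicity of $\Phi$ on $[0,X_{*}]$ and $[X_{*},\gamma]$. Plugging into \eqref{teq4}, multiplying by $\partial_{X}\Phi$, and integrating from $0$ to $X$ using $\partial_{X}\Phi(0)=\Phi(0)=0$ yields \eqref{tphieq2}. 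On $[0,X_{*}]$ one has $\partial_{X}\Phi=\sqrt{2V(\Phi;\beta,G,H_{\pm})}$, so separation of variables gives $X_{*}=\int_{0}^{\beta}d\Phi/\sqrt{2V}$; the analogous computation on $[X_{*},\gamma]$ gives $\gamma-X_{*}$ equal to the same integral, whence $X_{*}=\gamma/2$ (proving \eqref{symm1}), the period formula \eqref{tperiod1}, and finiteness in \eqref{tG-beta3}. The conditions in \eqref{tG-beta2} are read off from $V=\tfrac{1}{2}(\partial_{X}\Phi)^{2}\geq 0$, strict monotonicity of $\Phi$ on each half-period (yielding $V>0$ on $(0,\beta)$), and $\partial_{X}\Phi(X_{*})=0$. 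Finally, \eqref{symm2} follows from the implicit characterization $X=\int_{0}^{\Phi(X)}d\varphi/\sqrt{2V(\varphi)}$ on $[0,\gamma/2]$ together with periodicity and the analogous backward parametrization from $X_{*}$. The main obstacle will be rigorously justifying this last matching: $\sqrt{V}$ loses Lipschitz regularity at the endpoints $\Phi=0$ and $\Phi=\beta$, so the ODE uniqueness argument must be run on $[\varepsilon,\beta-\varepsilon]$ and extended by continuity, exactly as in the proofs of Lemmas \ref{need1} and \ref{uniqueness1}.
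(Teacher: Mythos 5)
Your proposal is correct and follows essentially the same route as the paper: integrate \eqref{teq4} over a period for \eqref{tnetrual2}, use characteristics (trapped ions pinned at $X_{*}$, non-trapped particles and electrons pinned at $X=0$, with single-valuedness on trapped electron loops forcing \eqref{tG-beta1}), reduce to \eqref{tphieq2} via the Sagdeev potential, and obtain \eqref{tperiod1}--\eqref{symm1} and \eqref{tG-beta2}--\eqref{tG-beta3} by separation of variables. Your treatment of \eqref{symm2} via the implicit parametrization $X=\int_{0}^{\Phi(X)}d\varphi/\sqrt{2V(\varphi)}$ is a minor rephrasing of the paper's argument (which applies ODE uniqueness to conclude $\Phi(X)=\Phi(\gamma-X)$ from the reflected solution), and you correctly identify the same Lipschitz-on-$[\ve,\beta-\ve]$ device used there.
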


\begin{proof}
First \eqref{tnetrual2} follows from integrating \eqref{teq4} and using the periodicity.

Next we show that $F_{-}$ satisfies the condition \eqref{tG-beta1} with $\beta=\Phi_{max}$,
and $F_{\pm}$ are written as \eqref{tfform+} and \eqref{tfform-} with $\beta=\Phi_{max}$, 
$G(\xi)=F_{+}(X_{*},\xi)$, and $H_{\pm}(\xi)=F_{\pm}(0,\xi)$.
Regarding $\Phi$ as a given function and then applying the characteristics method to \eqref{tweak1}, 
we see that the values of $F_{+}$ must be the same on the following characteristics curve:
\begin{gather*}
\frac{1}{2}(\xi_{1}-\alpha)^{2}-q_{+}\Phi(X)=c,
\end{gather*}
where $c$ is some constant. We draw the illustration of characteristics for $\alpha=0$ in Figure \ref{tfig+}.
\begin{figure}[H]
\begin{center}
    \includegraphics[width=9.5cm, bb=0 0 1720 1122]{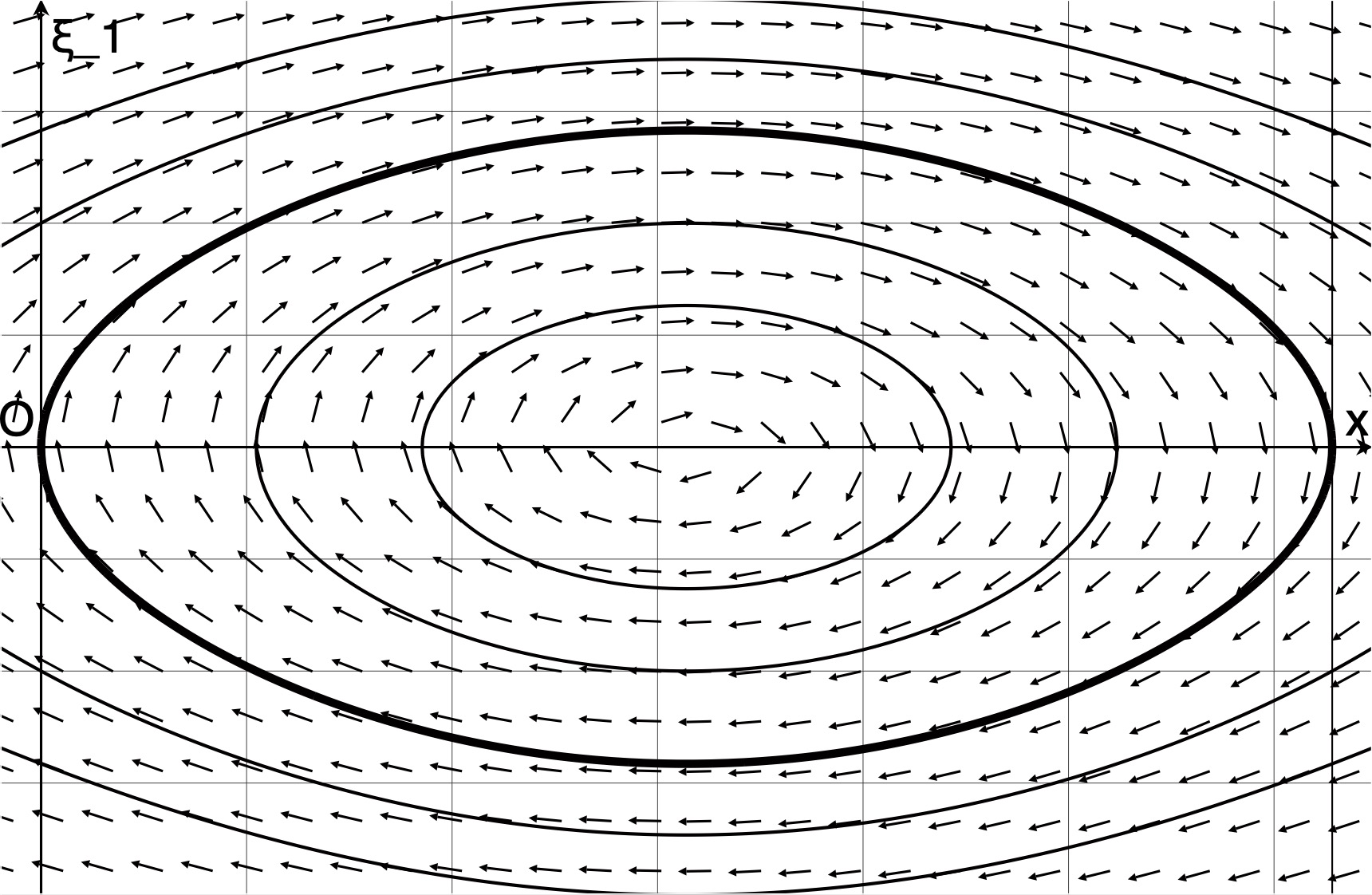}
\end{center}
  \caption{the characteristics of $F_{+}$}  
  \label{tfig+}
\end{figure}
It tells us that
\begin{align*}
F_{+}(y,-\sqrt{\eta_{1}^{2}+2q_{+}\Phi(y)}+\alpha,\eta')=F_{+}(0,\eta_{1}+\alpha,\eta'), &\quad (y,\eta_{1},\eta') \in {\cal X}_{+}^{1},
 \\
F_{+}(y,\pm\sqrt{{\eta_1^2-2q_{+}\Phi_{max}+2q_{+}\Phi}(y)}+\alpha,\eta')=F_{+}(X_{*},\eta_{1}+\alpha,\eta'), &\quad (y,\eta_{1},\eta') \in {\cal X}_{+}^{2},
 \\
F_{+}(y,\sqrt{\eta_{1}^{2}+2q_{+}\Phi(y)}+\alpha,\eta')=F_{+}(0,\eta_{1}+\alpha,\eta'), &\quad  (y,\eta_{1},\eta') \in {\cal X}_{+}^{3},
\end{align*}
where $\eta=(\eta_{1},\eta_{2},\eta_{3})=(\eta_{1},\eta')$ and
\begin{align*}
{\cal X}_{+}^{1}&:=[0,\gamma]\times\R_-^n, 
\\
{\cal X}_{+}^{2}&:=\{(y,\eta_1,\eta')\in [0,\gamma]\times\R^n\;|\; 2q_{+}\Phi_{max}-2q_{+}\Phi(y) <\eta_1^2<2q_{+}\Phi_{max}\}, 
\\
{\cal X}_{+}^{3}&:=[0,\gamma]\times\R_+^n. 
\end{align*}
Furthermore, we conclude from these three equalities that $F_{+}$ must be written as \eqref{tfform+} with $\beta=\Phi_{max}$, $G(\xi)=F_{+}(X_{*},\xi)$, and $H_{+}(\xi)=F_{+}(0,\xi)$, i.e.
\begin{align*}
&{F}_{+}(X,\xi)
\\
& =F_{+}(0,-\sqrt{(\xi_1-\alpha)^2-2q_{+}{\Phi}(X)}+\alpha,\xi')
\chi((\xi_{1}-\alpha)^{2}-2q_{+}\Phi(X))\chi(-(\xi_{1}-\alpha)) \\
& \qquad
+F_{+}(X_{*},\sqrt{(\xi_1-\alpha)^2-2q_{+}\Phi(X)+2q_{+}\Phi_{max}}+\alpha,\xi')
\chi(-(\xi_1-\alpha)^2+2q_{+}\Phi(X)) \\
& \qquad
+F_{+}(0,\sqrt{(\xi_1-\alpha)^2-2q_{+}\Phi(X)}+\alpha,\xi')
\chi((\xi_{1}-\alpha)^{2}-2q_{+}\Phi(X))\chi(\xi_{1}-\alpha).
\end{align*}

Next we treat $F_{-}$. The values of $F_{-}$ must be the same on the following characteristics curve:
\begin{gather*}
\frac{1}{2}(\xi_{1}-\alpha)^{2}+q_{-}\Phi(X)=c,
\end{gather*}
where $c$ is some constant. We draw the illustration of characteristics for $\alpha=0$ in Figure \ref{tfig-}.
\begin{figure}[H]
\begin{center}
    \includegraphics[width=9.5cm, bb=0 0 1987 1239]{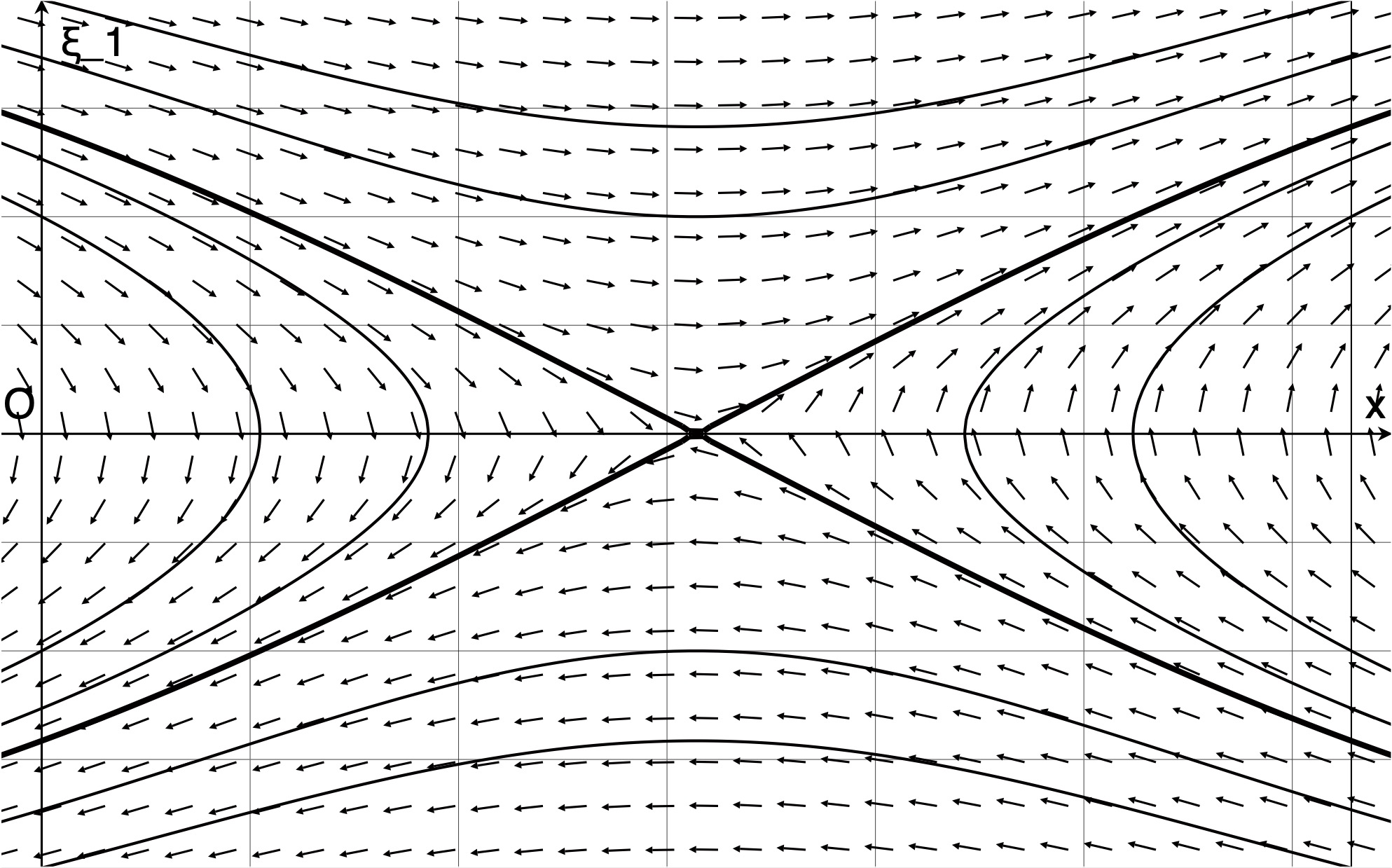}
\end{center}
  \caption{the characteristics of $F_{-}$}  
  \label{tfig-}
\end{figure}
It tells us that
\begin{align*}
F_{-}(y,-\sqrt{\eta_{1}^{2}-2q_{-}\Phi(y)}+\alpha,\eta')=F_{-}(0,\eta_{1}+\alpha,\eta'), &\quad (y,\eta_{1},\eta') \in {\cal X}_{-}^{1},
 \\
F_{-}(y,\pm\sqrt{\eta_1^2-2q_{-}{\Phi}(y)}+\alpha,\eta')=F_{-}(0,\eta_{1}+\alpha,\eta'), &\quad (y,\eta_{1},\eta') \in {\cal X}_{-}^{2},
 \\
F_{-}(y,\sqrt{\eta_1^2-2q_{-}{\Phi}(y)}+\alpha,\eta')=F_{-}(0,\eta_{1}+\alpha,\eta'), & \quad  (y,\eta_{1},\eta') \in {\cal X}_{-}^{3},
\end{align*}
where 
\begin{align*}
{\cal X}_{-}^{1}&:=[0,\gamma]\times(-\infty,-\sqrt{2q_{-}\Phi_{max}})\times\R^{n-1},
\\
{\cal X}_{-}^{2}&:=\{(y,\eta_1,\eta')\in[0,\gamma]\times\R\times\R^{n-1}\;|\;2q_{-}{\Phi}(y)<\eta_1^2<2q_{-}\Phi_{max}\}, \quad
\\
{\cal X}_{-}^{3}&:=[0,\gamma]\times(\sqrt{2q_{-}\Phi_{max}},+\infty)\times\R^{n-1}. 
\end{align*}
Due to the second equality, the following must hold:
\begin{align*}
F_{-}(0,\xi_1+\alpha,\xi')=F_{-}(0,-\xi_1+\alpha,\xi'), \quad 
(\xi_{1},\xi')\in(-\sqrt{2q_{-}\Phi_{max}},\sqrt{2q_{-}\Phi_{max}})\times\R^{n-1},
\end{align*}
which is the same equality as \eqref{tG-beta1} with $\beta=\Phi_{max}$ and $H_{-}(\xi)=F_{-}(0,\xi)$.
Furthermore, we conclude from these three equalities that $f$ must be written as \eqref{tfform-} with $H_{-}(\xi)=F_{-}(0,\xi)$, i.e.
\begin{align*}
F_{-}(X,\xi)
&=F_{-}(0,-\sqrt{(\xi_1-\alpha)^2+2q_{-}\Phi(X)}+\alpha,\xi')\chi(-(\xi_1-\alpha))
\\
&\quad +F_{-}(0,\sqrt{(\xi_1-\alpha)^2+2q_{-}\Phi(X)}+\alpha,\xi')\chi(\xi_1-\alpha). 
\end{align*}

Now we can reduce the problem \eqref{tVP2} to a problem of an ordinary differential equation only for $\Phi$.
Integrating \eqref{tfform+} over $\mathbb R^{n}$ and using the change of variables 
$\sqrt{(\xi_{1}-\alpha)^{2}-2q_{+}\Phi}=-(\zeta_{1}-\alpha)$,
$\sqrt{(\xi_{1}-\alpha)^{2}-2q_{+}\Phi+2q_{+}\Phi_{max})}=(\zeta_{1}-\alpha)$,
and $\sqrt{(\xi_{1}-\alpha)^{2}-2q_{+}\Phi}=\zeta_{1}-\alpha$ 
for the first, second, and third terms on the right hand side, respectively, we see that
\begin{align}
&\int_{\mathbb R^{n}} F_{+}(X,\xi) \,d\xi  
\notag\\
&= \int_{{\mathbb R}^n}F_{+}(0,\xi)\frac{|\xi_1-\alpha|}{\sqrt{(\xi_1-\alpha)^2+2q_{+}\Phi(X)}}\,d\xi
\notag\\
&\quad +2\int_{\mathbb R^{n-1}}\int_{\sqrt{2q_{+}\Phi_{max}-2q_{+}\Phi(X)}+\alpha}^{\sqrt{2q_{+}\Phi_{max}}+\alpha}
F_{+}(X_{*},\xi)\frac{\xi_1-\alpha}{\sqrt{(\xi_1-\alpha)^2+2q_{+}\Phi(X)-2q_{+}\Phi_{max}}}\,d\xi_1\,d\xi'
\notag \\
&=  \rho_{+}(\Phi(X);\Phi_{max},F_{+}(X_{*},\cdot),F_{+}(0,\cdot)),
\label{trho+'}
\end{align}
where $\rho_{+}$ is the same function defined in \eqref{trho++}.
On the other hand, integrating \eqref{tfform-} over $\mathbb R^{n}$ and using the change of variables 
$\sqrt{(\xi_{1}-\alpha)^{2}+2q_{-}\Phi}=-(\zeta_{1}-\alpha)$ and 
$\sqrt{(\xi_{1}-\alpha)^{2}+2q_{-}\Phi}=\zeta_{1}-\alpha$ for the first and second terms on the right hand side, respectively,
we see that
\begin{align}
\int_{\mathbb R^{n}} F_{-}(X,\xi) \,d\xi  
&=\int_{{\mathbb R}^n}F_{-}(0,\xi)\frac{|\xi_1-\alpha|}{\sqrt{(\xi_1-\alpha)^2-2q_{-}\Phi(X)}} \chi((\xi_1-\alpha)^2-2q_{-}\Phi(X)) \,d\xi
\notag \\
&=  \rho_{-}(\Phi(X);F_{-}(0,\cdot)),
\label{trho-'}
\end{align}
where $\rho_{-}$ is the same function defined in \eqref{trho--}.
Substituting \eqref{trho+'} and \eqref{trho-'} into \er{teq4}, we arrive at
\begin{equation}\lb{tphieq1} 
\partial_{XX} \Phi 
= e_{+}\rho_{+}(\Phi;\Phi_{max},F_{+}(X_{*},\cdot),F_{+}(0,\cdot)) - e_{-}\rho_{-}(\Phi;F_{-}(0,\cdot)), \quad X\in \mathbb R.
\end{equation}
Now we claim that $\rho_{\pm} \in C([0,\Phi_{max}])$.
Indeed, owing to $ F_{\pm} \in C(\mathbb T_{\gamma};L^{1}(\R^{n}))$ in Definition \ref{tDefS1}, there hold that
\begin{align*}
\rho_{\pm}(\Phi(X))= \| F_{\pm}(X) \|_{L^{1}} \in C(\mathbb T_{\gamma}).
\end{align*}
This implies $\rho_{\pm} \in C([0,\Phi_{max}])$ with the aid of 
$ \Phi \in C(\mathbb T_{\gamma})$ and the fact that $\partial_{X}\Phi(X)\gtrless 0$ for $ X_{*} \gtrless X$ with $X,X_{*} \in [0,\gamma]$.
Thus the claim is valid. 
Multiply \er{tphieq1} by $\D_{X}\Phi$, integrate it over $(0,X)$, and use the condition (ii) in Definition \ref{tDefS1} to obtain \eqref{tphieq2} with $\beta=\Phi_{max}$, $G(\xi)=F_{+}(X_{*},\xi)$, and $H_{\pm}(\xi)=F_{\pm}(0,\xi)$, i.e.
\begin{gather*}
(\D_{X} \Phi)^2=2V(\Phi;\Phi_{max},F_{+}(X_{*},\cdot),F_{\pm}(0,\cdot)), 
\\
\begin{aligned}
&V(\Phi;\Phi_{max},F_{+}(X_{*},\cdot),F_{\pm}(0,\cdot))
\\
&=\int_{0}^{\Phi} \left(e_{+}\rho_{+}(\varphi;\Phi_{max},F_{+}(X_{*},\cdot),F_{+}(0,\cdot)) - e_{-}\rho_{-}(\varphi;F_{-}(0,\cdot) )\right)  d\vphi,
\end{aligned}
\notag
\end{gather*}
where $V$ is the same function defined in \eqref{tV0}.
Thus $\Phi$ must solve \eqref{tphieq2}.

We prove that $V$ satisfies \eqref{tG-beta2} and \eqref{tG-beta3} 
with $\beta=\Phi_{max}$, $G(\xi)=F_{+}(X_{*},\xi)$, and $H_{\pm}(\xi)=F_{\pm}(0,\xi)$.
From \eqref{tphieq2} and the condition (ii) in Definition \ref{DefS1}, the following hold:
\begin{gather*}
V(\Phi;\Phi_{max},F_{+}(X_{*},\cdot),F_{\pm}(0,\cdot))>0 \ \ \text{for $\Phi \in (0,\Phi_{max})$}, 
\label{tV>0}\\
V(\Phi_{max};\Phi_{max},F_{+}(X_{*},\cdot),F_{\pm}(0,\cdot))=0,
\end{gather*}
where are the same as in \eqref{tG-beta2}. Thus \eqref{tG-beta2} must hold.
Next we show that 
\begin{gather*}
\int_{0}^{\Phi_{max}} \frac{d\Phi}{\sqrt{V(\Phi;\Phi_{max},F_{+}(X_{*},\cdot),F_{\pm}(0,\cdot))}} <+ \infty,
\end{gather*}
which is the same condition as \eqref{tG-beta3}. 
Using \eqref{tphieq2}, we arrive at
\begin{align*}
\int_{0}^{\Phi_{max}} \frac{d\Phi}{\sqrt{2V(\Phi;\Phi_{max},F_{+}(X_{*},\cdot),F_{\pm}(0,\cdot))}}
&=\int_{0}^{X_{*}} \frac{\partial_{X} \Phi (X)}{\sqrt{2V(\Phi(X);\Phi_{max},F_{+}(X_{*},\cdot),F_{\pm}(0,\cdot))} } dX
\\
&=\int_{0}^{X_{*}} 1dX<+\infty.
\end{align*}
Thus \eqref{tG-beta3} must hold.

Finally, we  prove \eqref{tperiod1}--\eqref{symm2}. We recall $\Phi(X_{*})=\Phi_{max}=\beta$ and then see from \eqref{tphieq2} and the condition (ii) in Definition \ref{tDefS1} that 
\begin{align*}
X_{*} &=  \int_{0}^{X_{*}} 1 dX
= \int_{0}^{X_{*}} \frac{\partial_{X} \Phi (X)}{\sqrt{2V(\Phi(X);\Phi_{max},F_{+}(X_{*},\cdot),F_{\pm}(0,\cdot))} }dX
\\
&= \int_{0}^{\beta} \frac{d\Phi}{\sqrt{2V(\Phi;\Phi_{max},F_{+}(X_{*},\cdot),F_{\pm}(0,\cdot))} }
\\
&= \int_{X_{*}}^{\gamma} \frac{-\partial_{X} \Phi (X)}{\sqrt{2V(\Phi(X);\Phi_{max},F_{+}(X_{*},\cdot),F_{\pm}(0,\cdot))} }dX
=  \int_{X_{*}}^{\gamma} 1 dX= \gamma -X_{*},
\end{align*}
which gives $X_{*}=\frac{\gamma}{2}$, \eqref{tperiod1}, and \eqref{symm1}.
To show \eqref{symm2}, we study the uniqueness of the solution $\Phi$ of 
the problem \eqref{tphieq2} with $\Phi(0)=\Phi(\gamma)=0$, $\Phi(\frac{\gamma}{2})=\Phi_{max}$,  and $\partial_{X}\Phi(X)\gtrless 0$ for $\frac{\gamma}{2} \gtrless X$, where $X \in [0,\gamma]$.
Suppose that $\Phi_1$ and $\Phi_2$ are solutions of the problem, and $\Phi_1\not\equiv\Phi_2$ holds.
It is easy to see from \eqref{symm1} and the condition (ii) in Definition \ref{tDefS1} that $\Phi_1$ and $\Phi_2$ solve
\begin{subequations}\label{tneq1}
\begin{gather}
\D_{X}{\Phi}=\pm\sqrt{2V(\Phi(X);\Phi_{max},F_{+}(X_{*},\cdot),F_{\pm}(0,\cdot))}, \quad \pm \left(X -\frac{\gamma}{2}\right) < 0, 
\\
\Phi(0)=\Phi(\gamma)=0, \quad \Phi\left(\frac{\gamma}{2}\right)=\Phi_{max}.
\end{gather}
\end{subequations}
In much the same way as in the \footnote{It is written in the first paragraph of the proof of Lemma \ref{uniqueness1}.}proof of the uniqueness of the solution of the problem \eqref{uniquePhi}, one can show that the solution of \eqref{tneq1} is unique.
On the other hand, $\Psi(X)=\Phi(\gamma-X)$ is also a solution of the problem \eqref{tneq1}.
Therefore, $\Phi(X)=\Phi(\gamma-X)$ holds.
This fact together with $\Phi \in C^{2}({\mathbb T}_{\gamma})$ leads to \eqref{symm2}.
The proof is complete.
\end{proof}

\subsection{Solvability}\label{S4.2}

In this subsection, we  prove Theorem \ref{texistence1}.

\begin{proof}[Proof of Theorem \ref{texistence1}.]
Due to Lemma \ref{tneed1}, it suffices to show the solvability of the problem \eqref{tVP2} for some period $\gamma$
provided that there exists a quadruplet $(\beta,G,H_{\pm})$ with \eqref{tG-beta}.

We find $\gamma$ and construct $\Phi \in C^{2}({\mathbb T}_{\gamma})$ by solving \eqref{tphieq2}. 
To this end, we solve the following problem for $Y>0$:
\begin{gather}
\D_{Y}{\Psi}(Y)=\sqrt{2V(\Psi(Y);\beta,G,H_{\pm})}, \quad \Psi(0)=\frac{1}{2}\beta.
\label{tpsieq1}
\end{gather}
It follows from Lemma \ref{trhopm} and \eqref{tG-beta2} that
$\sqrt{V(\cdot;\beta,G,H_{\pm})}$ is Lipschitz continuous on $[\ve,\beta-\ve]$ for any suitably small $\ve>0$.
Therefore, the problem \eqref{tpsieq1} has a solution $\Psi$ which is strictly increasing unless $\Psi$ attains $\beta$.
Suppose that $\Psi$ cannot attain $\beta$ at some finite point. We observe that 
\begin{align*}
+\infty&=\int_{0}^{+\infty} 1dY
=\int_{0}^{+\infty} \frac{\partial_{Y} \Psi (Y)}{\sqrt{2V(\Psi(Y);\beta,G,H_{\pm})} } dY 
=\int_{\beta/2}^{\beta} \frac{d\Phi}{\sqrt{2V(\Phi;\beta,G,H_{\pm})}}<+\infty,
\end{align*}
where we have used \eqref{tG-beta3} in deriving the last equality. 
This is a contradiction.
Thus $\Psi$ attains $\beta$ at a point $Y=Y_{*}<+\infty$.
Now it is clear that the problem \eqref{tpsieq1} is solvable on the interval $[0,Y_{*}]$.

Next we solve the problem \eqref{tpsieq1} for $Y<0$. Similarly as above, 
the problem \eqref{tpsieq1} has a solution $\Psi$ which is strictly increasing.
In the same way as above, 
it follows that the solution $\Psi$ attains zero at some point $Y=Y_{\#}<0$.
Therefore, setting $X=Y-Y_{\#}$, we conclude that $\Psi \in C^{1}([0,Y_{*}-Y_{\#}])$ solves
\begin{gather}
\D_{X}{\Psi}(X)=\sqrt{2V(\Psi(X);\beta,G,H_{\pm})}, \quad \Psi(0)=0, \quad \Psi(Y_{*}-Y_{\#})=\beta.
\label{tpsieq2}
\end{gather}

We now set $\gamma:=2(Y_{*}-Y_{\#})$ and define $\Phi$ by
\begin{gather}
\Phi(X)=\left\{
\begin{array}{ll}
\Psi(X) & \text{for $X \in [0,\frac{\gamma}{2}]$},
\\
\Psi(\gamma-X) & \text{for $X \in (\frac{\gamma}{2},\gamma]$}.
\end{array}
\right.
\end{gather}
It is straightforward to see that $\Phi \in C^{2}({\mathbb T}_{\gamma})$ satisfies \eqref{tphieq1} 
and the condition (ii) in Definition \ref{tDefS1}. 


Now by using $\Phi$ and $\beta=\Phi_{max}$, we define $F_{+}$  and  $F_{-}$ as \eqref{tfform+} and \eqref{tfform-}, respectively, i.e. 
\begin{align*}
{F}_{+}(X,\xi)
& =H_{+}(-\sqrt{(\xi_1-\alpha)^2-2q_{+}{\Phi}(X)}+\alpha,\xi')
\chi((\xi_{1}-\alpha)^{2}-2q_{+}\Phi(X))\chi(-(\xi_{1}-\alpha)) 
\\
& \qquad
+G(\sqrt{(\xi_1-\alpha)^2-2q_{+}\Phi(X)+2q_{+}\Phi_{max}}+\alpha,\xi')
\chi(-(\xi_1-\alpha)^2+2q_{+}\Phi(X)) 
\\
& \qquad
+H_{+}((\sqrt{(\xi_1-\alpha)^2-2q_{+}\Phi(X)}+\alpha,\xi')
\chi((\xi_{1}-\alpha)^{2}-2q_{+}\Phi(X))\chi(\xi_{1}-\alpha),
\\
F_{-}(X,\xi)
&=H_{-}(-\sqrt{(\xi_1-\alpha)^2+2q_{-}\Phi(X)}+\alpha,\xi')\chi(-(\xi_1-\alpha))
\\
&\quad +H_{-}(\sqrt{(\xi_1-\alpha)^2+2q_{-}\Phi(X)}+\alpha,\xi')\chi(\xi_1-\alpha).
\end{align*}
One can see that $F_{\pm}$ satisfy the conditions (i)--(iii) in Definition \ref{tDefS1} 
in much the same way as in the proof of Theorem \ref{existence1}. 
The condition (iv) is validated by \eqref{tfform+}--\eqref{tphieq1}.
The proof is complete.
\end{proof}

\subsection{Nonuniqueness}\label{S4.3}

This subsection is devoted to the proofs of Theorems \ref{tnonunique1} and \ref{tnonunique2}. 
First we show Theorem \ref{tnonunique1}.

\begin{proof}[Proof of Theorem \ref{tnonunique1}]
Due to Theorem \ref{texistence1},
if quadruplets $(\beta,G,H_{\pm})$ with \eqref{tG-beta} exist,
we have solutions $(F_{\pm},\Phi)$ of the problem \eqref{tVP2} 
replaced the period $\gamma$ by $\tilde{\gamma}$ for each $(\beta,G,H_{\pm})$, 
where
\begin{gather*}
\tilde{\gamma} := 2\int_{0}^{\beta} \frac{d\Phi}{\sqrt{V(\Phi;\beta,G,H_{\pm})}}.
\end{gather*}
Owing to $\Phi(0)=\beta$ and the forms \eqref{tfform+}--\eqref{tfform-} in Theorem \ref{texistence1}, 
if the quadruplets are different, then so are the solutions.
For the case that the period $\tilde{\gamma}$ is not equal to $\gamma$,
we take a new quadruplet $\bigl(\beta, \frac{\tilde{\gamma}^{2}}{\gamma^{2}} G, \frac{\tilde{\gamma}^{2}}{\gamma^{2}} H_{\pm}\bigr)$, 
and then see that it also satisfies \eqref{tG-beta}.
The solution constructed from the new quadruplet has a period $\gamma$, since $V\bigl(\Phi;\beta, \frac{\tilde{\gamma}^{2}}{\gamma^{2}} G, \frac{\tilde{\gamma}^{2}}{\gamma^{2}} H_{\pm}\bigr)=\frac{\tilde{\gamma}^{2}}{\gamma^{2}}V(\Phi;\beta,G,H_{\pm})$ holds.
Therefore, it is sufficient to find infinite many quadruplets $(\beta,G,H_{\pm})$ with \eqref{tG-beta}.

For any $\tau>0$ and $\beta>0$, we define 
 by
\begin{align*}
H_{\tau,+}(\xi)
&:=\Biggl\{
\begin{array}{ll}
\frac{1}{2e_+\sqrt{2q_+}} & \hbox{if} \quad
(\xi_1,\xi')\in[-\sqrt{2q_+\tau}+\alpha,\sqrt{2q_+\tau}+\alpha]\times[0,1]^{n-1}, \\
0 & \hbox{otherwise}, 
\end{array}
\\
H_{\tau,-}(\xi)
&:=\Biggl\{
\begin{array}{ll}
\frac{1}{2e_-\sqrt{2q_-}} & \hbox{if} \quad
(\xi_1,\xi')\in[-\sqrt{2q_-\beta+2q_-\tau}+\alpha,
-\sqrt{2q_-\beta}+\alpha]\times[0,1]^{n-1}, \\
\frac{1}{2e_-\sqrt{2q_-}} & \hbox{if} \quad
(\xi_1,\xi')\in[\sqrt{2q_-\beta}+\alpha,
\sqrt{2q_-\beta+2q_-\tau}+\alpha]\times[0,1]^{n-1}, \\
0 & \hbox{otherwise},
\end{array}
\\
G(\xi)&:=0.
\end{align*}
It is clear that 
\begin{gather*}
H_{\tau,+} \in L^1(\R^n), \quad 
H_{\tau,-} \in L^1(\R^n) \cap L_{loc}^{3}({\mathbb R};L^{1}(\mathbb R^{n-1})), \quad 
H_{\tau,\pm} \geq 0,
\\
G \in L^1_{loc}((\alpha,+\infty) \times \R^{n-1}) \cap L_{loc}^{3}([\alpha,+\infty);L^{1}(\mathbb R^{n-1})), \quad 
G \geq 0.
\end{gather*}

Let us show that the quadruplets $(\beta,G,H_{\tau,\pm})$ satisfy \eqref{tG-beta}.
From the definition of $H_{\tau,-}$, it is obvious that \eqref{tG-beta1} holds. 
By the direct computation, there hold for $\Phi \in [0,\beta]$ that
\begin{align*}
e_+\rho_+(\Phi;\beta,G,H_{\tau,+})&=\sqrt{\Phi+\tau}-\sqrt{\Phi}=:f_\tau(\Phi),
\\
e_-\rho_-(\Phi;H_{\tau,-})&
=f_\tau(\beta-\Phi),
\\
V(\Phi;\beta,G,H_{\tau,\pm})
&=\int_0^\Phi(f_\tau(\varphi)-f_\tau(\beta-\varphi))\,d\varphi,
\end{align*}
where $\rho_{\pm}$ and $V$ are defined in \eqref{tV0}--\eqref{trho--}.
Evaluating $V(\Phi;\beta,G,H_{\tau,\pm})$ at $\Phi=0,\beta$ gives 
$V(0;\beta,G,H_{\tau,\pm})=V(\beta;\beta,G,H_{\tau,\pm})=0$.
Using the fact that the function $f_\tau$ is strictly increasing on $[0,\beta]$, we observe that
\begin{gather}\label{bi1}
\frac{d}{d\Phi}V(\Phi;\beta,G,H_{\tau,\pm})
=f_\tau(\Phi)-f_\tau(\beta-\Phi)
\left\{
\begin{array}{ll}
>0, & \Phi\in[0,\frac{1}{2}\beta), \\
=0, & \Phi=\frac{1}{2}\beta, \\
<0, & \Phi\in(\frac{1}{2}\beta,\beta]. 
\end{array}\right.
\end{gather}
From these, we deduce that $V(\Phi;\beta,G,H_{\tau,\pm})$ is positive on $(0,\beta)$.
Hence, \eqref{tG-beta2} holds.
Furthermore, from \eqref{bi1}, we have
\begin{gather}\label{V<0}
\frac{d}{d\Phi}V(0;\beta,G,H_{\tau,\pm})>0, \quad
\frac{d}{d\Phi}V(\beta;\beta,G,H_{\tau,\pm})<0.
\end{gather}
These lead to \eqref{tG-beta3} with the aid of the Taylor theorem.
The proof is complete.
\end{proof}

Next we prove Theorem \ref{tnonunique2}.

\begin{proof}[Proof of Theorem \ref{tnonunique2}]
We make use of the quadruplets $(\beta,G,H_{\tau,\pm})$ in the proof of Theorem \ref{tnonunique1} again.
For these $(\beta,G,H_{\tau,\pm})$, we see that 
$V(\Phi;\beta,G,H_{\tau,\pm})=V(\Phi;\beta,0,H_{\tau,\pm})$ satisfies \eqref{V<0}, 
and $\beta_{*}$ defined in \eqref{beta*} is equal to $+\infty$.
In much the same way as in the proof of Lemma \ref{nonunique2},
one can find a pair $(\tilde{\beta},\tilde{G})$ so that $(\tilde{\beta},\tilde{G},H_{\pm,\tau})$ satisfies \eqref{tG-beta} and $G\not\equiv 0$.
Therefore, adjusting the period of solutions by considering constant multiples of $\tilde{G}$ and $H_{\tau,\pm}$ similarly as in the proof of Theorem \ref{tnonunique1}, we have infinite many solutions with the period $\gamma$ and trapped ions.
\end{proof}

\subsection{Nonuniqueness for the case with the Boltzmann relation}\label{S4.4}

This subsection is devoted to the proof of Corollary \ref{tnonunique3}. 
We can show the assertions expect the last sentence in Corollary \ref{tnonunique3} 
in the same way as in the proof of Corollary \ref{cor1}.
We focus ourself on the proof of the last sentence, that is, 
`` There is a constant $\gamma_{\star}>0$ satisfying the following properties: 
for any $\alpha$ and $\gamma \in (0,\gamma_{\star}]$, 
there exist infinite many solutions of the problem \eqref{tVP3} with the period $\gamma$ ''.

Set $\tau_\star:=\beta_\star:=\frac{1}{10\kappa}$.
For any $\tau \in (0,\tau_{\star}]$ and $\beta \in (0,\beta_{\star}]$, we define
\begin{align*}
H_{\tau,\beta,+}(\xi)
&:=\Biggl\{
\begin{array}{ll}
\frac{2e_-\rho}{3\kappa e_+\sqrt{2q_+}}A_{\tau,\beta} 
& \hbox{if} \quad \xi\in[\alpha,\sqrt{2q_{+}\tau}+\alpha]\times[0,1]^{n-1}, \\
0 & \hbox{otherwise}, 
\end{array}
\\
H_-(\xi)
&:=\frac{\rho}{\displaystyle
\int_{\R^n}e^{\frac{-\kappa}{2q_-}|\xi|^2}\,d\xi}
e^{\frac{-\kappa}{2q_-}\{(\xi_1-\alpha)^2+|\xi'|^2\}}, 
\\
G(\xi)&:= 0,
\end{align*}
where we will choose a constant $A_{\tau,\beta}>0$ in \eqref{ATB1} below, and $H_{-}$ is the same function in Corollary \ref{cor1} and hence gives the Boltzmann realtion.
It is clear that 
\begin{subequations}\label{regHG1}
\begin{gather}
H_{\tau,\beta,+}\in L^1(\R^n), \quad
H_-\in L^1(\R^n)\cap L^3_{loc}(\R;L^1(\R^{n-1})), \quad
H_{\tau,\beta,+},H_-\geq 0, 
\\
G\in L^1_{loc}((\alpha,+\infty)\times\R^{n-1})
\cap L^3_{loc}([\alpha,+\infty);L^1(\R^{n-1})), \quad
G\geq 0.
\end{gather}
\end{subequations}
It is seen by the direct computation that
\begin{align}
\rho_+(\Phi;\beta,G,H_{\tau,\beta,+})&=\frac{2}{3\kappa e_+}e_-\rho A_{\tau,\beta}(\sqrt{\Phi+\tau}-\sqrt{\Phi}),
\notag\\
\rho_-(\Phi;H_-)&=\rho e^{-\kappa\Phi},
\notag\\
V(\Phi;\beta,G,H_{\tau,\beta,+},H_-)&
=\frac{e_-\rho}{\kappa}\Bigl(A_{\tau,\beta}\{(\Phi+\tau)^{\frac{3}{2}}-\Phi^{\frac{3}{2}}-\tau^{\frac{3}{2}}\}-(1-e^{-\kappa\Phi})\Bigr),
\label{VTB0}
\end{align}
where $\rho_{\pm}$ and $V$ are defined in \eqref{tV0}--\eqref{trho--}. 
For notational convenience, we use 
\begin{gather}
\tilde{V}_{\tau,+}(\Phi):=(\Phi+\tau)^{\frac{3}{2}}-\Phi^{\frac{3}{2}}-\tau^{\frac{3}{2}}, \quad
\tilde{V}_-(\Phi):=1-e^{-\kappa\Phi}, 
\notag\\
\tilde{V}_{\tau,\beta}(\Phi):=A_{\tau,\beta}\tilde{V}_{\tau,+}(\Phi)-\tilde{V}_-(\Phi),
\label{VTB1}
\end{gather}
where $V(\Phi;\beta,G,H_{\tau,\beta,+},H_-)=\frac{e_-\rho}{\kappa} \tilde{V}_{\tau,\beta}(\Phi)$ holds.
Now we choose the constant $A_{\tau,\beta}$ as
\begin{gather}\label{ATB1}
A_{\tau,\beta}:=\frac{\tilde{V}_-(\beta)}{\tilde{V}_{\tau,+}(\beta)},
\end{gather}
which gives $\tilde{V}_{\tau,\beta}(\beta)=0$.

It is seen in Lemma \ref{LemA} below that for any $\tau \in (0,\tau_{\star}]$ and $\beta \in (0,\beta_{\star}]$,
the quadruplets $(\beta,G,H_{\tau,\beta,+},H_-)$ satisfy \eqref{tG-beta}. 
Therefore, we have infinite many solutions of the problem \eqref{tVP3} by virtue of Theorem \ref{texistence1} with $H_{-}$ chosen above.
Selecting suitably $\tau$ and $\beta$, we find infinite many solutions with the same period $\gamma$.
We observe the period of solutions obtained from the quadruplets $(\beta,G,H_{\tau,\beta,+},H_-)$ by using \eqref{tperiod1} as follows:
\begin{align}\label{gamma00}
\gamma&=2\int_0^\beta\frac{d\Phi}{\sqrt{V(\Phi;\beta,G,H_{\tau,\beta,+},H_-)}}
\notag \\
&=2\sqrt{\frac{\kappa}{e_-\rho}}
\int_0^\beta\frac{d\Phi}{\sqrt{\tilde{V}_{\tau,\beta}(\Phi)}}
=2\sqrt{\frac{\kappa}{e_-\rho}}
\int_0^1\tilde{W}_{\tau,\beta}(\Psi)\,d\Psi,
\end{align}
where the function $\tilde{W}_{\tau,\beta}(\Psi)$ is defined by 
\begin{gather}\label{WTB1}
\tilde{W}_{\tau,\beta}(\Psi)
:=\frac{\beta}{\sqrt{\tilde{V}_{\tau,\beta}(\beta\Psi)}}, \quad \Psi\in(0,1).
\end{gather}
Let us set
\begin{gather*}
\gamma_\star:=\tilde{\gamma}_\star\sqrt{\frac{\kappa}{e_-\rho}}, \quad \text{where \quad
$\displaystyle \tilde{\gamma}_\star:=2\int_0^1\tilde{W}_{\tau_\star,\beta_\star}(\Psi)\,d\Psi$}.
\end{gather*}
To complete the proof of Corollary \ref{tnonunique3},
it is sufficient to find infinite many pairs $(\tau,\beta) \in (0,\tau_{\star}] \times (0,\beta_{\star}]$ which satsify the following equality for each $\tilde{\gamma} \in (0,\tilde{\gamma}_{\star}]$:
\begin{gather}\label{GTB1}
\tilde{\gamma}=\tilde{\gamma}_{\tau,\beta}, \quad \text{where \quad
$\displaystyle \tilde{\gamma}_{\tau,\beta}:=2\int_0^1\tilde{W}_{\tau,\beta}(\Psi)\,d\Psi.$}
\end{gather}

First we show auxiliary properties in Lemmas \ref{LemA}--\ref{LemC}. 
Then we complete the proof of Corollary \ref{tnonunique3}.

\begin{lem}\label{LemA}
For any $(\tau,\beta) \in (0,\tau_{\star}] \times (0,\beta_{\star}]$,
the quadruplet $(\beta,G,H_{\tau,\beta,+},H_-)$ satisfies \eqref{tG-beta}, and
the problem \eqref{tVP3} has a solution.
\end{lem}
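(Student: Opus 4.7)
The plan is to verify the three conditions in \eqref{tG-beta} and then invoke Theorem \ref{texistence1}; existence of the solution to \eqref{tVP3} then follows from the same argument as in the proof of Corollary \ref{cor1}, since $H_{-}$ is precisely the Gaussian that yields the Boltzmann relation. The requisite regularity of $H_{\tau,\beta,+}$, $H_-$, and $G$ is already recorded in \eqref{regHG1}. The symmetry \eqref{tG-beta1} is immediate because $H_-(\xi)$ depends on $\xi_1$ only through $(\xi_1-\alpha)^2$. The boundary identity $V(\beta;\beta,G,H_{\tau,\beta,+},H_-)=0$ in \eqref{tG-beta2} holds by the definition \eqref{ATB1} of $A_{\tau,\beta}$.

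The core task is the strict positivity $V>0$ on $(0,\beta)$ together with the integrability \eqref{tG-beta3}. I plan to derive both from a single geometric fact: the map $\tilde{V}_{-}\mapsto\tilde{V}_{\tau,+}$, i.e.\ the function $h(u):=\tilde{V}_{\tau,+}(\Phi(u))$ with $u=\tilde{V}_{-}(\Phi)$, is strictly concave on $[0,\tilde{V}_-(\beta)]$. Using $\tilde{V}_{\tau,+}'(\Phi)=\tfrac{3}{2}p(\Phi)$ with $p(\Phi):=(\Phi+\tau)^{1/2}-\Phi^{1/2}$ and $\tilde{V}_-'(\Phi)=\kappa q(\Phi)$ with $q(\Phi):=e^{-\kappa\Phi}$, one computes
\[
h'(u)=\frac{3\,p(\Phi)}{2\kappa\,q(\Phi)},
\]
so strict concavity of $h$ is equivalent to strict monotonicity of $p/q$ in $\Phi$ on $(0,\beta]$. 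A short calculation shows $(p/q)'<0$ on $(0,\beta]$ reduces to the inequality $\Phi(\Phi+\tau)<1/(4\kappa^{2})$, which is guaranteed by the smallness $\tau,\beta\le 1/(10\kappa)$ since $\Phi(\Phi+\tau)\le\beta(\beta+\tau)\le 2/(100\kappa^{2})<1/(4\kappa^{2})$.

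With $h$ strictly concave and $h(0)=0$, two standard consequences close the proof. First, the graph lies strictly above its chord, so for every $\Phi\in(0,\beta)$,
\[
\tilde{V}_{\tau,+}(\Phi)>\frac{\tilde{V}_{\tau,+}(\beta)}{\tilde{V}_{-}(\beta)}\,\tilde{V}_{-}(\Phi)=\frac{1}{A_{\tau,\beta}}\,\tilde{V}_{-}(\Phi),
\]
which gives $\tilde{V}_{\tau,\beta}(\Phi)>0$ and finishes \eqref{tG-beta2}. Second, the strict chord inequalities $h'(0)>h(u(\beta))/u(\beta)>h'(u(\beta))$ translate exactly to $\tilde{V}_{\tau,\beta}'(0)>0$ and $\tilde{V}_{\tau,\beta}'(\beta)<0$. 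Combined with $\tilde{V}_{\tau,\beta}(0)=\tilde{V}_{\tau,\beta}(\beta)=0$, Taylor's theorem yields $\tilde{V}_{\tau,\beta}(\Phi)\sim\tilde{V}_{\tau,\beta}'(0)\,\Phi$ near $0$ and $\tilde{V}_{\tau,\beta}(\Phi)\sim -\tilde{V}_{\tau,\beta}'(\beta)(\beta-\Phi)$ near $\beta$, so the integral in \eqref{tG-beta3} converges at both endpoints.

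The hard part is identifying the right quantitative smallness that makes the monotonicity of $p/q$ work; once the reduction to $\Phi(\Phi+\tau)<1/(4\kappa^2)$ is spotted, the choice $\tau_\star=\beta_\star=1/(10\kappa)$ is comfortably within this range and the rest is formal. With \eqref{tG-beta} verified, Theorem \ref{texistence1} produces a solution $(F_\pm,\Phi)$ of \eqref{tVP2}, and the explicit form of $F_-$ in \eqref{tfform-} combined with the Gaussian choice of $H_-$ gives $\int_{\mathbb R^n}F_-\,d\xi=\rho e^{-\kappa\Phi}$, so $(F_+,\Phi)$ solves \eqref{tVP3}.
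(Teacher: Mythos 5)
Your proposal is correct, and it verifies the same three conditions but organizes the core estimate differently from the paper. Your single lemma --- that $\tilde V_{\tau,+}$ viewed as a function of $\tilde V_-$ is strictly concave, equivalently that $p/q$ with $p(\Phi)=(\Phi+\tau)^{1/2}-\Phi^{1/2}$, $q(\Phi)=e^{-\kappa\Phi}$ is strictly decreasing --- is a sound reduction: $(p/q)'<0$ is indeed equivalent to $p'+\kappa p<0$, i.e.\ to $2\kappa\sqrt{\Phi(\Phi+\tau)}<1$, which is comfortably guaranteed by $\tau,\beta\le\frac{1}{10\kappa}$; and from concavity with $h(0)=0$ the chord-above inequality gives $A_{\tau,\beta}\tilde V_{\tau,+}(\Phi)>\tilde V_-(\Phi)$ on $(0,\beta)$ (hence \eqref{tG-beta2}), while the mean-value chord-slope inequalities give $\frac{d}{d\Phi}\tilde V_{\tau,\beta}(0)>0$ and $\frac{d}{d\Phi}\tilde V_{\tau,\beta}(\beta)<0$, so \eqref{tG-beta3} follows by Taylor exactly as in the paper. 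The paper instead proves the two endpoint slope signs directly, by introducing auxiliary functions $f(\beta)$ and $g(\beta)=\tilde V_{\tau,+}'(\beta)\tilde V_-(\beta)-\tilde V_{\tau,+}(\beta)\tilde V_-'(\beta)$ with $f(0)=g(0)=0$ and monotonicity in $\beta$ under the same smallness, and then establishes interior positivity by showing that every root $\Phi_0$ of $\frac{d}{d\Phi}\tilde V_{\tau,\beta}$ satisfies $\frac{d^2}{d\Phi^2}\tilde V_{\tau,\beta}(\Phi_0)<0$ (a computation equivalent to your smallness condition) and running an $\inf/\sup$ argument to conclude the critical point in $[0,\beta]$ is unique. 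So the quantitative input is essentially the same smallness-of-$\kappa\sqrt{\Phi(\Phi+\tau)}$ fact, but your concavity-in-the-transformed-variable formulation delivers the interior positivity and both boundary slopes in one stroke, avoiding the separate $f$, $g$ computations and the uniqueness-of-critical-point argument; the paper's route, on the other hand, stays with explicit derivatives of $\tilde V_{\tau,\beta}$ and is the template it reuses (via \eqref{Astar1}--\eqref{Astar2}) in the convexity estimate of Lemma \ref{LemC}. The remaining items (regularity \eqref{regHG1}, symmetry \eqref{tG-beta1}, $V(\beta)=0$ from \eqref{ATB1}, and passing from \eqref{tVP2} to \eqref{tVP3} via Theorem \ref{texistence1} and the Corollary \ref{cor1} computation with the Gaussian $H_-$) are handled as in the paper.
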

\begin{proof}
It is sufficient to show that the quadruplets $(\beta,G,H_{\tau,\beta,+},H_-)$ satisfy \eqref{tG-beta}.
Indeed, $(\beta,G,H_{\tau,\beta,+},H_-)$ also has the regularity \eqref{regHG1}, and then
Theorem \ref{texistence1} ensures the existence of a solution $(F_{\pm},\Phi)$ of the problem \eqref{tVP2} for each $(\tau,\beta)$.
Then we can deduce that $(F_{+},\Phi)$ solves the problem \eqref{tVP3} in much the same way as in the proof of Corollary \ref{cor1}.

From now on we show that all $(\beta,G,H_{\tau,\beta,+},H_-)$ satisfy \eqref{tG-beta}.
It is clear that \eqref{tG-beta1} holds. 
To show \eqref{tG-beta2}, we start from proving that
\begin{gather}\label{dVpm}
\frac{d}{d\Phi}\tilde{V}_{\tau,\beta}(0)>0, \quad
\frac{d}{d\Phi}\tilde{V}_{\tau,\beta}(\beta)<0,
\end{gather}
where $\tilde{V}_{\tau,\beta}$ is defined in \eqref{VTB1}.
It is seen that
\begin{gather*}
\frac{d}{d\Phi}\tilde{V}_{\tau,\beta}(0)
=\frac{f(\beta)}{\tilde{V}_{\tau,+}(\beta)}, \quad
\text{where} \quad
f(\beta):=\frac{3}{2}\tau^{\frac{1}{2}}(1-e^{-\kappa\beta})
-\kappa\{(\beta+\tau)^{\frac{3}{2}}
-\beta^{\frac{3}{2}}-\tau^{\frac{3}{2}}\}.
\end{gather*} 
It follows from direct computation that $f(0)=0$ and $f'(\beta)>0$ for any $\beta\in(0,\beta_{\star}]$.
Hence, $\frac{d}{d\Phi}\tilde{V}_{\tau,\beta}(0)>0$ holds.
On the other hand, 
\begin{gather*}
\frac{d}{d\Phi}\tilde{V}_{\tau,\beta}(\beta)
=\frac{g(\beta)}
{\tilde{V}_{\tau,+}(\beta)}, \quad
\text{where} \quad
g(\beta):=\tilde{V}_{\tau,+}'(\beta)\tilde{V}_-(\beta)
-\tilde{V}_{\tau,+}(\beta)\tilde{V}_-'(\beta).
\end{gather*}
To deduce that $\frac{d}{d\Phi}\tilde{V}_{\tau,\beta}(\beta)<0$ for any $\beta\in(0,\beta_{\star}]$, 
we show that $g(0)=0$ and $g'(\beta)<0$ holds for any $\beta\in(0,\beta_{\star}]$. 
It is obvious that $g(0)=0$. We observe that
\begin{align*}
g'(\beta)
&=\frac{3}{4}(1-e^{-\kappa\beta})
\{(\beta+\tau)^{-\frac{1}{2}}-\beta^{-\frac{1}{2}}\}
+\kappa^2 e^{-\kappa\beta}
\{(\beta+\tau)^{\frac{3}{2}}-\beta^{\frac{3}{2}}-\tau^{\frac{3}{2}}\}
\\
& \leq \frac{3}{4}(1-e^{-1})
\{(\beta+\tau)^{-\frac{1}{2}}-\beta^{-\frac{1}{2}}\}
+\kappa^2 
\{(\beta+\tau)^{\frac{3}{2}}-\beta^{\frac{3}{2}}-\tau^{\frac{3}{2}}\},
\end{align*}
where we have used $\beta \leq \beta_{\star}=\frac{1}{10\kappa}$.
We note that the rightmost is an increasing function with respect to $\beta$, and then see that
\begin{align}
\kappa^2 \{(\beta+\tau)^{\frac{3}{2}}-\beta^{\frac{3}{2}}-\tau^{\frac{3}{2}}\}
&\leq \kappa^2 \{(\beta_{\star}+\tau)^{\frac{3}{2}}-\beta_{\star}^{\frac{3}{2}}-\tau^{\frac{3}{2}}\}
\notag\\
&\leq \kappa^2 \{(\beta_{\star}+\tau)^{\frac{3}{2}}-\beta_{\star}^{\frac{3}{2}}\}
\leq 7\kappa^2\beta_\star^\frac{1}{2}\tau,
\label{Astar1}
\end{align}
where we have used $\tau \leq \tau_{\star}=\beta_{\star}$ in deriving the last inequality.
It is also seen by direct computation that 
\begin{gather}\label{Astar2}
g'(\beta) \leq 
\frac{3}{4}(1-e^{-1}) \{(\beta+\tau)^{-\frac{1}{2}}-\beta^{-\frac{1}{2}}\}
+ 7\kappa^2\beta_\star^\frac{1}{2}\tau<0, \quad \beta \in (0,\beta_{\star}].
\end{gather}
Hence, we deduce that $\frac{d}{d\Phi}\tilde{V}_{\tau,\beta}(\beta)<0$.
Thus \eqref{dVpm} holds.

Furthermore, 
we claim that $\frac{d^{2}}{d\Phi^{2}} \tilde{V}_{\tau,\beta}(\Phi_{0})<0$ holds for any roots $\Phi_{0} \in [0,\beta]$ of the function $\frac{d}{d\Phi} \tilde{V}_{\tau,\beta}(\Phi)$. 
This can be shown as follows.
The fact $\frac{d}{d\Phi} \tilde{V}_{\tau,\beta}(\Phi_{0})=0$ gives 
\begin{gather*}
(\Phi_0+\tau)^{\frac{1}{2}}-\Phi_0^{\frac{1}{2}}
=\frac{2}{3A_{\tau,\beta}}\kappa e^{-\kappa\Phi_0}.
\end{gather*}
By using this, we arrive at the claim, i.e.
\begin{gather*}
\frac{d^{2}}{d\Phi^{2}} \tilde{V}_{\tau,\beta}(\Phi_{0})
=-\frac{\kappa e^{-\kappa\Phi_0}}
{3\Phi_0^{\frac{1}{2}}(\Phi_0+\tau)^{\frac{1}{2}}}
\Bigl\{1-3\kappa\Phi_0^{\frac{1}{2}}(\Phi_0+\tau)^{\frac{1}{2}} \Bigr\}<0,
\end{gather*}
where we have used the fact that $\Phi_0\leq\beta\leq\beta_\star$ and $\tau\leq\tau_\star=\beta_\star=\frac{1}{10\kappa}$ in deriving the last inequality.

Now we show \eqref{tG-beta2} by using the claim and \eqref{dVpm}.
It follows from \eqref{VTB0}--\eqref{ATB1} that $V(0;\beta,G,H_{\tau,\beta,+},$ $H_-)=0$.
Let us show the positivity of $V(\Phi;\beta,G,H_{\tau,\beta,+},H_-)=\frac{e_-\rho}{\kappa} \tilde{V}_{\tau,\beta}(\Phi)$.
There exits a root of $\frac{d}{d\Phi} \tilde{V}_{\tau,\beta}(\Phi)$ in $[0,\beta]$ thanks to the intermediate value theorem with \eqref{dVpm}.
If the root is unique in $[0,\beta]$, we can deduce that the positivity owing to
$\tilde{V}_{\tau,\beta}(0)=\tilde{V}_{\tau,\beta}(\beta)=0$.
Let us prove the uniqueness.
Set $K:=\bigl\{\Phi \in [0,\beta] \, ; \, \frac{d}{d\Phi} \tilde{V}_{\tau,\beta}(\Phi)=0 \bigr\}$, $\Phi_{*}:=\inf K$, and $\Phi^{*}:=\sup K$.
We note that $\frac{d}{d\Phi} \tilde{V}_{\tau,\beta}(\Phi_{*})=\frac{d}{d\Phi} \tilde{V}_{\tau,\beta}(\Phi^{*})=0$ and $0<\Phi_{*}\leq \Phi^{*} \leq \beta$ hold.
We only need to show $\Phi_{*}=\Phi^{*}$. Suppose that $\Phi_{*}<\Phi^{*}$ holds.
From the above claim, we can find a constant $\delta>0$ such that 
\begin{gather*}
\frac{d}{d\Phi} \tilde{V}_{\tau,\beta}(\Phi)<0 \quad
\text{for $\Phi\in(\Phi_*,\Phi_*+\delta]$}, \qquad
\frac{d}{d\Phi} \tilde{V}_{\tau,\beta}(\Phi)>0 \quad
\text{for $\Phi\in[\Phi^*-\delta,\Phi^*)$},
\end{gather*}
where $\Phi_*+\delta<\Phi^*-\delta$ also holds.
We notice that $\frac{d}{d\Phi} \tilde{V}_{\tau,\beta}(\Phi_*+\delta)<0<\frac{d}{d\Phi} \tilde{V}_{\tau,\beta}(\Phi^*-\delta)$.
Hence, $\frac{d}{d\Phi}\tilde{V}_{\tau,\beta}(\Phi_{1}) = 0$ and
$\frac{d^{2}}{d\Phi^{2}}\tilde{V}_{\tau,\beta}(\Phi_{1}) \geq 0$ hold for a certain $\Phi_{1} \in [\Phi_*+\delta,\Phi^*-\delta]$, but this contradicts to the above claim. Thus $\Phi_{*}=\Phi^{*}$ holds. 
Consequently, we see the positivity of $V(\Phi;\beta,G,H_{\tau,\beta,+},H_-)$ and obtain \eqref{tG-beta2}.

Obviously, \eqref{tG-beta3} follows from 
\eqref{dVpm} and $V(\Phi;\beta,G,H_{\tau,\beta,+},H_-)=\frac{e_-\rho}{\kappa} \tilde{V}_{\tau,\beta}(\Phi)$ 
with the aid of the Taylor theorem.
The proof is complete.
\end{proof}

\begin{lem}\label{LemBa}
For each fixed $\beta \in (0,\beta_{\star}]$ and $\Psi \in (0,1)$, $\tilde{W}_{\tau,\beta}(\Psi)$  is a continuous function with respect to $\tau$ on the interval $(0,\tau_{*}]$. Furthermore, if $\tau_{1},\tau_{2} \in (0,\tau_{\star}]$ and $\tau_1<\tau_2$, then the following holds:
\begin{gather}\label{tW1}
\tilde{W}_{\tau_1,\beta}(\Psi)<\tilde{W}_{\tau_2,\beta}(\Psi).
\end{gather}
\end{lem}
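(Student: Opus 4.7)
The plan is to exploit the representation $\tilde{V}_{\tau,\beta}(\Phi)=\tilde{V}_-(\beta)\,\varphi_\tau(\Phi)/\varphi_\tau(\beta)-\tilde{V}_-(\Phi)$, where $\varphi_\tau(\Phi):=\tilde{V}_{\tau,+}(\Phi)=(\Phi+\tau)^{3/2}-\Phi^{3/2}-\tau^{3/2}$. First I would observe that $\partial_\Phi\varphi_\tau(\Phi)=\frac{3}{2}\bigl((\Phi+\tau)^{1/2}-\Phi^{1/2}\bigr)>0$ and $\varphi_\tau(0)=0$, so $\varphi_\tau(\beta)>0$ for every $\tau>0$; hence $\tilde{V}_{\tau,\beta}$ is a smooth function of $(\tau,\Phi)\in(0,\tau_\star]\times[0,\beta]$. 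Continuity of $\tau\mapsto\tilde{W}_{\tau,\beta}(\Psi)$ on $(0,\tau_\star]$ then follows at once, because Lemma \ref{LemA} (specifically \eqref{tG-beta2}) keeps $\tilde{V}_{\tau,\beta}(\beta\Psi)>0$ for $\Psi\in(0,1)$.

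For the strict monotonicity \eqref{tW1}, it suffices to prove that $\tau\mapsto\tilde{V}_{\tau,\beta}(\Phi)$ is strictly decreasing on $(0,\tau_\star]$ for every fixed $\Phi\in(0,\beta)$ and $\beta\in(0,\beta_\star]$. Since $\tilde{V}_-(\beta)>0$, this reduces to showing that $R_\Phi(\tau):=\varphi_\tau(\Phi)/\varphi_\tau(\beta)$ is strictly decreasing in $\tau$, and its logarithmic derivative satisfies $\partial_\tau\log R_\Phi(\tau)=L_\tau(\Phi)-L_\tau(\beta)$ with $L_\tau(\Phi):=\partial_\tau\varphi_\tau(\Phi)/\varphi_\tau(\Phi)$. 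Thus the whole argument collapses to proving that $\Phi\mapsto L_\tau(\Phi)$ is strictly increasing on $(0,\beta]$.

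To analyze $L_\tau$, I would use the substitution $u:=\sqrt{\Phi+\tau}$, $v:=\sqrt{\tau}$, $w:=\sqrt{\Phi}$, which satisfies $u^2=v^2+w^2$ together with the identities $u-v=w^2/(u+v)$ and $u-w=v^2/(u+w)$. A short computation gives $\varphi_\tau(\Phi)=v^2(u-v)+w^2(u-w)=v^2w^2(2u+v+w)/((u+v)(u+w))$ and $\partial_\tau\varphi_\tau(\Phi)=\frac{3}{2}(u-v)=3w^2/(2(u+v))$, yielding the compact closed form
\begin{equation*}
L_\tau(\Phi)=\frac{3(u+w)}{2v^2(2u+v+w)}=\frac{3}{2v^2}\cdot\frac{1}{1+(u+v)/(u+w)}.
\end{equation*}
Since $v$ is $\Phi$-independent and $u,w$ are strictly increasing in $\Phi$, differentiating with $\partial_\Phi u=1/(2u)$ and $\partial_\Phi w=1/(2w)$ yields $\partial_\Phi[(u+v)/(u+w)]<0$ (the numerator simplifies to $-v(u+v+w)/(2uw)$), whence $L_\tau$ is strictly increasing in $\Phi$ and \eqref{tW1} follows. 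The main obstacle is precisely finding the right algebraic reduction that produces the closed form above; differentiating the raw expression for $L_\tau$ in $\Phi$ directly is unpleasant, but the substitution $(u,v,w)$ combined with $u^2=v^2+w^2$ makes both the factorization of $\varphi_\tau$ and the sign of $\partial_\Phi L_\tau$ transparent, after which the rest of the argument is routine.
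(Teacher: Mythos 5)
Your proposal is correct, and its skeleton is the same as the paper's: both reduce \eqref{tW1} to showing that $\tau\mapsto\tilde{V}_{\tau,+}(\Phi)/\tilde{V}_{\tau,+}(\beta)$ is strictly decreasing for fixed $0<\Phi<\beta$, and both phrase this as a comparison of the logarithmic $\tau$-derivatives, i.e.\ as monotonicity in $\Phi$ of $L_\tau(\Phi)=\partial_\tau\tilde{V}_{\tau,+}(\Phi)/\tilde{V}_{\tau,+}(\Phi)$ (continuity is handled the same way in both, via positivity of $\tilde{V}_{\tau,\beta}$ from the preceding lemma). The only real divergence is the final verification. The paper scales out $\tau$, setting $x=\Phi/\tau$, and shows the one-variable function $f(x)=\frac{(x+1)^{3/2}-x^{3/2}-1}{(x+1)^{1/2}-1}$ is strictly decreasing after simplifying it to $(x+1)^{1/2}-x^{1/2}+x-x^{1/2}(x+1)^{1/2}+2$; you instead keep both variables and use the substitution $u=\sqrt{\Phi+\tau}$, $v=\sqrt{\tau}$, $w=\sqrt{\Phi}$ with $u^2=v^2+w^2$ to get the closed form $L_\tau(\Phi)=\frac{3}{2v^2}\bigl(1+\frac{u+v}{u+w}\bigr)^{-1}$ and differentiate in $\Phi$. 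I checked your algebra: $\varphi_\tau(\Phi)=v^2(u-v)+w^2(u-w)=\frac{v^2w^2(2u+v+w)}{(u+v)(u+w)}$, $\partial_\tau\varphi_\tau(\Phi)=\frac{3w^2}{2(u+v)}$, and the numerator of $\partial_\Phi\frac{u+v}{u+w}$ is indeed $-\frac{v(u+v+w)}{2uw}<0$, so $L_\tau$ is strictly increasing and the conclusion follows. The two verifications are equivalent (indeed $L_\tau(\Phi)=\frac{3}{2\tau f(\Phi/\tau)}$, so your monotonicity of $L_\tau$ is exactly the paper's monotonicity of $f$); yours buys a somewhat more transparent sign computation, the paper's a one-variable calculus check.
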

\begin{proof}
It is straightforward to see the continuity of $\tilde{W}_{\tau,\beta}(\Psi)$ defined in \eqref{WTB1}.
It remains only to obtain the inequality \eqref{tW1}.
It suffices to show that for any $\Phi,\beta\in[0,+\infty)$ with $\Phi<\beta$ and any $\tau_1,\tau_2\in(0,\tau_\star]$ with $\tau_1<\tau_2$, 
\begin{gather}\label{Vtau+1}
\frac{\tilde{V}_{\tau_1,+}(\Phi)}{\tilde{V}_{\tau_1,+}(\beta)}
>\frac{\tilde{V}_{\tau_2,+}(\Phi)}{\tilde{V}_{\tau_2,+}(\beta)}.
\end{gather}
If this is true, we conclude from \eqref{VTB1} and \eqref{ATB1} that $\tilde{V}_{\tau_1,\beta}(\Phi)
>\tilde{V}_{\tau_2,\beta}(\Phi)$ which gives \eqref{tW1}.

In order to obtain \eqref{Vtau+1}, it is sufficent to prove that 
\begin{gather*}
\frac{d}{d\tau}\Bigl(
\frac{\tilde{V}_{\tau,+}(\Phi)}{\tilde{V}_{\tau,+}(\beta)}\Bigr)
=\frac{
\frac{d}{d\tau}\tilde{V}_{\tau,+}(\Phi)\cdot\tilde{V}_{\tau,+}(\beta)
-\tilde{V}_{\tau,+}(\Phi)\cdot\frac{d}{d\tau}\tilde{V}_{\tau,+}(\beta)}
{\{\tilde{V}_{\tau,+}(\beta)\}^2}
<0 \quad
\text{for $\tau\in(0,\tau_\star]$}.
\end{gather*}
This inequality is equivalent to the following:
\begin{gather*}
\frac{\frac{d}{d\tau}\tilde{V}_{\tau,+}(\Phi)}
{\tilde{V}_{\tau,+}(\Phi)}
<\frac{\frac{d}{d\tau}\tilde{V}_{\tau,+}(\beta)}
{\tilde{V}_{\tau,+}(\beta)} 
\quad \Leftrightarrow \quad
\frac{(\frac{\Phi}{\tau}+1)^{\frac{3}{2}}-(\frac{\Phi}{\tau})^{\frac{3}{2}}-1}
{(\frac{\Phi}{\tau}+1)^{\frac{1}{2}}-1}
>\frac
{(\frac{\beta}{\tau}+1)^{\frac{3}{2}}-(\frac{\beta}{\tau})^{\frac{3}{2}}-1}
{(\frac{\beta}{\tau}+1)^{\frac{1}{2}}-1}.
\end{gather*}
The last inequality holds if the following function $f$ is strictly decreasing on $(0,+\infty)$:
\begin{gather*}
f(x):=\frac{(x+1)^{\frac{3}{2}}-x^{\frac{3}{2}}-1}{(x+1)^{\frac{1}{2}}-1}=(x+1)^{\frac{1}{2}}-x^{\frac{1}{2}}+x-x^{\frac{1}{2}}(x+1)^{\frac{1}{2}}+2.
\end{gather*}
Indeed, it is easy to see that
\begin{align*}
f'(x)
=\frac{-\{(x+1)^{\frac{1}{2}}-x^{\frac{1}{2}}\}
-\{(x+1)^{\frac{1}{2}}-x^{\frac{1}{2}}\}^2}{2x^{\frac{1}{2}}(x+1)^{\frac{1}{2}}}
<0.
\end{align*}
Thus \eqref{Vtau+1} holds. The proof is complete.
\end{proof}

\begin{lem}\label{LemB}
For each fixed $\beta \in (0,\beta_{\star}]$, $\tilde{\gamma}_{\tau,\beta}$ is a continue function with respect to $\tau$ on the interval $(0,\tau_{*}]$. Furthermore, if $\tau_{1},\tau_{2} \in (0,\tau_{\star})$ and $\tau_1<\tau_2$, 
then $\tilde{\gamma}_{\tau_1,\beta}<\tilde{\gamma}_{\tau_2,\beta}$ holds.
\end{lem}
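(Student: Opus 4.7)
The plan is to deduce the strict monotonicity by integrating the pointwise inequality \eqref{tW1} from Lemma \ref{LemBa}, and to derive the continuity from a dominated convergence argument that uses the same monotonicity to produce an integrable majorant.

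First I would prove strict monotonicity. Fix $\beta\in(0,\beta_\star]$ and $\tau_1,\tau_2\in(0,\tau_\star]$ with $\tau_1<\tau_2$. By Lemma \ref{LemBa}, the strict inequality $\tilde{W}_{\tau_1,\beta}(\Psi)<\tilde{W}_{\tau_2,\beta}(\Psi)$ holds for every $\Psi\in(0,1)$, and the functions $\tilde{W}_{\tau_i,\beta}$ are continuous in $\Psi\in(0,1)$, hence measurable. Both integrals $\int_0^1\tilde{W}_{\tau_i,\beta}(\Psi)\,d\Psi=\tilde{\gamma}_{\tau_i,\beta}/2$ are finite by Lemma \ref{LemA} together with \eqref{tG-beta3}. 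Integrating the pointwise strict inequality over $(0,1)$ therefore yields $\tilde{\gamma}_{\tau_1,\beta}<\tilde{\gamma}_{\tau_2,\beta}$.

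Next I would prove continuity at an arbitrary $\tau_0\in(0,\tau_\star]$. For any sequence $\tau_n\to\tau_0$ with $\tau_n\in(0,\tau_\star]$, Lemma \ref{LemBa} guarantees the pointwise convergence $\tilde{W}_{\tau_n,\beta}(\Psi)\to\tilde{W}_{\tau_0,\beta}(\Psi)$ for each $\Psi\in(0,1)$. To invoke the dominated convergence theorem, choose $\tau_1\in(\tau_0,\tau_\star]$ (if $\tau_0<\tau_\star$); then, for all sufficiently large $n$, one has $\tau_n\leq\tau_1$ and hence $\tilde{W}_{\tau_n,\beta}(\Psi)\leq\tilde{W}_{\tau_1,\beta}(\Psi)$ by the monotonicity in Lemma \ref{LemBa}. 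Because $\int_0^1\tilde{W}_{\tau_1,\beta}(\Psi)\,d\Psi=\tilde{\gamma}_{\tau_1,\beta}/2<+\infty$ by \eqref{tG-beta3} and Lemma \ref{LemA}, dominated convergence gives $\tilde{\gamma}_{\tau_n,\beta}\to\tilde{\gamma}_{\tau_0,\beta}$. In the endpoint case $\tau_0=\tau_\star$ only left-continuity is needed, and this is obtained by the monotone convergence theorem applied to the increasing sequence $\tilde{W}_{\tau_n,\beta}\nearrow\tilde{W}_{\tau_\star,\beta}$, combined with the finiteness of $\tilde{\gamma}_{\tau_\star,\beta}$.

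The main potential obstacle is controlling the integrand uniformly near the singularities at $\Psi=0,1$, where $\tilde{V}_{\tau,\beta}$ vanishes. This difficulty is, however, fully absorbed into the finiteness of the dominating integral $\tilde{\gamma}_{\tau_1,\beta}$, which was already established in Lemma \ref{LemA} via the sign information \eqref{dVpm} and the Taylor theorem. Thus no further singular-integral analysis is required: the monotonicity produced by Lemma \ref{LemBa} converts the bound $\tilde{\gamma}_{\tau_1,\beta}<+\infty$ into a uniform majorant valid for all $\tau_n$ eventually, which is exactly what dominated convergence needs.
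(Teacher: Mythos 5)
Your proof is correct and follows essentially the same route as the paper: strict monotonicity by integrating the pointwise inequality \eqref{tW1}, and continuity by the dominated convergence theorem with a majorant supplied by the monotonicity in Lemma \ref{LemBa}. The paper simply takes the single global majorant $\tilde{W}_{\tau_\star,\beta}$, whose integrability follows from \eqref{gamma00}, which covers the endpoint $\tau=\tau_\star$ at once and avoids your separate monotone-convergence step (which, as written, would require the sequence $\tau_n$ to be increasing).
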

\begin{proof}
For any fixed $\beta\in(0,\beta_\star]$, we see from Lemma \ref{LemBa} that $\tilde{W}_{\tau,\beta}(\Psi)\leq\tilde{W}_{\tau_\star,\beta}(\Psi)$ for any $\tau\in(0,\tau_\star]$ and $\Psi\in(0,1)$.
It is also seen from \eqref{gamma00} that $\tilde{W}_{\tau_\star,\beta}(\Psi)$ is integrable over $(0,1)$.
Hence, Lemma \ref{LemBa} and the dominated convergence theorem give the continuity  of $\tilde{\gamma}_{\tau,\beta}$ defined in \eqref{GTB1}.
The inequality $\tilde{\gamma}_{\tau_1,\beta}<\tilde{\gamma}_{\tau_2,\beta}$ immediately follows from \eqref{tW1}.
\end{proof}

\begin{lem}\label{LemC}
For each fixed $\tau \in (0,\tau_{\star}]$, $\tilde{\gamma}_{\tau,\beta}$ is a continuous function with respect to $\beta$ on the interval $(0,\beta_{*}]$, and also $\lim_{\beta \to +0}\tilde{\gamma}_{\tau,\beta}=0$ holds.
\end{lem}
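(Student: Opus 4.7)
The plan is to work throughout with the representation
$$\tilde{\gamma}_{\tau,\beta} = 2\int_0^1 \tilde{W}_{\tau,\beta}(\Psi)\, d\Psi, \qquad \tilde{W}_{\tau,\beta}(\Psi) = \frac{\beta}{\sqrt{\tilde{V}_{\tau,\beta}(\beta\Psi)}},$$
from \eqref{gamma00} and \eqref{WTB1}, and to extract both the continuity in $\beta$ and the limit as $\beta \to 0^+$ from an application of the dominated convergence theorem.

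For the continuity at a fixed $\beta_0 \in (0, \beta_\star]$, I would first note that $A_{\tau,\beta}$ is continuous in $\beta$ via \eqref{ATB1}, whence $\tilde{V}_{\tau,\beta}(\beta\Psi)$ is jointly continuous in $(\beta, \Psi) \in (0,\beta_\star] \times (0,1)$. Combined with the strict positivity $\tilde{V}_{\tau,\beta_0}(\beta_0 \Psi) > 0$ for $\Psi \in (0,1)$ from Lemma \ref{LemA} and \eqref{tG-beta2}, this yields pointwise convergence $\tilde{W}_{\tau,\beta}(\Psi) \to \tilde{W}_{\tau,\beta_0}(\Psi)$. To construct a dominating function on a neighborhood $I := [\beta_0/2, \min\{3\beta_0/2, \beta_\star\}]$ of $\beta_0$, I would exploit the strict inequalities \eqref{dVpm}, which together with the continuity of the endpoint derivatives in $\beta$ give a $c_0 > 0$ such that
$$\frac{d}{d\Phi}\tilde{V}_{\tau,\beta}(0) \geq c_0, \quad -\frac{d}{d\Phi}\tilde{V}_{\tau,\beta}(\beta) \geq c_0, \quad \beta \in I.$$
A Taylor expansion at each endpoint (with remainder controlled uniformly in $\beta \in I$ and $\Phi$ bounded away from the cusp of $\tilde{V}_{\tau,+}$ at $\Phi=0$) then yields an $\eta > 0$ and a $c > 0$ such that $\tilde{V}_{\tau,\beta}(\beta\Psi) \geq c\beta\Psi$ on $(0,\eta]$ and $\tilde{V}_{\tau,\beta}(\beta\Psi) \geq c\beta(1-\Psi)$ on $[1-\eta, 1)$, uniformly in $\beta \in I$; a compactness argument supplies a positive lower bound on the middle portion $[\eta, 1-\eta]$. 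Consequently $\tilde{W}_{\tau,\beta}$ is dominated on $I$ by a constant multiple of $\Psi^{-1/2} + (1-\Psi)^{-1/2} + 1$, which is integrable on $(0,1)$, and DCT closes the argument.

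For the limit as $\beta \to 0^+$, the key computation is the asymptotic $A_{\tau,\beta} = \frac{2\kappa}{3\tau^{1/2}} + O(\beta^{1/2})$, obtained by expanding $\tilde{V}_-(\beta) = \kappa\beta + O(\beta^2)$ and $\tilde{V}_{\tau,+}(\beta) = \frac{3}{2}\tau^{1/2}\beta - \beta^{3/2} + O(\beta^2)$ in \eqref{ATB1}. Inserting these expansions together with the analogous ones for $\tilde{V}_{\tau,+}(\beta\Psi)$ and $\tilde{V}_-(\beta\Psi)$ into $\tilde{V}_{\tau,\beta}(\beta\Psi) = A_{\tau,\beta}\tilde{V}_{\tau,+}(\beta\Psi) - \tilde{V}_-(\beta\Psi)$, the $O(\beta)$ contributions coming from the linear-in-$\Phi$ parts of $\tilde{V}_{\tau,+}$ and $\tilde{V}_-$ cancel, and one finds
$$\tilde{V}_{\tau,\beta}(\beta\Psi) = \frac{2\kappa}{3\tau^{1/2}}\beta^{3/2}(\Psi - \Psi^{3/2}) + R(\beta, \Psi),$$
with a remainder $R$ that vanishes at $\Psi = 0$ and $\Psi = 1$ and satisfies $|R(\beta, \Psi)| \leq C\beta^2 \Psi(1-\Psi)$ for all sufficiently small $\beta$. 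Using the elementary identity $\Psi - \Psi^{3/2} = \Psi(1-\Psi)/(1+\Psi^{1/2}) \geq \Psi(1-\Psi)/2$, this delivers $\tilde{V}_{\tau,\beta}(\beta\Psi) \geq \frac{\kappa}{3\tau^{1/2}}\beta^{3/2}\Psi(1-\Psi)$ for all small $\beta$, so that
$$\tilde{\gamma}_{\tau,\beta} \leq 2\sqrt{\frac{3\tau^{1/2}}{\kappa}}\,\beta^{1/4}\int_0^1 \frac{d\Psi}{\sqrt{\Psi(1-\Psi)}} = 2\pi\sqrt{\frac{3\tau^{1/2}}{\kappa}}\,\beta^{1/4} \to 0.$$

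The hard part will be verifying the bound $|R(\beta,\Psi)| \leq C\beta^2 \Psi(1-\Psi)$ with the correct $\Psi$-dependence near the two endpoints. Both the full potential $\tilde{V}_{\tau,\beta}(\beta\Psi)$ and the leading-order approximation $\frac{2\kappa}{3\tau^{1/2}}\beta^{3/2}(\Psi - \Psi^{3/2})$ vanish at $\Psi = 0$ and $\Psi = 1$, so $R$ does as well; however, converting this qualitative observation into the sharp quantitative bound requires careful bookkeeping of each higher-order contribution from the expansions of $A_{\tau,\beta}$, $\tilde{V}_{\tau,+}(\beta\Psi)$ and $\tilde{V}_-(\beta\Psi)$, and in particular exploiting the cancellation encoded in the defining relation $A_{\tau,\beta}\tilde{V}_{\tau,+}(\beta) = \tilde{V}_-(\beta)$.
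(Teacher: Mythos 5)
Your argument is correct in outline, but it takes a genuinely different route from the paper. The paper's proof is structural: it shows that $\tilde{V}_{\tau,\beta}$ is concave on $(0,\beta)$ (via the explicit computation $\frac{d^2}{d\Phi^2}\tilde{V}_{\tau,\beta}=h(\Phi)/\tilde{V}_{\tau,+}(\beta)<0$, reusing the estimates \eqref{Astar1}--\eqref{Astar2}, which is where the restrictions $\beta\leq\beta_\star=\tfrac{1}{10\kappa}$, $\tau\leq\tau_\star$ enter), so that $\tilde{V}_{\tau,\beta}$ lies above the tent function $\hat{V}^*_{\tau,\beta}$ through $(0,0)$, $(\tfrac{\beta}{2},\tilde{V}_{\tau,\beta}(\tfrac{\beta}{2}))$, $(\beta,0)$; this yields the single dominating function $\hat{a}^*_\tau(\beta)\hat{U}^*(\Psi)\leq M_\tau\hat{U}^*(\Psi)$ uniformly on all of $(0,\beta_\star]$, giving continuity by dominated convergence, while $\lim_{\beta\to+0}\hat{a}^*_\tau(\beta)=0$ (proved by three applications of L'H\^opital) gives the limit in one line. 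You instead (a) prove continuity by a local domination argument near each fixed $\beta_0$, using the endpoint sign conditions \eqref{dVpm} plus compactness in the middle, and (b) prove the limit by a direct asymptotic expansion of $A_{\tau,\beta}$ and $\tilde{V}_{\tau,\beta}(\beta\Psi)$, identifying the leading term $\tfrac{2\kappa}{3\tau^{1/2}}\beta^{3/2}(\Psi-\Psi^{3/2})$, which even yields a quantitative rate $\tilde{\gamma}_{\tau,\beta}=O(\beta^{1/4})$ that the paper's proof only gives implicitly. Two caveats: at the left endpoint your Taylor-expansion phrasing is slightly off, since $\frac{d^2}{d\Phi^2}\tilde{V}_{\tau,\beta}$ blows up like $-\Phi^{-1/2}$ at $\Phi=0$, so a Lagrange-remainder bound is unavailable; the linear lower bound $\tilde{V}_{\tau,\beta}(\Phi)\geq c\Phi$ still holds because the cusp term is only $O(\Phi^{3/2})$ (or, more simply, by the concavity chord bound the paper uses). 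And your deferred remainder estimate $|R(\beta,\Psi)|\leq C\beta^2\Psi(1-\Psi)$ is indeed provable — since $R(\beta,0)=R(\beta,1)=0$ (the latter by the defining relation \eqref{ATB1}), it suffices to check $|\partial_\Psi R|\leq C\beta^2$ uniformly in $\Psi$, which follows from the same expansions because the $\beta^{3/2}\Psi^{1/2}$ contributions carry the small factor $A_{\tau,\beta}-\tfrac{2\kappa}{3\tau^{1/2}}=O(\beta^{1/2})$ — but this bookkeeping is precisely what the paper's concavity/tent-function comparison lets one avoid.
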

\begin{proof}
For any $(\tau,\beta)\in(0,\tau_\star] \times (0,\beta_\star]$, we define the helper function $\hat{V}_{\tau,\beta}^*(\Phi)$ by
\begin{gather*}
\hat{V}_{\tau,\beta}^*(\Phi)
:=\Biggl\{
\begin{array}{ll}
2\beta^{-1}\tilde{V}_{\tau,\beta}(\frac{1}{2}\beta)\Phi 
& \hbox{if} \quad \Phi\in[0,\frac{1}{2}\beta], \\
2\beta^{-1}\tilde{V}_{\tau,\beta}(\frac{1}{2}\beta)(\beta-\Phi) 
& \hbox{if} \quad \Phi\in[\frac{1}{2}\beta,\beta],
\end{array}
\end{gather*}
whose graph is a polygonal line connecting the three points $(0,0)$, $(\frac{1}{2}\beta, \tilde{V}_{\tau,\beta}(\frac{1}{2}\beta))$, and $(\beta,0)$ on the graph of $\tilde{V}_{\tau,\beta}(\Phi)$.
We can have the magnitude relationship between $\hat{V}_{\tau,\beta}^*(\Phi)$ and $\tilde{V}_{\tau,\beta}(\Phi)$ by studying the convexity of $\tilde{V}_{\tau,\beta}$ as follows.
It is seen that
\begin{gather*}
\frac{d^2}{d\Phi^2}\tilde{V}_{\tau,\beta}(\Phi) 
=\frac{h(\Phi)}{\tilde{V}_{\tau,+}(\beta)}, 
\end{gather*}
where
\begin{gather*}
h(\Phi):=\frac{3}{4}(1-e^{-\kappa\beta})
\{(\Phi+\tau)^{-\frac{1}{2}}-\Phi^{-\frac{1}{2}}\}
+\kappa^2 
\{(\beta+\tau)^{\frac{3}{2}}-\beta^{\frac{3}{2}}-\tau^{\frac{3}{2}}\}
e^{-\kappa\Phi}.
\end{gather*}
We estimate $h(\Phi)$ by using $\beta \leq \beta_{\star}=\frac{1}{10\kappa}$ as follows:
\begin{align*}
h(\Phi) 
&\leq \frac{3}{4}(1-e^{-1}) \{(\Phi+\tau)^{-\frac{1}{2}}-\Phi^{-\frac{1}{2}}\}
+\kappa^2 \{(\beta+\tau)^{\frac{3}{2}}-\beta^{\frac{3}{2}}-\tau^{\frac{3}{2}}\}
\\
& \leq \frac{3}{4}(1-e^{-1}) \{(\Phi+\tau)^{-\frac{1}{2}}-\Phi^{-\frac{1}{2}}\}
+7\kappa^2 \beta_\star^\frac{1}{2}\tau <0 \quad \text{for $\Phi \in (0,\beta_{\star}]$},
\end{align*}
where we have also used \eqref{Astar1} and \eqref{Astar2} in deriving the second and last inequalities, respectively. 
Therefore, for each $(\tau,\beta)\in(0,\tau_\star] \times (0,\beta_\star]$, we obtain the magnitude relationship that
$\hat{V}_{\tau,\beta}^*(\Phi) \leq\tilde{V}_{\tau,\beta}(\Phi)$ for $\Phi\in(0,\beta)$.

We define another helper function $\hat{W}_{\tau,\beta}^*(\Psi)$ by
\begin{gather*}
\hat{W}_{\tau,\beta}^*(\Psi) :=\frac{\beta}{\sqrt{\hat{V}_{\tau,\beta}^*(\beta\Psi)}} \quad \text{for $\Psi\in(0,1)$},
\end{gather*}
which is well-defined thanks to $\hat{V}_{\tau,\beta}^*(\beta\Psi)>0$.
It is easy to see that
\begin{gather*}
\hat{W}_{\tau,\beta}^*(\Psi)
= \hat{a}_{\tau}^{*}(\beta) \hat{U}^*(\Psi), \quad
\hat{a}_{\tau}^{*}(\beta):=\frac{\beta}{\sqrt{2\tilde{V}_{\tau,\beta}(\frac{1}{2}\beta)}}, \quad
\hat{U}^*(\Psi) :=\Biggl\{
\begin{array}{ll}
\Psi^{-\frac{1}{2}}
& \hbox{if $\Phi\in[0,\frac{1}{2}]$}, \\
(1-\Psi)^{-\frac{1}{2}} 
& \hbox{if $\Phi\in[\frac{1}{2},1]$}. 
\end{array}
\end{gather*}
We observe by applying the L'H\^opital's theorem that
\begin{align*}
\lim_{\beta\to+0} \{\hat{a}_{\tau}^{*}(\beta)\}^{-2}
&=2\lim_{\beta\to+0}\frac{\tilde{V}_{\tau,+}(\frac{\beta}{2})\tilde{V}_-(\beta)
-\tilde{V}_{\tau,+}(\beta)\tilde{V}_-(\frac{\beta}{2})}
{\beta^2\tilde{V}_{\tau,+}(\beta)}
\\
&=2\lim_{\beta\to+0}\frac{\left\{\tilde{V}_{\tau,+}(\frac{\beta}{2})\tilde{V}_-(\beta)
-\tilde{V}_{\tau,+}(\beta)\tilde{V}_-(\frac{\beta}{2})\right\}'''}
{\left\{\beta^2\tilde{V}_{\tau,+}(\beta)\right\}'''}=+\infty.
\end{align*}
Hence, $\lim_{\beta\to +0}\hat{a}_{\tau}^{*}(\beta)=0$ holds.

From now on we complete the proof by using the helper functions.
We first show that for each fixed $\tau \in (0,\tau_{\star}]$, $\tilde{\gamma}_{\tau,\beta}$ is a continuous function with respect to $\beta$. It is easy to see that $\hat{a}_{\tau}^{*}$ is continuous on $(0,\beta_{\star}]$. From this fact and $ \lim_{\beta\to +0}\hat{a}_{\tau}^{*}(\beta)=0$, we can find a constant $M_{\tau}>0$ so that $\hat{a}_{\tau}^{*}(\beta) \leq M_{\tau}$ for $\beta\in(0,\beta_{\star}]$. 
On the other hand, the magnitude relationship $\hat{V}_{\tau,\beta}^*(\Phi) \leq\tilde{V}_{\tau,\beta}(\Phi)$ shown above means that $\tilde{W}_{\tau,\beta}(\Phi) \leq \hat{W}_{\tau,\beta}^*(\Phi)$. From these two inequalities, we arrive at
\begin{gather}\label{Mtau1}
\tilde{W}_{\tau,\beta}(\Phi) \leq M_{\tau} \hat{U}^*(\Psi) \quad   \text{for $\beta\in(0,\beta_{\star}]$, \ $\Psi\in(0,1)$.}
\end{gather}
It is straightforward to see that for each $\Psi\in(0,1)$, $\tilde{W}_{\tau,\beta}$ is a continuous function with respect to $\beta$ on $(0,\beta_{*}]$. 
Furthermore, $\hat{U}^*$ is integrable over $(0,1)$.
Now recalling \eqref{GTB1} and applying the dominated convergence theorem with \eqref{Mtau1}, we deduce the continuity of $\tilde{\gamma}_{\tau,\beta}$.
Moreover, it is observed that
\begin{gather*}
0\leq\tilde{\gamma}_{\tau,\beta}
\leq \int_0^1\hat{W}_{\tau,\beta}^*(\Psi)\,d\Psi
=\hat{a}_{\tau}^{*}(\beta)\int_0^1 \hat{U}^*(\Psi)\,d\Psi 
\to 0 \quad \text{as $\beta \to +0$}. 
\end{gather*}
Thus $\lim_{\beta \to +0}\tilde{\gamma}_{\tau,\beta}=0$ holds. The proof is complete.
\end{proof}

We are now in a position to prove Corollary \ref{tnonunique3}.

\begin{proof}[Proof of Corollary \ref{tnonunique3}]
Lemma \ref{LemA} ensures that the problem \eqref{tVP3} has a solution $(F_{+},\Phi)$ of the problem \eqref{tVP3} for each pair $(\tau,\beta) \in (0,\tau_{\star}] \times (0,\beta_{\star}]$. 
Owing to $\Phi(0)=\beta$ and the forms \eqref{tfform+}--\eqref{tfform-} in Theorem \ref{texistence1}, 
if $(\tau,\beta)$ are different, then so are the solutions.
Therefore, as mentioned above, to complete the proof of Corollary \ref{tnonunique3},
it is sufficient to find infinite many pairs $(\tau,\beta) \in(0,\tau_\star]\times(0,\beta_\star]$ with \eqref{GTB1} for each $\tilde{\gamma}\in(0,\tilde{\gamma}_\star)$.

Let us fix $\tilde{\gamma}\in(0,\tilde{\gamma}_\star)$.
Then Lemma \ref{LemB} ensures that
\begin{gather*}
\lim_{\tau\to\tau_\star-0}\tilde{\gamma}_{\tau,\beta_\star}
=\tilde{\gamma}_{\tau_\star,\beta_\star}=\tilde{\gamma}_\star.
\end{gather*}
Therefore, there exists a constant $\delta>0$ such that 
if $\tau\in(\tau_\star-\delta,\tau_\star]$, then $\tilde{\gamma}<\tilde{\gamma}_{\tau,\beta_\star}\leq\tilde{\gamma}_\star$ holds.
For each $\tau\in(\tau_\star-\delta,\tau_\star]$, we can find a constant $\beta\in(0,\beta_\star]$ so that 
$\tilde{\gamma}_{\tau,\beta}=\tilde{\gamma}$ by applying the intermediate value theorem with $\lim_{\beta \to +0}\tilde{\gamma}_{\tau,\beta}=0$ in Lemma \ref{LemC}.
We note that there are infinite many choices of $\tau \in(\tau_\star-\delta,\tau_\star]$.
Thus we have infinite many pairs $(\tau,\beta)$ with \eqref{GTB1}.
The proof is complete.
\end{proof}

\medskip

\noindent
\textbf{Acknowledgements. } M. S. was supported by JSPS KAKENHI Grant Numbers 18K03364 and 21K03308. The authors would like to thank Professor Walter A. Strauss at Brown University for the helpful discussions.


\end{document}